\pgfplotsset{compat=newest,compat/show suggested version=false}
\definecolor{emerald}{rgb}{0.31, 0.78, 0.47}
\definecolor{myblue}{rgb}{0.10, 0.10, 0.90}
\definecolor{myred}{rgb}{1.0, 0.0, 0.25}
\definecolor{mygreen}{rgb}{0.20,0.65, 0.20}
\numberwithin{equation}{section}
\newtheorem{theorem}{Theorem}[section]
\newtheorem{lemma}[theorem]{Lemma}
\newtheorem{proposition}[theorem]{Proposition}
\newtheorem{corollary}[theorem]{Corollary}
\theoremstyle{definition}
\newtheorem{definition}[theorem]{Definition}
\newtheorem{conjecture}[theorem]{Conjecture}
\newtheorem{remark}[theorem]{Remark}
\newtheorem{convention}[theorem]{Convention}
\newtheorem{note}[theorem]{Note}
\newcommand{\thistheoremname}{}
\newtheorem{genericthm}[theorem]{\thistheoremname}
\newenvironment{namedthm}[1]
{\renewcommand{\thistheoremname}{#1}%
	\begin{genericthm}}
	{\end{genericthm}}
\newcommand{\g}{\operatorname{g}}
\renewcommand{\mod}{\operatorname{mod}}
\newcommand{\proj}{\operatorname{proj}}
\newcommand{\ind}{\operatorname{ind}}
\newcommand{\Hom}{\operatorname{Hom}}
\newcommand{\Pot}{\operatorname{Pot}}
\newcommand{\ctilt}{\operatorname{c-tilt}}
\newcommand{\iso}{\operatorname{iso}}
\newcommand{\rigid}{\operatorname{rigid}}
\newcommand{\twosilt}{\operatorname{2-silt}}
\newcommand{\twopresilt}{\operatorname{2-presilt}}
\renewcommand{\dim}{\operatorname{dim}}
\newcommand{\per}{\operatorname{per}}
\newcommand{\End}{\operatorname{End}}
\newcommand{\add}{\operatorname{add}}
\newcommand{\id}{\operatorname{id}}
\newcommand{\op}{\operatorname{op}}
\renewcommand{\add}{\operatorname{add}}
\newcommand{\ClVar}{\operatorname{ClVar}}
\newcommand{\Cluster}{\operatorname{Cluster}}
\newcommand{\cluster}{\operatorname{cluster}}
\newcommand{\relmiddle}[1]{\mathrel{}\middle#1\mathrel{}}
\newcommand{\doublelrangle}[1]{\langle\hspace{-0.7mm}\langle #1\rangle\hspace{-0.7mm}\rangle}
\newcommand\scalemath[2]{\scalebox{#1}{\mbox{\ensuremath{\displaystyle #2}}}}
\begin{document}
	\title[]{Finite-dimensional Jacobian algebras: Finiteness and tameness}
	\author{Mohamad Haerizadeh}
	\email{hyrizadeh@gmail.com}
	\author{Toshiya Yurikusa}
	\email{yurikusa@omu.ac.jp}
	
	\keywords{Jacobian algebras, quivers with potentials, triangulations of marked surfaces, cluster categories, cluster algebras, representation theory}
	\date{\today}
	\begin{abstract}
		Finite-dimensional Jacobian algebras are studied from the perspective of representation types. We establish that (like other representation types) the notions of $E$-finiteness and $E$-tameness are invariant under mutations of quivers with potentials. Consequently, by applying our results on laminations on marked surfaces, and the results of Plamondon and the second author, we classify $E$-finite and $E$-tame finite-dimensional Jacobian algebras. More precisely, we demonstrate that (resp., except for a few cases,) a finite-dimensional Jacobian algebra $\mathcal{J}(Q,W)$ is $E$-finite (resp., $E$-tame) if and only if it is $\operatorname{g}$-finite (resp., $\operatorname{g}$-tame), if and only if it is representation-finite (resp., representation-tame), and this holds exactly when $Q$ is of Dynkin type (resp., finite mutation type), as shown by Geiss, Labardini and Schr\"{o}er. This also proves Demonet's conjecture for finite-dimensional Jacobian algebras.
		
		Furthermore, we provide an application of our results in the theory of cluster algebras. More precisely, we establish the converse of Reading's theorem: if the $\operatorname{g}$-fan of the cluster algebra associated with a connected quiver $Q$ is complete, then $Q$ must be of Dynkin type.
	\end{abstract}
	\maketitle
	\tableofcontents
	\section{Introduction}
	\subsection{Jacobian algebras}
	Jacobian algebras emerged in the early 2000s as part of the effort to categorify cluster algebras, providing an algebraic framework for understanding mutation phenomena \cite{DWZ08}. The key idea was to associate an algebra to a quiver with potential that reflects the combinatorics of mutations. Hence, a sophisticated algebraic framework for studying cluster algebras via the representation theory of algebras is provided, which led to solving several central conjectures in the field (see e.g.\ \cite{Am09,CIKLFP13,DWZ10,Pla11a,Pla11b,Pla13}).
	
	At the same time, it provides an appropriate representation-theoretic framework to develop ideas from the theory of cluster algebras. Specifically, the notion of ``DWZ-mutation'' for decorated representations \cite{DWZ08} —defined through the mutation of their underlying quiver with potential— yields a powerful operation that generalizes classical Auslander-Platzeck-Reiten tilting functors by allowing transformations at arbitrary vertices of the quiver with potential \cite{BIRS11}.
	
	Accordingly, Jacobian algebras provide a bridge between the theory of cluster algebras and the representation theory of algebras. As a result, these algebras have attracted considerable attention as powerful tools for studying cluster algebras and an exciting topic in representation theory (see e.g.\ \cite{CILFS14,GGS14,GLFS16,KY11,LF09}).
	
	\subsection{Representation types}\label{sec:finite rep}
	Finite-dimensional algebras are categorized into three representation types \cite{Dick69,Drozd80}:
	\begin{itemize}
		\item A finite-dimensional algebra $\Lambda$ is \emph{representation-finite} if there are only finitely many isomorphism classes of finite-dimensional indecomposable $\Lambda$-modules.
		\item It is \emph{representation-tame} if for each dimension vector $\bf{d}$, almost all indecomposables of dimension vector $\bf{d}$ belong to finitely many one-parameter families.
		\item Otherwise, it is \emph{representation-wild}.
	\end{itemize}
	Classifying algebras through these representation types yields a deep insight into the complexity of their module categories, which is fundamental in representation theory.
	One of the well-known and classic results in this context is the characterizations of representation-finite and tame path algebras \cite{DF73,G72,N73} (see also \cite{ASS06, SS07}): Let $Q$ be a connected acyclic quiver. Then the path algebra $KQ$ is
	\begin{itemize}
		\item representation-finite if and only if $Q$ is a Dynkin quiver (see Figure \ref{Omh8sx14eA3m});
		\item representation-tame if and only if $Q$ is either a Dynkin or affine quiver (see Figure \ref{z87REJ16VdvK}).
	\end{itemize} 
	Geiss, Labardini-Fragoso and Schr\"{o}er showed that these representation types are invariant under mutations of Jacobian algebras (Theorem \ref{on26Fdo7dIPi}), and used DWZ’s machinery to classify representation-finite and tame Jacobian algebras, which generalize the foregoing classifications \cite{GLFS16} (see also Theorem \ref{SF76ppyobv7F} and Corollary \ref{cor:rep type}). Other notions of representation type which are related to $\g$-fans and $E$-invariant were introduced in \cite{AY23,AsIy24,BST19}.
	\subsection{$\mathbf{\g}$-vectors}
	Motivated by the work of Fock and Goncharov on the geometric approach to cluster algebras \cite{FG09}, Fomin and Zelevinsky introduced integral vectors, named $\g$-vectors \cite{FZ07}, which encode mutation combinatorics. These vectors play a crucial role in solving several conjectures in the theory of cluster algebras (see e.g.\ \cite{GHKK17,Ke11,KeDe19}). Furthermore, by \cite{GHKK17}, $\g$-vectors form an essential, rational, polyhedral, simplicial fan called $\g$-fan (see Definition \ref{QeZ820slm1UEa}). This fan is a subfan of the underlying fan of the cluster scattering diagram \cite{GHKK17}, and illustrates many features of the cluster algebras. Therefore, it is natural to study the shapes of $\g$-fans. In particular, Reading \cite{Re14} showed that the $\g$-fans of finite type cluster algebras are complete (Theorem \ref{7QdLexH7Sem3}).
	
	In representation theory, following \cite{DK08} and \cite{AuRe85}, Adachi, Iyama and Reiten introduced the notion of $\g$-vector \cite{AIR14}, which is compatible with the notion of $\g$-vector in the theory of cluster algebras (refer to Section \ref{sec:clcat}). It plays a significant role in the study of wall and chamber structures of finite-dimensional algebras and stability scattering diagrams (see e.g.\ \cite{As21,AsIy24,Br17,BST19,GS11,KS06}).
	Furthermore, based on the results of \cite{DIJ19,DeFe15}, the $\g$-vectors of $2$-term silting complexes form an essential, rational, polyhedral, simplicial fan.
	\subsubsection{$\g$-finiteness and $\g$-tameness}
	It turns out that $\g$-fans reveal many properties of module categories. Specifically, by exploring the geometries of $\g$-fans, one can obtain the arrangement of functorially finite torsion classes and mutations of support $\tau$-tilting modules \cite{As21,BST19}.
	Therefore, it is reasonable to investigate the completeness and denseness of $\g$-fans, which we refer to as $\g$-finiteness and $\g$-tameness \cite{DIJ19,AY23}.
	For instance, finite-dimensional $\g$-finite algebras (resp., $\g$-tame Jacobian algebras) were (resp., nearly) classified in \cite{DIJ19,As21,PYK23} (see also Theorems \ref{Ypp40yreQWb2} and \ref{tGCGN5Yoe6G9}).
	As consequences, the second classification yields a (nearly) classification of cluster algebras with dense $\g$-fans \cite[Corollary 5.19]{PYK23}, and the first one leads us to classify cluster algebras with complete $\g$-fans (refer to Theorem \ref{ORyKNsY2rrtu}).
	\subsubsection{$E$-finiteness and $E$-tameness}
	Motivated by the works of \cite{DWZ10,Sch92}, Derksen and Fei introduced the notion of $E$-invariant (for $2$-term complexes), which plays the main role in the generic decomposition of $\g$-vectors \cite{DeFe15}.
	Accordingly, Asai and Iyama utilized this notion to examine the complexity of (projective) presentation spaces and introduced two new representation types for finite-dimensional algebras, named $E$-finiteness and $E$-tameness \cite{AsIy24} (see also Definition \ref{Fr4567nvAEoQJ}).
	It is expected that $\g$-finiteness (resp., $\g$-tameness) coincides with $E$-finiteness (resp., $E$-tameness). These notions are also studied in \cite{Pf25,MP23} (see also \cite[Section 7]{MP25}).
	To see the known relationships between representation types, refer to Section \ref{Nfr91saLLBz78} and our results in Section \ref{4p5IXJsU8vuj}.
	\subsection{Our results}
	In this paper, we focus on the representation types of finite-dimensional Jacobian algebras. We provide a (resp., nearly) complete classification of $E$-finite (resp., $E$-tame) finite-dimensional Jacobian algebras. Throughout this subsection, we fix a non-degenerate Jacobi-finite quiver with potential $(Q,W)$, and write $\mathcal{J}(Q,W)$ for the associated Jacobian algebra (see Definition \ref{def:mutation} for the mutation $\mu_k$ of QPs). To achieve this classification, in Section \ref{uT45nbA3L9pM02}, motivated by \cite[Theorem 3.6]{GLFS16}, inspired by the idea of \cite[Proposition 3.24]{Pla13}, and based on results of \cite{KY11,AIR14}, we construct a well-behaved map\footnote{The counterpart of this map for decorated generically $\tau$-reduced components was defined in \cite[Section 1.6]{GLFW23}.}
	\[F_k:\iso(K^{\{-1,0\}}(\proj\mathcal{J}(Q,W)))\longrightarrow  \iso(K^{\{-1,0\}}(\proj\mathcal{J}(\mu_k(Q,W)))),\]
	\begin{itemize}
		\item that makes the Diagram \eqref{zvHv2HLFf7K4} commutative,
		\item sends (pre)silting complexes to (pre)silting complexes,
		\item mutates $\g$-vectors similar to the mutations of $\g$-vectors in the theory of cluster algebras (compare \eqref{peQxUmkA5Njt} to \eqref{Yjd9vt0FtxUc}), and
		\item commutes with direct sums of $\g$-vectors (see Theorem \ref{L9znmRN7cMFf}).
	\end{itemize}
	Therefore, it follows that the mutations of finite-dimensional Jacobian algebras preserve $E$-finiteness and $E$-tameness. 
	\begin{theorem}[{\ref{LwDWFmqsJxpt}}]\label{LwDWFmqsJxpt1}
		The algebra $\mathcal{J}(Q,W)$ is $E$-finite (resp., $E$-tame) if and only if so is $\mathcal{J}(\mu_k(Q,W))$.
	\end{theorem}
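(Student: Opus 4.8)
The plan is to prove the statement by exhibiting an explicit bijection $F_k$ between the sets of (isomorphism classes of) $2$-term complexes over $\mathcal{J}(Q,W)$ and over $\mathcal{J}(\mu_k(Q,W))$ that intertwines all the structure needed to read off $E$-finiteness and $E$-tameness, and then to observe that each of these properties is phrased purely in terms of data preserved by $F_k$. Since mutation is an involution up to isomorphism (i.e.\ $\mu_k\mu_k(Q,W)\cong(Q,W)$ via $\mathrm{DWZ}$-mutation), it suffices to prove only one implication; the converse follows by applying the same construction to $\mu_k(Q,W)$ and using $\mu_k^2\simeq\mathrm{id}$. So the real content is to construct $F_k$ and verify its four listed properties, after which the theorem is essentially a formal consequence.

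First I would construct $F_k$ on the level of the homotopy category $K^{\{-1,0\}}(\proj\mathcal{J}(Q,W))$, following the blueprint of \cite[Proposition 3.24]{Pla13} and \cite{KY11}: a $2$-term presilting complex corresponds, via the $\g$-vector dictionary of \cite{AIR14}, to an integral vector, and one declares $F_k$ to act on $\g$-vectors by the piecewise-linear tropical mutation rule used for cluster $\g$-vectors. Concretely, for a $\g$-vector $g=(g_i)$ the $k$-th coordinate flips sign and the remaining coordinates are corrected by the usual $\max(0,\cdot)$ formula read off the exchange matrix of $Q$; this is the content of comparing \eqref{peQxUmkA5Njt} with \eqref{Yjd9vt0FtxUc}. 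The point of routing through the homotopy category rather than defining $F_k$ combinatorially is that one gets, for free, compatibility with direct sums (Proposition \ref{0N428LtgFMnp}) and the fact that indecomposable presilting summands go to indecomposable presilting summands, since $F_k$ is induced by a triangle functor. Then I would check that the diagram \eqref{zvHv2HLFf7K4} commutes, which pins down $F_k$ uniquely and makes it a bijection on isomorphism classes.

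With $F_k$ in hand, the next step is the heart of the matter: showing $F_k$ preserves the $E$-invariant, or at least preserves its vanishing locus and its generic behaviour along $\g$-vector rays, since $E$-finiteness and $E$-tameness are defined (Definition \ref{Fr4567nvAEoQJ}) in terms of the $E$-invariant on presentation spaces indexed by $\g$-vectors. Here I would exploit that $F_k$ sends presilting complexes to presilting complexes and commutes with direct sums: $E$-finiteness says every $\g$-vector is a nonnegative combination of the $\g$-vectors of a single silting complex (equivalently the $\g$-fan is complete with finitely many cones), and $E$-tameness says the indecomposable $E$-rigid $\g$-vectors are dense in the appropriate sense. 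Both conditions are statements about the combinatorial/geometric structure of the collection of $\g$-vectors together with which ones underlie $E$-rigid (presilting) objects — and all of that structure is transported isomorphically by $F_k$ because it is a piecewise-linear automorphism of $\g$-vector space carrying presilting cones to presilting cones and respecting sums.

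The main obstacle I anticipate is the compatibility of $F_k$ with the $E$-invariant itself, as opposed to merely with the $\g$-vectors and the silting property. The tropical mutation rule for $\g$-vectors is a clean piecewise-linear formula, but the $E$-invariant $E(M,N)$ of a pair of decorated representations is an honest homological quantity, and one must show mutation transforms it in a controlled way — ideally that $E$ is invariant, or transforms by an explicit correction that leaves its finiteness/tameness dichotomy untouched. I expect this to require the DWZ-mutation of decorated representations \cite{DWZ08} and a careful comparison between the homological $E$-invariant and its $\g$-vector-theoretic incarnation via the formulas of \cite{DeFe15}; the delicate point is controlling the behaviour at the mutation vertex $k$ where the presentation picks up or loses a summand. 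Once this compatibility is secured, the equivalences $E$-finite $\Leftrightarrow$ $E$-finite and $E$-tame $\Leftrightarrow$ $E$-tame under $\mu_k$ follow immediately, since each property is invariant under the structure-preserving bijection $F_k$.
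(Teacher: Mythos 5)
Your overall architecture matches the paper's: the map $F_k$ is induced by the Keller--Yang triangulated equivalence $\mathcal{C}_{(Q,W)}\to\mathcal{C}_{\mu_k(Q,W)}$ sending $\hat{\Gamma}_{(Q,W)}$ to $\hat{\Gamma}_{\mu_k(Q,W)}$, transported to $2$-term complexes via the bijections of Theorem \ref{nyKSQbIejd2O}; it preserves presilting complexes and the number of indecomposable summands, acts on $\g$-vectors by the tropical rule \eqref{peQxUmkA5Njt}, and commutes with generic direct sums by Proposition \ref{0N428LtgFMnp} (Plamondon's Proposition 3.24, or Derksen--Fei). The $E$-finiteness half then follows exactly as you say: $g$ is rigid iff it is the $\g$-vector of a $2$-term presilting complex, and $\mu_k$ is a bijection on $K_0$ carrying presilting $\g$-vectors to presilting $\g$-vectors.

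There are, however, two problems. First, the step you defer as ``the main obstacle'' --- compatibility of $F_k$ with the $E$-invariant, which you propose to attack via DWZ-mutation of decorated representations --- is never actually carried out, so the $E$-tameness half of your proof is incomplete as written. The point you are missing is that no further homological comparison is needed: by definition $g$ is tame iff $e(g,g)=0$, which by the convention of Section 5.1 is precisely the statement $g+g=g\oplus g$. Hence preservation of tameness is an immediate instance ($g_1=g_2=g$) of the direct-sum compatibility $\mu_k(g_1\oplus g_2)=\mu_k(g_1)\oplus\mu_k(g_2)$ that you already invoked via Proposition \ref{0N428LtgFMnp}; this is exactly Theorem \ref{L9znmRN7cMFf} in the paper. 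Second, your paraphrases of the two properties are wrong: $E$-finiteness is \emph{not} the statement that the $\g$-fan is complete with finitely many cones (that is $\g$-finiteness, and the implication $E$-finite $\Rightarrow$ $\g$-finite is a nontrivial main result of the paper, not a definition), and $E$-tameness is not a density statement about indecomposable rigid $\g$-vectors --- it is the vanishing $e(g,g)=0$ for every $g\in K_0(\proj\Lambda)$. Your argument for the rigid case survives because you only use the correct characterization (every lattice vector is realized by a presilting complex), but the misstatements would lead you astray if taken literally. One further quibble: the ``it suffices to prove one implication'' reduction via $\mu_k^2\simeq\mathrm{id}$ is fine but unnecessary, since $F_k$ is already a bijection induced by an equivalence.
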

	Consequently, by \cite[Example 4.6]{HY25b} and \cite[Theorem 2.6]{FST12}, we conclude that if $\mathcal{J}(Q,W)$ is $E$-tame, then $Q$ is of finite mutation type (see Lemma \ref{lem:Etame finmut}). Therefore, applying the results of \cite{GLFS16,PYK23} (see also Theorems \ref{SF76ppyobv7F} and \ref{YTckH2lR6VY1}) concerning the classification of representation-tame and $\g$-tame Jacobian algebras via quivers of finite mutation type, we obtain the following classification for finite-dimensional Jacobian algebras.
	\begin{theorem}[{\ref{thm:tame} and \ref{qnOdOPZzHnAN}}]\label{thm:intro-tame}
		Assume that $Q$ is connected and not mutation equivalent to any of the following quivers: $K_m$ for $m\ge 3$, $X_6$, $X_7$, and $T_2$. Then the following are equivalent:
		\begin{itemize}
			\item[$(1)$] $\mathcal{J}(Q,W)$ is $E$-tame.
			\item[$(2)$] $\mathcal{J}(Q,W)$ is $\g$-tame.
			\item[$(3)$] $\mathcal{J}(Q,W)$ is representation-tame.
			\item[$(4)$] $Q$ is of finite mutation type.
		\end{itemize}
		The Jacobian algebra $\mathcal{J}(K_m,0)$ is neither $E$-tame, $\g$-tame nor representation-tame for any $m\ge 3$. On the other hand, $\mathcal{J}(T_2,W_2^{\text{tame}})$ is representation-tame, $E$-tame and $\g$-tame.
	\end{theorem}
	The implications among the statements in Theorem \ref{thm:intro-tame} that follow directly from known results are summarized in Figure \ref{fig:tame-diagram}. Each arrow is labeled with a reference proving the corresponding implication. These implications will be discussed in the text before the final proof of the theorem. In this case, it remains only to prove $(1)\Rightarrow(4)$.
	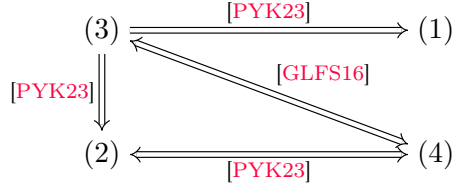
\begin{figure}[htp]
		\centering
		\begin{tikzcd}[column sep=huge,row sep=large,cramped]
			{(3)} && {(1)}\\
			{(2)} && {(4)}
			\arrow["\text{\cite{PYK23}}", Rightarrow, from=1-1, to=1-3]
			\arrow["\text{\cite{PYK23}}"{swap}, Rightarrow, from=1-1, to=2-1]
			\arrow["\text{\cite{PYK23}}"{swap}, Leftrightarrow, from=2-1, to=2-3]
			\arrow["\text{\cite{GLFS16}}", Leftrightarrow, from=1-1, to=2-3]
		\end{tikzcd}
		\caption{Implications among the statements in Theorems \ref{thm:intro-tame} obtained directly from known results}
		\label{fig:tame-diagram}
	\end{figure}
	\begin{conjecture}
		Let $(Q,W)$ be a non-degenerate Jacobi-finite QP. Then $\mathcal{J}(Q,W)$ is $\g$-tame if and only if it is $E$-tame.\footnote{This conjecture was posed by Asai and Iyama for any finite-dimensional algebra \cite[Figure 1]{AsIy24}.} Moreover, finite-dimensional Jacobian algebras associated with $T_2$, $X_6$ and $X_7$ are $E$-tame.~\footnote{By the results of Plamondon and the second author, the Jacobian algebras associated with $T_2$ are all $\g$-tame. Moreover, they conjectured that Jacobian algebras associated with $X_6$ and $X_7$ are also $\g$-tame.}
	\end{conjecture}
	In particular, when $(Q,W)$ is of acyclic type, we obtain a more concrete characterization.
	\begin{corollary}[{\ref{G4R4PmnT5Zx09}}]
		Let $(Q,W)$ be a connected acyclic type QP. Then the following are equivalent:
	\begin{itemize}
		\item[$(1)$] $\mathcal{J}(Q,W)$ is $E$-tame.
		\item[$(2)$] $\mathcal{J}(Q,W)$ is $\g$-tame.
		\item[$(3)$] $\mathcal{J}(Q,W)$ is representation-tame.
		\item[$(4)$] $Q$ is of either Dynkin or affine type.
		\item[$(5)$] The associated $g$-vector fan $\mathcal{F}^{g}_{\cluster}(Q)$ is dense.
	\end{itemize}
	\end{corollary}
	Moreover, motivated by Zito's work in \cite{Zi20}, using our result on laminations on marked surfaces (refer to Theorem \ref{thm:surf main}) and results of \cite{BST19,As21} regarding wall and chamber structures, we obtain the following classification.
	The equivalence of the first three statements proves that Demonet's conjecture holds for finite-dimensional Jacobian algebras (see also Conjecture \ref{tl5qnvWRhqxR}).
	\begin{theorem}[{\ref{thm:finiteness}}]\label{thm:intro-fin}
		Assume that $Q$ is connected. Then the following are equivalent:
		\begin{itemize}
			\item[$(1)$] $\mathcal{J}(Q,W)$ is $E$-finite;
			\item[$(2)$] $\mathcal{J}(Q,W)$ is $\g$-finite;
			\item[$(3)$] $\mathcal{J}(Q,W)$ is $\tau$-tilting finite;
			\item[$(4)$] $\mathcal{J}(Q,W)$ is representation-finite;
			\item[$(5)$] $Q$ is of Dynkin type;
			\item[$(6)$] The associated cluster algebra $\mathcal{A}(Q)$ is of finite type;
			\item[$(7)$] $\mathcal{F}^{g}_{\cluster}(Q)$ is complete.
		\end{itemize}
	\end{theorem}
	The implications among the statements in Theorem \ref{thm:intro-fin} that follow from known results, or can be obtained immediately by combining them, are shown in Figure \ref{fig:fin-diagram}. Each arrow is labeled with a reference proving the corresponding implication, where “def.” means that the implication follows directly from the definition. These implications will be explained in the text before the final proof. The only implication not covered by these observations is $(1)\Rightarrow(5)$, which is the main new contribution of this paper.
	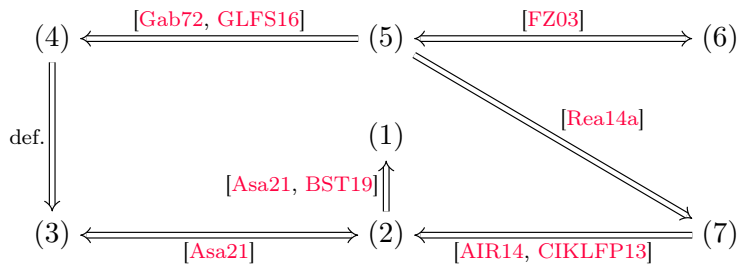
\begin{figure}[htp]
		\centering
		\begin{tikzcd}[column sep=huge,cramped]
			{(4)} && {(5)} && {(6)}\\
			&& {(1)} &&\\
			{(3)} && {(2)} && {(7)}
			\arrow["\text{\cite{G72,GLFS16}}", Leftarrow, from=1-1, to=1-3]
			\arrow["\text{\cite{FZ03}}", Leftrightarrow, from=1-3, to=1-5]
			\arrow["\text{def.}"{swap}, Rightarrow, from=1-1, to=3-1]
			\arrow["\text{\cite{Re14}}", Rightarrow, from=1-3, to=3-5]
			\arrow["\text{\cite{As21}}"{swap}, Leftrightarrow, from=3-1, to=3-3]
			\arrow["\text{\cite{AIR14,CIKLFP13}}"{swap}, Leftarrow, from=3-3, to=3-5]
			\arrow["\text{\cite{As21,BST19}}", Rightarrow, from=3-3, to=2-3]
		\end{tikzcd}
		\caption{Implications among the statements in Theorem \ref{thm:intro-fin} obtained from known results or their combinations}
		\label{fig:fin-diagram}
	\end{figure}

	As a consequence, the converse of Reading's theorem \cite[Theorem 10.6]{Re14} (see also Theorem \ref{7QdLexH7Sem3}) is established.
	\begin{corollary}[{\ref{Cf4iTsp13aMpY68}}]\label{ORyKNsY2rrtu}
		Let $Q$ be a connected quiver. Then $\mathcal{F}^{g}_{\cluster}(Q)$ is complete if and only if $Q$ is of Dynkin type.
	\end{corollary}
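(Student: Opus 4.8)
I would obtain this as the specialization of Theorem \ref{thm:finiteness} to conditions $(5)$ and $(7)$; the only point requiring care is that the corollary asserts a purely cluster-theoretic equivalence for \emph{every} connected quiver, including those admitting no Jacobi-finite potential. One direction is immediate: if $Q$ is of Dynkin type then it is acyclic, so the zero potential is non-degenerate and Jacobi-finite, $\mathcal{J}(Q,0)=KQ$ is representation-finite, and Theorem \ref{thm:finiteness} --- equivalently Reading's theorem \ref{7QdLexH7Sem3} --- shows that $\mathcal{F}^{g}_{\cluster}(Q)$ is complete.

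For the converse, which is the genuinely new statement (the converse of Reading's theorem), I would argue as follows. Assume $\mathcal{F}^{g}_{\cluster}(Q)$ is complete. Since completeness of a fan implies denseness of its support, the classification of cluster algebras with dense $\g$-fans \cite[Corollary 5.19]{PYK23} forces $Q$ to be of finite mutation type. For such $Q$ a non-degenerate Jacobi-finite potential $W$ exists --- via the surface model \cite{LF09} together with the finitely many exceptional mutation types --- so that $\mathcal{J}(Q,W)$ is finite-dimensional. Theorem \ref{thm:finiteness} now applies to $(Q,W)$, and the implication $(7)\Rightarrow(5)$ of that theorem yields that $Q$ is of Dynkin type.

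The main obstacle lies in the step that transfers completeness of the cluster $\g$-fan to a finiteness statement about the finite-dimensional algebra $\mathcal{J}(Q,W)$ on which Theorem \ref{thm:finiteness} operates. This hinges on the compatibility between cluster $\g$-vectors and the algebraic $\g$-vectors of $\mathcal{J}(Q,W)$ recalled in Section \ref{sec:clcat}: in the Jacobi-finite case one must identify $\mathcal{F}^{g}_{\cluster}(Q)$ with the fan of $2$-term silting complexes of $\mathcal{J}(Q,W)$, so that its completeness is precisely $\g$-finiteness of the algebra. The functorial comparison constructed in Section \ref{uT45nbA3L9pM02}, which is compatible with mutation and with direct sums of $\g$-vectors, is exactly what makes this identification rigorous and allows Theorem \ref{thm:finiteness} to finish the argument.
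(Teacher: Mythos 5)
Your proof follows the paper's argument exactly: the forward direction is Reading's theorem (Theorem \ref{7QdLexH7Sem3}), and the converse runs completeness $\Rightarrow$ denseness $\Rightarrow$ finite mutation type $\Rightarrow$ existence of a non-degenerate Jacobi-finite potential (Proposition \ref{prop:fin mut}) $\Rightarrow$ Theorem \ref{thm:finiteness}$(7)\Rightarrow(5)$. The only differences are cosmetic: the paper cites \cite[Theorem 2.27]{Yu23} rather than the near-classification \cite[Corollary 5.19]{PYK23} for ``dense implies finite mutation type,'' and the identification of $\mathcal{F}^{g}_{\cluster}(Q)$ with a subfan of $\mathcal{F}^{g}_{\twosilt}(\mathcal{J}(Q,W))$ that you flag at the end is Remark \ref{teSU4F2m6VPA} (already absorbed into Theorem \ref{thm:finiteness}), not the mutation map of Section \ref{uT45nbA3L9pM02}.
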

	\section*{Conventions and notations}\label{Gi0dguoDj5Fc}
	Throughout this paper, we assume that $K$ is an algebraically closed field (sometimes, we may assume that it is also uncountable) and unless otherwise stated, $\Lambda$ is a finite-dimensional $K$-algebra. Moreover, let the Gabriel quiver of $\Lambda$ have exactly $n$ vertices. Thus, it admits exactly $n$ simple modules $\{S_{(1)}, S_{(2)}, \cdots, S_{(n)}\}$ and $n$ indecomposable projective modules $\{P_{(1)}, P_{(2)}, \cdots, P_{(n)}\}$, up to isomorphism. In this notation, $S_{(i)}$ is the simple module associated with the $i$-th vertex of the Gabriel quiver of $\Lambda$ and $P_{(i)}$ is its projective cover. We denote by $\proj\Lambda$ the full subcategory of all finite-dimensional projective modules.
	Moreover, the homotopy category of bounded complexes of finite-dimensional projective modules is denoted by $K^b(\proj\Lambda)$. For any triangulated category, the suspension functor is denoted by $[1]$. 
	
	Additionally, we denote by $K_{0}(\proj\Lambda)$ (resp., $K_{0}(\mod\Lambda)$) the split Grothendieck group of finite-dimensional projective modules (resp., the Grothendieck group of the abelian category $\mod\Lambda$).
	\begin{convention}
		By \cite[Theorem 1.1]{Rose11}, we know that the Grothendieck group of the triangulated category $K^b(\proj\Lambda)$ is isomorphic to $K_{0}(\proj\Lambda)$. In this paper, we consider these groups to be the same.
	\end{convention}
	Let $\mathcal{C}$ be an additive category and $T\in\mathcal{C}$. We denote by
	\begin{itemize}
		\item $\iso \mathcal{C}$ the set of isomorphism classes of objects in $\mathcal{C}$;
		\item $\add(T)$ the full subcategory of $\mathcal{C}$ consisting of all direct sums of direct summands of $T$ in $\mathcal{C}$;
		\item $\ind T$ the set of isomorphism classes of indecomposable direct summands of $T$,
	\end{itemize}
	and we write
	\[|T|:=|\ind T|.\]
	For a set or class of objects $S\subseteq\mathcal{C}$, we also denote by $\ind S$ the set of isomorphism classes of indecomposable objects in $S$.
	\begin{convention}
		All quivers are assumed to be finite.
	\end{convention}
	For a quiver $Q$, we denote by $Q_0$ the set of all its vertices and by $Q_1$ the set of all its arrows. In several statements of this paper, we assume that the quiver is connected. This assumption is made for simplicity, as general cases can be reduced to the connected components.
	\section{Cluster theories}
	Inspired by ideas from the theory of cluster algebras \cite{FZ02}, many researchers have made extensive efforts to generalize tilting theory, like cluster-tilting theory \cite{BMRR06,IY08}, silting theory \cite{AiIy12}, and $\tau$-tilting theory \cite{AIR14}. Subsequently, these developments also paved the way for studying cluster algebras through the representation theory. In this section, we review the theory of (skew-symmetric) cluster algebras, cluster-tilting theory, and silting theory, which will be used in later sections. 
	\subsection{Cluster algebras}\label{sec:cluster}
	Let $n \in \mathbb{Z}_{> 0}$ and $Q$ a quiver without loops or $2$-cycles (i.e., oriented cycles of length two). The \emph{mutation of $Q$ at vertex $k$} is the quiver $\mu_k(Q)$ obtained from $Q$ by the following steps:
		\begin{itemize}
			\item[$(1)$] For each path $i\leftarrow k\leftarrow j$, add an arrow $i\leftarrow j$.
			\item[$(2)$] Reverse all arrows incident to $k$.
			\item[$(3)$] Remove a maximal set of disjoint $2$-cycles.
		\end{itemize}The quiver $Q$ is called
	\begin{itemize}
		\item \emph{mutation equivalent} to a quiver $Q'$ if it is obtained from $Q'$ by a finite sequence of mutations;
		\item \emph{of finite mutation type} if there are only finitely many quivers that are mutation equivalent to $Q$;
		\item \emph{of acyclic} (resp., \emph{Dynkin, affine}) \emph{type} if it is mutation equivalent to an acyclic (resp., Dynkin, affine) quiver (see Figures \ref{Omh8sx14eA3m} and \ref{z87REJ16VdvK}).
	\end{itemize}
	For example, the Kronecker quiver $K_2$ (i.e., the quiver with two vertices and two parallel arrows) is of finite mutation, acyclic, and affine types.
	
	\begin{figure}[htp]
		\centering
		\begin{tikzcd}
			{A_n:} & \bullet & \bullet & \bullet & \cdots & \bullet & \bullet \\
			& \bullet \\
			{D_n:} && \bullet & \bullet & \cdots & \bullet & \bullet \\
			& \bullet \\
			{E_{n=6,7,8}:} & \bullet & \bullet & \bullet & \bullet & \cdots & \bullet \\
			&&& \bullet
			\arrow[no head, from=1-2, to=1-3]
			\arrow[no head, from=1-3, to=1-4]
			\arrow[no head, from=1-4, to=1-5]
			\arrow[no head, from=1-5, to=1-6]
			\arrow[no head, from=1-6, to=1-7]
			\arrow[no head, from=2-2, to=3-3]
			\arrow[no head, from=3-3, to=3-4]
			\arrow[no head, from=3-4, to=3-5]
			\arrow[no head, from=3-5, to=3-6]
			\arrow[no head, from=3-6, to=3-7]
			\arrow[no head, from=4-2, to=3-3]
			\arrow[no head, from=5-2, to=5-3]
			\arrow[no head, from=5-3, to=5-4]
			\arrow[no head, from=5-4, to=5-5]
			\arrow[no head, from=5-4, to=6-4]
			\arrow[no head, from=5-5, to=5-6]
			\arrow[no head, from=5-6, to=5-7]
		\end{tikzcd}
		\caption{Underlying diagrams of Dynkin quivers with $n$ vertices}
		\label{Omh8sx14eA3m}
	\end{figure}
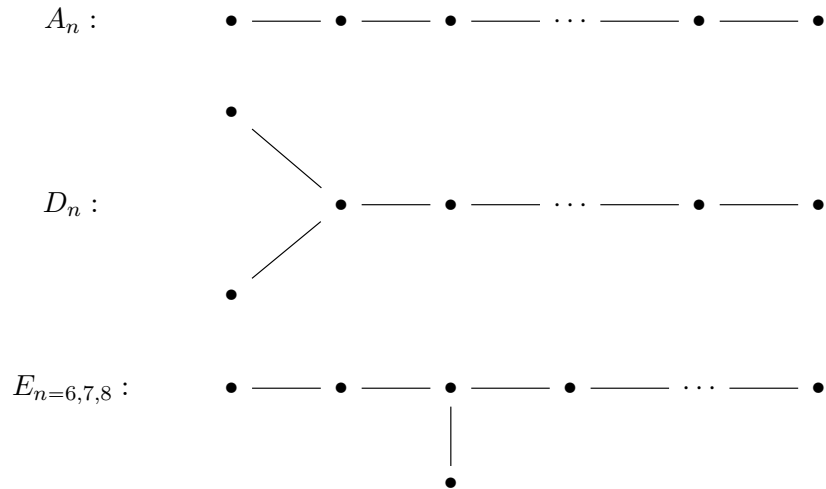
	\begin{figure}[htp]
		\centering
		\begin{tikzcd}
			&& \bullet \\
			{A^{(1)}_n:} & \bullet & \bullet & \cdots & \bullet \\
			& \bullet &&&&& \bullet \\
			{D^{(1)}_n:} && \bullet & \bullet & \cdots & \bullet \\
			& \bullet && \bullet &&& \bullet \\
			&&& \bullet \\
			{E^{(1)}_6:} & \bullet & \bullet & \bullet & \bullet & \bullet \\
			&&&& \bullet \\
			{E_7^{(1)}:} & \bullet & \bullet & \bullet & \bullet & \bullet & \bullet & \bullet \\
			&& \bullet \\
			&& \bullet \\
			{E^{(1)}_8:} & \bullet & \bullet & \bullet & \bullet & \bullet & \bullet & \bullet
			\arrow[no head, from=1-3, to=2-2]
			\arrow[no head, from=1-3, to=2-5]
			\arrow[no head, from=2-2, to=2-3]
			\arrow[no head, from=2-3, to=2-4]
			\arrow[no head, from=2-4, to=2-5]
			\arrow[no head, from=3-2, to=4-3]
			\arrow[no head, from=4-3, to=4-4]
			\arrow[no head, from=4-4, to=4-5]
			\arrow[no head, from=4-5, to=4-6]
			\arrow[no head, from=4-6, to=3-7]
			\arrow[no head, from=4-6, to=5-7]
			\arrow[no head, from=5-2, to=4-3]
			\arrow[no head, from=5-4, to=6-4]
			\arrow[no head, from=6-4, to=7-4]
			\arrow[no head, from=7-3, to=7-2]
			\arrow[no head, from=7-4, to=7-3]
			\arrow[no head, from=7-4, to=7-5]
			\arrow[no head, from=7-5, to=7-6]
			\arrow[no head, from=8-5, to=9-5]
			\arrow[no head, from=9-2, to=9-3]
			\arrow[no head, from=9-3, to=9-4]
			\arrow[no head, from=9-4, to=9-5]
			\arrow[no head, from=9-5, to=9-6]
			\arrow[no head, from=9-6, to=9-7]
			\arrow[no head, from=9-7, to=9-8]
			\arrow[no head, from=10-3, to=11-3]
			\arrow[no head, from=11-3, to=12-3]
			\arrow[no head, from=12-2, to=12-3]
			\arrow[no head, from=12-3, to=12-4]
			\arrow[no head, from=12-4, to=12-5]
			\arrow[no head, from=12-5, to=12-6]
			\arrow[no head, from=12-6, to=12-7]
			\arrow[no head, from=12-7, to=12-8]
		\end{tikzcd}
		\caption{Underlying diagrams of affine quivers with $n+1$ vertices}
		\label{z87REJ16VdvK}
	\end{figure}
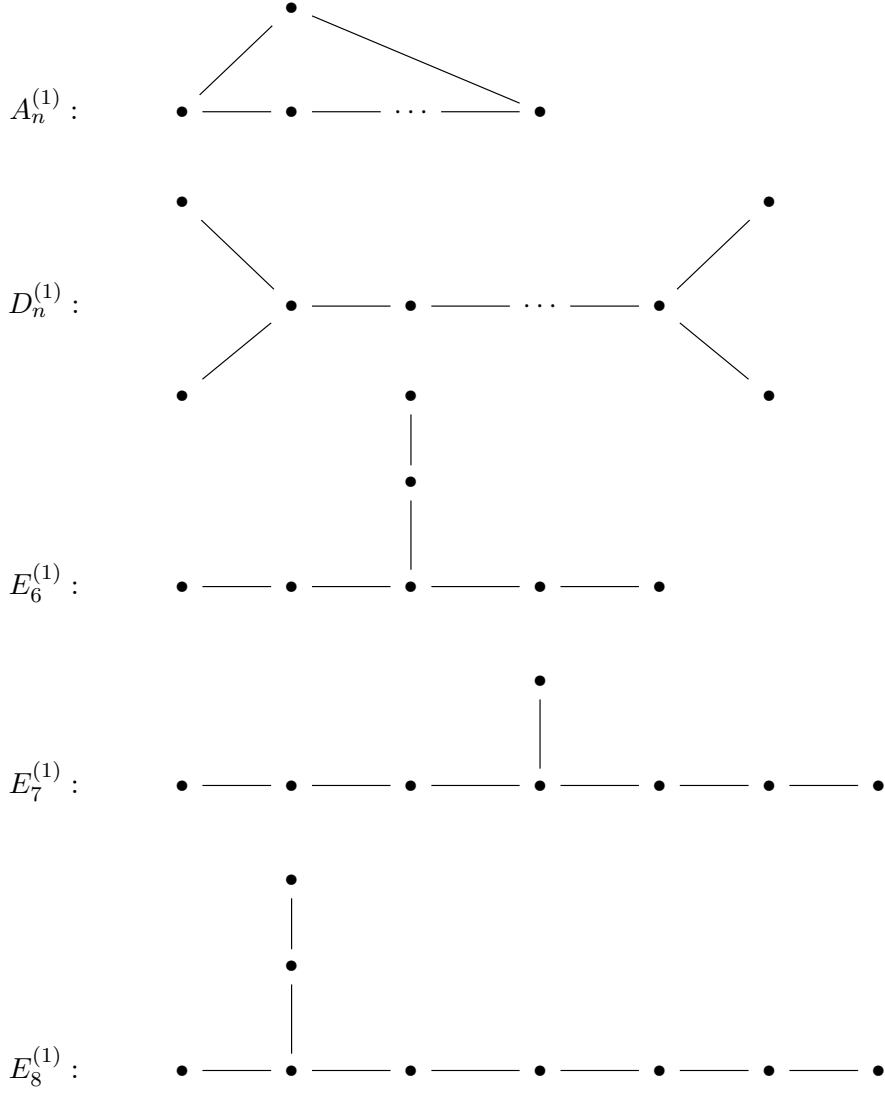
	\begin{theorem}[{\cite{FST12}}]\label{MupWVVH0u2Zz}
		A connected quiver without loops or $2$-cycles is of finite mutation type if and only if it is mutation equivalent to one of the following quivers:
		\begin{itemize}
			\item A quiver defined from a triangulated surface (see Appendix \ref{app:ms}).
			\item The exceptional quivers listed in Table \ref{axzK0GRWH13u}.
		\end{itemize}
	\end{theorem}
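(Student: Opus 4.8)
The plan is to prove both implications, with the forward (``if'') direction being comparatively elementary and the reverse (``only if'') direction carrying essentially all of the difficulty. For the ``if'' direction I would treat the two families separately. Quivers arising from a triangulated bordered surface with marked points are of finite mutation type because a mutation of such a quiver corresponds to a \emph{flip} of the underlying ideal triangulation, and a fixed surface with a fixed finite set of marked points admits only finitely many ideal triangulations; hence the mutation class is finite. For the finitely many exceptional quivers of the table, I would simply compute each mutation class explicitly (a terminating finite computation) and observe that it is finite.

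The reverse direction is the substance. First dispose of the trivial low-rank cases: every quiver of rank $\le 2$ is of finite mutation type, since mutation at either vertex only reverses the unique arrow bundle and preserves its multiplicity. So assume $Q$ is connected of rank $n\ge 3$ and of finite mutation type. The first key lemma to establish is a \emph{bounded-weight} statement: every quiver in the mutation class has at most two arrows between any pair of vertices. The idea is that every full subquiver on three vertices containing an arrow of multiplicity at least $3$ already has an infinite mutation class, by a direct growth estimate; since full subquivers of a finite-mutation-type quiver are themselves of finite mutation type (one never mutates the deleted vertices, and the restriction of each $\mu_k$ to the subquiver is the corresponding mutation of that subquiver), connectivity forces any high-multiplicity arrow into such a forbidden rank-$3$ configuration, a contradiction.

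With all weights bounded by $2$, I would run an induction on the rank $n$, using the block-decomposition characterization of surface quivers: a quiver is of surface type precisely when it admits a decomposition into the elementary ``puzzle-piece'' blocks. The base cases, say $n$ up to the maximal rank $10$ occurring among the exceptional types (attained by the elliptic $E_8^{(1,1)}$), are handled by a computer-assisted enumeration of all finite-mutation-type quivers of each small rank, checking that each is either block-decomposable or mutation equivalent to one of the listed exceptionals, and conversely that $X_6,X_7$ and the $E$-type exceptionals admit no block decomposition and so are genuinely not of surface type. For the inductive step with $n>10$, every proper full subquiver of $Q$ is of finite mutation type and of strictly smaller rank, hence by the inductive hypothesis of surface type; one then shows that the block decompositions of the large full subquivers can be glued consistently along their overlaps into a block decomposition of $Q$ itself, whence $Q$ is of surface type.

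I expect the main obstacle to be twofold. The first is the base-case enumeration: controlling the combinatorial explosion of mutation classes in ranks $3$ through $10$ rigorously — this is precisely where \cite{FST12} relies on exhaustive machine computation — and certifying that the exceptional quivers are \emph{exactly} the non-surface finite-mutation-type quivers. The second, more conceptual obstacle is the gluing step: proving that the local surface realizations of the codimension-one full subquivers agree and patch to a single global triangulated surface, which requires the full strength of the puzzle-piece machinery together with a careful analysis of how blocks may overlap inside a finite-mutation-type quiver.
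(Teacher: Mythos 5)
This theorem is not proved in the paper at all: it is the Felikson--Shapiro--Tumarkin classification, imported verbatim from \cite{FST12}, so there is no internal proof to compare your attempt against. What can be said is that your outline reproduces, at the level of strategy, the actual proof in the cited source: the weight bound $\le 2$ obtained from a rank-$3$ analysis of full subquivers (this is essentially \cite[Theorem 2.6]{FST12}, which the present paper also invokes in Lemma \ref{lem:Etame finmut}), the Fomin--Shapiro--Thurston block-decomposition criterion for surface quivers, an exhaustive computer-assisted enumeration in small rank to isolate the exceptional types of Table \ref{axzK0GRWH13u}, and an inductive gluing argument showing that a sufficiently large mutation-finite quiver whose proper full subquivers are block-decomposable is itself block-decomposable. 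So the route is the right one, and the two obstacles you flag are exactly where the real work in \cite{FST12} lies.

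Two points deserve correction or emphasis. First, in the ``if'' direction, a marked surface does \emph{not} admit only finitely many ideal triangulations up to isotopy (Dehn twists already produce infinitely many); the correct statement is that there are finitely many triangulations up to homeomorphism of the marked surface, and the adjacency quiver is an invariant of the homeomorphism class, which is what bounds the mutation class. One must also be slightly careful that flip and mutation correspond: an arc inside a self-folded triangle cannot be flipped, and the clean dictionary ``mutation $=$ flip'' holds only at the level of tagged triangulations. Second, as a proof your text remains a roadmap: the bounded-weight lemma requires an actual analysis of the rank-$3$ mutation dynamics (a Markov-type growth argument, not just a ``direct estimate''), the base-case enumeration through rank $10$ is a genuinely machine-verified computation, and the gluing of local block decompositions into a global one is the most delicate part of \cite{FST12} and is not something one can wave through by ``consistency along overlaps.'' None of this makes the approach wrong, but it does mean the proposal is an accurate summary of the known proof rather than a self-contained argument.
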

	\begin{table}[htp]
		\begin{tabular}{|c|c|c|c|}
			\hline
			$E_6$ &
			\begin{tikzpicture}[baseline=3mm,scale=0.9]
				\node(0)at(0,0){$\bullet$}; \node(l1)at(-1,0){$\bullet$}; \node(l2)at(-2,0){$\bullet$}; \node(u1)at(0,1){$\bullet$};\node(r1)at(1,0){$\bullet$}; \node(r2)at(2,0){$\bullet$};
				\draw[->](l2)--(l1); \draw[->](l1)--(0); \draw[->](u1)--(0); \draw[->](r2)--(r1); \draw[->](r1)--(0);
			\end{tikzpicture}
			& $E_6^{(1,1)}$ &
			\begin{tikzpicture}[baseline=0mm,scale=0.9]
				\node(l1)at(-0.5,0){$\bullet$}; \node(l2)at(-1.5,0){$\bullet$}; 
				\node(u)at(0,1){$\bullet$}; \node(d)at(0,-1){$\bullet$};
				\node(r1)at(0.5,0){$\bullet$}; \node(r2)at(1.5,0){$\bullet$}; 
				\node(r3)at(2.5,0){$\bullet$}; \node(r4)at(3.5,0){$\bullet$};
				\draw[->](l2)--(l1); \draw[->](u)--(l1); \draw[->](l1)--(d); \draw[->](u)--(r1); \draw[->](r1)--(d);
				\draw[->](-0.06,-0.7)--(-0.06,0.7); \draw[->](0.06,-0.7)--(0.06,0.7);
				\draw[->](r2)--(r1); \draw[->](r4)--(r3); \draw[->](u)--(r3); \draw[->](r3)--(d);\node at(0,1.15){};
			\end{tikzpicture}
			\\\hline
			$E_7$ &
			\begin{tikzpicture}[baseline=3mm,scale=0.9]
				\node(0)at(0,0){$\bullet$}; \node(l1)at(-1,0){$\bullet$}; \node(l2)at(-2,0){$\bullet$}; \node(u1)at(0,1){$\bullet$};
				\node(r1)at(1,0){$\bullet$}; \node(r2)at(2,0){$\bullet$}; \node(r3)at(3,0){$\bullet$};
				\draw[->](l2)--(l1); \draw[->](l1)--(0); \draw[->](u1)--(0); \draw[->](r2)--(r1); \draw[->](r1)--(0);
				\draw[->](r3)--(r2);
			\end{tikzpicture}
			& $E_7^{(1,1)}$ &
			\begin{tikzpicture}[baseline=0mm,scale=0.8]
				\node(l1)at(-0.5,0){$\bullet$}; \node(l2)at(-1.5,0){$\bullet$}; \node(l3)at(-2.5,0){$\bullet$}; 
				\node(u)at(0,1){$\bullet$}; \node(d)at(0,-1){$\bullet$};
				\node(r1)at(0.5,0){$\bullet$}; \node(r2)at(1.5,0){$\bullet$}; 
				\node(r3)at(2.5,0){$\bullet$}; \node(r4)at(3.5,0){$\bullet$};
				\draw[->](l3)--(l2); \draw[->](l2)--(l1); \draw[->](u)--(l1); \draw[->](l1)--(d); \draw[->](u)--(r1); \draw[->](r1)--(d);
				\draw[->](-0.06,-0.7)--(-0.06,0.7); \draw[->](0.06,-0.7)--(0.06,0.7);
				\draw[->](r3)--(r2); \draw[->](r4)--(r3); \draw[->](u)--(r2); \draw[->](r2)--(d);\node at(0,1.2){};
			\end{tikzpicture}
			\\\hline
			$E_8$ &
			\begin{tikzpicture}[baseline=3mm,scale=0.7]
				\node(0)at(0,0){$\bullet$}; \node(l1)at(-1,0){$\bullet$}; \node(l2)at(-2,0){$\bullet$}; \node(u1)at(0,1){$\bullet$};
				\node(r1)at(1,0){$\bullet$}; \node(r2)at(2,0){$\bullet$}; \node(r3)at(3,0){$\bullet$}; \node(r4)at(4,0){$\bullet$};
				\draw[->](l2)--(l1); \draw[->](l1)--(0); \draw[->](u1)--(0); \draw[->](r2)--(r1); \draw[->](r1)--(0);
				\draw[->](r3)--(r2); \draw[->](r4)--(r3);
			\end{tikzpicture}
			& $E_8^{(1,1)}$ &
			\begin{tikzpicture}[baseline=0mm,scale=0.7]
				\node(l1)at(-0.5,0){$\bullet$}; \node(l2)at(-1.5,0){$\bullet$};
				\node(u)at(0,1){$\bullet$}; \node(d)at(0,-1){$\bullet$};
				\node(r1)at(0.5,0){$\bullet$}; \node(r2)at(1.5,0){$\bullet$}; 
				\node(r3)at(2.5,0){$\bullet$}; \node(r4)at(3.5,0){$\bullet$};
				\node(r5)at(4.5,0){$\bullet$}; \node(r6)at(5.5,0){$\bullet$};
				\draw[->](l2)--(l1); \draw[->](u)--(l1); \draw[->](l1)--(d); \draw[->](u)--(r1); \draw[->](r1)--(d);
				\draw[->](-0.06,-0.7)--(-0.06,0.7); \draw[->](0.06,-0.7)--(0.06,0.7);
				\draw[->](r3)--(r2); \draw[->](r4)--(r3); \draw[->](u)--(r2); \draw[->](r2)--(d);
				\draw[->](r5)--(r4); \draw[->](r6)--(r5);\node at(0,1.2){};
			\end{tikzpicture}
			\\\hline
			$E^{(1)}_6$ &
			\begin{tikzpicture}[baseline=7mm,scale=0.9]
				\node(0)at(0,0){$\bullet$}; \node(l1)at(-1,0){$\bullet$}; \node(l2)at(-2,0){$\bullet$}; \node(u1)at(0,1){$\bullet$};\node(r1)at(1,0){$\bullet$}; \node(r2)at(2,0){$\bullet$}; \node(u2)at(0,2){$\bullet$};
				\draw[->](l2)--(l1); \draw[->](l1)--(0); \draw[->](u1)--(0); \draw[->](r2)--(r1); \draw[->](r1)--(0); 
				\draw[->](u2)--(u1);
			\end{tikzpicture}
			& $X_6$ &
			\begin{tikzpicture}[baseline=0mm,scale=0.9]
				\node(0)at(0,0){$\bullet$}; \node(l)at(180:1){$\bullet$}; \node(lu)at(120:1){$\bullet$};
				\node(r)at(0:1){$\bullet$}; \node(ru)at(60:1){$\bullet$}; \node(d)at(0,-1){$\bullet$};
				\draw[->](0)--(l); \draw[->](lu)--(0); \draw[->](0)--(ru); \draw[->](r)--(0); \draw[->](d)--(0);
				\draw[->](170:1)--(130:1); \draw[->](173:0.9)--(127:0.9);
				\draw[->](50:1)--(10:1); \draw[->](53:0.9)--(7:0.9);
			\end{tikzpicture}
			\\\hline
			$E^{(1)}_7$ &
			\begin{tikzpicture}[baseline=3mm,scale=0.8]
				\node(0)at(0,0){$\bullet$}; \node(l1)at(-1,0){$\bullet$}; \node(l2)at(-2,0){$\bullet$}; \node(u1)at(0,1){$\bullet$};
				\node(r1)at(1,0){$\bullet$}; \node(r2)at(2,0){$\bullet$}; \node(r3)at(3,0){$\bullet$}; \node(l3)at(-3,0){$\bullet$};
				\draw[->](l2)--(l1); \draw[->](l1)--(0); \draw[->](u1)--(0); \draw[->](r2)--(r1); \draw[->](r1)--(0);
				\draw[->](r3)--(r2); \draw[->](l3)--(l2);
			\end{tikzpicture}
			& $X_7$ &
			\begin{tikzpicture}[baseline=0mm,scale=0.9]
				\node(0)at(0,0){$\bullet$}; \node(l)at(180:1){$\bullet$}; \node(lu)at(120:1){$\bullet$};
				\node(r)at(0:1){$\bullet$}; \node(ru)at(60:1){$\bullet$};
				\node(dl)at(-120:1){$\bullet$}; \node(dr)at(-60:1){$\bullet$};
				\draw[->](0)--(l); \draw[->](lu)--(0); \draw[->](0)--(ru); \draw[->](r)--(0); \draw[->](0)--(dr); \draw[->](dl)--(0);
				\draw[->](170:1)--(130:1); \draw[->](173:0.9)--(127:0.9);
				\draw[->](50:1)--(10:1); \draw[->](53:0.9)--(7:0.9);
				\draw[->](-70:1)--(-110:1); \draw[->](-67:0.9)--(-113:0.9);\node at(0,1.1){};\node at(0,-1.1){};
			\end{tikzpicture}
			\\\hline
			$E^{(1)}_8$ &
			\begin{tikzpicture}[baseline=0mm,scale=0.7]
				\node(0)at(0,0.5){$\bullet$}; \node(l1)at(-1,0.5){$\bullet$}; \node(l2)at(-2,0.5){$\bullet$}; \node(u1)at(0,-0.5){$\bullet$};
				\node(r1)at(1,0.5){$\bullet$}; \node(r2)at(2,0.5){$\bullet$}; \node(r3)at(3,0.5){$\bullet$}; \node(r4)at(4,0.5){$\bullet$};
				\node(r5)at(5,0.5){$\bullet$};
				\draw[->](l2)--(l1); \draw[->](l1)--(0); \draw[->](u1)--(0); \draw[->](r2)--(r1); \draw[->](r1)--(0);
				\draw[->](r3)--(r2); \draw[->](r4)--(r3); \draw[->](r5)--(r4);
			\end{tikzpicture} & $K_{\ge 3}$ &
			\begin{tikzpicture}[baseline=0mm]
				\node(0) at (0,0){$\bullet$};
				\node(1) at (1,0){$\bullet$};
				\draw[<-] (0.1,0.2)--(0.9,0.2);
				\draw[<-] (0.1,-0.2)--(0.9,-0.2);
				\draw[<-] (0.1,0.4)--(0.9,0.4);
				\node(2) at (0.5,0.1){$\vdots$};\node at(0,0.6){};
			\end{tikzpicture} \\ \hline			
		\end{tabular}
		\vspace{3mm}
		\caption{Exceptional quivers, where $K_{\ge 3}$ denotes the generalized Kronecker quivers with at least three arrows.}
		\label{axzK0GRWH13u}
	\end{table}
	
	Let $\mathbb{F}:=\mathbb{Q}(t_1,\ldots,t_{2n})$ be the field of rational functions in $2n$ variables over $\mathbb{Q}$. A \emph{seed} (\emph{with coefficients}) is a pair $(\mathsf{x},Q)$ consisting of the following data:
	\begin{itemize}
		\item[$(1)$] $\mathsf{x}=(x_1,\ldots,x_n,y_1,\ldots,y_n)$ is a free generating set of $\mathbb{F}$ over $\mathbb{Q}$.
		\item[$(2)$] $Q$ is a quiver without loops or $2$-cycles, and assume that $Q_0=\{1,\ldots,2n\}$.
	\end{itemize}
	Then the tuple $(x_1,\ldots,x_n)$ is called the \emph{cluster}, each $x_i$ a \emph{cluster variable}, and $y_i$ a \emph{coefficient}. For a seed $(\mathsf{x},Q)$ and $k\in\{1,\ldots,n\}$, the \emph{mutation at $k$}
		\[
		 \mu_k(\mathsf{x},Q):=((x'_1,\ldots,x'_n,y_1,\ldots,y_n),\mu_k(Q))
		\]
		is defined by $x'_i = x_i$ for $i\neq k$, and
		\[
		x_k x'_k = \prod_{(j \rightarrow k)\in Q_1}x_jy_{j-n}+\prod_{(j \leftarrow k)\in Q_1}x_jy_{j-n},
		\]
		where $x_{n+1}=\cdots=x_{2n}=1=y_{1-n}=\cdots=y_0$. Note that $\mu_k$ is an involution, that is, $\mu_k(\mu_k(\mathsf{x},Q))=(\mathsf{x},Q)$. Moreover, it is easy to see that $\mu_k(\mathsf{x}, Q)$ is also a seed. 
	
	Let $Q$ be a quiver without loops or $2$-cycles, and assume that $Q_0=\{1,\ldots,n\}$. We obtain the quiver $\hat{Q}$ from $Q$ by adding vertices $\{1',\ldots,n'\}$ and arrow $i \rightarrow i'$ for each $1\le i\le n$. We fix a seed $(\mathsf{x}=(x_1,\ldots,x_n,y_1,\ldots,y_n),\hat{Q})$, called the \emph{initial seed}. The tuple $(x_1,\ldots,x_n)$ is called the \emph{initial cluster}, and each $x_i$ is called an \emph{initial cluster variable}.
	\begin{definition}
		The \emph{cluster algebra} $\mathcal{A}(Q)=\mathcal{A}(\mathsf{x},\hat{Q})$ with principal coefficients for the initial seed $(\mathsf{x},\hat{Q})$ is a $\mathbb{Z}$-subalgebra of $\mathbb{F}$ generated by all cluster variables and coefficients obtained from $(\mathsf{x},\hat{Q})$ by finite sequences of mutations.
	\end{definition}
	We denote by $\ClVar Q$ the set of all cluster variables in $\mathcal{A}(Q)$, and by $\Cluster(Q)$ the set of all clusters in $\mathcal{A}(Q)$. We say that $\mathcal{A}(Q)$ is of \emph{finite type} if $\ClVar(Q)$ is finite.
	\begin{theorem}[{\cite[Theorem 1.4]{FZ03}}]\label{7QdLexH7Sem2}
		Assume that $Q$ is connected. The cluster algebra $\mathcal{A}(Q)$ is of finite type if and only if $Q$ is of Dynkin type. 
	\end{theorem}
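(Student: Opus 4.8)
The plan is to translate the mutation combinatorics of seeds into the classification of finite root systems, following Fomin--Zelevinsky. Since the set $\ClVar Q$ depends only on the mutation class of $Q$ (the choice of initial seed is immaterial for counting cluster variables, and the finite-type property is independent of the coefficients), I may freely replace $Q$ by any mutation-equivalent quiver. To each quiver $Q'$ in the class I attach its \emph{Cartan counterpart} $A(Q')=(a_{ij})$, the symmetric matrix with $a_{ii}=2$ and $a_{ij}=-(\text{number of arrows between } i\text{ and }j)$ for $i\ne j$, together with its quadratic form. The goal is to show that $\mathcal{A}(Q)$ is of finite type precisely when some seed in the class has Cartan counterpart equal to a Cartan matrix of finite (Dynkin) type, equivalently when its underlying graph is a Dynkin diagram.

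For the direction ``$Q$ Dynkin $\Rightarrow$ finite type'', I would reduce to the case where $Q$ is an acyclic orientation of a simply-laced Dynkin diagram (choosing this representative as the initial seed) and invoke the denominator parametrization of cluster variables. By the Laurent phenomenon every cluster variable is a Laurent polynomial in the initial $x_i$; the denominator theorem then identifies each non-initial cluster variable $x_\beta$ with a positive root $\beta\in\Phi_{>0}$ of the root system $\Phi$ having Cartan matrix $A(Q)$, via the exponent vector of its denominator $\prod_i x_i^{[\beta:\alpha_i]}$, while the initial variables correspond to the negative simple roots $-\alpha_i$. This yields a bijection between $\ClVar Q$ and the set of almost positive roots $\Phi_{\ge -1}=\Phi_{>0}\cup\{-\alpha_1,\dots,-\alpha_n\}$. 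Since $\Phi$ is finite in Dynkin type, $|\ClVar Q|=|\Phi_{>0}|+n<\infty$, so $\mathcal{A}(Q)$ is of finite type.

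For the converse, assume $\mathcal{A}(Q)$ is of finite type, so its mutation class is finite. First I would establish a boundedness (``$2$-finiteness'') statement: for every pair of vertices in every quiver of the class, the weight $|b_{ij}b_{ji}|$ is at most $3$, which in the skew-symmetric case means there is at most a single arrow between any two vertices. This follows by restricting to the rank-$2$ full subquiver on a pair of vertices $i,j$ — finite type is inherited by such subseeds — and observing that a rank-$2$ exchange recursion of weight $\ge 4$ produces infinitely many distinct cluster variables, contradicting finiteness. Granting $2$-finiteness, the Cartan counterpart $A(Q)$ is a genuine symmetric Cartan matrix; I would then show that finite type forces some seed's $A(Q')$ to have positive-definite quadratic form, so that the classical classification of positive-definite Cartan matrices identifies the underlying graph of that seed with a disjoint union of Dynkin diagrams. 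Connectedness of $Q$, a mutation invariant, singles out one Dynkin diagram, i.e.\ $Q$ is of Dynkin type.

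The main obstacle is this last step: passing from $2$-finiteness to the existence of a seed with Dynkin diagram. The rank-$2$ analysis is only the seed of the argument; the difficulty is to propagate the weight bound and the definiteness of the quadratic form across all seeds simultaneously and to carry out the combinatorial classification of mutation-finite diagrams with bounded edge weights. Even starting from a quiver, intermediate mutations are most naturally handled through symmetrizable Cartan data, and the case analysis closely parallels the Cartan--Killing classification. I would therefore isolate this as the statement ``a connected diagram is $2$-finite if and only if it is mutation equivalent to a Dynkin diagram,'' and prove it via the diagram-mutation calculus of Fomin--Zelevinsky, which constitutes the technical heart of the theorem.
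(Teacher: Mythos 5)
The paper offers no proof of this statement---it is quoted verbatim from Fomin--Zelevinsky \cite{FZ03}---and your outline is essentially the argument of that source: the parametrization of non-initial cluster variables by positive roots (hence of $\ClVar Q$ by almost positive roots) for the Dynkin direction, and restriction to rank-$2$ subseeds, $2$-finiteness, and the classification of $2$-finite diagrams for the converse. Your sketch is correct as an outline and matches the cited proof, and you rightly flag the classification of $2$-finite diagrams as the technical core that must be supplied by the Fomin--Zelevinsky diagram-mutation calculus rather than by the rank-$2$ analysis alone.
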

	One of the remarkable properties of cluster algebras with principal coefficients is the Laurent phenomenon as follows.
	\begin{theorem}[{\cite[Proposition 3.6]{FZ07}}]\label{LPm1saCASSoh}
		Every nonzero element $x$ of $\mathcal{A}(Q)$ is expressed by a Laurent polynomial of $x_1,\ldots,x_n$, $y_1,\ldots,y_n$
		\[
		x=\frac{F(x_1, \ldots,x_n,y_1,\ldots,y_n)}{x_1^{d_1} \cdots x_n^{d_n}},
		\]
		where $d_i\in\mathbb{Z}$ and $F(x_1, \ldots, x_n,y_1,\ldots,y_n)\in\mathbb{Z}[x_1,\ldots, x_n,y_1,\ldots,y_n]$ is not divisible by any $x_i$.
	\end{theorem}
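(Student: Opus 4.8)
The plan is to prove the stronger statement that every cluster variable lies in the Laurent ring $\mathcal{L}:=\mathbb{Z}[x_1^{\pm1},\ldots,x_n^{\pm1},y_1,\ldots,y_n]$. Since every element of $\mathcal{A}(Q)$ is a $\mathbb{Z}$-linear combination of products of cluster variables and coefficients, and the coefficients $y_i$ already lie in $\mathcal{L}$, this reduces the theorem to cluster variables; extracting the reduced fraction $F/(x_1^{d_1}\cdots x_n^{d_n})$ with $F\in\mathbb{Z}[x_1,\ldots,x_n,y_1,\ldots,y_n]$ not divisible by any $x_i$ is then automatic. The ring $\mathcal{L}$ is a UFD in which $x_1,\ldots,x_n$ are pairwise non-associate primes, and I would record at the outset the elementary but decisive fact that for coprime $p,q\in\mathcal{L}$ one has $\mathcal{L}[p^{-1}]\cap\mathcal{L}[q^{-1}]=\mathcal{L}$ (normality of the UFD $\mathcal{L}$).

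Cluster variables are indexed by the vertices of the $n$-regular tree $\mathbb{T}_n$ whose edges are labelled by mutation directions, with the initial seed placed at a root $t_0$. I would induct on the distance $d(t_0,t)$, the base cases $d\le 1$ being the immediate computation $x'_k=\bigl(\prod_{j\to k}x_jy_{j-n}+\prod_{j\leftarrow k}x_jy_{j-n}\bigr)/x_k\in\mathcal{L}$. For the inductive step the naive substitution fails: writing the new variable as a binomial over the just-mutated variable $x_{k;t}$ only places it in $\mathcal{L}[x_{k;t}^{-1}]$, and $x_{k;t}$ need not divide that binomial inside $\mathcal{L}$. The remedy is the Caterpillar Lemma: I would produce a second expression for the same cluster variable by routing through a different path in $\mathbb{T}_n$, exploiting that mutations at independent vertices commute, thereby placing the variable in a second localization $\mathcal{L}[q^{-1}]$ for a suitable $q$. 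Verifying that $x_{k;t}$ and $q$ are coprime in $\mathcal{L}$, the intersection property quoted above then forces membership in $\mathcal{L}$ and closes the induction.

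The heart of the argument, and the step I expect to be the main obstacle, is the coprimality bookkeeping inside the Caterpillar Lemma. Concretely, it reduces to a rank-two/rank-three local analysis along a spine $t_0\text{---}t_1\text{---}t_2\text{---}t_3$: one must show that the two exchange binomials attached to the relevant consecutive edges stay coprime after one intermediate mutation, i.e.\ that neither binomial acquires the other's irreducible factors under the substitution $x_k\mapsto(\text{binomial})/x_k$. This is precisely where the loop-free and no-$2$-cycle hypotheses on $Q$ enter: absence of loops guarantees that $x_k$ does not occur on the right-hand side of its own exchange relation, while absence of $2$-cycles ensures that no vertex $j$ simultaneously contributes arrows $j\to k$ and $j\leftarrow k$, so the two monomial terms $\prod_{j\to k}x_jy_{j-n}$ and $\prod_{j\leftarrow k}x_jy_{j-n}$ share no common variable and are therefore coprime. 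Once these coprimality hypotheses of the Caterpillar Lemma are verified, the induction yields that every cluster variable lies in $\mathcal{L}$, and passing to the reduced fraction gives the stated form. I would finally remark that the argument is insensitive to the principal-coefficient specialization, since the $y_i$ enter the exchange binomials only as polynomial, never inverted, scalars and hence never contribute to denominators.
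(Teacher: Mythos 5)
The paper offers no proof of this statement: it is imported verbatim from \cite[Proposition 3.6]{FZ07} (which in turn rests on the Laurent phenomenon of Fomin--Zelevinsky's earlier work), so there is no internal argument to compare against. What you have written is a reconstruction of the original Fomin--Zelevinsky proof, and its skeleton is the correct one: reduce to cluster variables, work in the UFD $\mathcal{L}=\mathbb{Z}[x_1^{\pm1},\ldots,x_n^{\pm1},y_1,\ldots,y_n]$, use that $\mathcal{L}[p^{-1}]\cap\mathcal{L}[q^{-1}]=\mathcal{L}$ for coprime $p,q$, and induct along $\mathbb{T}_n$ via the Caterpillar Lemma; your closing observations that the $y_i$ are never inverted and that extracting the reduced fraction $F/(x_1^{d_1}\cdots x_n^{d_n})$ is automatic once membership in $\mathcal{L}$ is known are both correct. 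The one substantive caveat is that the coprimality you extract from the absence of loops and $2$-cycles (namely that the two monomials of a single exchange binomial share no variable, so the binomial is not divisible by any $x_i$) is only the easy half of the Caterpillar Lemma's hypotheses. The load-bearing half is the coprimality of two \emph{distinct} exchange binomials attached to consecutive spine edges, together with the identity expressing the once-mutated binomial as the original one under the substitution $x_k\mapsto(\text{binomial})/x_k$ up to a monomial unit and a power of the first binomial; this is where the actual computation lives, and the case $b_{jk}=0$ needs separate care since then neither binomial involves the other's variable. You correctly identify this as the main obstacle but do not carry it out, so as written the proposal is a faithful plan of the standard proof rather than a complete proof; since the paper itself delegates the entire statement to \cite{FZ07}, this level of detail is arguably on par with what the paper provides.
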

	Theorem \ref{LPm1saCASSoh} implies that $\mathcal{A}(Q)$ is contained in $\mathbb{Z}[x_1^{\pm 1},\ldots,x_n^{\pm 1},y_1,\ldots,y_n]$. We define a $\mathbb{Z}^n$-grading on $\mathcal{A}(Q)$ as follows:
	\[
	\deg(x_i)=\mathbf{e}_i,\ \ \deg(y_j)=\sum_{i=1}^n(-b_{ij}\mathbf{e}_i),
	\]
	where $\mathbf{e}_1,\ldots,\mathbf{e}_n$ are the standard basis vectors in $\mathbb{Z}^n$ and $b_{ij}=|j\to i|-|i\to j|$ in $Q$. Every cluster variable $x$ of $\mathcal{A}(Q)$ is homogeneous with respect to the $\mathbb{Z}^n$-grading $\deg$ (\cite[Proposition 6.1]{FZ07}).
	\begin{definition}
		The \emph{$\g$-vector} of a cluster variable $x$ of $\mathcal{A}(Q)$ is defined by the $\mathbb{Z}^n$-grading $\deg(x)$.
	\end{definition}
	The cone spanned by the $\g$-vectors of cluster variables in a cluster $\mathbf{x}$ is denoted by $C(\mathbf{x})$, that is,
	\[
	C(\mathbf{x}):=\left\{\sum_{x\in\mathbf{x}}a_x\deg(x)\relmiddle| a_x\in\mathbb{R}_{\ge 0}\right\}.\]
	\begin{definition}\label{QeZ820slm1UEa}
		The set of all cones $C(\mathbf{x})$ for $\mathbf{x}\in\Cluster Q$, together with their faces, is called the \emph{$\g$-fan} of $\mathcal{A}(Q)$, denoted by $\mathcal{F}^{g}_{\cluster}(Q)$.
	\end{definition}
	Note that $\g$-fans are fans by \cite[Theorem 1.7]{DWZ10} and simplicial by \cite[Theorem 0.8]{GHKK17}. There is the following transition rule, which was conjectured by Fomin and Zelevinsky \cite[Conjecture 7.12]{FZ07}, and proved by \cite[Corollary 5.5]{GHKK17} and \cite[Proposition 4.2]{NZ12}: For $k\in Q_0$, $\mathcal{F}^{g}_{\cluster}(\mu_k(Q))$ is obtained from $\mathcal{F}^{g}_{\cluster}(Q)$ by the map $(g_i)_{i \in Q_0} \mapsto (g_i')_{i \in Q_0}$, where
	\begin{equation}\label{Yjd9vt0FtxUc}
		g_i' = \left\{\begin{array}{ll}
			-g_k & \mbox{if} \ \ i=k,\\
			g_i+[b_{ik}]_+g_k - b_{ik}\min(g_k,0) & \mbox{otherwise},
		\end{array} \right.
	\end{equation}
	where $[a]_+:=\max\{a,0\}$.
	This naturally provides the following result.
	\begin{proposition}
		Let $k\in Q_0$. If $\mathcal{F}^{g}_{\cluster}(Q)$ is complete (resp., dense), then so is $\mathcal{F}^{g}_{\cluster}(\mu_k(Q))$.
	\end{proposition}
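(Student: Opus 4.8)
The plan is to realize the transition rule \eqref{Yjd9vt0FtxUc} as a piecewise-linear homeomorphism of $\mathbb{R}^n$ and then transport completeness and denseness across it. Write $T_k\colon\mathbb{R}^n\to\mathbb{R}^n$ for the map $(g_i)_{i\in Q_0}\mapsto(g_i')_{i\in Q_0}$ described in \eqref{Yjd9vt0FtxUc}. The cited transition rule says precisely that $\mathcal{F}^{g}_{\cluster}(\mu_k(Q))=T_k(\mathcal{F}^{g}_{\cluster}(Q))$; in particular, at the level of supports,
\[
\bigcup_{\mathbf{x}'\in\Cluster(\mu_k(Q))}C(\mathbf{x}') \;=\; T_k\!\left(\bigcup_{\mathbf{x}\in\Cluster(Q)}C(\mathbf{x})\right).
\]
Hence it suffices to prove that $T_k$ is a homeomorphism of $\mathbb{R}^n$: a homeomorphism sends $\mathbb{R}^n$ onto $\mathbb{R}^n$ (yielding completeness) and sends dense subsets to dense subsets (yielding denseness).

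First I would check that $T_k$ is well defined and continuous. The only nonlinearity in \eqref{Yjd9vt0FtxUc} comes from the term $\min(g_k,0)$, which is linear on each of the two closed half-spaces $H^{+}=\{g_k\ge 0\}$ and $H^{-}=\{g_k\le 0\}$. On $H^{+}$ one has $\min(g_k,0)=0$, so $T_k$ restricts to the linear map $g_k\mapsto -g_k$, $g_i\mapsto g_i+[b_{ik}]_+g_k$ $(i\neq k)$; on $H^{-}$ one has $\min(g_k,0)=g_k$, so, using $[a]_+-a=[-a]_+$, it restricts to $g_k\mapsto -g_k$, $g_i\mapsto g_i+[-b_{ik}]_+g_k$ $(i\neq k)$. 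On the common boundary $\{g_k=0\}$ both formulas collapse to the identity, so the two linear pieces glue to a continuous map.

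Next I would establish bijectivity. Each linear piece is the product of the coordinate reflection $g_k\mapsto -g_k$ with shears in the remaining coordinates, hence has determinant $-1$ and is invertible. Since $g_k'=-g_k$, the restriction to $H^{+}$ is a linear isomorphism onto $H^{-}$ and the restriction to $H^{-}$ is a linear isomorphism onto $H^{+}$; the two images together cover $\mathbb{R}^n$ and overlap only along $\{g_k'=0\}$, where $T_k$ is the identity. A short sign check then shows $T_k$ is globally injective, so $T_k$ is a continuous bijection of $\mathbb{R}^n$. By invariance of domain (equivalently, by exhibiting the inverse as the transition map attached to $\mu_k(Q)$, which uses $-b_{ik}$ in place of $b_{ik}$, since mutation is an involution), $T_k$ is a homeomorphism, and combining this with the support identity above finishes both assertions.

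The only genuinely technical point, and the one I would treat most carefully, is the gluing and global bijectivity of $T_k$: verifying that the two linear pieces agree on $\{g_k=0\}$, that each maps its half-space isomorphically onto the opposite one, and that no collision occurs off the hyperplane. Everything else is formal, since surjectivity immediately gives completeness and the homeomorphism property immediately gives preservation of denseness.
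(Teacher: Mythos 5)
Your verification that the transition map $T_k$ of \eqref{Yjd9vt0FtxUc} is a piecewise-linear homeomorphism of $\mathbb{R}^n$ is correct and carefully done (the two linear pieces on $\{g_k\ge 0\}$ and $\{g_k\le 0\}$, the identity $[a]_+-a=[-a]_+$, the gluing along $\{g_k=0\}$, and the determinant and half-space-swapping arguments all check out), and the reduction of completeness and denseness to surjectivity and bicontinuity of $T_k$ is fine. This makes explicit something the paper's one-line proof leaves implicit.

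The gap is in your opening step, where you assert that the transition rule ``says precisely'' that $\mathcal{F}^{g}_{\cluster}(\mu_k(Q))=T_k(\mathcal{F}^{g}_{\cluster}(Q))$ and hence that the supports match. The content of the cited rule (from \cite{NZ12,GHKK17}, originally \cite[Conjecture 7.12]{FZ07}) is a statement about individual $\g$-vectors: the $\g$-vector of each cluster variable with respect to the mutated initial seed is $T_k$ applied to its $\g$-vector with respect to the original seed. Because $T_k$ is only piecewise linear, with domains of linearity the two half-spaces $\{g_k\ge 0\}$ and $\{g_k\le 0\}$, this does not by itself give $T_k(C(\mathbf{x}))=C'(\mathbf{x})$ for a cluster $\mathbf{x}$: if $C(\mathbf{x})$ straddled the hyperplane $\{g_k=0\}$, its $T_k$-image would be a union of two pieces and would in general differ from the cone spanned by the images of its extremal rays, so the support identity you need would not follow. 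What closes this hole is the sign-coherence of the $\g$-vectors of each cluster: the $k$-th coordinates of the $\g$-vectors within a fixed cluster all have the same sign, so every cone $C(\mathbf{x})$ lies entirely in one domain of linearity of $T_k$, and $T_k(C(\mathbf{x}))$ is exactly the cone on the transformed generators. This is precisely the second ingredient the paper's proof invokes alongside \eqref{Yjd9vt0FtxUc}, and it is missing from your write-up; once it is added, your argument is complete and otherwise follows the same route as the paper.
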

	\begin{proof}
		The assertion immediately follows from \eqref{Yjd9vt0FtxUc} and the sign-coherence property of $\g$-vectors of each cluster.
	\end{proof}
	Due to this property, the completeness and denseness of $\mathcal{F}^{g}_{\cluster}(Q)$ can be examined through studying its mutation equivalence class. For instance, using Theorem \ref{7QdLexH7Sem2}, Reading proves the following theorem. By the methods of representation theory, we show that the converse also holds (see Corollary \ref{Cf4iTsp13aMpY68}).
	\begin{theorem}[{\cite[Theorem 10.6]{Re14}}]\label{7QdLexH7Sem3}
		If $Q$ is of Dynkin type, then $\mathcal{F}^{g}_{\cluster}(Q)$ is complete.
	\end{theorem}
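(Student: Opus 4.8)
The plan is to derive completeness from the finite-type structure of $\mathcal{A}(Q)$: in Dynkin type the fan has only finitely many maximal cones, so I would show that it closes up on itself with no boundary and then force its support to be all of $\mathbb{R}^n$ by a connectedness argument on the unit sphere. First, since $Q$ is of Dynkin type, Theorem \ref{7QdLexH7Sem2} gives that $\mathcal{A}(Q)$ is of finite type, whence $\Cluster Q$ is finite and $\mathcal{F}^{g}_{\cluster}(Q)$ consists of finitely many maximal cones $C(\mathbf{x})$, each simplicial of full dimension $n$ by \cite[Theorem 0.8]{GHKK17}. Because $\deg(x_i)=\mathbf{e}_i$, the initial cone $C(\mathbf{x}_0)$ is the nonnegative orthant $\mathbb{R}_{\ge 0}^{n}$, so at least one maximal cone has full dimension and it suffices to prove that the support $|\mathcal{F}^{g}_{\cluster}(Q)|$ equals $\mathbb{R}^n$.

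Next I would establish that the fan is closed under wall-crossing. Each maximal cone $C(\mathbf{x})$ has exactly $n$ facets, the $k$-th spanned by the $\g$-vectors of the cluster variables $x_i$ with $i\neq k$; mutating the cluster $\mathbf{x}$ at $x_k$ produces the cone $C(\mu_k\mathbf{x})$, which shares precisely this facet. The tropical exchange rule for $\g$-vectors (of the same shape as \eqref{Yjd9vt0FtxUc}) keeps the vectors $g_i$ ($i\neq k$) fixed and replaces $g_k$ by a new vector $g_k'$, and sign-coherence of the $\g$-vectors of a single cluster guarantees that $g_k$ and $g_k'$ lie strictly on opposite sides of the hyperplane through the common facet. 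Hence every facet of every maximal cone is shared by exactly two maximal cones. Moreover, since the exchange graph of clusters is connected, the adjacency graph of maximal cones is connected as well.

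Finally, radially projecting to the unit sphere turns $\mathcal{F}^{g}_{\cluster}(Q)$ into a finite geodesic simplicial complex $X\subseteq S^{n-1}$ which, by the previous step, is a connected pseudomanifold without boundary: each codimension-one simplex lies in exactly two top simplices. As $X$ is a finite union of closed spherical simplices it is closed in $S^{n-1}$, so the remaining task is to show that $X$ is also open, which then forces $X=S^{n-1}$ by connectedness of the sphere and proves completeness. I expect the genuine obstacle to be exactly this openness at faces of codimension $\ge 2$: the pseudomanifold-without-boundary property gives interior points only along top cells and their shared facets, and upgrading it to the full closed-manifold property (so that no ``pinching'' occurs at lower faces) is the crux. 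I would resolve this by induction on the rank $n$: the link of the cone on a partial cluster is canonically the $\g$-fan of a Dynkin cluster algebra of strictly smaller rank, hence complete by the inductive hypothesis, which makes $X$ locally Euclidean near each lower face; the rank-one and rank-two cases are checked directly. (Alternatively, the same obstacle is dispatched by identifying $\mathcal{F}^{g}_{\cluster}(Q)$ with the normal fan of the generalized associahedron, whose completeness is automatic.)
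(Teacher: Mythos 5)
The paper does not prove this statement: it is quoted from \cite[Theorem 10.6]{Re14}, with only the remark that Reading deduces it from the finite-type classification (Theorem \ref{7QdLexH7Sem2}). Your attempt is therefore a genuinely different, self-contained route, and its skeleton is sound: finite type gives finitely many maximal cones, each full-dimensional and simplicial; the $G$-matrix mutation rule (the coefficient of $g_k$ in $g_k'$ is $-1$, with the remaining coefficients pinned down by sign-coherence, cf.\ \cite{NZ12,GHKK17}) places $C(\mathbf{x})$ and $C(\mu_k(\mathbf{x}))$ strictly on opposite sides of their common facet, so every wall bounds exactly two chambers; and the initial cone is the positive orthant. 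What you gain over the paper's citation is independence from the Cambrian-fan/associahedron machinery; what you lose is that your final step, as written, is not yet a proof.

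Concretely, the weak point is your resolution of the codimension-$\ge 2$ issue. The claim that the link of the cone on a partial cluster is ``canonically the $\g$-fan of a Dynkin cluster algebra of strictly smaller rank'' is a nontrivial geometric assertion (the combinatorial fact about links in the cluster complex does not by itself identify the induced spherical fan with a $\g$-fan), and your alternative --- the normal fan of the generalized associahedron --- is essentially the content of the result being cited, so it cannot serve as an independent argument. Fortunately neither is needed: for a finite simplicial fan containing a full-dimensional cone, in which every codimension-one cone is a facet of exactly two maximal cones, completeness follows from a generic-segment argument. Fix $p$ interior to the positive orthant and any $q$ such that the segment $[p,q]$ avoids the finite union of cones of codimension $\ge 2$ (such $q$ are dense, since the union of rays from $p$ through this set has dimension at most $n-1$); if $t_0=\sup\{t: p+s(q-p)\in|\mathcal{F}^{g}_{\cluster}(Q)|\ \text{for all}\ s\le t\}$ were less than $1$, the point at parameter $t_0$ would lie in the relative interior of either a maximal cone or a wall, and in both cases a neighborhood of it is covered by the fan (in the second case by the two adjacent chambers), a contradiction. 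Hence the support is dense, and being a finite union of closed cones it is closed, so it equals $\mathbb{R}^n$. Note that finiteness does real work here --- the $\g$-fan of the Kronecker quiver satisfies the two-chambers-per-wall and connectedness conditions yet is not complete --- so Theorem \ref{7QdLexH7Sem2} is precisely where the Dynkin hypothesis enters, exactly as in your plan.
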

	A similar result concerning the denseness of $\mathcal{F}^{g}_{\cluster}(Q)$ is known in the case where $Q$ is assumed to be acyclic.
	\begin{theorem}[{\cite[Theorem 1.1]{Yu23}}]\label{aEaKnR4m1UVj}
		Let $Q$ be a connected acyclic type quiver. Then $\mathcal{F}^{g}_{\cluster}(Q)$ is dense if and only if $Q$ is of either Dynkin or affine type.
	\end{theorem}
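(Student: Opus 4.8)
The plan is to reduce the statement to a classical dichotomy for the imaginary cone of the Kac--Moody root system attached to $Q$. First I would use the mutation-invariance of denseness (the proposition established just above, stating that denseness of $\mathcal{F}^{g}_{\cluster}(Q)$ passes to $\mathcal{F}^{g}_{\cluster}(\mu_k(Q))$) to replace $Q$ by an acyclic representative of its mutation class; this is legitimate because ``acyclic,'' ``Dynkin,'' and ``affine type'' are all defined through such a representative, and by the trichotomy for hereditary algebras recalled in Section \ref{sec:finite rep} a connected acyclic quiver is of exactly one representation type: finite (Dynkin), tame (affine), or wild (all others). Thus it suffices to prove that $\mathcal{F}^{g}_{\cluster}(Q)$ is dense if and only if $Q$ is \emph{not} wild. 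Via the compatibility between the cluster-theoretic and representation-theoretic $\g$-vectors (Section \ref{sec:clcat} and \cite{AIR14}), I would identify $\mathcal{F}^{g}_{\cluster}(Q)$ with the $\g$-fan of $2$-term silting complexes over $KQ$, so that its rays are the $\g$-vectors of the indecomposable rigid objects (the real Schur roots, together with the shifted projectives $-[P_{(i)}]$) and its maximal cones are indexed by the cluster-tilting objects of $\mathcal{C}_Q$.

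The Dynkin case is immediate: by Reading's Theorem \ref{7QdLexH7Sem3} the fan is complete, hence dense. The crux is therefore to show that the complement of the closure of $\mathcal{F}^{g}_{\cluster}(Q)$ is governed by the imaginary cone $\mathcal{Z}$ of the root system of $Q$; concretely, that a direction fails to lie in the closure of the union of the $\g$-cones precisely when it lies in the interior of the image of $\mathcal{Z}$ under a fixed invertible transform induced by the Euler form. Granting this identification, the theorem follows from the classical Kac--Moody fact that $\mathcal{Z}$ has empty interior exactly when the generalized Cartan matrix of $Q$ is of finite or affine type: for finite type $\mathcal{Z}$ is empty (recovering completeness); for affine type $\mathcal{Z}$ is the single ray $\mathbb{R}_{\ge 0}\delta$ spanned by the null root, which has empty interior, giving denseness but not completeness since the limit ray is a genuine accumulation of cones; and for indefinite (wild) type $\mathcal{Z}$ is full-dimensional, so its interior is a nonempty open set omitted by the closure of the fan.

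To establish the identification in the two relevant cases I would argue directly. In the affine case I would use the decomposition of $\mod KQ$ into preprojective, regular (tubular), and preinjective parts: every preprojective and preinjective indecomposable is exceptional, hence $\tau$-rigid, and I would show that the cones attached to $\tau^{-k}P_{(i)}$ and $\tau^{k}I_{(i)}$ sweep out all of $\mathbb{R}^n$ as $k\to\infty$, their rays accumulating to the image of the ray $\mathbb{R}_{\ge 0}\delta$; since only this lower-dimensional set is left uncovered, the fan is dense. In the wild case I would instead exhibit an explicit open region in the complement: choosing a rational stability parameter $\theta$ in the interior of the transformed imaginary cone, I would use the canonical decomposition \cite{DeFe15} to see that the generic $\theta$-semistable representation is not rigid (its decomposition involves an imaginary Schur root). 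By the wall-and-chamber description of $\g$-cones via torsion classes and stability \cite{BST19,As21}, such a $\theta$ can lie in no silting cone, and since this persists on an open neighborhood, the fan is not dense; the generalized Kronecker $K_{\ge 3}$, whose two limit rays bound an honestly open missing cone, serves as the guiding prototype.

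The main obstacle is the affine analysis: proving that the preprojective and preinjective $\g$-cones accumulate to the image of the \emph{single} ray $\mathbb{R}_{\ge 0}\delta$ and together leave nothing else uncovered, so that the omitted set is genuinely lower-dimensional rather than merely small. This requires controlling the asymptotics of the $\g$-vectors along the $\tau$-orbits and ruling out, via the absence of rigid modules in the homogeneous tubes, any additional full-dimensional gap between the two families. The wild direction is comparatively routine once the canonical-decomposition criterion is in place, since there one only needs to produce a single open gap.
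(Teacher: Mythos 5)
This statement is not proved in the paper: it is quoted verbatim as \cite[Theorem 1.1]{Yu23} (a prior result of the second author) and used as a black box, so there is no internal proof to compare your argument against. What I can do is assess your outline on its own terms.

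Your overall architecture is consistent with how results of this kind are established in the literature: reduce by the mutation-invariance of denseness to an acyclic representative, identify $\mathcal{F}^{g}_{\cluster}(Q)$ with $\mathcal{F}^{g}_{\twosilt}(KQ)$ (for this you need $\rigid^{+}\mathcal{C}_{(Q,0)}=\rigid\mathcal{C}_{(Q,0)}$, i.e.\ \cite[Proposition 3.5]{BMRR06}, which the paper itself invokes in the proof of Lemma \ref{lem:affine E} --- you gesture at ``compatibility of $\g$-vectors'' but this reachability statement is the actual ingredient), then run the finite/tame/wild trichotomy. However, the two steps that carry all the mathematical weight are exactly the ones you defer. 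In the affine case, the assertion that the preprojective and preinjective $\g$-cones cover everything except the image of the single ray $\mathbb{R}_{\ge 0}\delta$ is true but is itself a theorem requiring the asymptotics of $\g$-vectors along $\tau$-orbits and an argument that no full-dimensional gap opens between the two families; you name this as ``the main obstacle'' without resolving it. In the wild case, your chain ``$\theta$ in the interior of the transformed imaginary cone $\Rightarrow$ generic decomposition contains an imaginary Schur root $\Rightarrow$ $\theta$ lies in no silting cone, and this persists on an open neighborhood'' is the right idea, but the last two implications need justification: a real point lies in the support of the fan iff some positive multiple is a nonnegative combination of compatible rigid $\g$-vectors, and the local constancy of the generic decomposition on open regions is a nontrivial input from \cite{DeFe15}. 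Calling this direction ``comparatively routine'' understates it. So the proposal is a credible roadmap rather than a proof; both halves of the dichotomy remain open in your write-up, which is presumably why the paper imports the statement from \cite{Yu23} instead of reproving it.
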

	In the general (possibly cyclic) case, a complete characterization of when $\mathcal{F}^{g}_{\cluster}(Q)$ is dense is not yet known. However, it is known that if $\mathcal{F}^{g}_{\cluster}(Q)$ is dense, then $Q$ must be of finite mutation type (see \cite[Theorem 2.27]{Yu23}).
	\subsection{Cluster-tilting theory}\label{i3ZCfF2ro4v5}
	In this subsection, we recall several concepts from cluster-tilting theory. In particular, we review the notion of mutation for cluster-tilting objects and the notion of $\g$-vector in $2$-Calabi-Yau categories. These are used in Section \ref{uT45nbA3L9pM02} to introduce a map between Grothendieck groups of mutated Jacobian algebras.
	\begin{definition}
		A $K$-linear $\Hom$-finite triangulated category $\mathcal{C}$ is called \emph{$2$-Calabi-Yau} or just \emph{$2$-CY}, for short, if there is a bi-functorial isomorphism
		\[\Hom_{\mathcal{C}}(X,Y[1])\cong D\Hom_{\mathcal{C}}(Y,X[1])\]
		for any $X,Y\in\mathcal{C}$, where $D:=\Hom_{K}(-,K)$.
	\end{definition}
		Throughout this paper, we assume that $\mathcal{C}$ is a $2$-CY Krull-Schmidt category.
	\begin{definition}	
		An object $T\in\mathcal{C}$ is called \emph{rigid} if $\Hom_{\mathcal{C}}(T,T[1])=0$. Moreover, it is called \emph{cluster-tilting} if
		\[\add(T)=\{Y\in\mathcal{C}\mid\Hom_{\mathcal{C}}(T,Y[1])=0\}.\]
	\end{definition}
		Note that, by \cite{ZZ11} (see also \cite[Corollary 4.5]{AIR14}), a rigid object $T\in\mathcal{C}$ is cluster-tilting if and only if
		\[|T|=\max\left\{|T'|\relmiddle|\text{$T'$ is a rigid object in $\mathcal{C}$}\right\}.\]
	\begin{theorem}[{\cite{KR07}}]\label{UcfISbEawdcJ}
		Let $T$ be a cluster-tilting object in $\mathcal{C}$. Then the functor $\Hom_{\mathcal{C}}(T,-):\mathcal{C}\rightarrow\mod(\End_{\mathcal{C}}(T)^{\op})$ induces an equivalence
		\[
		\mathcal{C}/(T[1])\longrightarrow \mod(\End_{\mathcal{C}}(T)^{\op}).
		\]
		This also induces an equivalence of additive subcategories $\add(T)\cong\proj(\End_{\mathcal{C}}(T)^{\op})$.
	\end{theorem}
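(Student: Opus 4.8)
The plan is to follow the classical projectivization-plus-approximation strategy, writing $\Gamma:=\End_{\mathcal{C}}(T)^{\op}$ and $F:=\Hom_{\mathcal{C}}(T,-)$. I would first dispatch the easier second assertion. Restricted to $\add(T)$, the functor $F$ is additive, sends $T$ to the regular module $\Gamma$, and is fully faithful by the Yoneda-type computation $\Hom_{\Gamma}(\Hom_{\mathcal{C}}(T,T_0),\Hom_{\mathcal{C}}(T,T_1))\cong\Hom_{\mathcal{C}}(T_0,T_1)$ for $T_0,T_1\in\add(T)$; since it matches indecomposable summands of $T$ with indecomposable projective $\Gamma$-modules, it yields the equivalence $\add(T)\cong\proj\Gamma$.

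The core input is a two-term $\add(T)$-resolution of every object, and here the $2$-CY property is essential. From $\Hom_{\mathcal{C}}(T,Y[1])\cong D\Hom_{\mathcal{C}}(Y,T[1])$ one gets the symmetric description $\add(T)=\{Y\mid\Hom_{\mathcal{C}}(Y,T[1])=0\}$. For arbitrary $X$, Hom-finiteness and the Krull–Schmidt property provide a right $\add(T)$-approximation $T_0\xrightarrow{p}X$, which I complete to a triangle $T_1\to T_0\xrightarrow{p}X\to T_1[1]$. Applying $F$ and using that $\Hom_{\mathcal{C}}(T,T_0)\to\Hom_{\mathcal{C}}(T,X)$ is surjective (the approximation property) together with $\Hom_{\mathcal{C}}(T,T_0[1])=0$, the connecting map forces $\Hom_{\mathcal{C}}(T,T_1[1])\hookrightarrow\Hom_{\mathcal{C}}(T,T_0[1])=0$, so $T_1\in\add(T)$. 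The triangle then produces an exact sequence $\Hom_{\mathcal{C}}(T,T_1)\to\Hom_{\mathcal{C}}(T,T_0)\to FX\to 0$, a projective presentation of $FX$; hence $F$ lands in $\mod\Gamma$. Conversely, lifting any presentation $P_1\to P_0\to M\to 0$ through $\add(T)\cong\proj\Gamma$ to a morphism $T_1\to T_0$ and taking its cone gives an object $X$ with $FX\cong M$, so $F$ is dense.

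It remains to identify the kernel and prove full-faithfulness of the induced functor $\overline{F}\colon\mathcal{C}/(T[1])\to\mod\Gamma$. Any $f$ factoring through $\add(T[1])$ satisfies $Ff=0$ because $\Hom_{\mathcal{C}}(T,T[1])=0$. Conversely, if $Ff=0$ for $f\colon X\to Y$, then $f$ annihilates the approximation $p$, so $fp=0$, and the triangle $T_0\xrightarrow{p}X\xrightarrow{w}T_1[1]$ yields a factorization $f=f'w$ through $T_1[1]\in\add(T[1])$; this gives faithfulness. For fullness, given $\phi\colon FX\to FY$ I lift the induced map of projective presentations through $\add(T)\cong\proj\Gamma$ to a commuting square of morphisms in $\add(T)$, complete it to a morphism $f\colon X\to Y$ of cones by (TR3), and check that $Ff=\phi$ after precomposing with the surjection $FT_0\twoheadrightarrow FX$, which forces equality.

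The main obstacle is precisely this last step: pinning down that $\ker F=(T[1])$ and that $\overline{F}$ is full. Faithfulness needs the exact interplay between the approximation triangle and the hypothesis $Ff=0$, while fullness relies on completing a lifted square to a morphism of triangles via (TR3)—where the non-functoriality of cones in a triangulated category requires genuine care rather than formal manipulation. The one category-specific ingredient enabling everything is the symmetric description of $\add(T)$ extracted from the $2$-CY duality, without which the two-term $\add(T)$-resolutions need not exist.
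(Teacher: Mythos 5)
The paper does not prove this statement---it is quoted directly from Keller--Reiten---and your reconstruction is precisely the standard approximation-triangle argument from that source (projectivization for $\add(T)\cong\proj\Gamma$, a two-term $\add(T)$-presentation of every object, density via cones of lifted projective presentations, and identification of the kernel with the ideal $(T[1])$); it is correct. One small remark: the $2$-Calabi--Yau duality is never actually used in your argument---to conclude $T_1\in\add(T)$ you only invoke the defining property $\add(T)=\{Y\mid\Hom_{\mathcal{C}}(T,Y[1])=0\}$, not the symmetric description, and indeed the equivalence holds for cluster-tilting subcategories of arbitrary $\Hom$-finite Krull--Schmidt triangulated categories (Koenig--Zhu), so your closing claim that the $2$-CY property is ``essential'' overstates its role.
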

	\begin{theorem}[{\cite{IY08, BIRS11}}]\label{WWXhVkRSjGCF}
		Let $T$ be a basic cluster-tilting object in $\mathcal{C}$ with the Krull-Schmidt decomposition $T=T_1\oplus T_2\oplus\cdots\oplus T_n$.
		For each indecomposable direct summand $T_i$ of $T$, there exist an indecomposable rigid object $T^{\ast}_i$ and distinguished triangles
		\[\begin{array}{cc}
			\begin{tikzcd}[cramped]
				T_i & U' & T^{\ast}_i & {T_i[1],}
				\arrow["b'", from=1-1, to=1-2]
				\arrow["a'", from=1-2, to=1-3]
				\arrow[from=1-3, to=1-4]
			\end{tikzcd} &
			\begin{tikzcd}[cramped]
				{T^{\ast}_i} & U & {T_i} & {T^{\ast}_i[1],}
				\arrow["b", from=1-1, to=1-2]
				\arrow["a", from=1-2, to=1-3]
				\arrow[from=1-3, to=1-4]
			\end{tikzcd}
		\end{array}\]
		where $a$ and $a'$ (resp., $b$ and $b'$) are minimal right (resp., left) $\add(T/T_i)$-approximations, and $\Hom_{\mathcal{C}}(T_i, T_i^{\ast}[1])$ is one-dimensional. Then $\mu_{T_i}(T):=(T/T_i)\oplus T^{\ast}_i$ is a basic cluster-tilting object. Furthermore, any basic cluster-tilting object $T'$ with $T/T_i\in\add(T')$ is isomorphic to $T$ or $\mu_{T_i}(T)$.
	\end{theorem}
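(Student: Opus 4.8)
The plan is to realize the two exchange triangles as cones of minimal approximations, verify rigidity, maximality and one-dimensionality, and finally deduce uniqueness. Write $\overline{T}:=T/T_i=\bigoplus_{j\neq i}T_j$ and $\mathcal{D}:=\add(\overline{T})$. Since $\mathcal{D}$ is the additive closure of a single object, it is a functorially finite subcategory of the Krull--Schmidt category $\mathcal{C}$, so minimal right and left $\mathcal{D}$-approximations exist. Choosing a minimal right $\mathcal{D}$-approximation $a\colon U\to T_i$ and completing it to a triangle produces
\[ T_i^{\ast}\xrightarrow{b}U\xrightarrow{a}T_i\xrightarrow{c}T_i^{\ast}[1], \]
which I take as the defining triangle of $T_i^{\ast}$; the remaining triangle is built dually from a minimal left $\mathcal{D}$-approximation $b'\colon T_i\to U'$, and one checks that the resulting object is isomorphic to $T_i^{\ast}$, so the two triangles are the ones in the statement.

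First I would establish that $\mu_{T_i}(T)=\overline{T}\oplus T_i^{\ast}$ is rigid, i.e.\ that the four groups $\Hom_{\mathcal{C}}(\overline{T},\overline{T}[1])$, $\Hom_{\mathcal{C}}(\overline{T},T_i^{\ast}[1])$, $\Hom_{\mathcal{C}}(T_i^{\ast},\overline{T}[1])$ and $\Hom_{\mathcal{C}}(T_i^{\ast},T_i^{\ast}[1])$ vanish. The first is zero because $T$, hence $\overline{T}$, is rigid. For the second, applying $\Hom_{\mathcal{C}}(\overline{T},-)$ to the defining triangle gives the exact sequence $\Hom_{\mathcal{C}}(\overline{T},U)\xrightarrow{a_\ast}\Hom_{\mathcal{C}}(\overline{T},T_i)\to\Hom_{\mathcal{C}}(\overline{T},T_i^{\ast}[1])\to\Hom_{\mathcal{C}}(\overline{T},U[1])$, where $a_\ast$ is surjective because $a$ is a $\mathcal{D}$-approximation and $\Hom_{\mathcal{C}}(\overline{T},U[1])=0$ by rigidity of $\overline{T}$; so the middle term vanishes. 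The third group is then zero by the $2$-CY duality $\Hom_{\mathcal{C}}(T_i^{\ast},\overline{T}[1])\cong D\Hom_{\mathcal{C}}(\overline{T},T_i^{\ast}[1])$. The delicate point is $\Hom_{\mathcal{C}}(T_i^{\ast},T_i^{\ast}[1])=0$: applying $\Hom_{\mathcal{C}}(T_i^{\ast},-)$ to the defining triangle and using $\Hom_{\mathcal{C}}(T_i^{\ast},U[1])=0$ shows that the connecting map induces a surjection $c_\ast\colon\Hom_{\mathcal{C}}(T_i^{\ast},T_i)\twoheadrightarrow\Hom_{\mathcal{C}}(T_i^{\ast},T_i^{\ast}[1])$ whose kernel consists of the morphisms $T_i^{\ast}\to T_i$ factoring through $a$, i.e.\ through $\mathcal{D}$. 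Thus the vanishing is equivalent to the statement that every morphism $T_i^{\ast}\to T_i$ factors through $\mathcal{D}$, and this is precisely where the minimality of the approximation together with the $2$-CY structure must be exploited; I expect this to be the main obstacle. The same circle of ideas, now with $\Hom_{\mathcal{C}}(T_i,-)$ in place of $\Hom_{\mathcal{C}}(T_i^{\ast},-)$ and using $\Hom_{\mathcal{C}}(T_i,U[1])=0$, identifies $\Hom_{\mathcal{C}}(T_i,T_i^{\ast}[1])$ with the quotient of the local ring $\End_{\mathcal{C}}(T_i)$ by the ideal of endomorphisms factoring through $\mathcal{D}$; since $T_i\notin\mathcal{D}$ no isomorphism lies in this ideal, and the same input forces the ideal to be the full radical, giving $\dim_K\Hom_{\mathcal{C}}(T_i,T_i^{\ast}[1])=1$.

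Granting rigidity, it remains to see that $\mu_{T_i}(T)$ is cluster-tilting. I would check that $T_i^{\ast}$ is indecomposable and $T_i^{\ast}\notin\mathcal{D}$: the latter holds because otherwise $c$ would lie in $\Hom_{\mathcal{C}}(T_i,\mathcal{D}[1])=0$, contradicting that $a$ is non-split, while indecomposability follows from the minimality of $a$, since a nontrivial splitting of $T_i^{\ast}$ would split a summand off the triangle and violate minimality. Hence $|\mu_{T_i}(T)|=n=|T|$. Because $T$ is a basic cluster-tilting object, $n$ is the maximal number of pairwise non-isomorphic indecomposable summands of a rigid object of $\mathcal{C}$; therefore the rigid object $\mu_{T_i}(T)$, having $n$ indecomposable summands, is cluster-tilting by the criterion following \cite{ZZ11} and \cite[Corollary 4.5]{AIR14}.

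Finally, for uniqueness let $T'$ be a basic cluster-tilting object with $\overline{T}\in\add(T')$, so that $T'=\overline{T}\oplus X$ with $X$ indecomposable and $X\notin\mathcal{D}$. Completing a minimal right $\mathcal{D}$-approximation of $X$ to an exchange triangle and comparing it with the defining triangle of $T_i^{\ast}$, the one-dimensionality of the exchange space $\Hom_{\mathcal{C}}(T_i,T_i^{\ast}[1])$ shows that $\overline{T}$ admits exactly two indecomposable complements, namely $T_i$ and $T_i^{\ast}$; hence $X\cong T_i$ or $X\cong T_i^{\ast}$, i.e.\ $T'\cong T$ or $T'\cong\mu_{T_i}(T)$. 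The main obstacle throughout is the self-extension vanishing $\Hom_{\mathcal{C}}(T_i^{\ast},T_i^{\ast}[1])=0$ and, equivalently, the one-dimensionality of the exchange space; both rest on converting the minimality of the $\mathcal{D}$-approximations into a factorization statement through $\mathcal{D}$ by means of the $2$-CY duality.
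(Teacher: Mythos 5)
This statement is quoted in the paper from \cite{IY08, BIRS11} without proof, so the only benchmark is the argument in those references (Iyama--Yoshino's mutation theorem for cluster-tilting subcategories). Your setup is the standard one and the parts you actually carry out are correct: realizing $T_i^{\ast}$ as the cocone of a minimal right $\add(T/T_i)$-approximation, the vanishing of $\Hom_{\mathcal{C}}(\overline{T},T_i^{\ast}[1])$ via surjectivity of $a_{\ast}$, the vanishing of $\Hom_{\mathcal{C}}(T_i^{\ast},\overline{T}[1])$ by $2$-CY duality, and the identification of $\Hom_{\mathcal{C}}(T_i^{\ast},T_i^{\ast}[1])$ with the morphisms $T_i^{\ast}\to T_i$ modulo those factoring through $\add(T/T_i)$.

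However, there is a genuine gap, and you flag it yourself: the claims that every morphism $T_i^{\ast}\to T_i$ factors through $\add(T/T_i)$, that $\Hom_{\mathcal{C}}(T_i,T_i^{\ast}[1])$ is one-dimensional, and that $\overline{T}$ has \emph{exactly} two indecomposable complements are asserted as ``where minimality and the $2$-CY structure must be exploited'' but never proved. These are precisely the substantive content of the theorem, and they do not follow formally from minimality plus Serre duality; in \cite{IY08} they are obtained by passing to the subfactor triangulated category $\mathcal{Z}/[\mathcal{D}]$ (with $\mathcal{Z}={}^{\perp}(\mathcal{D}[1])$ and shift given by the cone of a left $\mathcal{D}$-approximation) and analyzing cluster-tilting objects there --- an argument of a genuinely different nature from the Hom-exact-sequence bookkeeping in your sketch. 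Two smaller points: your claim that indecomposability of $T_i^{\ast}$ ``follows from the minimality of $a$'' is too quick (minimality only excludes summands of $T_i^{\ast}$ lying in $\add(T/T_i)$; indecomposability then needs the rigidity of $\mu_{T_i}(T)$ together with the maximality criterion $|T'|\le n$ for rigid objects), and the isomorphism between the cocone of the right approximation and the cone of the left approximation also requires an argument rather than ``one checks''. As it stands the proposal is a correct reduction to the key lemmas, not a proof of them.
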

	The basic cluster-tilting object $\mu_{T_i}(T)$ in Theorem \ref{WWXhVkRSjGCF} is called the \emph{mutation} of $T$ with respect to $T_i$.
	
	Let $T$ be a basic cluster-tilting object in $\mathcal{C}$. Then based on \cite[Section 2.1]{KR07} (see also \cite{DK08,Pa08}), all objects in $\mathcal{C}$ are cones of morphisms in $\add(T)$.
	For an object $X\in\mathcal{C}$, consider a distinguished triangle
	\begin{equation}\label{MHFd8b7XFQkS}
		\begin{tikzcd}[cramped]
			{T^{-1}} & {T^{0}} & X & {T^{-1}[1]}
			\arrow["g", from=1-1, to=1-2]
			\arrow["f", from=1-2, to=1-3]
			\arrow["h", from=1-3, to=1-4]
		\end{tikzcd},
	\end{equation}
	where $T^{-1}, T^{0}\in\add(T)$. Since $T$ is rigid, it is easy to see that $f$ is a right $\add(T)$-approximation. This triangle is called the \emph{$\add(T)$-presentation} of $X$, and is referred to as \emph{minimal} when $f$ is right minimal. Assume that
	\[\begin{tikzcd}[cramped]
		{T'^{-1}} & {T'^{0}} & X & {T'^{-1}[1]}
		\arrow["g'", from=1-1, to=1-2]
		\arrow["f'", from=1-2, to=1-3]
		\arrow["h'", from=1-3, to=1-4]
	\end{tikzcd}\]
	is another $\add(T)$-presentation. Using \cite[Proposition 1.1.11]{BBD82}, one can show that
	\[[T^{0}]-[T^{-1}]=[T'^{0}]-[T'^{-1}]\in K_0(\add(T)).\]
	The \emph{index} of $X$ with respect to $T$ is defined by $\ind_{T}X:=[T^{0}]-[T^{-1}]$.
	
	Now, assume that $T=T_1\oplus T_2\oplus\cdots\oplus T_n$ is a Krull-Schmidt decomposition. Then there exist unique integers $g_i$, $1\le i\le n$, such that $\ind_{T}X=\sum_{i=1}^{n}g_i[T_i]$. The integer vector $g^{X}_{T}:=(g_1,\cdots,g_n)$ is called the \emph{$\g$-vector} of $X$ with respect to $T$.
	
	\subsection{Silting theory}\label{yEThQDuEWkJB}
	The notion of mutation for silting complexes is reviewed. Furthermore, we discuss the connection between cluster-tilting theory and silting theory. In Section \ref{sec:clcat}, this helps us to see the role of Jacobian algebras as a link between the representation theory of algebras and the theory of cluster algebras.
	\begin{definition}
		We say that $X\in K^b(\proj\Lambda)$ is
		\begin{itemize}
			\item \emph{presilting} if $\Hom_{K^b(\proj\Lambda)}(X,X[m])=0$ for all $m>0$;
			\item \emph{silting} if it is presilting and the smallest thick subcategory containing $X$ is $K^b(\proj\Lambda)$;
			\item \emph{$2$-term} if $X^i=0$ for all $i\neq -1,0$.
		\end{itemize}
	\end{definition}
	Note that a $2$-term presilting complex $X\in K^b(\proj\Lambda)$ is silting if and only if $|X|=|\Lambda|$ \cite[Proposition 2.16]{Ai13}.
	
	Recall that the Krull-Schmidt decomposition $\Lambda = P_{(1)}\oplus P_{(2)}\oplus\cdots\oplus P_{(n)}$ yields a basis $\{[P_{(1)}], \ldots, [P_{(n)}]\}$ for $K_0(\proj\Lambda)$, establishing $K_0(\proj\Lambda) \cong \mathbb{Z}^n$. The \emph{$\g$-vector} of $X\in K^b(\proj\Lambda)$ is an integer vector $g^X:=[X]\in K_0(\proj\Lambda)\cong\mathbb{Z}^n$.
	
	We denote by $\twopresilt(\Lambda)$ (resp., $\twosilt(\Lambda)$) the set of isomorphism classes of basic $2$-term presilting (resp., $2$-term silting) complexes in $K^b(\proj\Lambda)$.
	\begin{namedthm}{Definition-Theorem}[{\cite[Theorem 2.31]{AiIy12}}]
		Let $S$ be a silting complex and $S=S_1\oplus S_2\oplus\cdots\oplus S_n$ its Krull-Schmidt decomposition.
		\begin{itemize}
			\item[$(1)$] Consider a distinguished triangle
			\[\begin{tikzcd}[cramped]
				{S^{\ast}_i} & U & {S_i} & {S_{i}^{\ast}[1],}
				\arrow[from=1-1, to=1-2]
				\arrow["a", from=1-2, to=1-3]
				\arrow[from=1-3, to=1-4]
			\end{tikzcd}\]
			where $a$ is the minimal right $\add(S/S_i)$-approximation. Then the \emph{right mutation} of $S$ with respect to $S_i$ is the silting complex $\mu^{+}_{S_i}(S):=(S/S_i)\oplus S_i^{\ast}$.
			\item[$(2)$] Consider a distinguished triangle
			\[\begin{tikzcd}[cramped]
				{S_i} & U' & {S^{\star}_i} & {S_{i}[1],}
				\arrow["b'", from=1-1, to=1-2]
				\arrow[from=1-2, to=1-3]
				\arrow[from=1-3, to=1-4]
			\end{tikzcd}\]
			where $b'$ is the minimal left $\add(S/S_i)$-approximation. Then the \emph{left mutation} of $S$ with respect to $S_i$ is the silting complex $\mu^{-}_{S_i}(S):=(S/S_i)\oplus S_i^{\star}$.
		\end{itemize}
		Moreover, $\mu^{+}_{S_i^{\star}}(\mu_{S_i}^{-}(S))\cong S$ and $\mu^{-}_{S_i^{\ast}}(\mu^{+}_{S_i}(S))\cong S$.
	\end{namedthm}
	\begin{lemma}[{\cite[Corollary 3.8]{AIR14}}]
		Let $X\in\twopresilt(\Lambda)$ with $|X|=|\Lambda|-1$. Then $X$ is a direct summand of exactly two $2$-term silting complexes, which are left or right mutations of each other (and other possible completions of $X$ are not $2$-term).
	\end{lemma}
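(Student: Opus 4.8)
\emph{Proof proposal.} The plan is to produce one $2$-term silting completion of $X$ by a Bongartz-type argument, to recognise the two completions as lying on the two sides of the ``wall'' cut out by the $\g$-vectors of the summands of $X$, and to use the sign-coherence of $2$-term silting complexes together with the silting mutations recalled above to show that each side carries exactly one completion.

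First I would show that $X$ is a direct summand of at least one $2$-term silting complex. Since $X$ is $2$-term presilting, a Bongartz-type completion --- applied to the functorially finite torsion class generated by the module part of $X$, or directly through the completion theory of \cite{AiIy12} --- yields a $2$-term silting complex $T$ with $X\in\add(T)$. By the silting criterion \cite[Proposition 2.16]{Ai13}, $|T|=|\Lambda|$, while $|X|=|\Lambda|-1$; hence the complement of $X$ in $T$ consists of a single indecomposable, so $T=X\oplus Y$ with $Y$ indecomposable and $Y\notin\add(X)$.

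Next I would set up the ``two sides'' picture. The $\g$-vectors $g^{X_1},\dots,g^{X_{n-1}}$ of the indecomposable summands of $X$ are part of the $\mathbb{Z}$-basis of $K_0(\proj\Lambda)\cong\mathbb{Z}^{n}$ attached to $T$, hence are linearly independent and span a hyperplane $H\subseteq\mathbb{R}^{n}$. For any $2$-term silting completion $T'=X\oplus Y'$, sign-coherence forces $g^{Y'}$ to lie strictly off $H$, on one of its two open sides. I would then show that \emph{each} side is realised by a $2$-term silting completion and by \emph{at most} one: the second completion is obtained from $T$ by the unique silting mutation at $Y$ that remains in the $2$-term subcategory (the opposite mutation leaves it, which is the meaning of the parenthetical clause, as one sees already for $\Lambda=K$, where the two mutations of $P$ are $P[1]$ and the non-$2$-term $P[-1]$); and uniqueness on a given side follows because the exchange triangle defining the complement is irreducible, so the morphism space governing the mutation is one-dimensional and determines $Y'$ up to isomorphism. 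Together these give exactly two completions, namely $T$ and its $2$-term mutation $T'$, which are left or right mutations of each other.

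The step I expect to be the main obstacle is precisely the last one: showing that there are neither more nor fewer than two $2$-term completions. As the parenthetical in the statement emphasises, $2$-termness is indispensable, since as a mere presilting complex $X$ generally admits infinitely many silting completions arranged along a totally ordered mutation line \cite{AiIy12}; the content of the lemma is that exactly two of them fall into the $2$-term subcategory. Turning the heuristic ``one completion per side of $H$'' into a proof --- equivalently, establishing the irreducibility of the minimal right $\add(X)$-approximation triangle and the resulting one-dimensionality of $\Hom(Y,Y^{\ast}[1])$, and checking that both sides of $H$ are actually attained --- is the technical core, and is exactly the point at which the machinery behind \cite[Corollary 3.8]{AIR14} (the bijection with support $\tau$-tilting pairs and their mutation) is brought to bear.
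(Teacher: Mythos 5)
The paper itself does not prove this lemma: it imports it verbatim from \cite[Corollary 3.8]{AIR14}, so the only proof to compare against is the one there, which passes through the bijection between $2$-term silting complexes and support $\tau$-tilting pairs and identifies the two completions of $X$ with the maximal and minimal functorially finite torsion classes compatible with the $\tau$-rigid pair corresponding to $X$ (the Bongartz and co-Bongartz completions). Your opening step --- a Bongartz-type completion produces one $2$-term silting complex $T=X\oplus Y$ with $Y$ indecomposable --- is correct and consistent with that route.

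The remainder has a genuine gap: the decisive claim, that each open side of the hyperplane $H$ carries at most one completion and that both sides are attained, is precisely the content of the lemma, and you do not establish it --- you defer it to ``the machinery behind \cite[Corollary 3.8]{AIR14}'', which is circular. Concretely: (i) the one-dimensionality of the exchange morphism space is a $2$-Calabi--Yau phenomenon (cf.\ Theorem \ref{WWXhVkRSjGCF}); for a general finite-dimensional $\Lambda$ the space $\Hom_{K^b(\proj\Lambda)}(Y^{\ast},Y[1])$ governing the exchange triangle need not be one-dimensional, so it cannot be the mechanism here; (ii) even granting it, it only constrains completions arising from the mutation of $T$ at $Y$, not an arbitrary second completion $X\oplus Y'$, so ``at most one per side of $H$'' does not follow; (iii) the fact that $g^{Y'}\notin H$ comes from the $g$-vectors of the summands of a $2$-term silting complex forming a $\mathbb{Z}$-basis of $K_0(\proj\Lambda)$ (\cite[Theorem 5.1]{AIR14}), not from sign-coherence; and (iv) you never argue that exactly one of $\mu^{+}_{Y}(T)$ and $\mu^{-}_{Y}(T)$ remains $2$-term. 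Closing these gaps is exactly what the torsion-class argument of \cite{AIR14} does (equivalently, one can use that the completions of $X$ form a totally ordered chain in the silting poset meeting the interval $[\Lambda[1],\Lambda]$ of $2$-term silting complexes in exactly two elements), so as written the proposal restates the difficulty rather than resolving it.
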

	We denote by $K^{\{-1,0\}}(\proj\Lambda)$ the full subcategory of $K^b(\proj\Lambda)$ consisting of $2$-term complexes.
	\begin{theorem}[{\cite[Section 4.2]{AIR14}}]\label{nyKSQbIejd2O}
		Let $\mathcal{C}$ be a $2$-CY Krull-Schmidt category with a cluster-tilting object $T$. Let $T=T_1\oplus T_2\oplus\cdots\oplus T_n$ be a Krull-Schmidt decomposition. 
		For any $X\in\mathcal{C}$, we can consider the minimal $\add(T)$-presentation of $X$
		\[\begin{tikzcd}[cramped]
			{T^{-1}} & {T^0} & X & {T^{-1}[1]}
			\arrow["a",from=1-1, to=1-2]
			\arrow[from=1-2, to=1-3]
			\arrow[from=1-3, to=1-4]
		\end{tikzcd}.\]
		Let $\Lambda=\End_{\mathcal{C}}(T)^{\op}$.
		Then the functor
		$\Hom_{\mathcal{C}}(T,-):\mathcal{C}\rightarrow\mod\Lambda$
		induces a bijection
		\[
		\begin{array}{r c l}
			\tilde{H}_{T}:\iso\mathcal{C} &\longrightarrow &  \iso K^{\{-1,0\}}(\proj\Lambda),\\
			X & \longmapsto & \Hom_{\mathcal{C}}(T, a)
		\end{array}
		\]
		which sends $T$ to $\Lambda$ and commutes with mutations. Thus it preserves $\g$-vectors.
		Moreover, it maintains the number of non-isomorphic indecomposable direct summands and thus, induces the following bijections:
		\[\begin{array}{lcr}
			\rigid(\mathcal{C})\longleftrightarrow\twopresilt(\Lambda), &&
			\ctilt(\mathcal{C})\longleftrightarrow\twosilt(\Lambda).
		\end{array}\]
	\end{theorem}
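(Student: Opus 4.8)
The plan is to build everything on the equivalence $F := \Hom_{\mathcal{C}}(T,-)$ of Theorem \ref{UcfISbEawdcJ}, which restricts to an equivalence $\add(T) \xrightarrow{\sim} \proj\Lambda$. First I would check that $\tilde{H}_{T}$ is well defined: for $X \in \mathcal{C}$ with minimal $\add(T)$-presentation $T^{-1} \xrightarrow{a} T^0 \to X \to T^{-1}[1]$, applying $F$ sends $T^{-1}$ and $T^0$ to projective $\Lambda$-modules, so $F(a) = \Hom_{\mathcal{C}}(T,a)$ is a genuine object of $K^{\{-1,0\}}(\proj\Lambda)$; uniqueness of the minimal presentation up to isomorphism makes the assignment independent of choices. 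The value $\tilde{H}_{T}(T) = \Lambda$ is then immediate, since the minimal presentation of $T$ is $0 \to T$ and $F(T) = \Lambda$.

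For bijectivity I would exhibit the inverse directly. Given a $2$-term complex $P^{-1} \xrightarrow{f} P^0$ of projectives, use a quasi-inverse of $F|_{\add(T)}$ to lift $f$ to a morphism $a \colon T^{-1} \to T^0$ in $\add(T)$ and set $X := \Cone(a)$; the resulting triangle is an $\add(T)$-presentation of $X$. The crux is matching minimality on both sides: a right minimal $\add(T)$-approximation corresponds under $F$ to a radical map of projectives, so minimal presentations in $\mathcal{C}$ biject with minimal $2$-term complexes, yielding mutual inverseness. That $\tilde{H}_{T}$ preserves the number of indecomposable direct summands then follows because $F$ is additive and minimal presentations respect direct-sum decompositions.

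Next I would verify the $\g$-vector and mutation compatibilities. The index $\ind_{T} X = [T^0] - [T^{-1}]$ lies in $K_0(\add(T))$, and the isomorphism $K_0(\add(T)) \cong K_0(\proj\Lambda)$ induced by $F$ carries it to $[F(T^0)] - [F(T^{-1})] = g^{\tilde{H}_{T}(X)}$, so $\g$-vectors are preserved. For mutations, I would apply $F$ to the exchange triangles of Theorem \ref{WWXhVkRSjGCF}: a minimal right $\add(T/T_i)$-approximation is sent to a minimal right $\add(\Lambda/P_{(i)})$-approximation, and likewise on the left, so the triangles defining $\mu_{T_i}(T)$ transform into the triangles defining the left and right silting mutations, whence $\tilde{H}_{T}$ intertwines the two mutation operations.

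The last and most delicate step is the rigid/presilting and cluster-tilting/silting correspondences. Here I would establish a functorial identification of the shape
\[\Hom_{K^b(\proj\Lambda)}(\tilde{H}_{T}(X), \tilde{H}_{T}(Y)[1]) \cong \Hom_{\mathcal{C}}(X, Y[1]),\]
obtained by applying $F$ to the presentation triangles and invoking the $2$-CY duality together with the rigidity $\Hom_{\mathcal{C}}(T,T[1]) = 0$. With this identification the rigidity condition $\Hom_{\mathcal{C}}(X,X[1]) = 0$ translates exactly into the presilting condition, and since cluster-tilting objects are precisely the rigid objects with maximal $|T'| = n$ while $2$-term silting complexes are the presilting ones with $|X| = |\Lambda| = n$, the preservation of $|{-}|$ upgrades the rigid/presilting bijection to the cluster-tilting/silting bijection. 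I expect this homological identification --- the careful bookkeeping of $\Hom$ groups through the presentation triangles and the $2$-CY structure --- to be the main obstacle.
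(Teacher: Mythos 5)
The paper does not actually prove this statement---it is imported verbatim from \cite[Section 4.2]{AIR14}---so your proposal has to be measured against the standard argument there. Your overall architecture is the right one: the equivalence $\add(T)\cong\proj\Lambda$, the matching of right minimal $\add(T)$-presentations with radical $2$-term complexes, and the identification of $\ind_T X$ with $g^{\tilde{H}_T(X)}$ under $K_0(\add(T))\cong K_0(\proj\Lambda)$ are all correct and are how the bijection, the value at $T$, and the $\g$-vector statement are obtained. However, the step you single out as the crux contains a genuine error: the isomorphism
\[
\Hom_{K^b(\proj\Lambda)}\bigl(\tilde{H}_{T}(X), \tilde{H}_{T}(Y)[1]\bigr) \cong \Hom_{\mathcal{C}}(X, Y[1])
\]
is false in general. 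The correct statement, which is the key homological input behind \cite[Section 4]{AIR14} (see also \cite{Pla11b}), is the $2$-CY--symmetric decomposition
\[
\Hom_{\mathcal{C}}(X, Y[1]) \;\cong\; \Hom_{K^b(\proj\Lambda)}\bigl(\tilde{H}_{T}(X), \tilde{H}_{T}(Y)[1]\bigr) \,\oplus\, D\Hom_{K^b(\proj\Lambda)}\bigl(\tilde{H}_{T}(Y), \tilde{H}_{T}(X)[1]\bigr).
\]
Concretely: let $\mathcal{C}$ be the cluster category of the Kronecker quiver, $T$ the image of the path algebra, and $X$ the object corresponding to a quasi-simple regular module $M$ in a homogeneous tube. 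Then $\Hom_{\mathcal{C}}(X,X[1])$ is two-dimensional, while $\Hom_{K^b(\proj\Lambda)}(\tilde{H}_T(X),\tilde{H}_T(X)[1])$ is one-dimensional (compute directly with the minimal presentation $P_{(2)}\to P_{(1)}$ of $M$, or use $\Hom_{K^b(\proj\Lambda)}(P_M,P_M[1])\cong D\Hom_\Lambda(M,\tau M)$). The conclusion you want---$X$ rigid if and only if $\tilde{H}_T(X)$ presilting---does survive, because for $X=Y$ the right-hand side of the correct formula vanishes precisely when each summand does; but the identification you propose to ``establish by careful bookkeeping'' cannot be established, since it is not true.

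A secondary gap concerns mutation compatibility. You propose to ``apply $F$ to the exchange triangles,'' but $\Hom_{\mathcal{C}}(T,-)$ is not a triangle functor into $K^b(\proj\Lambda)$ (it only induces an equivalence $\mathcal{C}/(T[1])\to\mod\Lambda$), and the exchange triangles of Theorem \ref{WWXhVkRSjGCF} for a cluster-tilting object $U\neq T$ do not live in $\add(T)$, so there is nothing to apply $F$ to. One must either splice the $\add(T)$-presentation triangles through the exchange triangle via the octahedral axiom, or---more cheaply---deduce mutation compatibility from what you have already shown: $\tilde{H}_T$ is a bijection preserving indecomposable direct summands and carrying $\ctilt(\mathcal{C})$ onto $\twosilt(\Lambda)$, and on both sides an almost complete object admits exactly two completions (Theorem \ref{WWXhVkRSjGCF} and the Lemma following the Definition-Theorem of silting mutation), so exchanging one summand on one side must correspond to exchanging the matching summand on the other; the preserved $\g$-vectors then distinguish left from right mutation.
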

	\subsection{$\mathbf{\g}$-finiteness and $\mathbf{\g}$-tameness}
	The $\g$-fan associated with any finite-dimensional algebra effectively illustrates mutations of $2$-term silting complexes, reflecting many properties of the corresponding Hasse quiver \cite{AIR14}. Thus, many papers have been devoted to studying $\g$-fans (see e.g.\ \cite{AHIKM22,AHIKM23,As21,DIJ19,HY25a}). Following this approach, the notions of $\g$-finiteness and $\g$-tameness were examined in \cite{AsIy24,AY23}. In this subsection, we prepare some preliminaries for the proofs of our results in Section \ref{4p5IXJsU8vuj}.
	
	For $X\in\twopresilt(\Lambda)$, the cone spanned by the $\g$-vectors of indecomposable direct summands of $X$ is denoted by $C(X)$, that is,
	\[C(X):=\left\{\sum_{X'\in\ind X}a_{X'}g^{X'}\relmiddle| a_{X'}\in\mathbb{R}_{\ge 0}\right\}.\]
	\begin{definition}[{\cite{DIJ19, DeFe15}}]
		The set of all cones $C(X)$ for $X\in\twopresilt(\Lambda)$ together with their faces forms a simplicial fan called the \emph{$\g$-fan} of $\Lambda$, which is denoted by $\mathcal{F}^{g}_{\twosilt}(\Lambda)$.
	\end{definition}
	\begin{definition}\label{1s65JGnzx70lk}
		The algebra $\Lambda$ is called \emph{$\g$-finite} (resp., \emph{$\g$-tame}) if $\mathcal{F}^{g}_{\twosilt}(\Lambda)$ is complete (resp., dense).
	\end{definition}
	\begin{remark}
		Two maximal cones of the $\g$-vector fan $\mathcal{F}^{g}_{\twosilt}(\Lambda)$ do not have any $n$-dimensional face in common, and they share an $(n-1)$-dimensional face if and only if they are mutations of each other.
	\end{remark}
	\begin{definition}[{\cite{DIJ19}}]
		The algebra $\Lambda$ is called \emph{$\tau$-tilting finite} if $\twosilt(\Lambda)$ is finite.
	\end{definition}
	\begin{remark}\label{LJwIJTvJ2fcB}
		If $\Lambda$ is representation-finite, then it is $\tau$-tilting finite.
	\end{remark}
	\begin{theorem}[{\cite{As21}}]\label{Ypp40yreQWb2}
		The algebra $\Lambda$ is $\tau$-tilting finite if and only if it is $\g$-finite.
	\end{theorem}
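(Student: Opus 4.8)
The plan is to prove the two implications separately, since their difficulties are very different. For the implication ``$\tau$-tilting finite $\Rightarrow$ $\g$-finite'', assume $\twosilt(\Lambda)$ is finite, so that $\mathcal{F}^{g}_{\twosilt}(\Lambda)$ consists of finitely many full-dimensional simplicial cones $C(S)$ with $S\in\twosilt(\Lambda)$, together with their faces. I would show that their union equals $\mathbb{R}^n$ by a ``pseudomanifold without boundary'' argument. Each facet (an $(n-1)$-dimensional face) of a maximal cone $C(S)$ is spanned by the $\g$-vectors of the indecomposable summands of some $X\in\twopresilt(\Lambda)$ with $|X|=|\Lambda|-1$; by the completion lemma cited above, such an $X$ is a direct summand of exactly two $2$-term silting complexes, and by the preceding Remark these two complexes give precisely the two maximal cones sharing that facet. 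Hence every facet of every maximal cone is shared by exactly one further maximal cone, so the support $|\mathcal{F}^{g}_{\twosilt}(\Lambda)|$ has no free facets. Being a finite union of closed cones, this support is closed; having no free facet, its topological boundary cannot contain any $(n-1)$-dimensional piece and is therefore empty, so the support is also open. As it is nonempty and $\mathbb{R}^n$ is connected, the support equals $\mathbb{R}^n$, i.e.\ the fan is complete.

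For the converse ``$\g$-finite $\Rightarrow$ $\tau$-tilting finite'' I would argue by contraposition: supposing $\twosilt(\Lambda)$ infinite, I want to exhibit a direction that no cone contains, contradicting completeness. Since $\mathcal{F}^{g}_{\twosilt}(\Lambda)$ is a fan and distinct $2$-term silting complexes give distinct maximal cones, these cones have pairwise disjoint interiors; intersecting with the unit sphere $S^{n-1}$ produces infinitely many spherical $(n-1)$-simplices with pairwise disjoint interiors. Their spherical volumes are positive and sum to at most $\operatorname{vol}(S^{n-1})<\infty$, so they tend to $0$; choosing an interior point $x_k$ of each of infinitely many shrinking simplices and passing to a convergent subsequence $x_k\to p\in S^{n-1}$, we find that every neighborhood of $p$ meets infinitely many distinct maximal cones. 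In particular $p$ cannot lie in the relative interior of any maximal cone, for such an interior is an open set meeting only that one cone.

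The main obstacle is to upgrade this to genuine non-completeness, i.e.\ to rule out that $p$ is nonetheless covered by lying on lower-dimensional faces of the fan; equivalently, to show that the $\g$-fan is locally finite throughout the interior of its support, so that an accumulation point such as $p$ must lie on the boundary of the support rather than inside it. This is exactly where the wall-and-chamber analysis of \cite{As21,BST19} enters: one uses that in the interior of the support each point lies in only finitely many chambers, so that infinitely many maximal cones can accumulate only along the topological boundary of $|\mathcal{F}^{g}_{\twosilt}(\Lambda)|$. Granting this, $p\in\partial|\mathcal{F}^{g}_{\twosilt}(\Lambda)|$; but if the fan were complete the support would be all of $\mathbb{R}^n$, which has empty boundary, a contradiction. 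Hence $\g$-finiteness forces $\twosilt(\Lambda)$ to be finite, completing the equivalence.
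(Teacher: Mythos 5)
First, a remark on the comparison itself: the paper does not prove this statement --- it is quoted from \cite{As21} without proof --- so there is no in-paper argument to match yours against; I can only assess your proposal on its own terms. Your first direction ($\tau$-tilting finite $\Rightarrow$ $\g$-finite) is essentially the standard argument from the literature and is sound in outline: finitely many full-dimensional simplicial cones, each facet of a maximal cone is $C(S/S_i)$ for an almost-complete presilting complex, the completion lemma plus the disjointness of interiors forces the two completing cones to lie locally on opposite sides of the facet's hyperplane, and a generic-segment argument shows a closed finite union of $n$-cones with no free facet has empty boundary, hence equals $\mathbb{R}^n$. Those last two small steps deserve a line each, but they are routine.

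The second direction contains a genuine gap, and it sits exactly where you placed your ``main obstacle.'' The claim you borrow --- that in the interior of the support of $\mathcal{F}^{g}_{\twosilt}(\Lambda)$ every point meets only finitely many chambers --- is not an auxiliary fact from \cite{As21,BST19} that you may grant; it is essentially the theorem itself. To see this concretely, take the point $0$: it lies in the closure of \emph{every} cone $C(S)$, and it lies in the interior of the support precisely when the fan is complete (the support is a union of cones, so it contains a neighborhood of $0$ only if it is all of $\mathbb{R}^n$). Thus ``$0$ is in the interior of the support and meets only finitely many chambers'' is verbatim ``the fan is complete and $\twosilt(\Lambda)$ is finite.'' Even restricted to nonzero points, local finiteness at every point of the compact sphere together with completeness yields finiteness immediately, so your accumulation-point and volume construction does no work that the asserted lemma does not already do. The actual content of the hard direction in \cite{As21} (and in the related arguments of \cite{DIJ19}) is precisely to supply this input by representation-theoretic means --- e.g.\ analyzing the limit of a sequence of normalized $\g$-vectors of pairwise non-isomorphic presilting complexes via semicontinuity of $\hom$ and the $E$-invariant (or, alternatively, via $\tau$-tilting reduction and induction on the number of simples, identifying the link of the fan along $C(U)$ with the $\g$-fan of the reduced algebra). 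Without one of these mechanisms, your argument for ``$\g$-finite $\Rightarrow$ $\tau$-tilting finite'' is circular.
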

	\begin{theorem}[{\cite{PYK23}}]\label{tGCGN5Yoe6G9}
		If $\Lambda$ is representation-tame, then it is $\g$-tame.
	\end{theorem}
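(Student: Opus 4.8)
The plan is to prove the contrapositive, reformulating both notions through the wall-and-chamber structure of \cite{BST19,As21} and then controlling the dimension vectors of bricks. Write $V := K_0(\proj\Lambda)_{\mathbb{R}} \cong \mathbb{R}^n$, and for $\theta \in V$ let $\mathcal{W}_\theta$ be the category of $\theta$-semistable modules; its simple objects are the $\theta$-stable bricks, and any $\theta$-stable brick $B$ satisfies $\langle \theta, \underline{\dim} B\rangle = 0$. The open maximal cones of $\mathcal{F}^{g}_{\twosilt}(\Lambda)$ are exactly the connected components of the interior of the generic locus $\{\theta : \mathcal{W}_\theta = 0\}$, so $\Lambda$ is $\g$-tame precisely when this interior is dense, equivalently when its complement $\overline{\{\theta : \mathcal{W}_\theta \neq 0\}}$ has empty interior.

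The first step is to push this complement inside a union of hyperplanes. Since $\mathcal{W}_\theta \neq 0$ forces $\theta \in H_B := \{\theta : \langle \theta, \underline{\dim} B\rangle = 0\}$ for some brick $B$, the non-generic locus is contained in $\bigcup_B H_B$, a union of hyperplanes indexed by the (countably many) dimension vectors of bricks. Each individual $H_B$ is nowhere dense, so the failure of $\g$-tameness cannot be detected one dimension vector at a time; it is the \emph{closure} $\overline{\bigcup_B H_B}$ that must be controlled, and this closure can certainly have nonempty interior once the hyperplanes accumulate densely. Thus, to prove $\g$-tameness it suffices to show that $\overline{\bigcup_B H_B}$ has empty interior, which amounts to showing that the set of projectivized brick dimension vectors $\{[\underline{\dim} B]\} \subseteq \mathbb{P}^{n-1}$ is nowhere dense: a somewhere-dense set of normals dualizes to a somewhere-dense union of hyperplanes, and conversely.

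The representation-theoretic heart is then to show that representation-tameness forces $\{[\underline{\dim} B]\}$ to be nowhere dense (while representation-wildness is precisely what allows it to be somewhere dense, as for the generalized Kronecker quivers $K_m$ with $m \ge 3$). Here I would invoke the structure theory of tame algebras: for each dimension vector all but finitely many indecomposables lie in finitely many one-parameter families, and the dimension vectors that actually support bricks are governed by the associated Tits/Euler quadratic form. The discrete, real-root-type directions accumulate only toward the radical of this form, while the remaining directions lie on the radical itself, a proper subspace; in both cases the projectivized brick dimension vectors are confined to a nowhere-dense subset of $\mathbb{P}^{n-1}$. Feeding this back into the reduction yields $\g$-tameness. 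The representation-finite case is subsumed, since by Remark \ref{LJwIJTvJ2fcB} and Theorem \ref{Ypp40yreQWb2} such $\Lambda$ is even $\g$-finite, hence $\g$-tame.

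I expect the last step to be the main obstacle: upgrading the clean hereditary picture—where brick dimension vectors are roots of the Tits form accumulating only toward null roots—to arbitrary representation-tame algebras. This requires Crawley-Boevey-type control of generic modules together with a \emph{uniform} argument showing that bricks cannot occur along a full-dimensional cone of directions, rather than a dimension-vector-by-dimension-vector estimate; controlling the brick-walls simultaneously as $\theta$ ranges over an open set, and deducing nowhere-density of $\{[\underline{\dim} B]\}$ directly from the defining property of representation-tameness, is the delicate point of the proof.
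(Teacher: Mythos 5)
First, note that the paper does not prove Theorem \ref{tGCGN5Yoe6G9}: it is quoted from \cite{PYK23}, where the argument runs through the $E$-invariant (representation-tameness gives $E$-tameness by a Crawley-Boevey-type count of parameters combined with upper semicontinuity of $\hom$, and $E$-tameness gives density of the $\g$-fan via the generic decomposition of $\g$-vectors in the sense of Derksen--Fei). Your wall-counting route is therefore genuinely different from the one the theorem actually rests on, so it has to stand on its own.

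It does not, because one step of your reduction is false for $n\ge 3$. You correctly reduce $\g$-tameness to showing that $\bigcup_B H_B$ is nowhere dense, but you then assert that this ``amounts to'' the set of projectivized brick dimension vectors being nowhere dense in $\mathbb{P}^{n-1}$. Only one implication holds. A nowhere dense set of normals can sweep out a somewhere dense --- even all of $\mathbb{R}^n$ --- union of hyperplanes: in $\mathbb{R}^3$ the circle of normals $\{[\cos t:\sin t:0]\}$ is nowhere dense in $\mathbb{P}^2$, yet $\bigcup_t\{(x,y,z): x\cos t+y\sin t=0\}=\mathbb{R}^3$. Hence even a complete proof of your ``representation-theoretic heart'' (projective nowhere-density of brick dimension vectors) would not yield $\g$-tameness. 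What you would actually need is a local finiteness statement --- that away from a nowhere dense set every $\theta$ has a neighborhood meeting only finitely many of the hyperplanes $H_B$ --- which requires controlling the \emph{accumulation set} of the directions $[\underline{\dim}\,B]$, not merely showing it is nowhere dense. (For tame hereditary algebras this works because the real Schur roots accumulate only at the finitely many null-root directions; nothing in your sketch produces an analogue for a general representation-tame algebra, and you explicitly defer that step.) Combined with the fact that the heart of the argument is only outlined, the proposal does not establish the theorem. The $E$-invariant route of \cite{PYK23} avoids this difficulty entirely: it never needs global control of brick dimension vectors, only the generic vanishing of $e(g,g)$ for each individual $g$, which is a pointwise statement amenable to the parameter bound supplied by tameness.
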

	\section{Quivers with potentials}\label{Bm04xjtUpW3v}
	We begin by recalling the definitions of quivers with potentials and their associated Jacobian algebras, following \cite{DWZ08}. Next, we review classifications of the representation types —namely, representation-tameness and $\g$-tameness— of finite-dimensional Jacobian algebras \cite{GLFS16, PYK23}. These results play a direct role in the proofs presented in Section \ref{4p5IXJsU8vuj}. In Section \ref{sec:clcat}, we study the relationship between silting theory and cluster algebras through Jacobian algebras. This perspective is also applied in Appendix \ref{app:ms} to classify $E$-finite Jacobian algebras of surface type. Finally, we study mutations of Jacobian algebras and introduce a map between the Grothendieck groups of a Jacobian algebra and its mutation at an arbitrary vertex.
	
	Let $Q$ be a quiver. For an integer $m\ge 0$, we denote by $Q_{m}$ the set of all paths of length $m$. Consider the vector space $KQ_m$ freely generated by $Q_m$. Then the \emph{complete path algebra} associated with $Q$ is defined by
	\[K\doublelrangle{Q}:=\prod_{m=0}^{\infty}KQ_m,\]
	where multiplication is induced by the concatenation of paths. Let $\mathfrak{m}$ be the ideal of $K\doublelrangle{Q}$ generated by all arrows. For a subset $U\subset K\doublelrangle{Q}$, the \emph{$\mathfrak{m}$-adic closure} of $U$ is denoted by
	\[\overline{U}:=\bigcap_{l=0}^{\infty}U+\mathfrak{m}^{l}.\]
	\begin{definition}[{\cite{DWZ08}}]
		Let $Q$ be a quiver without loops. A \emph{potential} $W$ on $Q$ is a (possibly infinite) linear combination of oriented cycles in $Q$ modulo the closure of the subspace $U$ generated by the commutators in $[K\doublelrangle{Q},K\doublelrangle{Q}]$ and trivial paths $e_i$, for any $i\in Q_0$. Indeed, $W$ is an element of 
		\[\Pot(Q):=K\doublelrangle{Q}/\overline{U}.\]
		The pair $(Q, W)$ is called a \emph{quiver with potential} or just a \emph{QP}, for short. For an arrow $\alpha$ of $Q$, we define a \emph{cyclic derivative} $\partial_{\alpha}:\Pot(Q)\rightarrow  K\doublelrangle{Q}$ as the unique continuous linear map that sends any cycle $c$ to $\sum\limits_{c=u\alpha v}vu$.
		The \emph{Jacobian algebra} associated with the QP $(Q, W)$ is defined by
		\[\mathcal{J}(Q,W):=K\doublelrangle{Q}/\overline{J(W)},\]
		where $J(W):=\langle \partial_{\alpha}W\mid\alpha\in Q_1\rangle$ is an ideal of $K\doublelrangle{Q}$.
		If the Jacobian algebra is finite-dimensional, then $(Q, W)$ is called a \emph{Jacobi-finite} QP.
	\end{definition}
	Let $(Q', W')$ and $(Q'', W'')$ be two QPs with $Q'_0=Q''_0$. Their \emph{direct sum} is a QP $(Q,W)$ defined by $Q_0=Q'_0=Q''_0$, $Q_1=Q'_1\sqcup Q''_1$, and $W=W'+W''$.
	\begin{definition}[{\cite{DWZ08}}]
		Let $(Q, W)$ and $(Q', W')$ be two QPs with $Q_0=Q'_0$. Then we say that they are \emph{right equivalent} if there is an algebra isomorphism $\varphi:K\doublelrangle{Q}\rightarrow K\doublelrangle{Q'}$ whose restriction on vertices is the identity map and $\varphi(W)=W'$.
	\end{definition}
	We recall two special classes of QPs introduced in \cite{DWZ08}: A QP $(Q,W)$ is called \emph{trivial} if $W$ is a linear combination of $2$-cycles and $\mathcal{J}(Q,W)$ is isomorphic to $KQ_{0}$. It is called \emph{reduced} if $W$ has no terms that are cycles of length at most $2$.
	\begin{theorem}[{\cite[Theorem 4.6]{DWZ08}}]
		Let $(Q, W)$ be a QP.
		Up to right equivalence, uniquely, we can write it as a direct sum of a trivial QP $(Q_{\text{tri}}, W_{\text{tri}})$ and a reduced QP $(Q_{\text{red}}, W_{\text{red}})$, called the \emph{reduced part} of $(Q, W)$. Moreover, $\mathcal{J}(Q,W)\cong\mathcal{J}(Q_{\text{red}},W_{\text{red}})$.
	\end{theorem}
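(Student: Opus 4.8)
The plan is to follow the strategy underlying the Splitting Theorem of \cite{DWZ08}: isolate the degree-$2$ part of the potential, put it in a normal form by a linear change of arrows, and then eliminate it by a convergent infinite sequence of automorphisms of the complete path algebra. First I would write $W = W_2 + W_{\geq 3}$ in $\Pot(Q)$, where $W_2$ collects the $2$-cycles and $W_{\geq 3}$ the cyclically reduced terms of length at least $3$; this is well defined because cyclic equivalence preserves the length of a cycle. The quadratic part $W_2$ records, for each ordered pair of vertices $(i,j)$, a bilinear pairing between the arrows $i \to j$ and the arrows $j \to i$, namely the coefficients of the corresponding $2$-cycles. Over the field $K$ this pairing can be brought into rank-normal form by a \emph{linear} automorphism $\varphi_0$ of $K\doublelrangle{Q}$ that fixes the vertices and acts invertibly on the arrow span; after applying $\varphi_0$ I may assume $W_2 = \sum_{k=1}^{s} a_k b_k$ for arrows $a_k\colon i_k \to j_k$ and $b_k\colon j_k \to i_k$, with all remaining arrows absent from $W_2$. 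These distinguished arrows $a_k, b_k$ will form the trivial part, and the remaining ``free'' arrows the reduced part.

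Next I would decouple the two groups of arrows. The key observation is that $\partial_{a_k} W = b_k + \partial_{a_k} W_{\geq 3}$ and $\partial_{b_k} W = a_k + \partial_{b_k} W_{\geq 3}$, so modulo strictly longer paths the Jacobian relations express each $a_k$ and each $b_k$ through higher-order terms. Feeding these expressions back into the potential defines a unitriangular automorphism of $K\doublelrangle{Q}$ with respect to the path-length filtration (the identity on the free arrows, and adjusting $a_k, b_k$ by the relevant higher-order derivatives), whose effect is to push every term that couples $\{a_k, b_k\}$ to the free arrows into strictly higher length. Iterating produces automorphisms $\psi_1, \psi_2, \dots$ whose partial composites $\psi_m \cdots \psi_1$ agree to higher and higher order; since $K\doublelrangle{Q}$ is complete in the $\mathfrak{m}$-adic topology, the infinite composite converges to a genuine automorphism $\psi$, and $(\psi\varphi_0)(W)$ takes the desired split form $\sum_{k} a_k b_k + W_{\text{red}}$, where $W_{\text{red}}$ is supported on the free arrows and contains no cycle of length $\leq 2$. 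This exhibits $(Q,W)$ as right equivalent to the direct sum $(Q_{\text{tri}}, W_{\text{tri}}) \oplus (Q_{\text{red}}, W_{\text{red}})$ with $W_{\text{tri}} = \sum_k a_k b_k$.

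The isomorphism of Jacobian algebras is then formal: right-equivalent QPs have isomorphic Jacobian algebras, and for the split potential the derivatives $\partial_{a_k}(W_{\text{tri}} + W_{\text{red}}) = b_k$ and $\partial_{b_k}(W_{\text{tri}} + W_{\text{red}}) = a_k$ force every distinguished arrow to vanish, while the derivatives along the free arrows reproduce exactly the relations coming from $W_{\text{red}}$; hence $\mathcal{J}(Q,W) \cong \mathcal{J}(Q_{\text{red}}, W_{\text{red}})$. I expect the main obstacle to be the \emph{uniqueness} up to right equivalence. Here I would analyze an arbitrary right equivalence through its induced linear map on the arrow span $\mathfrak{m}/\mathfrak{m}^2$: this linear part must preserve the rank of the normalized quadratic form, which forces the two trivial parts to be isomorphic, and then a second application of the elimination argument pins down the reduced parts up to right equivalence. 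A secondary technical point is controlling the convergence of the infinite composite $\psi$, for which I would record an explicit estimate on how far each $\psi_m$ shifts the path-length filtration, ensuring that $\psi$ is a well-defined invertible endomorphism.
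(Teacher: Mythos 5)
The paper does not prove this statement at all: it is quoted verbatim from \cite[Theorem 4.6]{DWZ08} (the Splitting Theorem) and used as a black box, so there is no internal proof to compare against. Your sketch is, however, a faithful outline of the original Derksen--Weyman--Zelevinsky argument: split $W=W_2+W_{\ge 3}$, bring the quadratic part into rank-normal form $\sum_k a_kb_k$ by a linear change of arrows, and then eliminate the coupling between the distinguished arrows and the rest by an $\mathfrak{m}$-adically convergent composite of unitriangular automorphisms; the identification $\mathcal{J}(Q,W)\cong\mathcal{J}(Q_{\text{red}},W_{\text{red}})$ then follows because $\partial_{a_k}$ and $\partial_{b_k}$ of the split potential are exactly $b_k$ and $a_k$. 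One caution: the step you dispatch in a single sentence --- uniqueness of the \emph{reduced} part up to right equivalence --- is the genuinely delicate half of the DWZ proof. Knowing that the linear part of a right equivalence preserves the rank of the quadratic form pins down the trivial summand, but to conclude that the two reduced parts are themselves right equivalent one must show that any right equivalence between the two direct sums can be corrected to one that restricts to the reduced arrows; this requires its own elimination argument and is not an immediate consequence of what precedes it. If you intend this as a complete proof rather than a roadmap, that is the step to expand.
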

	\begin{definition}[{\cite{DWZ08}}]\label{def:mutation}
		Let $(Q, W)$ be a QP. Assume that $k\in Q_0$ lies on no $2$-cycles. Moreover, we may assume that no oriented cycle appearing in the expansion of $W$ starts or ends at $k$. The \emph{QP-premutation} of $(Q,W)$ at $k$ is the QP $\tilde{\mu}_k(Q,W):=(\tilde{Q},\tilde{W})$ obtained from $(Q,W)$ by the following steps (cf. Section \ref{sec:cluster}):
		\begin{itemize}
			\item[$(1)$] For each
			$(b,a)\in Q_{2,k}:=
			\{(b,a)\in Q_1\times Q_1\mid\begin{tikzcd}[cramped]
				i & k & j
				\arrow["b", from=1-2, to=1-1,swap]
				\arrow["a", from=1-3, to=1-2,swap]
			\end{tikzcd}\}$,
			add an arrow 
			$\begin{tikzcd}[cramped]
				i & j
				\arrow["{[ba]}", from=1-2, to=1-1,swap]
			\end{tikzcd}$.
			\item[$(2)$] Replace each arrow
			$\begin{tikzcd}[cramped]
				k & i
				\arrow["{a}", from=1-2, to=1-1,swap]
			\end{tikzcd}$
			with
			$\begin{tikzcd}[cramped]
				k & i
				\arrow["{a^{\ast}}", from=1-1, to=1-2]
			\end{tikzcd}$, and each arrow
			$\begin{tikzcd}[cramped]
				i & k
				\arrow["{a}", from=1-2, to=1-1,swap]
			\end{tikzcd}$
			with
			$\begin{tikzcd}[cramped]
				i & k
				\arrow["{a^{\ast}}", from=1-1, to=1-2]
			\end{tikzcd}$.
			\item[$(3)$] Define $\tilde{W}$ by \[\tilde{W}:=[W]+\sum_{(b,a)\in Q_{2,k}} [ba]a^{\ast}b^{\ast}.\] Here, $[W]$ is obtained from $W$ by replacing each occurrence of $ba$, where $(b,a)\in Q_{2,k}$, with the arrow $[ba]$.
		\end{itemize}
		Up to right equivalence, the reduced part of $\tilde{\mu}_k(Q, W)$ is called the \emph{QP-mutation} of $(Q, W)$ at $k$, which is denoted by $\mu_k(Q, W)$.
		\begin{theorem}[{\cite[Corollary 6.6]{DWZ08}}]\label{thm:Jfin}
			Let $(Q, W)$ be a Jacobi-finite QP. Assume that $k\in Q_0$ lies on no $2$-cycles. Moreover, we may assume that no oriented cycle appearing in the expansion of $W$ starts or ends at $k$. Then $\mu_k(Q,W)$ is also Jacobi-finite.
		\end{theorem}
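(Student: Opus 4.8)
The plan is to reduce the statement to a combinatorial bound on the lengths of nonzero paths. First I would reformulate Jacobi-finiteness: since $Q$ is finite, $\mathcal{J}(Q,W)$ is finite-dimensional if and only if its arrow ideal is nilpotent, equivalently there is an integer $L$ such that every path of length greater than $L$ belongs to $\overline{J(W)}$ and hence vanishes in the Jacobian algebra. By the splitting theorem \cite[Theorem 4.6]{DWZ08} we have $\mathcal{J}(\mu_k(Q,W))\cong\mathcal{J}(\tilde\mu_k(Q,W))$, so it is enough to produce such a bound for the premutation $\tilde\mu_k(Q,W)=(\tilde Q,\tilde W)$. Note that the theorem asks only for the one implication ``$(Q,W)$ Jacobi-finite $\Rightarrow$ $\mu_k(Q,W)$ Jacobi-finite'', so establishing a single inequality of the form $\dim_K\mathcal{J}(\mu_k(Q,W))\le f(\dim_K\mathcal{J}(Q,W))$ will suffice; involutivity of mutation would moreover upgrade this to an equivalence.

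The core of the argument is to convert a bound $L$ for $(Q,W)$ into a bound $L'$ for $(\tilde Q,\tilde W)$, using the cyclic derivatives of $\tilde W=[W]+\sum_{(b,a)\in Q_{2,k}}[ba]\,a^{\ast}b^{\ast}$. Differentiating in the new arrows gives the relations $a^{\ast}b^{\ast}=-\partial_{[ba]}[W]$, which rewrite every single pass through the vertex $k$ (necessarily of the form $a^{\ast}b^{\ast}$, entering $k$ by a reversed arrow $b^{\ast}$ and leaving by a reversed arrow $a^{\ast}$) as a combination of paths that avoid $k$ and use only the arrows $[ba]$ together with the arrows of $Q$ not incident to $k$; the relations $\sum_b b^{\ast}[ba]=0$ and $\sum_a[ba]\,a^{\ast}=0$ coming from $\partial_{a^{\ast}}\tilde W$ and $\partial_{b^{\ast}}\tilde W$ control the remaining concatenations of reversed arrows. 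Iterating the first rewriting eliminates every occurrence of $k$, so that any path in $\tilde Q$ is expressed through paths in the subquiver on $\{[ba]\}$ and the $k$-avoiding arrows of $Q$; replacing each formal arrow $[ba]$ by the length-two path $ba$ then turns such a path of length $\ell$ into an honest path of $Q$ of length between $\ell$ and $2\ell$. Once this honest $Q$-path exceeds $L$ it vanishes in $\mathcal{J}(Q,W)$, which should force the original $\tilde Q$-path to vanish in $\mathcal{J}(\tilde Q,\tilde W)$, yielding a bound $L'\le 2L+O(1)$ and hence the desired inequality.

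The step I expect to be the main obstacle is making this path correspondence faithful: I must show that vanishing of the expanded $Q$-path in $\mathcal{J}(Q,W)$ really does transfer to vanishing of the corresponding $\tilde Q$-path in $\mathcal{J}(\tilde Q,\tilde W)$, carefully tracking how the substitution of $ba$ by $[ba]$ inside $[W]$ and the derivatives at the reversed arrows interact with the relations of $W$ at the vertex $k$. This bookkeeping is delicate precisely because the relations at $k$ of the two QPs are encoded in genuinely different ways. As a more conceptual alternative I would keep in reserve the route through the mutation of decorated representations of \cite{DWZ08}: it provides a bijection on isomorphism classes of nilpotent decorated representations with an explicitly controlled change of dimension vector, and one then reads off finite-dimensionality of the Jacobian algebra from the uniform boundedness this bijection imposes, trading the explicit path combinatorics for the machinery of the mutation functor.
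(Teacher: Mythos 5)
The paper does not prove this statement at all: it is quoted verbatim from \cite[Corollary~6.6]{DWZ08}, so your attempt can only be measured against the argument in that source, which is essentially the one you outline. Your reductions are sound: Jacobi-finiteness is indeed equivalent to $\mathfrak{m}^{L}\subseteq\overline{J(W)}$ for some $L$; the splitting theorem lets you work with the premutation $(\tilde Q,\tilde W)$; the relation $\partial_{[ba]}\tilde W=a^{\ast}b^{\ast}+\partial_{[ba]}[W]$ does rewrite every interior passage through $k$ (which is necessarily a segment $a^{\ast}b^{\ast}$) as a combination of $k$-avoiding paths, and the length bookkeeping ($\ell_0$ with $m\le\ell_0/2$ passages yields $Q$-paths of length at least $\ell_0/2$ after substituting $ba$ for $[ba]$) gives the bound $L'\le 2L$ you want. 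The problem is that the step you yourself flag as ``the main obstacle'' is not a bookkeeping nuisance but the entire mathematical content of the theorem, and you have not supplied it. What is needed is that the substitution $\sigma\colon[ba]\mapsto ba$ \emph{reflects} the ideal, i.e.\ $\sigma(q)\in\overline{J(W)}$ forces $q\in\overline{J(\tilde W)}$ for $k$-avoiding $q$. The subtlety is that $\hat e\,\overline{J(W)}\,\hat e$ (with $\hat e=1-e_k$) is topologically spanned by elements $u(\partial_cW)v$ in which $c$ may be an arrow incident to $k$ and $u,v$ may themselves pass through $k$; pulling these back requires combining all three families of relations $\partial_{[ba]}\tilde W$, $\partial_{a^{\ast}}\tilde W=\sum_b b^{\ast}[ba]$, $\partial_{b^{\ast}}\tilde W=\sum_a[ba]a^{\ast}$ in a nontrivial way. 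This is exactly the identification $\hat e\,\mathcal{J}(Q,W)\,\hat e\cong\hat e\,\mathcal{J}(\tilde\mu_k(Q,W))\,\hat e$ that \cite{DWZ08} (see also Keller--Yang) actually establishes; asserting ``this should force vanishing'' is assuming the theorem's hard part.

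Two further points. First, paths that \emph{begin or end} at $k$ cannot be rewritten to avoid $k$, and the relations $\sum_b b^{\ast}[ba]=0$, $\sum_a[ba]a^{\ast}=0$ do not ``control'' them in the way you suggest; what one actually uses is the elementary observation that $e_k\mathcal{J}\hat e$, $\hat e\mathcal{J}e_k$ and $e_k\mathcal{J}e_k$ are generated over $\hat e\mathcal{J}\hat e$ by the finitely many starred arrows at $k$, hence are finite-dimensional once $\hat e\mathcal{J}\hat e$ is. This is easy but must be said. Second, your reserve route via mutation of decorated representations does not obviously close either: finite-dimensionality of the Jacobian algebra is not a property of the collection of finite-dimensional nilpotent representations that the DWZ bijection visibly preserves --- the Kronecker algebra is finite-dimensional yet has nilpotent representations of unbounded dimension --- so the relevant invariant would have to be Loewy length (equivalently, nonvanishing of long path actions), and the representation mutation gives no direct control over that. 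In short: correct skeleton, matching the cited source's strategy, but the decisive lemma is missing rather than proved.
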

		We say that $W$ or $(Q, W)$ is \emph{non-degenerate} if every quiver obtained from $(Q, W)$ by a finite sequence of mutations has no $2$-cycles. In which case, the quiver of $\mu_k(Q,W)$ is equal to $\mu_k(Q)$ for $k\in Q_0$.
	\end{definition}
	\begin{theorem}[{\cite[Corollary 7.4]{DWZ08}}]\label{qQ9PzgKXEX4s}
		Let $Q$ be a quiver without loops or $2$-cycles. If $K$ is uncountable, then $Q$ admits a non-degenerate potential.
	\end{theorem}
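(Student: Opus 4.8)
The plan is to realize non-degeneracy as a \emph{generic} property in the space of potentials and then exploit the uncountability of $K$. Fix the $2$-acyclic quiver $Q$. A potential is specified by a coefficient $t_c\in K$ for each cyclic word $c$ in $Q$ (taken up to cyclic equivalence), of which there are only countably many; thus the space of potentials is coordinatized by a countable family of parameters $(t_c)_c$. For a finite sequence of vertices $\mathbf{s}=(k_1,\dots,k_\ell)$ at which successive mutations are formally admissible, I would consider the condition that applying $\mu_{k_\ell}\circ\cdots\circ\mu_{k_1}$ to $(Q,W)$ creates no surviving $2$-cycle at any intermediate step. Only finitely many of the parameters $t_c$ (those of bounded degree) affect this condition, and I claim it carves out a Zariski-open subset $U_{\mathbf{s}}$ of the corresponding finite-dimensional affine space.

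The first step is to reduce the survival of $2$-cycles to the degeneracy of an explicit quadratic form. Recall that premutation produces $\tilde{W}=[W]+\sum_{(b,a)}[ba]a^{\ast}b^{\ast}$; the cubic terms $[ba]a^{\ast}b^{\ast}$ contribute nothing in degree two, while $[W]$ acquires a $2$-cycle $d[ba]$ for every $3$-cycle $d\,ba$ of $W$ whose length-two subpath $ba$ runs through $k$. By the splitting theorem \cite{DWZ08}, $\tilde{\mu}_k(Q,W)$ decomposes uniquely as a direct sum of a trivial QP and its reduced part, and all of these newly created $2$-cycles are eliminated in $\mu_k(Q,W)=\mu_k(Q)$ precisely when the degree-two part of $\tilde{W}$, viewed as a quadratic form on the created $2$-cycles, is non-degenerate. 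Hence each $U_{\mathbf{s}}$ is cut out by the non-vanishing of a determinant and is in particular Zariski-open; the remaining issue for each fixed $\mathbf{s}$ is to show that $U_{\mathbf{s}}$ is \emph{non-empty}, equivalently that this determinant is a nonzero polynomial in the $t_c$.

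To prove non-emptiness I would induct on the length $\ell$ of $\mathbf{s}$. Assuming $(k_1,\dots,k_{\ell-1})$ has already been carried out with every intermediate reduced QP $2$-acyclic, one premutates at $k_\ell$, isolates the $2$-cycles it creates, and checks that the associated quadratic form is generically invertible. The coefficients of this form are polynomial expressions in the original $t_c$, inherited through steps $(1)$--$(3)$ of the premutation and the successive reductions, and the crucial point is that they depend on the $t_c$ non-trivially, so the determinant is not identically zero. Treating the $t_c$ as independent indeterminates, this is exactly the assertion that $(Q,W)$ is non-degenerate once the coefficients are taken generic over $K$, which witnesses that each $U_{\mathbf{s}}$ is a non-empty Zariski-open set.

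Finally I would assemble the global conclusion. The desired good locus is $\bigcap_{\mathbf{s}}U_{\mathbf{s}}$, an intersection indexed by the countably many finite vertex sequences, so its complement is a countable union of proper Zariski-closed subsets, each the vanishing locus of a nonzero polynomial in finitely many of the $t_c$. Since $K$ is uncountable and algebraically closed, it has uncountable transcendence degree over its prime field, and one may choose the family $(t_c)_c$ to be algebraically independent over that prime field; such a point escapes every one of these hypersurfaces, so the resulting $W$ is non-degenerate. The main obstacle is the inductive step of the non-emptiness argument: one must track exactly which $2$-cycles premutation produces at each stage and verify, via the reduction in \cite{DWZ08}, that no conspiracy among the $t_c$ forces the relevant quadratic form to be singular.
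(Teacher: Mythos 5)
The paper does not prove this statement; it is quoted verbatim from \cite[Corollary~7.4]{DWZ08}, so the only meaningful comparison is with the argument there. Your global strategy is the right one and is essentially the one in \cite{DWZ08}: coordinatize $\Pot(Q)$ by countably many coefficients, show that for each finite mutation sequence $\mathbf{s}$ the good locus $U_{\mathbf{s}}$ is a non-empty Zariski-open set depending on finitely many coordinates, and then use uncountability of $K$ to avoid the countably many bad hypersurfaces. Your analysis of a single premutation is also correct: since $Q$ is $2$-acyclic, the degree-two part of $[W]$ comes exactly from the $3$-cycles of $W$ through $k$, the new $2$-cycles pair a new arrow $[ba]$ with an old arrow $c$, and the reduced part is $2$-acyclic precisely when the resulting coefficient matrix has maximal rank (``non-degenerate quadratic form'' should really be ``maximal rank of a possibly rectangular matrix,'' but that is cosmetic). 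For $\ell=1$ the entries of this matrix are independent coordinates $t_{cba}$, so non-emptiness is immediate.

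The genuine gap is exactly where you flag it: the inductive step for $\ell\ge 2$. After $\ell-1$ mutations the relevant $3$-cycle coefficients are complicated algebraic functions of the $t_c$ (produced by the reduction process of the splitting theorem), and ``they depend on the $t_c$ non-trivially, so the determinant is not identically zero'' is not an argument --- non-constant functions can perfectly well satisfy relations forcing a determinant to vanish identically, and asserting otherwise is essentially restating the theorem. The missing ingredient is the involutivity of QP-mutation on right-equivalence classes of reduced QPs (\cite[Theorem~5.7]{DWZ08}). With it the induction closes cleanly: let $Q'=\mu_{k_1}(Q)$, which is again $2$-acyclic. By induction the locus $U'\subseteq\Pot(Q')$ where the sequence $(k_2,\dots,k_\ell)$ stays $2$-acyclic is a non-empty Zariski-open set, and so is the locus $U'_{(k_1)}$ where the single mutation $\mu_{k_1}$ of $(Q',-)$ is $2$-acyclic (the base case applied to $Q'$). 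Since the truncated space of potentials is an affine space, hence irreducible, these two non-empty opens meet; pick $W'$ in the intersection and set $W:=\mu_{k_1}(Q',W')$. Involutivity guarantees that $(Q,W)$ mutates at $k_1$ back to $(Q',W')$, so the full sequence $(k_1,\dots,k_\ell)$ is $2$-acyclic starting from $W$, proving $U_{\mathbf{s}}\neq\emptyset$. Two smaller points you should also nail down: that the finitely many coefficients of the mutated potential in each bounded degree are regular functions of finitely many of the original $t_c$ (this needs the degreewise stabilization in the proof of the splitting theorem), and that the defining polynomials have coefficients in the prime field if you want to invoke algebraic independence rather than the more robust ``an uncountable field is not a countable union of proper hypersurfaces'' argument.
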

	
	\begin{proposition}\label{prop:fin mut}
		Let $Q$ be a quiver without loops or $2$-cycles. Assume that $Q$ is of finite mutation type. Then there exists a potential $W$ such that $(Q,W)$ is non-degenerate and Jacobi-finite.
	\end{proposition}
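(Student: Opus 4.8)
The plan is to combine the classification of finite mutation type quivers (Theorem \ref{MupWVVH0u2Zz}) with the stability of both properties along mutation sequences, so that it suffices to exhibit one good potential on each mutation class and then transport it. First I would record this stability. Suppose $(Q',W')$ is non-degenerate and Jacobi-finite and $Q=\mu_{k_r}\cdots\mu_{k_1}(Q')$. Applying the corresponding QP-mutations gives $(Q,W):=\mu_{k_r}\cdots\mu_{k_1}(Q',W')$ whose underlying quiver is exactly $Q$, because non-degeneracy ensures that the quiver of each $\mu_{k}$ is the quiver mutation of the previous one (Definition \ref{def:mutation}). This $(Q,W)$ is again non-degenerate, since every mutation sequence out of it extends one out of $(Q',W')$, and it is again Jacobi-finite by iterated application of Theorem \ref{thm:Jfin} (whose hypotheses are met: $k$ lies on no $2$-cycle by non-degeneracy, and one may normalize $W$ up to right equivalence so that no cyclic term starts or ends at $k$). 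After reducing a disconnected quiver to its connected components, it therefore suffices to produce a non-degenerate Jacobi-finite potential on a single representative of each mutation class of a connected $Q$.

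By Theorem \ref{MupWVVH0u2Zz}, such a representative is either one of the finitely many exceptional quivers of Table \ref{axzK0GRWH13u} or a quiver $Q(\tau)$ arising from a triangulated surface. I would dispose of the exceptional quivers by a finite case analysis. For the acyclic ones among them --- the Dynkin $E_n$, the affine $E_n^{(1)}$, and $K_{\ge 3}$ --- I would take the zero potential, whose Jacobian algebra is the (finite-dimensional) path algebra and which is non-degenerate because its QP-mutations are $2$-cycle-free. For the remaining cyclic exceptional quivers $E_n^{(1,1)}$, $X_6$ and $X_7$, I would quote the explicit non-degenerate potentials and the corresponding finite-dimensionality computations of \cite{GLFS16}.

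The surface case carries essentially all the weight, and is where I expect the main obstacle. Here I would take the representative $Q(\tau)$ with the Labardini-Fragoso potential $W(\tau)$ \cite{LF09}, whose non-degeneracy is known for all surfaces. When the surface has non-empty boundary the associated Jacobian algebra is finite-dimensional, so the claim is immediate. The delicate situation is that of punctured, and in particular closed, surfaces, where finite-dimensionality of the Jacobian algebra is not automatic for the geometric potential. To control this I would invoke the analysis of laminations on marked surfaces carried out in Theorem \ref{thm:surf main} and Appendix \ref{app:ms}, which is precisely the tool adapted to bounding the Jacobian algebra. Establishing Jacobi-finiteness of a non-degenerate potential in this punctured regime --- and isolating any small exceptional surfaces that must be treated by hand --- is the technical heart of the argument and the step I expect to be hardest.
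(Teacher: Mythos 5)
Your overall skeleton --- reduce to connected components, transport non-degeneracy and Jacobi-finiteness along mutation sequences via Theorem \ref{thm:Jfin}, and then treat one representative per mutation class using the classification in Theorem \ref{MupWVVH0u2Zz} --- is exactly the paper's strategy, and your treatment of the exceptional quivers matches it in substance (for the record, the paper attributes the cyclic exceptional cases to \cite{GGS14} for $E_i^{(1,1)}$, \cite[Section 9.4]{GLFS16} for $X_6$, and \cite[Theorem B]{La25} for $X_7$, and handles the acyclic ones via Theorems \ref{thm:Jfin} and \ref{qQ9PzgKXEX4s}).

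The genuine gap is in the surface case, precisely where you flag the difficulty. You propose to establish Jacobi-finiteness of the Labardini-Fragoso potential on punctured and closed surfaces by invoking Theorem \ref{thm:surf main} and the lamination machinery of Appendix \ref{app:ms}. That is circular: every statement in that appendix, including Theorem \ref{thm:surf main} and Corollary \ref{cor:surfbij}, is formulated under the standing hypothesis that $W$ is already a non-degenerate \emph{Jacobi-finite} potential on $Q_T$ --- shear coordinates are matched with $\g$-vectors of $2$-term presilting complexes over the Jacobian algebra, which presupposes that this algebra is finite-dimensional. Laminations give no control on $\dim_K\mathcal{J}(Q_T,W_T)$. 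What actually closes this case is the existing literature, recorded in Remark \ref{rem:Jacobi-fin}: $\mathcal{J}(Q_T,W_T)$ is finite-dimensional when $\partial\mathcal{S}\neq\emptyset$ by \cite{LF09}, for closed surfaces other than the four-punctured sphere by \cite{Lad12}, and for the four-punctured sphere by \cite{LF15}. Replacing your appeal to the appendix with these citations repairs the argument; as written, the step you correctly identify as the technical heart rests on a tool that cannot do the job.
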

	\begin{proof}
		We may assume that $Q$ is connected, since the existence of a non-degenerate Jacobi-finite potential can be checked componentwise.
	
		If $Q$ is defined from a triangulated surface, the assertion follows from \cite{LF09,Lad12,LF15} (see Remark \ref{rem:Jacobi-fin}).
		
		If $Q$ is of exceptional finite mutation type, we divide the proof into cases: If $Q$ is of acyclic type, then the assertion follows from Theorems \ref{thm:Jfin} and \ref{qQ9PzgKXEX4s}. If $Q$ is mutation equivalent to $E_i^{(1,1)}$ for $i=6,7,8$, to $X_6$, or to $X_7$, then the assertion follows from \cite{GGS14}, \cite[Section 9.4]{GLFS16}, and \cite[Theorem B]{La25}, respectively.
	\end{proof}
	
	For later use, we recall the notion of the restriction of a QP to a subset of vertices. Let $(Q,W)$ be a QP, and $I$ be a subset of $Q_{0}$. Then the \emph{restriction} of $(Q,W)$ to $I$ is denoted by $(Q|_I,W|_I)$ and defined as follows:
	\begin{itemize}
		\item $Q|_I$ is the full subquiver of $Q$ with $(Q|_I)_0=I$.
		\item $W|_I$ is obtained from $W$ by deleting summands of $W$ that are not cycles in $Q|_I$.
	\end{itemize}
	It induces naturally a surjective algebra homomorphism $\mathcal{J}(Q,W)\rightarrow\mathcal{J}(Q|_I,W|_I)$. If $(Q,W)$ is non-degenerate, then so is $(Q|_I,W|_I)$ \cite[Proposition 2.3]{GLFS16}.
	
	\subsection{Finite-dimensional Jacobian algebras}
	In this paper, we focus on finite-dimensional Jacobian algebras associated with non-degenerate QPs. From now on, all quivers are assumed to have no loops or $2$-cycles, and by a QP $(Q,W)$, we mean a non-degenerate Jacobi-finite QP.
	\subsubsection{Representation types}
	The classification of representation-finite (resp., representation-tame) Jacobian algebras plays a crucial role in the classification of $\g$-finite (resp., $\g$-tame) Jacobian algebras, and subsequently, in our study for the classification $E$-finite (resp., $E$-tame) Jacobian algebras.
	\begin{theorem}[{\cite[Theorem 7.1]{GLFS16}}]\label{on26Fdo7dIPi}
		The Jacobian algebra $\mathcal{J}(Q,W)$ is representation-finite (resp., representation-tame) if and only if so is $\mathcal{J}(\mu_k(Q,W))$ for $k\in Q_0$.
	\end{theorem}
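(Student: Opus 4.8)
The plan is to categorify the QP-mutation $\mu_k$ and then transport representation-theoretic information through a fixed ambient $2$-CY category. Since $(Q,W)$ is non-degenerate and Jacobi-finite, Amiot's construction \cite{Am09} produces a Hom-finite Krull-Schmidt $2$-CY category $\mathcal{C}=\mathcal{C}_{(Q,W)}$ carrying a basic cluster-tilting object $\Gamma=\Gamma_1\oplus\cdots\oplus\Gamma_n$ with $\End_{\mathcal{C}}(\Gamma)^{\op}\cong\mathcal{J}(Q,W)$. By the compatibility of QP-mutation with cluster-tilting mutation \cite{KY11,BIRS11} (see Theorem \ref{WWXhVkRSjGCF}), the category attached to $\mu_k(Q,W)$ is triangle-equivalent to the same $\mathcal{C}$, now equipped with the mutated cluster-tilting object $\Gamma':=\mu_{\Gamma_k}(\Gamma)$, and $\End_{\mathcal{C}}(\Gamma')^{\op}\cong\mathcal{J}(\mu_k(Q,W))$. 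As $\mu_k^2(Q,W)$ is right equivalent to $(Q,W)$ \cite{DWZ08}, it will suffice to prove each implication in a single direction.

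First I would settle representation-finiteness by a count inside $\mathcal{C}$. By Theorem \ref{UcfISbEawdcJ} the functor $\Hom_{\mathcal{C}}(\Gamma,-)$ induces an equivalence $\mathcal{C}/(\Gamma[1])\simeq\mod\mathcal{J}(Q,W)$, under which an indecomposable of $\mathcal{C}$ becomes zero precisely when it lies in $\add(\Gamma[1])$. Hence the indecomposable $\mathcal{J}(Q,W)$-modules are in bijection with $\ind\mathcal{C}\setminus\{\Gamma_1[1],\dots,\Gamma_n[1]\}$, giving $|\ind\mathcal{C}|=|\ind(\mod\mathcal{J}(Q,W))|+n$. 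The identical computation for $\Gamma'$ yields $|\ind\mathcal{C}|=|\ind(\mod\mathcal{J}(\mu_k(Q,W)))|+n$. Since $\ind\mathcal{C}$ is intrinsic to $\mathcal{C}$ and $n=|Q_0|$ is unchanged by mutation, the two module categories carry the same (finite or infinite) number of indecomposables, so representation-finiteness transfers immediately.

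The tame case is more delicate, because the count above is blind to the dimensions of the families of indecomposables of each fixed dimension vector, which is exactly what tameness records. Here the plan is to use the explicit DWZ-mutation of decorated representations \cite{DWZ08}: it is defined purely by linear algebra at the vertex $k$ (the homology at $k$ of the maps along arrows incident to $k$) and induces a bijection between isomorphism classes of indecomposable decorated representations of $(Q,W)$ and of $\mu_k(Q,W)$, away from the two exceptional simples supported at $k$. I would show that, for each pair of relevant dimension vectors $\mathbf{d},\mathbf{d}'$, this bijection is realized by $\GL$-equivariant morphisms between constructible subsets of the representation varieties $\rep_{\mathbf{d}}(\mathcal{J}(Q,W))$ and $\rep_{\mathbf{d}'}(\mathcal{J}(\mu_k(Q,W)))$. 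Such morphisms carry algebraic one-parameter families of indecomposables to algebraic one-parameter families and send a finite exceptional locus to a finite locus; combined with involutivity, this will show that the tameness condition holds for $\mathcal{J}(Q,W)$ exactly when it holds for $\mathcal{J}(\mu_k(Q,W))$.

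The hard part will be precisely this geometric input in the tame case: one must verify that the set-theoretic DWZ-bijection is genuinely algebraic — given generically by morphisms of varieties compatible with the conjugation actions — and must control both the decoration that mutation creates at $k$ and the finitely many dimension vectors exchanged across a fixed $\mathbf{d}'$. Once the mutation is known to be a morphism of representation varieties outside a finite exceptional locus, the preservation of both representation types follows formally from the involutivity of $\mu_k$.
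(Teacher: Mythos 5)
First, a point of comparison: the paper does not prove this statement at all --- it is imported verbatim as \cite[Theorem 7.1]{GLFS16} --- so your proposal must be measured against the argument in that reference rather than against anything internal to this paper.

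Your representation-finite half is correct and essentially complete. Since $(Q,W)$ and $\mu_k(Q,W)$ are Jacobi-finite (Theorem \ref{thm:Jfin}), Amiot's category $\mathcal{C}=\mathcal{C}_{(Q,W)}$ is $\Hom$-finite, Krull--Schmidt and $2$-CY, the mutated cluster-tilting object $\mu_{\Gamma_k}(\Gamma)$ has endomorphism algebra $\mathcal{J}(\mu_k(Q,W))$ by \cite{BIRS11,KY11}, and the quotient equivalence of Theorem \ref{UcfISbEawdcJ} really does induce a bijection between $\ind\mathcal{C}\setminus\ind(\add(\Gamma[1]))$ and the indecomposable modules: for an indecomposable $X\notin\add(\Gamma[1])$ the ideal $(\Gamma[1])(X,X)$ lies in $\rad\End_{\mathcal{C}}(X)$, so $X$ neither vanishes nor gets identified with a non-isomorphic indecomposable in the quotient. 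The count $|\ind\mathcal{C}|=|\ind(\mod\mathcal{J}(Q,W))|+n$, applied to both $\Gamma$ and $\Gamma'$, transfers finiteness. This is a genuinely different and cleaner route than \cite{GLFS16} for this half (one could even bypass the cluster category and use only the DWZ bijection on indecomposable decorated representations from \cite{DWZ08}).

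The tame half is where all of the difficulty of \cite[Theorem 7.1]{GLFS16} is concentrated, and your proposal does not close it: what you announce as ``the hard part'' is precisely the theorem's technical core, stated as a goal rather than proved. Concretely: (i) the DWZ mutation of a decorated representation is built from non-canonical choices of splittings in the triangle of linear maps at the vertex $k$, so it is not a morphism of varieties as it stands; one must stratify $\rep_{\mathbf{d}}(\mathcal{J}(Q,W))$ by the ranks of those maps and construct constructible, $\GL$-equivariant maps stratum by stratum. (ii) A bijection on indecomposables together with ``families map to families'' is not by itself enough to transfer tameness; one needs a quantitative statement --- in \cite{GLFS16} the invariance of the number of parameters $\mu_{\Lambda}(\mathbf{d})$ in the sense of de la Pe\~na and Crawley-Boevey, combined with their characterization of tameness via parameter counts --- and verifying that invariance through the rank stratification is the bulk of their Section 7. (iii) The target dimension vector is not a function of $\mathbf{d}$ alone (the new dimension at $k$ depends on the ranks of the maps at $k$), so the bookkeeping of which strata of which $\rep_{\mathbf{d}}$ land in a fixed $\rep_{\mathbf{d}'}$ must be carried out explicitly. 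Until these points are supplied, the tame case of your argument is a correct plan that coincides with the strategy of \cite{GLFS16}, but not a proof.
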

	\begin{corollary}\label{cor:rep type}
		Let $Q$ be a connected acyclic type quiver. Then $\mathcal{J}(Q,W)$ is
		\begin{itemize}
			\item representation-finite if and only if $Q$ is of Dynkin type;
			\item representation-tame if and only if $Q$ is of either Dynkin or affine type.
		\end{itemize}
	\end{corollary}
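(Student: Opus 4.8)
The plan is to reduce the statement to the classical trichotomy for path algebras of acyclic quivers \cite{DF73,G72,N73} by means of the mutation invariance of the representation type. Since $Q$ is of acyclic type, there is an acyclic quiver $Q'$ mutation equivalent to $Q$; fix a sequence of quiver mutations from $Q$ to $Q'$. Because $(Q,W)$ is non-degenerate, applying the corresponding QP-mutations to $(Q,W)$ is legitimate at every step: no $2$-cycle ever occurs, so each $\mu_k$ is defined and, as recorded in Definition \ref{def:mutation}, the underlying quiver after each QP-mutation agrees with the corresponding quiver mutation. Hence the terminal QP has underlying quiver $Q'$, and since $Q'$ is acyclic it carries no oriented cycles and thus only the zero potential. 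Therefore $(Q,W)$ is QP-mutation equivalent to $(Q',0)$, whose Jacobian algebra is the finite-dimensional path algebra $\mathcal{J}(Q',0)=KQ'$.

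By Theorem \ref{on26Fdo7dIPi}, representation-finiteness and representation-tameness are preserved under QP-mutation, so $\mathcal{J}(Q,W)$ is representation-finite (resp. representation-tame) if and only if $KQ'$ is. The classical characterization \cite{DF73,G72,N73} then says that $KQ'$ is representation-finite precisely when $Q'$ is Dynkin, and representation-tame precisely when $Q'$ is Dynkin or affine. This yields both equivalences phrased in terms of the chosen acyclic representative $Q'$.

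It remains to check that these properties of $Q'$ match the intrinsic type of $Q$, namely that $Q'$ is Dynkin (resp. Dynkin or affine) if and only if $Q$ is of Dynkin (resp. of Dynkin or affine) type. If $Q'$ is Dynkin (resp. affine), then $Q$ is mutation equivalent to the Dynkin (resp. affine) quiver $Q'$, hence is of the corresponding type by definition. Conversely, suppose $Q$ is of Dynkin type, i.e.\ mutation equivalent to some Dynkin quiver $R$ (which is acyclic); then $Q'$ is mutation equivalent to $R$ as well. As $(Q',0)$ lies in the QP-mutation class of the non-degenerate QP $(Q,W)$, it is itself non-degenerate, so the argument of the first paragraph applied with target $R$ shows that $(Q',0)$ and $(R,0)$ are QP-mutation equivalent. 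Theorem \ref{on26Fdo7dIPi} then forces $KQ'$ and $KR$ to share their representation type; since $R$ is Dynkin, $KR$ and hence $KQ'$ is representation-finite, so $Q'$ is Dynkin by \cite{DF73,G72,N73}. The affine case is identical, with representation-tameness replacing representation-finiteness and noting that affine quivers are acyclic.

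I expect the main obstacle to be the first paragraph: one must verify that non-degeneracy genuinely allows the entire quiver-mutation path to an acyclic representative to be lifted to QP-mutations, which is precisely what forces the terminal potential to vanish and produces the path algebra $KQ'$. The remaining steps are a bookkeeping combination of Theorem \ref{on26Fdo7dIPi} with the classical path-algebra trichotomy, the only real care being the consistency check of the third paragraph, ensuring that ``being of Dynkin/affine type'' (mutation equivalence to \emph{some} Dynkin/affine quiver) agrees with the type of the fixed acyclic representative.
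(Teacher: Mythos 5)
Your proposal is correct and follows essentially the same route as the paper: the paper's proof is a one-line appeal to the classical Dynkin/affine classification of representation types of path algebras together with the mutation invariance of representation type (Theorem \ref{on26Fdo7dIPi}), which is exactly what you carry out. Your write-up just makes explicit the details the paper leaves implicit, namely that non-degeneracy lets the quiver-mutation path to an acyclic representative $Q'$ be lifted to QP-mutations ending at $(Q',0)$ with $\mathcal{J}(Q',0)=KQ'$, and that the type of the acyclic representative agrees with the mutation type of $Q$.
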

	\begin{proof}
		It follows from the classifications of representation-finite and tame algebras in Section \ref{sec:finite rep} and Theorem \ref{on26Fdo7dIPi}.
	\end{proof}
	\begin{theorem}[{\cite{GLFS16}}]\label{SF76ppyobv7F}
		Assume that $Q$ is not mutation equivalent to any of the following quivers: $K_m$ for $m\ge 3$, $X_6$, $X_7$, and $T_2$. Then $\mathcal{J}(Q,W)$ is representation-tame if and only if $Q$ is of finite mutation type. Otherwise, the representation type of the Jacobian algebra $J(Q,W)$ is identified by the following assertions.
		\begin{itemize}
			\item[$(1)$] For $m\ge 3$, $\mathcal{J}(K_m,0)$ is representation-wild.
			\item[$(2)$] Any QP $(X_6,W)$ is right equivalent to $(X_6,W_6)$ as shown in Figure \ref{7N8dyjMD1jFh}, and $\mathcal{J}(X_6, W_6)$ is representation-wild.
			\item[$(3)$] For any QP $(X_7,W)$, $\mathcal{J}(X_7,W)$ is representation-wild.
			\item[$(4)$] Any QP $(T_2,W)$ is right equivalent to either $(T_2,W_2^{\text{tame}})$ or $(T_2,W_2^{\text{wild}})$ as shown in Figure \ref{PqJ50bfznPxB}. Moreover, $\mathcal{J}(T_2,W_2^{\text{tame}})$ is representation-tame, and $\mathcal{J}(T_2,W_2^{\text{wild}})$ is representation-wild.
		\end{itemize}
	\end{theorem}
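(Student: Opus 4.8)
The plan is to leverage the mutation-invariance of representation type (Theorem~\ref{on26Fdo7dIPi}) in order to reduce the statement to a finite list of ``model'' quivers furnished by the Felikson--Shapiro--Tumarkin classification (Theorem~\ref{MupWVVH0u2Zz}), and to dispose of the exceptional cases by direct computation. Since representation-tameness is a mutation invariant, for the main equivalence it suffices to decide tameness on a single representative of each mutation class, so I would treat the two implications separately and then address the exceptional classifications $(1)$--$(4)$.

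For the direction ``tame $\Rightarrow$ finite mutation type'' I would argue by contraposition, combining restriction of QPs with mutation invariance. Recall that for $I\subseteq Q_0$ the restriction yields a surjection $\mathcal{J}(Q,W)\twoheadrightarrow\mathcal{J}(Q|_I,W|_I)$, whence $\mod\mathcal{J}(Q|_I,W|_I)$ sits inside $\mod\mathcal{J}(Q,W)$ as a full subcategory closed under extensions; in particular any wild family over the restriction transports to a wild family over the ambient algebra. Now suppose $Q$ has rank at least three and is not of finite mutation type (the rank-two case is automatically of finite mutation type, so nothing is needed there). By the mutation-finite classification, some quiver $Q'$ mutation equivalent to $Q$ carries at least three arrows between a pair of vertices $u,v$. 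Restricting the non-degenerate QP $(Q',W')$ obtained after a suitable sequence of QP-mutations to $\{u,v\}$, and noting that a two-vertex quiver without $2$-cycles contains no oriented cycle, we get $W'|_{\{u,v\}}=0$ and hence a surjection $\mathcal{J}(Q',W')\twoheadrightarrow KK_m$ with $m\ge 3$. As $KK_m$ is representation-wild for $m\ge 3$, the algebra $\mathcal{J}(Q',W')$ is wild, and Theorem~\ref{on26Fdo7dIPi} carries this conclusion back to $\mathcal{J}(Q,W)$. This simultaneously establishes assertion~$(1)$ and the contrapositive we require.

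For the direction ``finite mutation type $\Rightarrow$ tame'' I would split along Theorem~\ref{MupWVVH0u2Zz}. If $Q$ is mutation equivalent to an acyclic quiver of Dynkin or affine type, then Corollary~\ref{cor:rep type} already yields representation-finiteness or tameness. The non-excluded exceptional entries of Table~\ref{axzK0GRWH13u}, namely $E_6,E_7,E_8$ and $E_6^{(1)},E_7^{(1)},E_8^{(1)}$, are covered by this acyclic analysis, while the elliptic cases $E_6^{(1,1)},E_7^{(1,1)},E_8^{(1,1)}$ are \emph{not} of acyclic type and must be shown tame by hand: here one identifies $\mathcal{J}(Q,W)$, for the explicit potentials of \cite{GGS14}, with a tubular algebra and invokes the known tameness of tubular algebras. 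The remaining and most substantial case is that $Q$ is defined from a triangulated surface; then I would use Labardini-Fragoso's description of the surface QP to recognise $\mathcal{J}(Q,W)$ as a string-type algebra (gentle in the unpunctured case, and skew-gentle or clannish in the punctured case) and deduce tameness from the corresponding structure theory.

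Finally, assertions $(2)$--$(4)$ demand explicit potential computations. For $X_6$ and $X_7$ one first shows that, up to right equivalence, the potential is forced into the displayed normal form (the space of cyclically inequivalent potentials being small), and then exhibits a representation-wild full subcategory—typically via a restriction or quotient dominating the module category of a wild quiver—to conclude wildness. For $T_2$ one reduces similarly to the two normal forms $W_2^{\mathrm{tame}}$ and $W_2^{\mathrm{wild}}$: the former is matched with a known tame (clannish/tubular) model, while the latter again admits a wild subquotient. I expect the genuine obstacle to lie in the surface case of the forward direction: proving tameness uniformly across all triangulated surfaces requires the full structure theory of surface Jacobian algebras, and the punctured case is especially delicate, since there the relevant algebras are no longer gentle.
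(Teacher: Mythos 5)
The first thing to say is that the paper does not prove this statement at all: Theorem \ref{SF76ppyobv7F} is imported verbatim from \cite{GLFS16} and used as a black box, so there is no in-paper proof to compare against. Judged as a reconstruction of the argument of \cite{GLFS16}, your outline has the right architecture. The direction ``tame $\Rightarrow$ finite mutation type'' --- via \cite[Theorem 2.6]{FST12}, restriction of the QP to a generalized Kronecker subquiver with zero potential, and mutation invariance of the representation type (Theorem \ref{on26Fdo7dIPi}) --- is essentially complete, and is precisely the argument this paper itself runs for $E$-tameness in Lemma \ref{lem:Etame finmut}. One small slip there: the module category of a quotient algebra embeds as a full subcategory closed under submodules and quotients, but not under extensions; fortunately only exactness and full faithfulness of the embedding are needed to transport wildness, so nothing breaks.

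The genuine gap is in the surface case of ``finite mutation type $\Rightarrow$ tame''. You propose to recognise $\mathcal{J}(Q_T,W)$ as a gentle, skew-gentle or clannish algebra using Labardini-Fragoso's potential, but the theorem quantifies over \emph{all} non-degenerate Jacobi-finite potentials $W$ on $Q_T$, not just the canonical $W_T$; the whole point of the exceptional items $(2)$--$(4)$ is that a single quiver can carry right-inequivalent non-degenerate potentials with different representation types, so one cannot silently reduce to the standard potential. In \cite{GLFS16} this is closed by a degeneration argument: tameness passes to any algebra that degenerates to a tame algebra (after Geiss), and for a suitably chosen triangulation the Jacobian algebra of \emph{every} non-degenerate potential degenerates to a skewed-gentle algebra. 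Uniqueness of the non-degenerate potential up to right equivalence, where it holds (cf.\ Remark \ref{rem:Jacobi-fin}), is itself a nontrivial theorem and fails exactly in the exceptional cases, so it cannot be waved through as ``the space of cyclically inequivalent potentials being small.'' Likewise, your treatment of $X_6$, $X_7$, $T_2$ and $E_n^{(1,1)}$ has the correct shape but compresses the bulk of the case-by-case analysis of \cite{GLFS16} into ``invoke structure theory''; as a proof, those normal-form computations and the identification of tame/wild subquotients would have to be carried out in full.
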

	
	\begin{remark}
		The QP $(T_1,W_1)$ as shown in Figure \ref{PqJ50bfznPxB} is non-degenerate and Jacobi-finite, and the Jacobian algebra $\mathcal{J}(T_1,W_1)$ is representation-tame. The QP $(T_1,W_1')$ is non-degenerate, though it is not Jacobi-finite.
	\end{remark}
	
	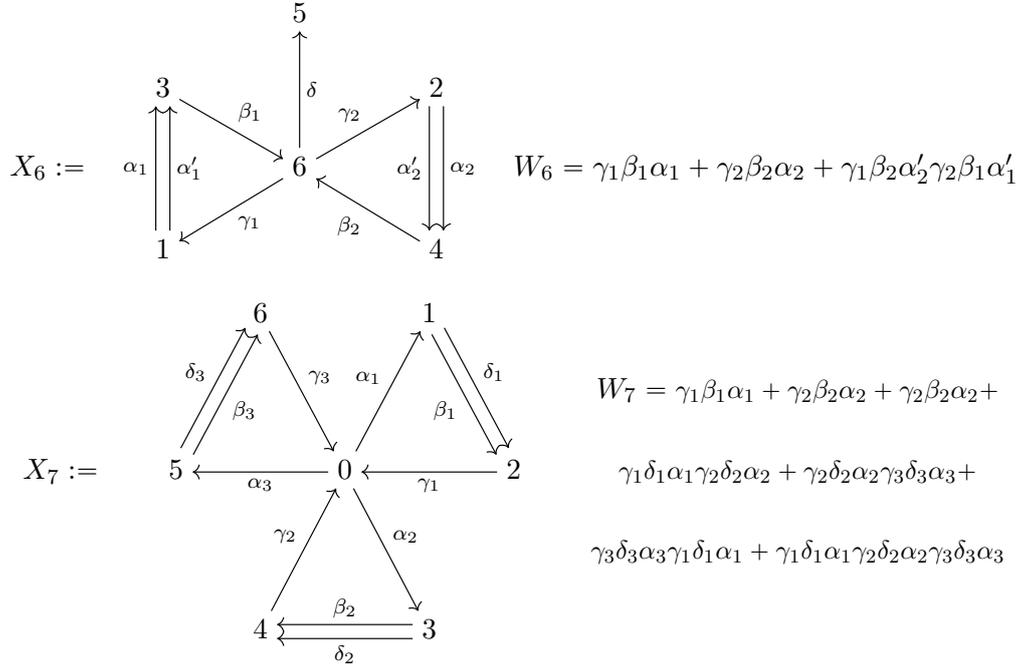
\begin{figure}[]
		\[\begin{array}{ccc}
			\begin{tikzcd}[cramped,sep=scriptsize]
				&&& 5 \\
				&3 &&&& 2 \\
				X_6:= &&& 6 &&& W_6=\gamma_1\beta_1\alpha_1+ \gamma_2\beta_2\alpha_2+ \gamma_1\beta_2\alpha'_2\gamma_2\beta_1\alpha'_1 \\
				&1 &&&& 4
				\arrow["{\beta_1}", from=2-2, to=3-4]
				\arrow["{\alpha'_2}"', shift right, from=2-6, to=4-6]
				\arrow["{\alpha_2}", shift left, from=2-6, to=4-6]
				\arrow["\delta"', from=3-4, to=1-4]
				\arrow["{\gamma_2}", from=3-4, to=2-6]
				\arrow["{\gamma_1}", from=3-4, to=4-2]
				\arrow["{\alpha'_1}"', shift right, from=4-2, to=2-2]
				\arrow["{\alpha_1}", shift left, from=4-2, to=2-2]
				\arrow["{\beta_2}", from=4-6, to=3-4]
			\end{tikzcd}\\ \\
			\begin{tikzcd}[cramped,sep=scriptsize]
				&& 6 && 1 \\
				&&&&&& W_7=\scalemath{0.9}{\gamma_1\beta_1\alpha_1+\gamma_2\beta_2\alpha_2+\gamma_2\beta_2\alpha_2+}
				\\
				X_7:=& 5 && 0 && 2 & \scalemath{0.9}{\gamma_1\delta_1\alpha_1\gamma_2\delta_2\alpha_2+\gamma_2\delta_2\alpha_2\gamma_3\delta_3\alpha_3+}\\
				&&&&&& \scalemath{0.9}{\gamma_3\delta_3\alpha_3\gamma_1\delta_1\alpha_1+\gamma_1\delta_1\alpha_1\gamma_2\delta_2\alpha_2\gamma_3\delta_3\alpha_3}
				\\
				&& 4 && 3
				\arrow["{\gamma_3}", from=1-3, to=3-4]
				\arrow["{\beta_1}"', shift right, from=1-5, to=3-6]
				\arrow["{\delta_1}", shift left, from=1-5, to=3-6]
				\arrow["{\beta_3}"', shift right, from=3-2, to=1-3]
				\arrow["{\delta_3}", shift left, from=3-2, to=1-3]
				\arrow["{\alpha_1}", from=3-4, to=1-5]
				\arrow["{\alpha_3}", from=3-4, to=3-2]
				\arrow["{\alpha_2}", from=3-4, to=5-5]
				\arrow["{\gamma_1}", from=3-6, to=3-4]
				\arrow["{\gamma_2}", from=5-3, to=3-4]
				\arrow["{\beta_2}"', shift right, from=5-5, to=5-3]
				\arrow["{\delta_2}", shift left, from=5-5, to=5-3]
			\end{tikzcd} 
		\end{array}\]
		\caption{The quivers $X_6$ and $X_7$ with non-degenerate Jacobi-finite potentials}
		\label{7N8dyjMD1jFh}
	\end{figure}
	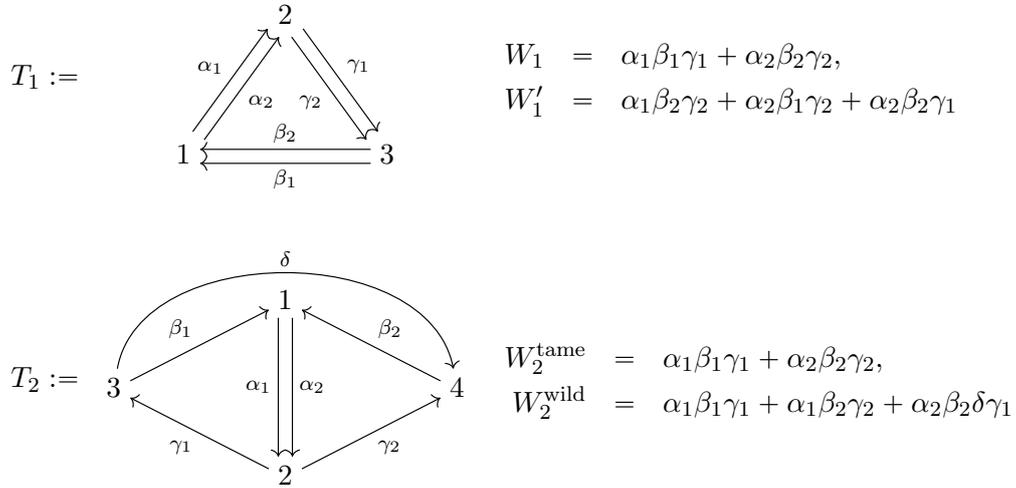
\begin{figure}[htp]
		\[\begin{array}{rcl}
			T_1:= &
			\begin{tikzcd}[cramped]
				& 2 \\
				\\
				1 && 3
				\arrow["{\gamma_1}", shift left, from=1-2, to=3-3]
				\arrow["{\gamma_2}"', shift right, from=1-2, to=3-3]
				\arrow["{\alpha_2}"', shift right, from=3-1, to=1-2]
				\arrow["{\alpha_1}", shift left, from=3-1, to=1-2]
				\arrow["{\beta_2}"', shift right, from=3-3, to=3-1]
				\arrow["{\beta_1}", shift left, from=3-3, to=3-1]
			\end{tikzcd} &
			\begin{array}{rcl}
				W_1&=&\alpha_1\beta_1\gamma_1 + \alpha_2\beta_2\gamma_2, \\
				W_1'&=&\alpha_1\beta_2\gamma_2 + \alpha_2\beta_1\gamma_2 + \alpha_2\beta_2\gamma_1
			\end{array} \\ \\
			T_2:= &
			\begin{tikzcd}[cramped]
				&& 1 \\
				3 &&&& 4 \\
				&& 2
				\arrow["{\alpha_1}"', shift right, from=1-3, to=3-3]
				\arrow["{\alpha_2}", shift left, from=1-3, to=3-3]
				\arrow["{\beta_1}", from=2-1, to=1-3]
				\arrow["\delta", bend left=80, from=2-1, to=2-5]
				\arrow["{\beta_2}"', from=2-5, to=1-3]
				\arrow["{\gamma_1}", from=3-3, to=2-1]
				\arrow["{\gamma_2}"', from=3-3, to=2-5]
			\end{tikzcd}&
			\begin{array}{rcl}
				W_2^{\text{tame}}&=&\alpha_1\beta_1\gamma_1+\alpha_2\beta_2\gamma_2,\\
				W_2^{\text{wild}}&=&\alpha_1\beta_1\gamma_1+\alpha_1\beta_2\gamma_2+ \alpha_2\beta_2\delta\gamma_1
			\end{array}
		\end{array}\]
		\caption{The quivers $T_1$ and $T_2$ with two non-degenerate potentials}
		\label{PqJ50bfznPxB}
	\end{figure}
		Theorems \ref{tGCGN5Yoe6G9} and \ref{SF76ppyobv7F} imply that almost all Jacobian algebras associated with finite mutation type QPs are $\g$-tame. 
	\begin{theorem}[{\cite[Theorem 5.13 and Corollary 5.14]{PYK23}}]\label{YTckH2lR6VY1}
		Assume that $Q$ is not mutation equivalent to any of the following quivers: $K_m$ for $m\ge 3$, $X_6$, and $X_7$. Then $\mathcal{J}(Q,W)$ is $\g$-tame if and only if $Q$ is of finite mutation type. Moreover, $\mathcal{J}(K_m,0)$ is not $\g$-tame for $m\ge 3$.
	\end{theorem}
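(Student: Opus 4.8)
The plan is to prove the equivalence by its two implications, treating the addendum on $K_m$ separately, and to organize the ``if'' direction around the classification of finite mutation type quivers in Theorem~\ref{MupWVVH0u2Zz}. The cleanest route to $\g$-tameness is through representation-tameness: if $Q$ is of finite mutation type and \emph{not} mutation equivalent to $T_2$ (recall that $K_m$ for $m\ge 3$, $X_6$, and $X_7$ are already excluded by hypothesis), then Theorem~\ref{SF76ppyobv7F} yields that $\mathcal{J}(Q,W)$ is representation-tame for every admissible potential $W$, and Theorem~\ref{tGCGN5Yoe6G9} gives $\g$-tameness at once. This disposes of the exceptional $E$-type quivers and of all surface-type quivers other than $T_2$, so the only finite mutation type case not covered by this soft argument is $T_2$.

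I would handle $Q$ mutation equivalent to $T_2$ by hand, which is precisely why $T_2$ cannot be excluded from the statement. By Theorem~\ref{SF76ppyobv7F}(4), up to right equivalence there are exactly two potentials, $W_2^{\text{tame}}$ and $W_2^{\text{wild}}$ (Figure~\ref{PqJ50bfznPxB}). For $W_2^{\text{tame}}$ the algebra is representation-tame, so Theorem~\ref{tGCGN5Yoe6G9} again applies. For $W_2^{\text{wild}}$ the algebra is representation-wild, so Theorem~\ref{tGCGN5Yoe6G9} is unavailable; instead I would argue that denseness of the $\g$-fan is a feature of the quiver alone. Concretely, via the cluster category attached to a non-degenerate Jacobi-finite QP (Section~\ref{sec:clcat}) and the $\g$-vector-preserving bijection of Theorem~\ref{nyKSQbIejd2O}, the cluster $\g$-fan $\mathcal{F}^{g}_{\cluster}(T_2)$ is realized as the subfan of $\mathcal{F}^{g}_{\twosilt}(\mathcal{J}(T_2,W_2^{\text{wild}}))$ spanned by the reachable ($=$ cluster-variable) cones, whence
\[\operatorname{supp}\mathcal{F}^{g}_{\cluster}(T_2)\subseteq\operatorname{supp}\mathcal{F}^{g}_{\twosilt}(\mathcal{J}(T_2,W_2^{\text{wild}})).\]
Since $T_2$ is of surface type, $\mathcal{F}^{g}_{\cluster}(T_2)$ is dense by the lamination techniques (e.g.\ Theorem~\ref{thm:surf main}), and denseness therefore transfers \emph{upward} to the silting $\g$-fan, giving $\g$-tameness of $\mathcal{J}(T_2,W_2^{\text{wild}})$.

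For the ``only if'' direction I would prove the contrapositive: if $Q$ is not of finite mutation type, then $\mathcal{J}(Q,W)$ is not $\g$-tame. Here the inclusion used above runs the wrong way---a non-dense cluster subfan does not by itself force the larger silting fan to be non-dense, since a priori non-reachable rigid objects could fill the gap---so the key input is the identification of the support of $\mathcal{F}^{g}_{\twosilt}(\mathcal{J}(Q,W))$ with the ``cluster chamber'' region of the cluster scattering diagram of $Q$, via the wall-and-chamber/stability results of \cite{BST19,As21} together with \cite{GHKK17}; crucially this region does not depend on the chosen potential. Granting this, $\g$-tameness of $\mathcal{J}(Q,W)$ is \emph{equivalent} to denseness of $\mathcal{F}^{g}_{\cluster}(Q)$, and then \cite[Theorem 2.27]{Yu23} forces $Q$ to be of finite mutation type. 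I expect this \emph{downward} transfer of denseness---ruling out that the non-reachable part of the silting fan covers the open region left empty when the cluster $\g$-fan is not dense---to be the main obstacle, and it is exactly the step where the scattering-diagram description, rather than a soft fan-theoretic inclusion, is indispensable.

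Finally, for the addendum that $\mathcal{J}(K_m,0)$ is not $\g$-tame for $m\ge 3$, note that $K_m$ is acyclic, so $\mathcal{J}(K_m,0)$ is the path algebra of the $m$-Kronecker quiver. I would describe its indecomposable $2$-term presilting complexes explicitly: their $\g$-vectors are those of the preprojective and preinjective rigid modules together with the shifted projectives, and for $m\ge 3$ these accumulate to two \emph{distinct} boundary rays but never enter the cone between them, leaving an open two-dimensional region of $\mathbb{R}^2$ uncovered (in contrast to $K_2$, where the two rays coincide and the fan is dense). Hence $\mathcal{F}^{g}_{\twosilt}(\mathcal{J}(K_m,0))$ is not dense, so $\mathcal{J}(K_m,0)$ is not $\g$-tame; this last step is a direct computation requiring no scattering-diagram input.
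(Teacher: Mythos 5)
This statement is quoted by the paper from \cite[Theorem 5.13 and Corollary 5.14]{PYK23} without an internal proof, so your proposal has to be judged against what such a proof actually requires. Your ``if'' direction and the $K_m$ addendum are essentially sound: Theorem \ref{SF76ppyobv7F} plus Theorem \ref{tGCGN5Yoe6G9} handle every finite mutation class outside that of $T_2$, and for $(T_2,W_2^{\text{wild}})$ the observation that $\mathcal{F}^{g}_{\cluster}(T_2)$ sits inside $\mathcal{F}^{g}_{\twosilt}(\mathcal{J}(T_2,W_2^{\text{wild}}))$ (Remark \ref{teSU4F2m6VPA}) so that denseness transfers upward is a legitimate workaround --- though the denseness of the cluster fan of a surface-type quiver with boundary comes from \cite{Yu20}, not from Theorem \ref{thm:surf main}, which concerns integer points and $E$-finiteness rather than denseness of the support. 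The accumulation-ray computation for $K_m$, $m\ge 3$, is correct and is exactly the content of \cite[Corollary 5.14]{PYK23}.

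The genuine gap is in the ``only if'' direction. You base it on identifying the support of $\mathcal{F}^{g}_{\twosilt}(\mathcal{J}(Q,W))$ with the cluster-chamber region of the scattering diagram, i.e.\ on the equivalence ``$\mathcal{J}(Q,W)$ is $\g$-tame if and only if $\mathcal{F}^{g}_{\cluster}(Q)$ is dense.'' This is precisely the step you flag as the main obstacle, and it is not just unproved but inconsistent with the paper's own framework: by Remark \ref{teSU4F2m6VPA} the cluster fan is only the \emph{reachable} subfan of the silting fan, and Theorem \ref{thm:connected}(2) shows the unreachable part is genuinely nonempty in general (for once-punctured closed surfaces $\ind\bigl(\rigid\bigr)=\ind\bigl(\rigid^{+}\bigr)\sqcup\ind\bigl(\rigid^{-}\bigr)$), while the cluster chambers of the scattering diagram account only for the reachable cones. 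Hence a dense silting fan does not force a dense cluster fan, and the downward transfer you need collapses. The workable route, parallel to Lemma \ref{lem:Etame finmut}, avoids scattering diagrams entirely: if $Q$ is not of finite mutation type, then by \cite[Theorem 2.6]{FST12} it is mutation equivalent to a quiver containing $K_m$, $m\ge 3$, as a full subquiver; $\g$-tameness is preserved under QP-mutation because the piecewise-linear map \eqref{peQxUmkA5Njt} is a homeomorphism of $\mathbb{R}^n$ carrying one silting fan onto the other, and it descends to restrictions $(Q|_I,W|_I)$ by the $\g$-analogue of Proposition \ref{R2gTZuMZA4vV} established in \cite{PYK23}; this reduces everything to the non-$\g$-tameness of $\mathcal{J}(K_m,0)$, which you have already proved. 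You should replace the scattering-diagram step by these two permanence properties.
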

	\subsubsection{Cluster categories}\label{sec:clcat}
	We recall the definition of Ginzburg dg algebras (see \cite[Section 3.3]{Am09} and \cite{Gi06}). Let $(Q, W)$ be a QP. We denote by $\hat{Q}$ the quiver such that $\hat{Q}_0=Q_0$, and each of its arrows is one of the following:
	\begin{itemize}
		\item[$(0)$] an arrow of $Q$,
		\item[$(-1)$] an arrow $\begin{tikzcd}[cramped]
			i & j
			\arrow["{a^\ast}", from=1-1, to=1-2]
		\end{tikzcd}$,
		for each arrow
		$\begin{tikzcd}[cramped]
			i & j
			\arrow["{a}", from=1-2, to=1-1,swap]
		\end{tikzcd}$
		of $Q$,
		\item[$(-2)$] a loop
		$\begin{tikzcd}[cramped]
			i & i
			\arrow["{l_i}", from=1-2, to=1-1,swap]
		\end{tikzcd}$,
		for each vertex $i$ of $Q$.
	\end{itemize}
	Define the degree of arrows in the case $(m)$ to be $m$. Then this grading induces a grading on the complete path algebra $K\doublelrangle{\hat{Q}}$. Moreover, there is a unique differential\footnote{By a differential, we mean a morphism of graded $K$-spaces of degree $1$ with $d^2=0$ \cite{Ke07}.} $d:K\doublelrangle{\hat{Q}}\rightarrow K\doublelrangle{\hat{Q}}$ satisfying the following conditions:
	\begin{itemize}
		\item (Leibniz rule) For all $v,u\in K\doublelrangle{\hat{Q}}$, where $u$ is homogeneous of degree $p$,
		\[d(uv)=d(u)v+(-1)^{p}ud(v).\]
		\item For all $a\in Q_1$, $d(a)=0$ and $d(a^{\ast})=\partial_{a}W$.
		\item For all $i\in Q_0$, \[d(l_i)=e_i\left(\sum_{a\in Q_1}aa^{\ast}-a^{\ast}a\right)e_i.\]
	\end{itemize}
	The \emph{complete Ginzburg dg algebra} $\hat{\Gamma}_{(Q,W)}$ is the graded algebra $K\doublelrangle{\hat{Q}}$ endowed with the differential $d$.
	\begin{proposition}[{\cite[Lemma 2.8]{KY11}}]
		The $0$-th cohomology of complete Ginzburg dg algebra $\hat{\Gamma}_{(Q,W)}$ is the Jacobian algebra $\mathcal{J}(Q,W)$, that is,
		\[H^{0}\hat{\Gamma}_{(Q,W)}=\mathcal{J}(Q,W).\]
		Moreover, all terms of $\hat{\Gamma}_{(Q,W)}$ in positive degrees vanish.
	\end{proposition}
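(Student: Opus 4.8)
The plan is to read off both assertions directly from the cohomological $\mathbb{Z}$-grading on $\hat{\Gamma}_{(Q,W)}$. First I would observe that every arrow of $\hat{Q}$ lies in degree $0$, $-1$, or $-2$, so any path—being a product of arrows—has nonpositive degree. Consequently the underlying graded algebra $K\doublelrangle{\hat{Q}}$ is concentrated in degrees $\le 0$; in particular $\hat{\Gamma}_{(Q,W)}^{m}=0$ for all $m>0$, which is the second claim.

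For the computation of $H^{0}$, since $\hat{\Gamma}_{(Q,W)}^{1}=0$ the outgoing differential $d\colon\hat{\Gamma}_{(Q,W)}^{0}\to\hat{\Gamma}_{(Q,W)}^{1}$ vanishes, so
\[
H^{0}\hat{\Gamma}_{(Q,W)}=\hat{\Gamma}_{(Q,W)}^{0}\big/\Im\big(d\colon\hat{\Gamma}_{(Q,W)}^{-1}\to\hat{\Gamma}_{(Q,W)}^{0}\big).
\]
I would then identify the two pieces. The degree-$0$ part $\hat{\Gamma}_{(Q,W)}^{0}$ is spanned by paths using only the degree-$0$ arrows, i.e.\ the arrows of $Q$ itself, so $\hat{\Gamma}_{(Q,W)}^{0}=K\doublelrangle{Q}$. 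A path has degree $-1$ exactly when it uses no loop $l_i$ (each of degree $-2$) and precisely one starred arrow $a^{\ast}$; thus $\hat{\Gamma}_{(Q,W)}^{-1}$ is the completed $K\doublelrangle{Q}$-sub-bimodule of $K\doublelrangle{\hat{Q}}$ topologically generated by the $a^{\ast}$ with $a\in Q_1$.

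The heart of the argument is the computation of this image. For $p,q\in K\doublelrangle{Q}$, the Leibniz rule together with $d(a)=0$ for $a\in Q_1$ and $d(a^{\ast})=\partial_{a}W$ gives $d(p\,a^{\ast}\,q)=p\,(\partial_{a}W)\,q$, since the degree-$0$ factors $p$ and $q$ are annihilated by $d$. Hence $d$ restricted to degree $-1$ is a continuous $K\doublelrangle{Q}$-bimodule homomorphism from the free complete bimodule on $\{a^{\ast}\mid a\in Q_1\}$ into $K\doublelrangle{Q}$ sending $a^{\ast}\mapsto\partial_{a}W$, so its image is the closed two-sided ideal topologically generated by $\{\partial_{a}W\mid a\in Q_1\}$, namely $\overline{J(W)}$. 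Substituting into the displayed formula yields
\[
H^{0}\hat{\Gamma}_{(Q,W)}=K\doublelrangle{Q}\big/\overline{J(W)}=\mathcal{J}(Q,W),
\]
as desired.

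I expect the only genuinely delicate point to be the bookkeeping of the $\mathfrak{m}$-adic completion: one must verify that $d$ is continuous for the $\mathfrak{m}$-adic topology and that the image of the \emph{completed} bimodule $\hat{\Gamma}_{(Q,W)}^{-1}$ is precisely the \emph{closed} ideal $\overline{J(W)}$ rather than the naive ideal $J(W)$ generated by the $\partial_{a}W$. This is exactly the content needed to match the definition $\mathcal{J}(Q,W)=K\doublelrangle{Q}/\overline{J(W)}$; everything else is a formal consequence of the grading and of the definition of the cyclic derivative.
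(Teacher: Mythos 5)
The paper does not actually prove this statement: it is quoted directly from \cite[Lemma 2.8]{KY11}, so there is no internal argument to compare against, and your write-up is a from-scratch verification of the cited result. Your argument is the standard one and is essentially correct: positive-degree vanishing follows from all arrows of $\hat{Q}$ having degree $\le 0$; $\hat{\Gamma}_{(Q,W)}^{0}=K\doublelrangle{Q}$ and $\hat{\Gamma}_{(Q,W)}^{-1}$ is the completed bimodule on the $a^{\ast}$; and the Leibniz computation $d(p\,a^{\ast}q)=p(\partial_a W)q$ (using that $d$ kills degree $0$ because $\hat{\Gamma}^{1}=0$) identifies the relevant image. The one step you flag but leave open --- that $d(\hat{\Gamma}_{(Q,W)}^{-1})$ equals the \emph{closed} ideal $\overline{J(W)}$ rather than merely a dense subideal containing $J(W)$ --- is a genuine point, since images of continuous maps between complete spaces need not be closed in general. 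It can be closed as follows: both $\hat{\Gamma}_{(Q,W)}^{-1}$ and $K\doublelrangle{Q}$ are products over path length of finite-dimensional spaces, hence linearly compact (pseudocompact) topological vector spaces, and $d$ is continuous for this topology; the continuous image of a linearly compact space is linearly compact and therefore closed. Since $d(\hat{\Gamma}_{(Q,W)}^{-1})$ contains $J(W)$ (finite sums) and is contained in $\overline{J(W)}$ (every element is the limit of its partial sums, which lie in $J(W)$), closedness forces equality with $\overline{J(W)}$. With that observation inserted, your proof is complete and matches the argument underlying the citation.
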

	Consider the derived category $\mathcal{D}(\hat{\Gamma}_{(Q,W)})$ of the complete Ginzburg dg algebra $\hat{\Gamma}_{(Q,W)}$ (see \cite{Ke94} for details). We denote by $\mathcal{D}_{\text{f.d}}(\hat{\Gamma}_{(Q,W)})$ the full subcategory of $\mathcal{D}(\hat{\Gamma}_{(Q,W)})$ consists of all dg modules with finite-dimensional total cohomology. Then by \cite[Theorem 2.17]{KY11}, $\mathcal{D}_{\text{f.d}}(\hat{\Gamma}_{(Q,W)})$ is a triangulated subcategory of $\per(\hat{\Gamma}_{(Q,W)})$, which is the smallest thick subcategory of $\mathcal{D}(\hat{\Gamma}_{(Q,W)})$ containing $\hat{\Gamma}_{(Q,W)}$.
	
	Following \cite{Am09}, we define the \emph{cluster category} $\mathcal{C}_{(Q, W)}$ to be the triangulated quotient
	\[\per(\hat{\Gamma}_{(Q,W)})/ \mathcal{D}_{\text{f.d}}(\hat{\Gamma}_{(Q,W)}).\]
	By \cite[Theorem 3.5]{Am09} or \cite[Section 2.1]{KR07},
	\begin{itemize}
		\item $\mathcal{C}_{(Q,W)}$ is Krull-Schmidt and $2$-CY,
		\item the image of $\hat{\Gamma}_{(Q,W)}$ in $\mathcal{C}_{(Q,W)}$ is a cluster-tilting object, and
		\item $\End_{\mathcal{C}_{(Q,W)}}(\hat{\Gamma}_{(Q,W)})^{\op}\cong \mathcal{J}(Q,W)$.
	\end{itemize}
	For $X \in \mathcal{C}_{(Q,W)}$, we refer to its $\g$-vector $g_{\hat{\Gamma}_{(Q,W)}}^X$ with respect to $\hat{\Gamma}_{(Q,W)}$ simply as the $\g$-vector of $X$, and denote it by $g^X$.
	We define
	\begin{itemize}
		\item $\ctilt^{+}\mathcal{C}_{(Q,W)}$ (resp., $\ctilt^{-}\mathcal{C}_{(Q,W)}$) to be the subset of $\ctilt\mathcal{C}_{(Q,W)}$ consisting of all objects obtained from $\hat{\Gamma}_{(Q,W)}$ (resp., $\hat{\Gamma}_{(Q,W)}[1]$) by all finite sequences of mutations;
		\item $\rigid^{+}\mathcal{C}_{(Q,W)}$ (resp., $\rigid^{-}\mathcal{C}_{(Q,W)}$) to be the set of direct summands of objects in $\ctilt^{+}\mathcal{C}_{(Q,W)}$ (resp., $\ctilt^{-}\mathcal{C}_{(Q,W)}$).
	\end{itemize}
	\begin{theorem}[{\cite[Corollary 3.5]{CIKLFP13}}]\label{VTIFVbl5KQBe}
		There is a bijection
		\begin{equation}\label{PjIjz0gGmdIk}
			\ind\bigl(\rigid^{+}(\mathcal{C}_{(Q,W)})\bigr)\longleftrightarrow\ClVar(Q)
		\end{equation}
		preserving $\g$-vectors. Moreover, it induces a bijection
		\[
		\ctilt^{+}\mathcal{C}_{(Q,W)}\longleftrightarrow\Cluster(Q),
		\]
		which sends $\hat{\Gamma}_{(Q,W)}$ to the initial cluster of $\mathcal{A}(Q)$ and commutes with mutations.
	\end{theorem}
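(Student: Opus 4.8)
The plan is to build the correspondence recursively along mutation sequences, using that the reachable cluster-tilting objects of $\mathcal{C}_{(Q,W)}$ and the clusters of $\mathcal{A}(Q)$ are generated from the same initial data by combinatorially identical mutation rules, and to certify this identification through $\g$-vectors.

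First I would set $\hat{\Gamma}_{(Q,W)} = T_1 \oplus \cdots \oplus T_n$ and match each summand $T_i$ with the initial cluster variable $x_i$. The $\g$-vector of $T_i$ with respect to $\hat{\Gamma}_{(Q,W)}$ is $\mathbf{e}_i = \deg(x_i)$, so the base case preserves $\g$-vectors. By Theorem \ref{WWXhVkRSjGCF}, mutating a reachable cluster-tilting object $T$ at an indecomposable summand $T_k$ yields $\mu_{T_k}(T)$ with exactly one new indecomposable summand $T_k^{\ast}$; this categorical mutation is compatible with the QP-mutation $\mu_k$ (so that the endomorphism algebra of the mutated object is $\mathcal{J}(\mu_k(Q,W))^{\op}$), hence it matches the seed mutation at $k$ producing $x_k'$. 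I would therefore define the correspondence recursively along mutation sequences, sending $T_k^{\ast} \mapsto x_k'$ at each step.

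The decisive point is that this assignment preserves $\g$-vectors at every step, which both forces it and makes it well defined. I would prove by induction on the length of the mutation sequence that the $\g$-vector of each reachable indecomposable rigid object, computed with respect to the fixed $\hat{\Gamma}_{(Q,W)}$, obeys the same recursion as the $\g$-vector of the corresponding cluster variable. On the categorical side the new $\g$-vector $g^{T_k^{\ast}}$ is read off from the two exchange triangles $T_k^{\ast} \to U \to T_k \to T_k^{\ast}[1]$ and $T_k \to U' \to T_k^{\ast} \to T_k[1]$ of Theorem \ref{WWXhVkRSjGCF}, whose middle terms $U,U'$ are direct sums of the remaining summands with multiplicities recorded by the arrows at $k$ in the current quiver; taking indices expresses $g^{T_k^{\ast}}$ in terms of $g^{T_k}$, the $g^{T_j}$, and the entries $b_{jk}$. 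This matches the transition rule \eqref{Yjd9vt0FtxUc}: the choice of which exchange triangle computes the minimal presentation is dictated by the sign of the pivotal coordinate, exactly as \eqref{Yjd9vt0FtxUc} splits into the cases governed by $\min(g_k,0)$. Consequently the two families of $\g$-vectors, both emanating from $\{\mathbf{e}_i\}$ under the same rule, coincide along every mutation sequence.

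Finally I would upgrade this step-by-step matching to the stated bijections. The $\g$-vectors of the indecomposable summands of a reachable cluster-tilting object are sign-coherent and linearly independent, and distinct reachable indecomposable rigid objects have distinct $\g$-vectors; on the cluster side distinct cluster variables have distinct $\g$-vectors by \cite{GHKK17}. Hence the $\g$-vector-preserving assignment descends to a well-defined injection $\ind(\rigid^{+}(\mathcal{C}_{(Q,W)})) \hookrightarrow \ClVar(Q)$, and it is surjective because every cluster-variable $\g$-vector is produced by some mutation sequence, which simultaneously produces the matching indecomposable object. Since a cluster-tilting object is determined by its set of indecomposable summands, equivalently by the maximal cone their $\g$-vectors span, this induces the bijection $\ctilt^{+}\mathcal{C}_{(Q,W)} \leftrightarrow \Cluster(Q)$ sending $\hat{\Gamma}_{(Q,W)}$ to the initial cluster and commuting with mutations by construction. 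I expect the main obstacle to be the categorical $\g$-vector computation together with its sign-coherence: one must show that the sign of the pivotal coordinate selects the correct exchange triangle (hence the correct branch of \eqref{Yjd9vt0FtxUc}) and that $\g$-vectors genuinely separate reachable indecomposables, which is where the $2$-Calabi-Yau rigidity and the DWZ mutation theory carry the weight.
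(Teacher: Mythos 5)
This statement is imported verbatim from \cite[Corollary 3.5]{CIKLFP13}; the paper supplies no proof of its own, so your proposal can only be measured against the literature it cites. Your overall architecture — define the correspondence recursively along mutation sequences starting from $\hat{\Gamma}_{(Q,W)}\leftrightarrow(x_1,\ldots,x_n)$, certify each step by matching $\g$-vector recursions, and use that $\g$-vectors separate objects on both sides (Dehy--Keller \cite{DK08} for rigid objects, \cite{GHKK17} for cluster variables with principal coefficients) to get well-definedness and injectivity — is indeed the standard strategy of \cite{Pla11b,CIKLFP13}, and the skeleton is sound.

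There is, however, a concrete gap at what you yourself call the decisive point. The formula \eqref{Yjd9vt0FtxUc} that you invoke is the wrong recursion for your induction: it describes how the $\g$-vectors of \emph{all} cluster variables transform when the \emph{reference} (initial) seed is mutated — its categorical counterpart is the change-of-reference-cluster-tilting-object map \eqref{peQxUmkA5Njt} of Diagram \eqref{6FUzaTwbFh77} — whereas your induction needs the \emph{exchange recursion}: an expression for $\ind_{\hat{\Gamma}}(T_k^{\ast})$ in terms of the indices of the summands of the \emph{current} object $T$, with the reference $\hat{\Gamma}$ held fixed. That recursion is not obtained by simply "taking indices" of the exchange triangles of Theorem \ref{WWXhVkRSjGCF}, because the index is not additive on triangles; it is additive only up to a $\Hom$-defect term (Palu's formula), and deciding which of the two exchange triangles has vanishing defect — equivalently, which branch of the tropicalized exchange relation is selected — is exactly the sign-coherence statement whose proof requires the decorated-representation machinery of \cite{DWZ10} or the scattering-diagram results of \cite{GHKK17}. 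On the cluster side the matching recursion for $\deg(x_k')$ also involves the degrees of the coefficients $\hat{y}_j$ of the current seed (the $c$-vectors), which your description omits entirely. In short, the step "both families of $\g$-vectors obey the same rule" is asserted via a formula that governs a different operation, and the correct version of that step is essentially the entire content of the cited corollary; as written, the proposal defers rather than proves it.
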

	
	For a finite-dimensional algebra $\Lambda$, we similarly define
	\begin{itemize}
		\item $\twosilt^{+}\Lambda$ (resp., $\twosilt^{-}\Lambda$) to be a subset of $\twosilt^{+}\Lambda$ consisting of all complexes obtained from $\Lambda\in K^{b}(\proj\Lambda)$ (resp., $\Lambda[1]$) by all finite sequences of mutations;
		\item $\twopresilt^{+}\Lambda$ (resp., $\twopresilt^{-}\Lambda$) to be the set of direct summands of complexes in $\twosilt^{+}\Lambda$ (resp., $\twosilt^{-}\Lambda$).
	\end{itemize}
	By Theorem \ref{nyKSQbIejd2O}, one can obtain the bijection
	\[
	\ind\bigl(\rigid(\mathcal{C}_{(Q,W)})\bigr)\longleftrightarrow\ind\bigl(\twopresilt(\mathcal{J}(Q,W))\bigr),
	\]
	which preserves $\g$-vectors. Moreover, it induces the bijection
	\[
	\ctilt\mathcal{C}_{(Q,W)}\longleftrightarrow\twosilt(\mathcal{J}(Q,W)),
	\]
	which sends $\hat{\Gamma}_{(Q,W)}$ to $\mathcal{J}(Q,W)$ and commutes with mutations. 
	Thus the map \eqref{PjIjz0gGmdIk} induces the bijection
	\[
	\ind\bigl(\twopresilt^{+}(\mathcal{J}(Q,W))\bigr)\longleftrightarrow\ClVar(Q),
	\]
	which preserves $\g$-vectors. Furthermore, it induces the bijection
	\[
	\twosilt^{+}(\mathcal{J}(Q,W))\longleftrightarrow\Cluster(Q),
	\]
	which sends $\mathcal{J}(Q,W)$ to the initial cluster of $\mathcal{A}(Q)$ and commutes with mutations.
	\begin{remark}\label{teSU4F2m6VPA}
		The $g$-vector fan $\mathcal{F}^{g}_{\cluster}(Q)$ coincides with the subfan of $\mathcal{F}^{g}_{\twosilt}(\mathcal{J}(Q,W))$ consisting of $\g$-vectors of complexes in $\ind\bigl(\twopresilt^{+}(\mathcal{J}(Q,W))\bigr)$.
	\end{remark}
		Similarly to the above observation, the following result is a consequence of Theorem \ref{nyKSQbIejd2O} and \cite[Corollary 4.4]{Yu20}, and it will be used in Appendix \ref{app:ms}.
	\begin{theorem}[{\cite[Section 4.2]{Yu20}}]\label{Lr78jTdZdIN3}
		There is a bijection
		\[
		\ind\bigl(\twopresilt^{-}(\mathcal{J}(Q,W))\bigr)\longleftrightarrow\ClVar(Q^{\op})
		\]
		reversing $\g$-vectors. Moreover, it induces a bijection
		\[
		\twosilt^{-}(\mathcal{J}(Q,W))\longleftrightarrow\Cluster(Q^{\op}),
		\]
		which sends $\mathcal{J}(Q,W)[1]$ to the initial cluster of $\mathcal{A}(Q^{\op})$ and commutes with mutations.
	\end{theorem}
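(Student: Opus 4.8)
The plan is to deduce the $(-)$-statement from the already-established $(+)$-statement applied to the opposite quiver with potential, using the standard duality on two-term complexes to pass between them. Recall that immediately before Remark~\ref{teSU4F2m6VPA} the excerpt provides, for every non-degenerate Jacobi-finite QP, the $\g$-vector--preserving bijections $\ind\bigl(\twopresilt^{+}(\mathcal{J}(Q,W))\bigr)\longleftrightarrow\ClVar(Q)$ and $\twosilt^{+}(\mathcal{J}(Q,W))\longleftrightarrow\Cluster(Q)$, which send $\mathcal{J}(Q,W)$ to the initial cluster and commute with mutations. First I would record that $\mathcal{J}(Q,W)^{\op}\cong\mathcal{J}(Q^{\op},W^{\op})$, where $W^{\op}$ is obtained by reversing every cyclic word of $W$; non-degeneracy and Jacobi-finiteness are preserved under this operation, so the $(+)$-bijections above apply verbatim to $\mathcal{J}(Q^{\op},W^{\op})=\mathcal{J}(Q,W)^{\op}$ and give $\ind\bigl(\twopresilt^{+}(\mathcal{J}(Q,W)^{\op})\bigr)\longleftrightarrow\ClVar(Q^{\op})$.

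Next I would introduce the transport functor. Writing $\Lambda=\mathcal{J}(Q,W)$, consider the triangulated anti-equivalence $\mathbb{D}:=\Hom_{\Lambda}(-,\Lambda)\colon K^{b}(\proj\Lambda)\to K^{b}(\proj\Lambda^{\op})$ and set $\Phi:=\mathbb{D}(-)[1]$. Since $\mathbb{D}$ carries homological degree $-i$ to degree $i$, the functor $\Phi$ restricts to a bijection $\iso K^{\{-1,0\}}(\proj\Lambda)\to\iso K^{\{-1,0\}}(\proj\Lambda^{\op})$; concretely it sends a two-term complex $(X^{-1}\to X^{0})$ to $\bigl((X^{0})^{\ast}\to (X^{-1})^{\ast}\bigr)$ placed in degrees $-1,0$. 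Because $\mathbb{D}$ takes the indecomposable projective at a vertex $i$ to the indecomposable projective at $i$ over $\Lambda^{\op}$, the $\g$-vector of $\Phi(X)$ equals $-g^{X}$, so $\Phi$ reverses $\g$-vectors. As an anti-equivalence, $\mathbb{D}$ preserves the presilting and silting conditions and exchanges left with right mutations, whence $\Phi$ is a mutation-compatible bijection on (pre)silting complexes. Finally $\Phi(\Lambda[1])=\mathbb{D}(\Lambda[1])[1]=\Lambda^{\op}$ sits in degree $0$, so $\Phi$ restricts to bijections $\ind\bigl(\twopresilt^{-}(\Lambda)\bigr)\leftrightarrow\ind\bigl(\twopresilt^{+}(\Lambda^{\op})\bigr)$ and $\twosilt^{-}(\Lambda)\leftrightarrow\twosilt^{+}(\Lambda^{\op})$.

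Composing $\Phi$ with the $(+)$-bijections for $\Lambda^{\op}=\mathcal{J}(Q^{\op},W^{\op})$ then yields the desired bijection $\ind\bigl(\twopresilt^{-}(\mathcal{J}(Q,W))\bigr)\longleftrightarrow\ClVar(Q^{\op})$: it reverses $\g$-vectors because $\Phi$ does so and the $(+)$-map preserves them, and it sends $\mathcal{J}(Q,W)[1]\mapsto\Lambda^{\op}\mapsto$ the initial cluster of $\mathcal{A}(Q^{\op})$. Restricting to maximal objects gives the induced bijection $\twosilt^{-}(\mathcal{J}(Q,W))\leftrightarrow\Cluster(Q^{\op})$ commuting with mutations, which is the assertion; this is the concrete realization of the route through Theorem~\ref{nyKSQbIejd2O} and \cite[Corollary 4.4]{Yu20} indicated in the text. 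The main obstacle I expect is bookkeeping rather than substance: one must check carefully that $\mathbb{D}$ interchanges left and right silting mutations so that ``commutes with mutations'' survives the composition (a mutation on the silting side must match the correct seed mutation of $Q^{\op}$), and one must pin down the isomorphism $\mathcal{J}(Q,W)^{\op}\cong\mathcal{J}(Q^{\op},W^{\op})$ together with the vertex-preserving duality of projectives so that the sign in the $\g$-vector reversal comes out correctly.
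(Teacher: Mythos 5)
Your argument is correct, but it follows a genuinely different route from the paper's. The paper does not prove this statement via duality at all: it obtains it as a formal consequence of Theorem~\ref{nyKSQbIejd2O} together with the cited result \cite[Corollary 4.4]{Yu20}, i.e.\ it works inside the cluster category, where $\ind\bigl(\rigid^{-}(\mathcal{C}_{(Q,W)})\bigr)$ (the summands of cluster-tilting objects reachable from $\hat{\Gamma}_{(Q,W)}[1]$) is matched with $\ClVar(Q^{\op})$, and then transfers everything to $2$-term complexes exactly as in the observation preceding Remark~\ref{teSU4F2m6VPA}. You instead stay entirely on the silting side: you apply the already-established $(+)$-bijection to $\mathcal{J}(Q^{\op},W^{\op})\cong\mathcal{J}(Q,W)^{\op}$ and transport it back through $\Phi=\Hom_{\Lambda}(-,\Lambda)[1]$. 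Your route buys self-containedness relative to the excerpt — it needs no external input beyond the $(+)$-statement — at the cost of the extra (routine but necessary) verifications you correctly identify: that $(Q^{\op},W^{\op})$ is again non-degenerate and Jacobi-finite, that $\mathbb{D}$ preserves (pre)silting, exchanges minimal left and right approximations and hence left and right silting mutations, that it identifies the vertex-labelled projectives so that $g^{\Phi(X)}=-g^{X}$, and that $\Phi(\Lambda[1])=\Lambda^{\op}$ so that $\twosilt^{-}(\Lambda)$ is carried exactly onto $\twosilt^{+}(\Lambda^{\op})$. All of these check out, so your composite does reverse $\g$-vectors, sends $\mathcal{J}(Q,W)[1]$ to the initial cluster of $\mathcal{A}(Q^{\op})$, and commutes with mutations, as required. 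The paper's route, by contrast, leverages the existing literature directly and is what makes the parallel with the $(+)$-case (Theorem~\ref{VTIFVbl5KQBe}) transparent.
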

	\subsubsection{Mutations of Jacobian algebras}\label{uT45nbA3L9pM02}
	Let $k\in Q_0$. By Theorem \ref{thm:Jfin}, $\mu_k(Q, W)$ is Jacobi-finite and $\mathcal{C}_{\mu_k(Q,W)}$ satisfies the properties in Section \ref{sec:clcat}. In particular,
	\[\End_{\mathcal{C}_{\mu_k(Q,W)}}(\hat{\Gamma}_{\mu_k(Q,W)})^{\text{op}}\cong \mathcal{J}(\mu_k(Q,W)).\]
	Based on \cite[Theorem 3.2]{KY11}, one can see that there is a triangulated equivalence
	\begin{equation}\label{eq:F_k}
		F_k: \mathcal{C}_{(Q,W)}\longrightarrow \mathcal{C}_{\mu_k(Q,W)}
	\end{equation}
	that sends $\hat{\Gamma}_{(Q,W)}$ to $\hat{\Gamma}_{\mu_k(Q,W)}$.
	
	Let
	$\hat{\Gamma}_{(Q,W)}=\hat{\Gamma}_{(Q,W),1}\oplus\hat{\Gamma}_{(Q,W),2}\oplus\cdots\hat{\Gamma}_{(Q,W),n}$
	be a Krull-Schmidt decomposition with the property that for all $1\le i\le n$, $\hat{\Gamma}_{(Q,W),i}$ corresponds to the $i$-th vertex of $Q$, that is, $\Hom_{\mathcal{C}_{(Q,W)}}(\hat{\Gamma}_{(Q,W)},\hat{\Gamma}_{(Q,W),i})$ is the indecomposable projective $\mathcal{J}(Q,W)$-module corresponding to the $i$-th vertex of $Q$. Denote $\mu_k(\hat{\Gamma}_{(Q,W)}):= \mu_{\hat{\Gamma}_{(Q,W),k}}(\hat{\Gamma}_{(Q,W)})$.
	Then by Theorem \ref{nyKSQbIejd2O}, we obtain the following commutative diagram:
	\begin{equation}\label{zvHv2HLFf7K4}
		\begin{tikzcd}[cramped]
			{\iso\mathcal{C}_{(Q,W)}} && {\iso\mathcal{C}_{\mu_k(Q,W)}} \\
			\\
			{\iso\bigl(K^{\{-1,0\}}(\proj\mathcal{J}(Q,W))\bigr)} && {\iso\bigl(K^{\{-1,0\}}(\proj\mathcal{J}(\mu_k(Q,W)))\bigr)}
			\arrow["{F_k}", from=1-1, to=1-3]
			\arrow[dotted, "\tilde{H}_{\mu_k(\hat{\Gamma}_{(Q,W)})}"{description}, from=1-1, to=3-3]
			\arrow["{\tilde{H}_{\hat{\Gamma}_{(Q,W)}}}"', from=1-1, to=3-1]
			\arrow["{\tilde{H}_{\hat{\Gamma}_{\mu_k(Q,W)}}}", from=1-3, to=3-3]
			\arrow["{F_k}", from=3-1, to=3-3]
		\end{tikzcd}
	\end{equation}
	Here, both maps labeled $F_k$ are induced from the triangulated equivalence \eqref{eq:F_k}. In particular, the bottom map $F_k$ in Diagram \eqref{zvHv2HLFf7K4} sends presilting complexes to presilting complexes and also preserves the number of indecomposable direct summands. Therefore, it sends silting complexes to silting complexes.
	
	Diagram \eqref{zvHv2HLFf7K4} induces the following commutative diagram of bijections:
	\begin{equation}\label{6FUzaTwbFh77}
		\begin{tikzcd}[cramped]
			{K_0(\add(\hat{\Gamma}_{(Q,W)}))} && {K_0(\add(\mu_k(\hat{\Gamma}_{(Q,W)})))} \\
			\\
			{K_0(\proj\mathcal{J}(Q,W))} && {K_0(\proj\mathcal{J}(\mu_k(Q,W)))}
			\arrow["{\mu_k}", from=1-1, to=1-3]
			\arrow["\id"',  from=1-1, to=3-1]
			\arrow["\id",  from=1-3, to=3-3]
			\arrow["{\mu_k}", from=3-1, to=3-3]
		\end{tikzcd}
	\end{equation}
	Here, it follows from \cite[Page 34]{Pla13} that the bottom map $\mu_k$ in Diagram \eqref{6FUzaTwbFh77} sends $g^{a}=(g_1,g_2,\cdots,g_n)$ with $a\in K^{\{-1,0\}}(\proj\mathcal{J}(Q,W))$ to $g^{F_k(a)}=(g'_1,g'_2,\cdots,g'_n)$, where
	\begin{equation}\label{peQxUmkA5Njt}
		g_j' = \left\{\begin{array}{ll}
			-g_i & \text{if} \ \ i=j,\\
			g_j + [b_{ji}]_+g_i - b_{ji}\min(g_i,0) & \text{otherwise}.
		\end{array} \right.
	\end{equation}
	\section{$E$-invariant}
	In this section, following \cite{DeFe15}, we recall the notion of $E$-invariant. We then review generic decompositions of $\g$-vectors and discuss some of our results concerning the notions of $E$-finiteness and $E$-tameness.
	\subsection{Generic decompositions of $\mathbf{\g}$-vectors}
	For any $g\in K_{0}(\proj\Lambda)$, there exist unique $P^{-1},P^{0}\in\proj\Lambda$ such that they do not share any non-zero direct summands and $g=[P^0]-[P^{-1}]\in K_{0}(\proj\Lambda)$. Denote
	\[\Hom_{\Lambda}(g):= \Hom_{\Lambda}(P^{-1},P^{0}).\]
	Let $a_1,a_2$ be two morphisms in $\proj\Lambda$. Then the \emph{$E$-invariant} of $a_1$ and $a_2$ is defined by
	\[e(a_1,a_2):=\dim_{K}{\Hom_{K^b(\proj\Lambda)}(a_1,a_2[1])}.\]
	Let $g_1,g_2\in K_0(\proj\Lambda)$. Then the $E$-invariant of $g_1$ and $g_2$ is defined by
	\[e(g_1,g_2):= \min\{e(a_1,a_2)\mid a_1\in\Hom_{\Lambda}(g_1),a_2\in\Hom_{\Lambda}(g_2)\}.\footnote{Since the function $e(-,?):\Hom_{\Lambda}(g_1)\times\Hom_{\Lambda}(g_2)\rightarrow\mathbb{Z}$ is upper semi-continuous, for a general pair $(a_1,a_2)\in\Hom_{\Lambda}(g_1)\times\Hom_{\Lambda}(g_2)$, we have $e(g_1,g_2)=e(a,b)$.}\]
	By \cite[Corollary 4.2]{DeFe15}, $e(g_1,g_2)=e(g_2,g_1)=0$ if and only if a general morphism of $\Hom_{\Lambda}(g_1+g_2)$ can be written as a direct sum of two (general) morphisms in $\Hom_{\Lambda}(g_1)$ and $\Hom_{\Lambda}(g_2)$. We write $g_1 \oplus g_2$ for $g_1 + g_2$ when $e(g_1, g_2) = e(g_2, g_1) = 0$.
	
	Let $g\in K_0(\proj\Lambda)$. Then $g$ is called \emph{generically indecomposable} if a general morphism in $\Hom_{\Lambda}(g)$ is indecomposable. The decomposition $g=g_1\oplus g_2\oplus\cdots\oplus g_s$ is called the \emph{generic decomposition} of $g$ if a general morphism $a$ of $\Hom_{\Lambda}(g)$ can be written as a direct sum $a=a_1\oplus a_2\oplus\cdots\oplus a_s$ where each $a_i$ is indecomposable and belongs to $\Hom_{\Lambda}(g_i)$. Equivalently, all $g_i$'s are generically indecomposable and $e(g_i,g_j)=0$, for all $1\le i\neq j\le s$. In this case, we denote by \emph{$\ind(g)$} the set of all generically indecomposable $\g$-vectors in the generic decomposition of $g$.
	
	Let $\mathcal{C}$ be a $2$-CY Krull-Schmidt category with a cluster-tilting object $\Gamma$. Denote $\Lambda=\End_{\mathcal{C}}(\Gamma)^{\op}$. Then by Theorem \ref{UcfISbEawdcJ}, we have a triangulated equivalence
	\begin{equation}\label{lrRwOeVYo6aj}
		\Hom_{\mathcal{C}}(\Gamma,-):K^{b}(\add(\Gamma))\longrightarrow K^{b}(\proj\Lambda).
	\end{equation}
	Thus
	for morphisms $a',a''\in\add(\Gamma)$,
	\[\Hom_{K^b(\add( \Gamma))}(a',a''[1])\cong \Hom_{K^b(\proj\Lambda)}(\Hom_{\mathcal{C}}(\Gamma,a'), \Hom_{\mathcal{C}}(\Gamma,a'')[1]).\]
	Define
	\[e(a',a''):=\dim_{K}{\Hom_{K^b(\add( \Gamma))}(a',a''[1])}.\]
	By \cite[Section 4]{DeFe15} and the above equivalence \eqref{lrRwOeVYo6aj}, one can see that for two $\g$-vectors $g_1,g_2\in K_0(\add(\Gamma))$, $e(g_1,g_2)=e(g_2,g_1)=0$ if and only if a general $\add(\Gamma)$-presentation in $\Hom_{\mathcal{C}}(g_1+g_2)$ can be written as a direct sum of two (general) $\add(\Gamma)$-presentations in $\Hom_{\mathcal{C}}(g_1)$ and $\Hom_{\mathcal{C}}(g_2)$.
	Therefore, one can extend the notion of direct sums of $\g$-vectors for cluster categories \cite[Remark 3.11]{Pla13}.
	
	\subsection{$E$-finiteness and $E$-tameness}\label{Nfr91saLLBz78}
	We begin by reviewing the notions of $E$-finiteness and $E$-tameness, along with known examples of $E$-finite and $E$-tame algebras. The focus then shifts to Jacobian algebras, where we demonstrate that both $E$-finiteness and $E$-tameness are preserved under mutations of Jacobian algebras. To establish this, we prove that mutation commutes with the direct sum of $\g$-vectors.
	\begin{definition}[{\cite[Definition 6.3]{AsIy24}}]\label{Fr4567nvAEoQJ}
		We say that $g \in K_0(\proj\Lambda)$ is \emph{rigid} if there exists a $2$-term presilting complex $X$ such that $g^X = g$, and \emph{tame} if $e(g, g) = 0$. The algebra $\Lambda$ is called \emph{$E$-finite} (resp., \emph{$E$-tame}) if every element of $K_0(\proj\Lambda)$ is rigid (resp., tame).
	\end{definition}
	\begin{remark}\label{RgDFJ5y6gThr}
		Finite-dimensional $E$-finite algebras are $E$-tame.
	\end{remark}
	\begin{proposition}[{\cite{BST19, As21}}]\label{YQWOqSW20Zqa}
		Every finite-dimensional $\g$-finite algebra is $E$-finite.
	\end{proposition}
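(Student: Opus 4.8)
The plan is to unwind the two hypotheses and reduce the statement to a single lattice-theoretic fact about the $\g$-fan. Recall that $\Lambda$ is $\g$-finite exactly when $\mathcal{F}^{g}_{\twosilt}(\Lambda)$ is complete, i.e.\ its cones cover all of $K_0(\proj\Lambda)\otimes\mathbb{R}\cong\mathbb{R}^n$, whereas $\Lambda$ is $E$-finite exactly when every lattice point $g\in K_0(\proj\Lambda)\cong\mathbb{Z}^n$ is the $\g$-vector of some $2$-term presilting complex (Definition \ref{Fr4567nvAEoQJ}). So I would fix an arbitrary $g\in K_0(\proj\Lambda)$ and aim to produce a $2$-term presilting complex realizing it.

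First I would use completeness: the point $g$ lies in some cone $C(X)$ of the fan with $X\in\twopresilt(\Lambda)$, and since every cone is a face of a full-dimensional one, I may take $X=T$ to be a $2$-term silting complex $T=T_1\oplus\cdots\oplus T_n$ with $g\in C(T)$. By the very definition of $C(T)$, this gives real coefficients $a_i\ge 0$ with $g=\sum_{i=1}^{n}a_i\,g^{T_i}$.

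The decisive input is then the \emph{unimodularity} of the maximal cones: the $\g$-vectors $g^{T_1},\dots,g^{T_n}$ of the indecomposable summands of a $2$-term silting complex form a $\mathbb{Z}$-basis of $K_0(\proj\Lambda)\cong\mathbb{Z}^n$ (equivalently, the $\g$-matrix of $T$ lies in $\GL_n(\mathbb{Z})$). Granting this, since $g\in\mathbb{Z}^n$ and the $g^{T_i}$ form a lattice basis, each $a_i$ is an integer; combined with $a_i\ge 0$ from the previous step, we obtain $a_i\in\mathbb{Z}_{\ge 0}$. Then $X:=\bigoplus_{i=1}^{n}T_i^{\,a_i}$ is again a $2$-term presilting complex — any complex whose indecomposable summands, counted with multiplicity, occur among those of the presilting complex $T$ satisfies $\Hom_{K^b(\proj\Lambda)}(X,X[1])=0$ — and it satisfies $g^{X}=\sum_i a_i\,g^{T_i}=g$. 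Hence $g$ is rigid, and as $g$ was arbitrary, $\Lambda$ is $E$-finite.

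The main obstacle is establishing the unimodularity of the maximal cones, which is precisely what upgrades the real membership $g\in C(T)$ to an integral nonnegative combination; completeness alone does not suffice, since a complete simplicial fan need not be unimodular. This is where the wall-and-chamber analysis of \cite{BST19,As21} enters: one invokes that the $\g$-matrices of $2$-term silting complexes are obtained from the identity matrix (corresponding to $T=\Lambda$) by mutations governed by sign-coherence, so that successive $\g$-matrices differ by unimodular transformations and therefore remain in $\GL_n(\mathbb{Z})$ throughout. I would also note that this argument simultaneously recovers Remark \ref{RgDFJ5y6gThr}: the realizing complex $X$ is presilting, so $e(g,g)=\dim_K\Hom_{K^b(\proj\Lambda)}(X,X[1])=0$, i.e.\ $g$ is tame.
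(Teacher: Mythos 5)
Your argument is correct and is essentially the argument behind the cited result: the paper itself gives no proof of Proposition \ref{YQWOqSW20Zqa} (it defers to \cite{BST19,As21}), but the exact same reduction — completeness of the simplicial $\g$-fan plus the fact that the $\g$-vectors of the indecomposable summands of a $2$-term silting complex form a $\mathbb{Z}$-basis of $K_0(\proj\Lambda)$, so every lattice point is a nonnegative integer combination realized by a presilting complex — is the one the authors spell out in their proof of Theorem \ref{thm:surf main}. Your identification of unimodularity of the maximal cones as the decisive input (and your remark that the construction also yields tameness of $g$) is exactly right.
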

	In this paper, we establish that the converse is also holds, which was conjectured by Demonet in \cite[Question 3.49]{De17} (see also \cite[Conjecture 3.3]{Pf25}). 
	\begin{conjecture}[{Demonet}]\label{tl5qnvWRhqxR}
		Every finite-dimensional $E$-finite algebra is $\g$-finite.
	\end{conjecture}
	\begin{theorem}[{\cite[Theorem 3.8]{PYK23}}]\label{B9OIm5j2hMxc}
		Finite-dimensional representation-tame algebras are $E$-tame.
	\end{theorem}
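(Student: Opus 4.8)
The plan is to reduce the statement to generically indecomposable $\g$-vectors and then exploit the one-parameter family structure guaranteed by representation-tameness. Fix $g\in K_0(\proj\Lambda)$ and let $g=g_1\oplus\cdots\oplus g_s$ be its generic decomposition, so that each $g_i\in\ind(g)$ is generically indecomposable and $e(g_i,g_j)=0$ for $i\neq j$. Writing a generic pair of presentations in $\Hom_\Lambda(g)\times\Hom_\Lambda(g)$ as direct sums of mutually generic presentations of the $g_i$ (as provided by the generic decomposition theory of \cite{DeFe15}), the biadditivity of $e$ together with the vanishing of the cross terms gives $e(g,g)=\sum_{i=1}^{s}e(g_i,g_i)$. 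Hence it suffices to prove that every generically indecomposable $\g$-vector is tame, i.e.\ $e(g_i,g_i)=0$.

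So fix a generically indecomposable $g=g_i$, write $g=[P^0]-[P^{-1}]$, and pick two independent generic morphisms $a_1,a_2\in\Hom_\Lambda(P^{-1},P^0)=\Hom_\Lambda(g)$; by the upper semi-continuity recorded in the footnote to the definition of $e$, we have $e(g,g)=e(a_1,a_2)=\dim_K\Hom_{K^b(\proj\Lambda)}(a_1,a_2[1])$. For a generically indecomposable $g$, a generic $a_j$ has generically indecomposable cohomology; set $M_j:=\Coker(a_j)$, a generic indecomposable $\Lambda$-module of a fixed dimension vector $\mathbf{d}$. The Auslander-Reiten type identification for $2$-term complexes (\cite{AIR14}) then expresses $\Hom_{K^b(\proj\Lambda)}(a_1,a_2[1])$ through $\overline{\Hom}_\Lambda(M_2,\tau M_1)$ (equivalently $\Ext^1_\Lambda(M_1,M_2)$ via the AR formula), the projective/negative contributions vanishing for independent generic choices. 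Thus $e(g,g)$ is controlled by the generic value of $\Ext^1_\Lambda(M_1,M_2)$ between two independently chosen generic indecomposables of dimension $\mathbf{d}$.

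At this point I would invoke representation-tameness. If the generic indecomposable of dimension $\mathbf{d}$ is isolated (only finitely many of dimension $\mathbf{d}$), then its orbit in $\rep(\Lambda,\mathbf{d})$ is open, so $M:=M_1\cong M_2$ is rigid, $\Ext^1_\Lambda(M,M)=0$, and $e(g,g)=0$ directly. Otherwise, almost all indecomposables of dimension $\mathbf{d}$ lie in finitely many one-parameter families $\{M_\lambda\}_\lambda$, and two independent generic choices land on \emph{distinct} members $M_{\lambda_1},M_{\lambda_2}$ with $\lambda_1\neq\lambda_2$. Distinct generic members of such a family are orthogonal, $\Hom_\Lambda(M_{\lambda_1},M_{\lambda_2})=\Ext^1_\Lambda(M_{\lambda_1},M_{\lambda_2})=0$ (the behaviour modelled on homogeneous tubes), so the homological quantity of the previous paragraph vanishes and $e(g,g)=0$. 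Combining with the reduction gives $e(g,g)=\sum_i e(g_i,g_i)=0$ for every $g$, i.e.\ $\Lambda$ is $E$-tame.

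The main obstacle I anticipate is the third step: making rigorous, from the bare definition of representation-tameness, that a generic pair of presentations of $g$ really produces two distinct generic members of a single one-parameter family enjoying the orthogonality $\Hom_\Lambda=\Ext^1_\Lambda=0$. This requires matching genericity in the presentation space $\Hom_\Lambda(g)$ with genericity in the module variety $\rep(\Lambda,\mathbf{d})$ (so that $\Coker$ of a generic presentation is a generic module), absorbing the finitely many exceptional indecomposables, and appealing to the structure theory of tame algebras (Crawley-Boevey's endofinite generic modules and the bound of one on the number of parameters) to guarantee pairwise orthogonality of generic family members. The reduction of the first step and the homological translation of the second step are comparatively formal, resting on \cite{DeFe15} and the $2$-term Auslander-Reiten formula of \cite{AIR14}.
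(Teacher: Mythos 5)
First, note that the paper does not prove this statement at all: it is imported verbatim as \cite[Theorem 3.8]{PYK23}, so there is no in-paper argument to compare against; the relevant comparison is with the proof in the cited source, which does follow your general strategy (reduce via generic decomposition to generically indecomposable $\g$-vectors, then use tameness to force the generic self-$E$-invariant to vanish).

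Your first reduction, $e(g,g)=\sum_i e(g_i,g_i)$, is fine. The genuine gap is in the third step, and it is twofold. (a) You systematically conflate genericity in the presentation space $\Hom_{\Lambda}(g)$ with genericity in the module variety $\rep(\Lambda,\mathbf{d})$: the cokernel of a general element of $\Hom_{\Lambda}(g)$ need not be a general (or even indecomposable) module of its dimension vector, an open orbit in $\rep(\Lambda,\mathbf{d})$ only controls $\Ext^1_{\Lambda}(M,M)$ via Voigt's lemma and not the quantity $\Hom_{\Lambda}(M,\tau M)$ (plus the decoration/non-minimality correction terms) that actually computes $e(a,a)$, and $\tau$-rigidity is strictly stronger than rigidity. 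The statement that does the work in \cite{PYK23} lives entirely in $\Hom_{\Lambda}(g)$: by Derksen--Fei, $e(a,a)$ equals the codimension of the $\Aut(P^{-1})\times\Aut(P^0)$-orbit of $a$, and one must transport the ``at most one parameter'' bound coming from tameness to these orbits. (b) The orthogonality you assert for distinct generic members of a one-parameter family, $\Hom_{\Lambda}(M_{\lambda_1},M_{\lambda_2})=\Ext^1_{\Lambda}(M_{\lambda_1},M_{\lambda_2})=0$, is modelled on homogeneous tubes but is false for general tame algebras: for instance, rank-one band modules $B(\lambda)$, $B(\mu)$ over a band with self-overlaps admit nonzero graph maps independent of the parameters, and since such modules are $\tau$-invariant this also obstructs the vanishing of $\Hom(M_{\mu},\tau M_{\lambda})$ that your second step requires. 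So the step you yourself flag as the main obstacle is not merely unrigorous; in the form stated it fails, and repairing it requires the orbit-codimension argument in the presentation space (together with the Crawley-Boevey/Geiss--Labardini--Schr\"oer control on the number of parameters) rather than an appeal to pairwise orthogonality of family members in $\rep(\Lambda,\mathbf{d})$.
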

	
	\begin{corollary}
		Let $(Q,W)$ be a non-degenerate Jacobi-finite QP. Assume that $Q$ is of finite mutation type and not mutation equivalent to any of the following quivers: $K_m$ for $m\ge 3$, $X_6$, $X_7$, and $T_2$. Then $\mathcal{J}(Q,W)$ is $E$-tame.
	\end{corollary}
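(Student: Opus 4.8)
The plan is to obtain this corollary as an immediate consequence of two of the classification results recalled above, chained together. First I would invoke Theorem \ref{SF76ppyobv7F} of \cite{GLFS16}. Under exactly the stated hypothesis—that $Q$ is not mutation equivalent to any of $K_m$ for $m\ge 3$, $X_6$, $X_7$, or $T_2$—that theorem asserts the biconditional that $\mathcal{J}(Q,W)$ is representation-tame if and only if $Q$ is of finite mutation type. Since we are assuming precisely that $Q$ is of finite mutation type, the relevant direction of the biconditional yields that $\mathcal{J}(Q,W)$ is representation-tame.

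Next, because $(Q,W)$ is Jacobi-finite, the Jacobian algebra $\mathcal{J}(Q,W)$ is finite-dimensional. I would then apply Theorem \ref{B9OIm5j2hMxc} (from \cite[Theorem 3.8]{PYK23}), which states that every finite-dimensional representation-tame algebra is $E$-tame. Feeding the representation-tameness established in the first step into this implication gives immediately that $\mathcal{J}(Q,W)$ is $E$-tame, which is the desired conclusion.

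There is no genuine obstacle here: the corollary is purely a matter of stitching together the representation-type classification of \cite{GLFS16} with the implication ``representation-tame $\Rightarrow$ $E$-tame'' of \cite{PYK23}. The one point worth a moment's care is that the list of excluded quivers in the hypothesis of the corollary coincides exactly with the list appearing in Theorem \ref{SF76ppyobv7F}, so that the biconditional there can be applied in the direction we need; this is automatic from the phrasing, and in particular the exclusion of $T_2$ is what allows us to use the clean biconditional rather than the case-by-case analysis of the remaining exceptional quivers.
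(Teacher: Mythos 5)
Your proposal is correct and matches the paper's own proof exactly: the paper likewise deduces the corollary directly from Theorem \ref{SF76ppyobv7F} (representation-tame $\Leftrightarrow$ finite mutation type, under the stated exclusions) followed by Theorem \ref{B9OIm5j2hMxc} (representation-tame $\Rightarrow$ $E$-tame for finite-dimensional algebras). No further comment is needed.
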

	\begin{proof}
		It follows directly from Theorems \ref{SF76ppyobv7F} and \ref{B9OIm5j2hMxc}.
	\end{proof}
	\begin{proposition}\label{prop:Kro}
		The Jacobian algebra $\mathcal{J}(K_m,0)$ is not $E$-finite for $m=2$, and not $E$-tame for any $m\ge 3$.
	\end{proposition}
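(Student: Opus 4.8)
The plan is to prove each assertion by exhibiting an explicit witnessing element of $K_0(\proj\mathcal{J}(K_m,0))$. Since $K_m$ is acyclic and $W=0$, we have $\mathcal{J}(K_m,0)=KK_m$, a hereditary algebra with $K_0(\proj KK_m)\cong\mathbb{Z}^2$ in the basis $[P_1],[P_2]$; its Cartan matrix is $\left(\begin{smallmatrix}1&0\\m&1\end{smallmatrix}\right)$, so $\dim P_1=(1,m)$ and $\dim P_2=(0,1)$. I write a $\g$-vector as $a[P_1]+b[P_2]$ and denote by $\langle-,-\rangle$ the Euler form of $KK_m$, for which $\langle d,d\rangle=d_1^2+d_2^2-m\,d_1d_2$.

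For $m\ge 3$ I would show that $\mathcal{J}(K_m,0)$ is not $E$-tame by exhibiting $g:=[P_1]-(m-1)[P_2]$ with $e(g,g)>0$. Here $\Hom_{KK_m}(g)=\Hom(P_2^{m-1},P_1)$, and a general morphism $a$ in this space is injective with cokernel $M$ a general representation of dimension vector $(1,1)$; such an $M$ is a brick, so $\dim_K\Ext^1(M,M)=\dim_K\End(M)-\langle(1,1),(1,1)\rangle=1-(2-m)=m-1>0$. The core computation is the identity $e(a,a)=\dim_K\Hom_{K^b(\proj)}(a,a[1])=\dim_K\bigl(\Hom(P_2^{m-1},P_1)/(\End(P_1)\,a+a\,\End(P_2^{m-1}))\bigr)$, obtained by writing out chain maps and null-homotopies of the two-term complex $a$. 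Applying $\Hom(P_2^{m-1},-)$ to the short exact sequence $0\to P_2^{m-1}\xrightarrow{a}P_1\to M\to 0$ identifies the quotient by $a\,\End(P_2^{m-1})$ with $\Hom(P_2^{m-1},M)$, which has dimension $m-1$, while the remaining relations $\End(P_1)\,a$ cut this down by at most $\dim_K\End(P_1)=1$. Hence $e(a,a)\ge (m-1)-1=m-2\ge 1$; by upper semicontinuity of the $E$-invariant (the footnote to the definition of $e(g_1,g_2)$) this general value equals $e(g,g)$, so $g$ is not tame and $KK_m$ is not $E$-tame. In particular it is not $E$-finite by Remark \ref{RgDFJ5y6gThr}.

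For $m=2$ the algebra $KK_2$ is the Kronecker algebra; it is representation-tame of type $\widetilde{A}_1$, hence $E$-tame by Theorem \ref{B9OIm5j2hMxc}, so here only the failure of $E$-finiteness is at stake. I would produce a non-rigid $\g$-vector. Every rigid element of $K_0(\proj KK_2)$ is the $\g$-vector of a $2$-term presilting complex, and since the indecomposable summands of such a complex span a cone of $\mathcal{F}^{g}_{\twosilt}(KK_2)$, every rigid $\g$-vector lies in the support of this fan. It therefore suffices to find a lattice point outside the support. Classifying the indecomposable $\tau$-rigid $KK_2$-modules — the preprojectives, with $\g$-vectors $(n,1-n)$, and the preinjectives, with $\g$-vectors $(n+1,-n-2)$ for $n\ge 0$, the regular modules of dimension $\delta=(1,1)$ being non-$\tau$-rigid as they have self-extensions — together with the two shifted projectives $-[P_1],-[P_2]$, yields all indecomposable $2$-term presilting $\g$-vectors. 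Their cones accumulate on the ray $\mathbb{R}_{\ge 0}([P_1]-[P_2])$ from both sides, yet no cone meets its interior (no preprojective and preinjective summand are compatible), while every other ray is covered; hence $[P_1]-[P_2]=(1,-1)$, the $\g$-vector attached to a regular module of dimension $\delta$, is not rigid, and $KK_2$ is not $E$-finite.

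The main obstacle is, for $m\ge 3$, the homotopy-category identity $e(a,a)=\dim_K\Hom_{K^b(\proj)}(a,a[1])$ and its reduction to $\Hom(P_2^{m-1},M)$: one must run the computation of chain maps and null-homotopies carefully and confirm that the general presentation really has the general brick of dimension $(1,1)$ as cokernel, so that upper semicontinuity identifies $e(a,a)$ with $e(g,g)$. For $m=2$ the delicate point is instead the coverage analysis, namely verifying that the preprojective and preinjective cones leave precisely the ray through $(1,-1)$ uncovered; this is the standard ``gap'' in the $\g$-fan of the Kronecker algebra, which is dense but, since $KK_2$ is not $\tau$-tilting finite, fails to be complete by Theorem \ref{Ypp40yreQWb2}.
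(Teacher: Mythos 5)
Your proposal takes a genuinely different route from the paper's: the paper disposes of both claims by citation (the non-$E$-finiteness of $\mathcal{J}(K_2,0)$ to \cite[Remark 3.19]{BST19}, the non-$E$-tameness of $\mathcal{J}(K_m,0)$ for $m\ge 3$ to \cite[Example 4.6]{HY25b}), whereas you reprove both facts from scratch. What your version buys is a self-contained argument: the computation of $\Hom_{K^b(\proj\Lambda)}(a,a[1])$ as $\Hom(P_2^{m-1},P_1)$ modulo the null-homotopies $\End(P_1)a+a\End(P_2^{m-1})$ is correct, the identification of the generic cokernel with a general brick of dimension vector $(1,1)$ is correct, and for $m=2$ the Kronecker picture (preprojective and preinjective cones accumulating on the ray through $(1,-1)$ from both sides without any cone meeting it, because no preprojective--preinjective pair is $\tau$-rigid) is the standard one; since every rigid class is a lattice point in the support of $\mathcal{F}^{g}_{\twosilt}(\mathcal{J}(K_2,0))$, the lattice point $(1,-1)=[P_1]-[P_2]$ witnesses the failure of $E$-finiteness.

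There is, however, one genuine (though easily repairable) gap in the $m\ge 3$ part. The paper defines $e(g,g)$ as the minimum of $e(a_1,a_2)$ over \emph{independent} pairs $(a_1,a_2)\in\Hom_{\Lambda}(g)\times\Hom_{\Lambda}(g)$, and the footnote you invoke identifies $e(g,g)$ with the value at a general point of the \emph{product}, not of the diagonal. You compute $e(a,a)$ for a single general $a$; but a dense open subset of the product need not meet the diagonal, so positivity of the generic diagonal value does not formally yield $e(g,g)>0$. The fix is already contained in your computation: for \emph{arbitrary} $a_1,a_2\in\Hom(P_2^{m-1},P_1)$ the space of null-homotopies $\End(P_1)a_1+a_2\End(P_2^{m-1})$ has dimension at most $1+(m-1)^2$, so $e(a_1,a_2)\ge m(m-1)-(m-1)^2-1=m-2$ for every pair; equivalently, for an independent general pair the cokernels are non-isomorphic general bricks $M_1\not\cong M_2$ of dimension vector $(1,1)$, whence $e(a_1,a_2)=\dim_K\Ext^1(M_1,M_2)=-\langle(1,1),(1,1)\rangle=m-2>0$. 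Either repair gives $e(g,g)\ge m-2\ge 1$, and your conclusion stands.
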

	\begin{proof}
		The non-$E$-finiteness of $\mathcal{J}(K_2,0)$ follows from \cite[Remark 3.19]{BST19}, and the non-$E$-tameness of $\mathcal{J}(K_m,0)$ for $m\ge 3$ follows from \cite[Example 4.6]{HY25b}.
	\end{proof}
	
	\begin{proposition}[{\cite[Proposition 6.4]{AsIy24}}]\label{R2gTZuMZA4vV}
		Let $\Lambda'$ be a finite-dimensional $K$-algebra.
		Assume that there is a surjective $K$-algebra homomorphism $\Lambda\rightarrow\Lambda'$.
		If $\Lambda$ is $E$-finite (resp., $E$-tame), then so is $\Lambda'$.
	\end{proposition}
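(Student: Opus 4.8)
The plan is to realize the surjection as $\Lambda'\cong\Lambda/I$ and to transport everything along the base-change functor $F:=\Lambda'\otimes_\Lambda-$, which sends $\proj\Lambda$ to $\proj\Lambda'$ and hence induces an additive functor $F\colon K^{\{-1,0\}}(\proj\Lambda)\to K^{\{-1,0\}}(\proj\Lambda')$ together with a group homomorphism $\phi\colon K_0(\proj\Lambda)\to K_0(\proj\Lambda')$ on $\g$-vectors, with $g^{FX}=\phi(g^X)$. First I would record two structural facts. \emph{(i)} For all $P,Q\in\proj\Lambda$ the canonical map $\Hom_\Lambda(P,Q)\to\Hom_{\Lambda'}(FP,FQ)$ is surjective: for $P=\Lambda e_i$, $Q=\Lambda e_j$ it is precisely the quotient $e_i\Lambda e_j\twoheadrightarrow e_i\Lambda' e_j$. \emph{(ii)} The homomorphism $\phi$ is surjective: since $e_i\Lambda' e_i=e_i\Lambda e_i/e_iIe_i$ is a quotient of the local ring $e_i\Lambda e_i$, it is again local or zero, so the image $\bar e_i$ of each primitive idempotent is primitive or zero. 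Thus the nonzero $\bar e_i$ form a complete set of primitive orthogonal idempotents of $\Lambda'$, and every indecomposable projective $\Lambda'$-module occurs, up to isomorphism, as some $FP_{(i)}=\Lambda' e_i$; hence the classes $\phi([P_{(i)}])$ already exhaust a basis of $K_0(\proj\Lambda')$.

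The technical heart is the inequality $e_{\Lambda'}(Fa_1,Fa_2)\le e_\Lambda(a_1,a_2)$ for all morphisms $a_1,a_2\in\proj\Lambda$. To prove it I would write $a_i\colon P_i^{-1}\to P_i^{0}$ and express
\[
e_\Lambda(a_1,a_2)=\dim_K\Coker\Bigl(\Hom_\Lambda(P_1^{-1},P_2^{-1})\oplus\Hom_\Lambda(P_1^{0},P_2^{0})\xrightarrow{(h^{-1},h^{0})\mapsto h^{0}a_1-a_2h^{-1}}\Hom_\Lambda(P_1^{-1},P_2^{0})\Bigr),
\]
and likewise for $\Lambda'$. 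Applying $F$ yields a commutative square between these two cokernel presentations whose vertical maps are the surjections of \emph{(i)}; a diagram chase then shows that $\Hom_{K^b(\proj\Lambda')}(Fa_1,Fa_2[1])$ is a quotient of $\Hom_{K^b(\proj\Lambda)}(a_1,a_2[1])$, which is exactly the claimed dimension bound. Passing to $\g$-vectors, I choose general representatives $a_i\in\Hom_\Lambda(g_i)$ with $e_\Lambda(a_1,a_2)=e_\Lambda(g_1,g_2)$, reduce the (possibly non-minimal) complexes $Fa_i$ to homotopy-equivalent minimal representatives $\tilde a_i\in\Hom_{\Lambda'}(\phi(g_i))$, and use that $e_{\Lambda'}$ is a homotopy invariant and is the minimum over representatives to obtain
\[
e_{\Lambda'}(\phi(g_1),\phi(g_2))\le e_{\Lambda'}(\tilde a_1,\tilde a_2)=e_{\Lambda'}(Fa_1,Fa_2)\le e_\Lambda(g_1,g_2)\qquad\text{for all }g_1,g_2\in K_0(\proj\Lambda).
\]

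With these two ingredients both implications are immediate. For $E$-tameness, given $g'\in K_0(\proj\Lambda')$ I lift it to $g$ with $\phi(g)=g'$ by surjectivity of $\phi$, and conclude $e_{\Lambda'}(g',g')\le e_\Lambda(g,g)=0$. For $E$-finiteness, I lift $g'$ to a rigid $g$, choose a $2$-term presilting complex $X$ with $g^X=g$, and note that $FX$ is a $2$-term complex with $g^{FX}=\phi(g)=g'$ satisfying $\dim_K\Hom_{K^b(\proj\Lambda')}(FX,FX[1])=e_{\Lambda'}(FX,FX)\le e_\Lambda(X,X)=0$; since a $2$-term complex with vanishing degree-$1$ self-extensions is automatically presilting, its reduction witnesses that $g'$ is rigid. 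I expect the main obstacle to be the surjectivity of $\phi$ in full generality---one must rule out that a primitive idempotent of $\Lambda$ splits further in $\Lambda'$---which is exactly where the observation that a quotient of a local ring is local becomes essential; the cokernel/diagram-chase bound on the $E$-invariant is then routine.
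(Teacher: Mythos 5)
The paper does not prove this proposition at all: it is imported verbatim as \cite[Proposition 6.4]{AsIy24}, so there is no internal proof to compare against. Judged on its own terms, your argument is correct and is essentially the standard one (and, as far as I can tell, the same strategy as Asai--Iyama's): base change along $\Lambda\twoheadrightarrow\Lambda'$ is surjective on Hom-spaces of projectives, hence the cokernel presentation of $\Hom_{K^b}(a_1,a_2[1])$ maps onto that of $\Hom_{K^b}(Fa_1,Fa_2[1])$, so $E$-invariants can only decrease; combined with surjectivity of $\phi$ on $K_0$ this gives both statements. Your treatment of the degree-$(-1,0)$ Hom-complex, the locality argument for primitivity of the $\bar e_i$, and the observation that $2$-term plus vanishing first self-extension already implies presilting are all fine.

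One step deserves tightening. You pass from $Fa_i$ to ``homotopy-equivalent minimal representatives $\tilde a_i\in\Hom_{\Lambda'}(\phi(g_i))$.'' As literally stated this is not justified: minimal reduction of a $2$-term complex only splits off summands on which the differential is an isomorphism, so in general a minimal representative of a complex with class $g'$ need \emph{not} lie in $\Hom_{\Lambda'}(g')$ (the two terms may still share indecomposable summands, with the map between them in the radical). Fortunately the step is also unnecessary here: if $P^{-1},P^0$ share no summands, then neither do $FP^{-1},FP^0$. Indeed, for $i\neq j$ with $FP_{(i)},FP_{(j)}\neq 0$ one has $FP_{(i)}\not\cong FP_{(j)}$, because $\rad(\Lambda')=(\rad\Lambda+I)/I$ forces the top of $FP_{(i)}$ to be the simple $S_{(i)}$ (nonzero by Nakayama), and distinct simples of $\Lambda$ remain non-isomorphic over $\Lambda'$. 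Hence $F$ maps $\Hom_\Lambda(g)$ directly into $\Hom_{\Lambda'}(\phi(g))$ and $\tilde a_i=Fa_i$ works on the nose. (Alternatively, if you insist on arbitrary representatives, you can argue via upper semicontinuity that a general element of $\Hom_{\Lambda'}(FP^{-1},FP^0)$ is isomorphic to $b\oplus\mathrm{id}_R$ with $b\in\Hom_{\Lambda'}(\phi(g))$, and identity summands do not contribute to $e$.) With that correction the proof is complete.
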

	\begin{corollary}\label{cor:Kro}
		Let $(Q,W)$ be a non-degenerate Jacobi-finite QP. If $Q$ contains the Kronecker quiver (resp., the generalized Kronecker quiver $K_m$ for $m\ge 3$) as a subquiver, then $\mathcal{J}(Q,W)$ is not $E$-finite (resp., $E$-tame).
	\end{corollary}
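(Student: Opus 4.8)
The plan is to reduce to the Kronecker case already settled in Proposition \ref{prop:Kro} by restricting the QP to the two vertices carrying the parallel arrows, and then transporting non-$E$-finiteness (resp.\ non-$E$-tameness) back along the restriction surjection via Proposition \ref{R2gTZuMZA4vV}.

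First I would suppose that $Q$ contains the Kronecker quiver $K_2$ (resp.\ $K_m$ with $m\ge 3$) as a subquiver, and let $i,j\in Q_0$ be the two vertices carrying the parallel arrows. Setting $I:=\{i,j\}$, I form the restriction $(Q|_I,W|_I)$. Since $Q$ has no loops or $2$-cycles, every arrow joining $i$ and $j$ points in the same direction, so the full subquiver $Q|_I$ is a generalized Kronecker quiver $K_{m'}$, where $m'$ is the total number of arrows between $i$ and $j$; by hypothesis $m'\ge 2$ (resp.\ $m'\ge 3$). As $K_{m'}$ is acyclic it supports no oriented cycles, whence $W|_I=0$ and $\mathcal{J}(Q|_I,W|_I)=\mathcal{J}(K_{m'},0)$, which is finite-dimensional; by \cite[Proposition 2.3]{GLFS16} the restricted QP is again non-degenerate. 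Moreover the restriction furnishes a surjective $K$-algebra homomorphism $\mathcal{J}(Q,W)\twoheadrightarrow\mathcal{J}(K_{m'},0)$.

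Next I would invoke Proposition \ref{prop:Kro}: for $m'=2$ the algebra $\mathcal{J}(K_2,0)$ is not $E$-finite, while for $m'\ge 3$ it is not $E$-tame and hence not $E$-finite by Remark \ref{RgDFJ5y6gThr}. Feeding this into the contrapositive of Proposition \ref{R2gTZuMZA4vV} together with the surjection above shows that $\mathcal{J}(Q,W)$ is not $E$-finite, which handles the first assertion. For the parenthetical statement, when $m'\ge 3$ Proposition \ref{prop:Kro} gives directly that $\mathcal{J}(K_{m'},0)$ is not $E$-tame, and the same contrapositive argument yields that $\mathcal{J}(Q,W)$ is not $E$-tame.

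There is essentially no serious obstacle, since all the substantive content is packaged in Propositions \ref{prop:Kro} and \ref{R2gTZuMZA4vV}; the only point requiring a little care is the bookkeeping of the number of arrows. One must observe that restricting to the \emph{full} subquiver on $\{i,j\}$ may produce a $K_{m'}$ with strictly more arrows than the embedded $K_m$, but since $m'\ge m$ the relevant non-$E$-finiteness (resp.\ non-$E$-tameness) of $\mathcal{J}(K_{m'},0)$ persists, so the reduction goes through unchanged.
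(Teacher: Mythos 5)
Your proof is correct and follows essentially the same route as the paper: restrict the QP to the two vertices of the Kronecker subquiver, note that the restriction is $(K_{m'},0)$, and transport non-$E$-finiteness (resp.\ non-$E$-tameness) back along the induced surjection using Propositions \ref{prop:Kro} and \ref{R2gTZuMZA4vV}. Your extra care about the full subquiver possibly having $m'\ge m$ arrows (and using Remark \ref{RgDFJ5y6gThr} when $m'\ge 3$ in the $E$-finiteness case) is a small but welcome refinement of the paper's terser argument.
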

	\begin{proof}
		Let $I \subset Q_0$ be the vertex set of the generalized Kronecker subquiver $K_m$ for $m\ge 2$. Then the restriction $(Q|_I, W|_I)$ is equal to $(K_m,0)$ since $Q$ has no loops or $2$-cycles. Since there is a surjective algebra homomorphism $\mathcal{J}(Q,W) \rightarrow \mathcal{J}(Q|_I, W|_I)$, the assertion follows from Propositions \ref{prop:Kro} and \ref{R2gTZuMZA4vV}.
	\end{proof}
	\subsubsection{Under mutations of Jacobian algebras}
	Let $(Q,W)$ be a non-degenerate Jacobi-finite QP, and
	\[\hat{\Gamma}_{(Q,W)}=\hat{\Gamma}_{(Q,W),1}\oplus\hat{\Gamma}_{(Q,W),2}\oplus\cdots\hat{\Gamma}_{(Q,W),n},\]
	be a Krull-Schmidt decomposition with the property that for all $1\le i\le n$, $\hat{\Gamma}_{(Q,W),i}$ corresponds to the $i$-th vertex of $Q$. Fix $k$ with $1\le k\le n$.
	\begin{proposition}\label{0N428LtgFMnp}
		Let $g_1,g_2\in K_0(\add(\hat{\Gamma}_{(Q,W)}))$. If $g_1+g_2=g_1\oplus g_2$, then
		\[\mu_k(g_1+g_2)=\mu_k(g_1)\oplus\mu_k(g_2).\] 
	\end{proposition}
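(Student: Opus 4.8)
The plan is to transport the object-level direct-sum decomposition supplied by the hypothesis through the map $F_k$, and then read off both the additivity of the $\g$-vectors and the vanishing of the relevant $E$-invariants. Write $\Gamma:=\hat{\Gamma}_{(Q,W)}$. First I would unwind the hypothesis $g_1+g_2=g_1\oplus g_2$, which by definition means $e(g_1,g_2)=e(g_2,g_1)=0$. By the cluster-category form of \cite[Corollary 4.2]{DeFe15} recorded just before this statement, a general $\add(\Gamma)$-presentation $a\in\Hom_{\mathcal{C}}(g_1+g_2)$ then decomposes as $a=a_1\oplus a_2$, where each $a_i\in\Hom_{\mathcal{C}}(g_i)$ is a general presentation; by the upper semicontinuity noted in the footnote, $e(a_i,a_j)=e(g_i,g_j)=0$ for this general choice.

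Next I would apply the bottom map $F_k$ of Diagram \eqref{zvHv2HLFf7K4}. Since $F_k$ is induced by the triangulated equivalence \eqref{eq:F_k}, it is additive, so $F_k(a)=F_k(a_1)\oplus F_k(a_2)$, and it realizes the transition rule \eqref{peQxUmkA5Njt} on $\g$-vectors, so that $g^{F_k(a_i)}=\mu_k(g_i)$ and $g^{F_k(a)}=\mu_k(g_1+g_2)$. Because $\g$-vectors (indices) are additive on direct sums, $g^{F_k(a)}=g^{F_k(a_1)}+g^{F_k(a_2)}$, and combining the last three identities yields the equality
\[
\mu_k(g_1+g_2)=\mu_k(g_1)+\mu_k(g_2).
\]
One byproduct is that the $k$-th coordinates of $g_1$ and $g_2$ must be sign-coherent, which is exactly what makes the piecewise-linear formula \eqref{peQxUmkA5Njt} additive on this particular pair.

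It remains to check that this sum is a direct sum of $\g$-vectors, i.e.\ that $e(\mu_k(g_1),\mu_k(g_2))=e(\mu_k(g_2),\mu_k(g_1))=0$. As a triangulated equivalence, $F_k$ preserves the spaces $\Hom_{K^b}(-,-[1])$, so $e(F_k(a_1),F_k(a_2))=e(a_1,a_2)=0$ and likewise with the arguments swapped. Now $F_k(a_i)$ is a presentation realizing $\mu_k(g_i)$, so by the definition of the $E$-invariant as a minimum over all presentations, $e(\mu_k(g_1),\mu_k(g_2))\le e(F_k(a_1),F_k(a_2))=0$, whence it is $0$; the symmetric inequality gives $e(\mu_k(g_2),\mu_k(g_1))=0$ as well. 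Therefore $\mu_k(g_1+g_2)=\mu_k(g_1)\oplus\mu_k(g_2)$, as claimed. The delicate point of the argument is the assertion that $F_k$ preserves the pairwise $E$-invariants: this amounts to saying that $e$ does not depend on whether one presents an object with respect to $\Gamma$ or to its mutation, i.e.\ to the mutation-invariance of the $E$-invariant, and it is precisely here that the triangulated nature of \eqref{eq:F_k}, rather than merely the combinatorics of \eqref{peQxUmkA5Njt}, is essential. I would emphasize that we never need $F_k(a_i)$ to be a \emph{general} presentation of $\mu_k(g_i)$: invoking the minimum-characterization of $e$ lets one conclude the vanishing from a single witnessing pair, thereby sidestepping any transfer-of-genericity issue.
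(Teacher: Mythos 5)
Your overall strategy---pushing a generic direct-sum decomposition of presentations through $F_k$---is reasonable, and the first half of the argument (decomposing a general $a\in\Hom_{\mathcal{C}}(g_1+g_2)$ as $a_1\oplus a_2$, additivity of indices, hence $\mu_k(g_1+g_2)=\mu_k(g_1)+\mu_k(g_2)$) is fine modulo the paper's own setup around \eqref{peQxUmkA5Njt}. The gap is in the step you yourself flag as delicate: the claim that $e(F_k(a_1),F_k(a_2))=e(a_1,a_2)$ because ``$F_k$ is a triangulated equivalence and preserves $\Hom_{K^b}(-,-[1])$.'' The equivalence \eqref{eq:F_k} preserves $\Hom_{\mathcal{C}}(X,Y[1])$, but $e(a_1,a_2)$ is not computed in $\mathcal{C}$: it is $\dim\Hom_{K^b(\add\hat{\Gamma}_{(Q,W)})}(a_1,a_2[1])$, whereas the object the bottom map of Diagram \eqref{zvHv2HLFf7K4} actually produces from $a_i$ is the minimal $\add\hat{\Gamma}_{\mu_k(Q,W)}$-presentation of $F_k(\Cone(a_i))$, i.e.\ (pulling back along $F_k$) the minimal presentation of $\Cone(a_i)$ with respect to the \emph{mutated} cluster-tilting object $\mu_k(\hat{\Gamma}_{(Q,W)})$. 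You are conflating two readings of ``$F_k(a_i)$'': the literal image of the complex $a_i$ under $F_k$ lies in $\add F_k(\hat{\Gamma}_{(Q,W)})$, which is not $\proj\mathcal{J}(\mu_k(Q,W))=\add F_k(\mu_k\hat{\Gamma}_{(Q,W)})$, so your minimum-over-presentations inequality does not apply to it; and the re-presented complex, to which the inequality does apply, is not obtained from $a_i$ by applying a functor, so its $\Hom(-,-[1])$-spaces are not a priori those of $a_i$. The comparison between the two is exactly where the content lies.

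Concretely, the statement you need---that the $E$-invariant of a pair of objects of $\mathcal{C}_{(Q,W)}$ is unchanged when one switches from $\add\hat{\Gamma}_{(Q,W)}$-presentations to $\add\mu_k(\hat{\Gamma}_{(Q,W)})$-presentations---is the mutation-invariance of the $E$-invariant, which is precisely the substance of \cite[Proposition 3.24]{Pla13} and \cite[Corollary 7.3]{DeFe15} (going back to \cite[Theorem 7.1]{DWZ10}); the paper's proof consists of citing these, so your argument has in effect assumed the essential content of the proposition. It is not a formal consequence of $F_k$ being triangulated. A route closer to your intention that could be made to work: the symmetrized invariant $e(a_1,a_2)+e(a_2,a_1)$ can be identified, up to corrections from direct summands lying in $\add\hat{\Gamma}_{(Q,W)}[1]$, with $\dim\Hom_{\mathcal{C}}(\Cone(a_1),\Cone(a_2)[1])$, which genuinely is preserved by $F_k$; since both summands are non-negative and vanish before mutation, they vanish after. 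But that identification and the handling of the shifted summands themselves require proof, so you should either supply that argument or fall back on the citations the paper uses.
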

	\begin{proof}
		It is a direct consequence of \cite[Proposition 3.24]{Pla13} and Diagram \eqref{6FUzaTwbFh77}. It also follows from \cite[Corollary 7.3]{DeFe15}.
	\end{proof}
	\begin{theorem}\label{L9znmRN7cMFf}
		Let $g_1,g_2\in K_0(\proj\mathcal{J}(Q,W))$. Then the following are equivalent:
		\begin{itemize}
			\item[$(1)$] $g_1+g_2=g_1\oplus g_2$.
			\item[$(2)$] $\mu_k(g_1+g_2)=\mu_k(g_1)+\mu_k(g_2)=\mu_k(g_1)\oplus\mu_k(g_2)$.
		\end{itemize}
	\end{theorem}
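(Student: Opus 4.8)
The plan is to derive both implications from Proposition~\ref{0N428LtgFMnp} together with the involutivity of the mutation map $\mu_k$, so that the argument becomes completely symmetric in $(Q,W)$ and $\mu_k(Q,W)$. Throughout I identify $K_0(\proj\mathcal{J}(Q,W))$ with $K_0(\add(\hat{\Gamma}_{(Q,W)}))$ through the left vertical isomorphism of Diagram~\eqref{6FUzaTwbFh77}, so that Proposition~\ref{0N428LtgFMnp} applies verbatim to elements of $K_0(\proj\mathcal{J}(Q,W))$.

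The implication $(1)\Rightarrow(2)$ is then immediate. Indeed, if $g_1+g_2=g_1\oplus g_2$, Proposition~\ref{0N428LtgFMnp} gives $\mu_k(g_1+g_2)=\mu_k(g_1)\oplus\mu_k(g_2)$; but by definition the appearance of the symbol $\oplus$ on the right already asserts $e(\mu_k(g_1),\mu_k(g_2))=e(\mu_k(g_2),\mu_k(g_1))=0$ and $\mu_k(g_1)\oplus\mu_k(g_2)=\mu_k(g_1)+\mu_k(g_2)$, which is precisely statement $(2)$.

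For $(2)\Rightarrow(1)$ I would first record that the $\g$-vector mutation $\mu_k$ (the bottom map of Diagram~\eqref{6FUzaTwbFh77}) is an involution. By Theorem~\ref{thm:Jfin} the QP $\mu_k(Q,W)$ is again non-degenerate and Jacobi-finite, and mutating it once more at $k$ returns $(Q,W)$ up to right equivalence; transporting this through the bijections of Theorem~\ref{nyKSQbIejd2O} and Diagram~\eqref{zvHv2HLFf7K4} yields $\mu_k\circ\mu_k=\id$. Alternatively, one checks directly that the piecewise-linear transformation~\eqref{peQxUmkA5Njt}, which coincides with the classical rule~\eqref{Yjd9vt0FtxUc}, squares to the identity once the exchange matrix is mutated, using $[b]_+-[-b]_+=b$ and $\min(-x,0)-\min(x,0)=-x$. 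Granting this, I apply Proposition~\ref{0N428LtgFMnp} to the QP $\mu_k(Q,W)$ and the pair $\mu_k(g_1),\mu_k(g_2)$: assuming $(2)$, namely $\mu_k(g_1)+\mu_k(g_2)=\mu_k(g_1)\oplus\mu_k(g_2)$, the proposition yields
\[\mu_k\bigl(\mu_k(g_1)+\mu_k(g_2)\bigr)=\mu_k(\mu_k(g_1))\oplus\mu_k(\mu_k(g_2))=g_1\oplus g_2,\]
where the last step uses $\mu_k\circ\mu_k=\id$. In particular $g_1\oplus g_2$ is defined, i.e.\ $e(g_1,g_2)=e(g_2,g_1)=0$, which is statement $(1)$.

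The only genuine content of the theorem is therefore Proposition~\ref{0N428LtgFMnp}; everything else is formal. The step I expect to require the most care is the bookkeeping around involutivity: one must ensure that ``mutation at $k$ of $\mu_k(Q,W)$'' is literally the map to which Proposition~\ref{0N428LtgFMnp} applies (its exchange matrix is the mutated one), and that the identification $K_0(\proj\mathcal{J}(\mu_k^2(Q,W)))\cong K_0(\proj\mathcal{J}(Q,W))$ induced by the right equivalence is compatible with the $\oplus$-decomposition, so that $\mu_k\circ\mu_k=\id$ may legitimately be invoked inside the second application of the proposition.
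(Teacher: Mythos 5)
Your proposal is correct and follows essentially the same route as the paper: the forward implication is Proposition \ref{0N428LtgFMnp} applied verbatim, and the converse is obtained by applying the same proposition to the mutated QP and undoing the mutation via the bijections of Diagram \eqref{6FUzaTwbFh77} (equivalently, the involutivity of $\mu_k$ up to right equivalence). Your extra care about the identification $\mu_k^2(Q,W)\simeq(Q,W)$ and the involutivity of the transformation \eqref{peQxUmkA5Njt} is a reasonable way to make the paper's one-line argument explicit.
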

	\begin{proof}
		It follows from the mutation formula \eqref{peQxUmkA5Njt}, Diagram \eqref{6FUzaTwbFh77} and Proposition \ref{0N428LtgFMnp}.
	\end{proof}
	\begin{theorem}\label{LwDWFmqsJxpt}
		Let $g\in K_{0}(\proj\mathcal{J}(Q,W))$. Then the following hold:
		\begin{itemize}
			\item[$(1)$] $g$ is rigid if and only if so is $\mu_k(g)$.
			\item[$(2)$] $g$ is tame if and only if so is $\mu_k(g)$.
		\end{itemize}
		In particular, $\mathcal{J}(Q,W)$ is $E$-finite (resp., $E$-tame) if and only if so is $\mathcal{J}(\mu_k(Q,W))$.
	\end{theorem}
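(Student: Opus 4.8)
The plan is to reduce everything to the machinery already assembled: the bottom map $F_k$ in Diagram \eqref{zvHv2HLFf7K4}, which is a bijection sending (pre)silting complexes to (pre)silting complexes and whose effect on $\g$-vectors is exactly the bijection $\mu_k$ of Diagram \eqref{6FUzaTwbFh77} via formula \eqref{peQxUmkA5Njt}, together with Theorem \ref{L9znmRN7cMFf}. I would first dispose of the pointwise statements $(1)$ and $(2)$, and then deduce the global $E$-finiteness/$E$-tameness assertion from the fact that $\mu_k$ is a bijection on $\g$-vectors.

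For part $(1)$, suppose $g\in K_0(\proj\mathcal{J}(Q,W))$ is rigid, so there is a $2$-term presilting complex $X$ with $g^X=g$. Since the bottom $F_k$ carries $X$ to a $2$-term presilting complex $F_k(X)$ and commutativity of Diagram \eqref{6FUzaTwbFh77} gives $g^{F_k(X)}=\mu_k(g^X)=\mu_k(g)$, the element $\mu_k(g)$ is again witnessed by a presilting complex, hence rigid. The converse is symmetric: because QP-mutation is an involution, the same construction performed at $k$ for $\mu_k(Q,W)$ provides an inverse to $F_k$ that is again presilting-preserving and realizes $\mu_k$ on $\g$-vectors, so (using $\mu_k\circ\mu_k=\id$ on $\g$-vectors) rigidity transfers back.

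For part $(2)$, I would apply Theorem \ref{L9znmRN7cMFf} to the pair $g_1=g_2=g$. By definition $g$ is tame precisely when $e(g,g)=0$, and by the direct-sum convention this is exactly the statement $g+g=g\oplus g$. Theorem \ref{L9znmRN7cMFf} then gives $g+g=g\oplus g$ if and only if $\mu_k(g)+\mu_k(g)=\mu_k(g)\oplus\mu_k(g)$, i.e.\ $e(\mu_k(g),\mu_k(g))=0$, which says $\mu_k(g)$ is tame. Thus $(2)$ is immediate once the tameness condition is correctly identified with the $\oplus$-condition.

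Finally, for the ``in particular'' clause, recall that $\mu_k\colon K_0(\proj\mathcal{J}(Q,W))\to K_0(\proj\mathcal{J}(\mu_k(Q,W)))$ is a bijection. Hence every element of the source is rigid (resp.\ tame) if and only if every element of the target is, by parts $(1)$ and $(2)$; this is precisely $E$-finiteness (resp.\ $E$-tameness) of the two algebras. The genuinely hard work---constructing $F_k$ so that it simultaneously respects presilting objects and realizes the cluster-theoretic $\g$-vector mutation, and proving that mutation commutes with direct sums of $\g$-vectors (Proposition \ref{0N428LtgFMnp}, hence Theorem \ref{L9znmRN7cMFf})---has already been carried out, so the only remaining care is bookkeeping: matching the definition of tameness to the $\oplus$-notation and invoking the involutivity of $\mu_k$ for the reverse implications.
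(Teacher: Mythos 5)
Your proposal is correct and follows essentially the same route as the paper: part $(1)$ from the fact that the bottom map $F_k$ in Diagram \eqref{zvHv2HLFf7K4} preserves presilting complexes and realizes $\mu_k$ on $\g$-vectors, part $(2)$ by applying Theorem \ref{L9znmRN7cMFf} with $g_1=g_2=g$ (the identification of tameness with $g+g=g\oplus g$ is exactly right, since both conditions reduce to $e(g,g)=0$), and the global statement from the bijectivity of $\mu_k$. The only difference is that you spell out the bookkeeping the paper leaves implicit.
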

	\begin{proof}
		The statement $(2)$ is a direct consequence of Theorem \ref{L9znmRN7cMFf}. The statement $(1)$ follows from the fact that the bottom map $F_k$ in Diagram \eqref{zvHv2HLFf7K4} preserves presilting complexes.
	\end{proof}
	\begin{remark}
		Based on Theorem \ref{L9znmRN7cMFf}, one can show that if a $\g$-vector $g$ satisfies the ray condition (resp., the non-decreasing condition), then so does $\mu_k(g)$ (see \cite{HY25b, AsIy24, DeFe15} for definition). Therefore, if $\mathcal{J}(Q,W)$ satisfies the ray condition (resp., the non-decreasing condition), then so does $\mathcal{J}(\mu_k(Q,W))$. Thus, by \cite[Proposition 7.1]{AsIy24}, in the case that $Q$ is of acyclic type, $\mathcal{J}(Q,W)$ satisfies the ray condition.\footnote{JiaRui Fei conjectured (through our email discussion) that every finite-dimensional Jacobian algebra satisfies the ray condition.}
	\end{remark}
	\begin{lemma}\label{lem:affine E}
		If $Q$ is of affine type, then $\mathcal{J}(Q,W)$ is not $E$-finite.
	\end{lemma}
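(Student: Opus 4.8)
\emph{The plan} is to use the mutation-invariance of $E$-finiteness to reduce first to the hereditary case and then to the Kronecker algebra. Since $Q$ is of affine type, a finite sequence of mutations carries $(Q,W)$ to a QP $(Q',0)$ whose quiver $Q'$ is an acyclic affine quiver; being acyclic, $Q'$ has no oriented cycles, so its only potential is $0$ and $\mathcal{J}(Q',0)=KQ'$ is the path algebra. Every intermediate QP is again non-degenerate and Jacobi-finite, so applying Theorem \ref{LwDWFmqsJxpt} along this sequence shows that $\mathcal{J}(Q,W)$ is $E$-finite if and only if $KQ'$ is. Thus it suffices to prove that the tame hereditary algebra $KQ'$ is not $E$-finite.

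The delicate point is that $KQ'$ is representation-tame, hence $E$-tame by Theorem \ref{B9OIm5j2hMxc}: every $g$ satisfies $e(g,g)=0$, so non-rigidity \emph{cannot} be detected by the $E$-invariant, and the failure of $E$-finiteness must come from a lattice point lying in a hole of the dense-but-incomplete $\g$-fan $\mathcal{F}^{g}_{\twosilt}(KQ')$ rather than from a self-extension. To pin down such a point cleanly I would reduce to the already-settled Kronecker case: I claim that $Q'$ is mutation equivalent to a quiver $Q''$ containing a double arrow, i.e.\ a $K_2$ as a full subquiver. Granting this, Corollary \ref{cor:Kro} together with Proposition \ref{prop:Kro} gives that the (non-degenerate, Jacobi-finite) Jacobian algebra $\mathcal{J}(Q'',W'')$ is not $E$-finite, and a second application of Theorem \ref{LwDWFmqsJxpt} transfers this back to $KQ'$, finishing the proof.

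\emph{The main obstacle} is therefore the combinatorial claim that every affine quiver admits a double arrow somewhere in its mutation class. By the classification of finite mutation type quivers (Theorem \ref{MupWVVH0u2Zz}), the types $\tilde{A}$ and $\tilde{D}$ arise from triangulated surfaces (annuli and twice-punctured disks), where an appropriate triangulation produces a double arrow — the two parallel arcs of a minimal annulus triangulation (already $\tilde{A}_1=K_2$ is a double arrow), or a self-folded triangle around a puncture; for the exceptional affine types $\tilde{E}_6,\tilde{E}_7,\tilde{E}_8$ the mutation classes are finite and a direct computation exhibits a double arrow. The difficulty is making this uniform over all ranks, so that one must either write down an explicit mutation sequence realizing a double arrow for each family $\tilde{A}_n,\tilde{D}_n$, or argue abstractly that a mutation class with all arrow-multiplicities $\le 1$ forces the Tits form to be of Dynkin type, contradicting that $KQ'$ is representation-infinite. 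As a fallback one can instead argue directly in the hole of the fan: the $\g$-vector $g_{\delta}$ of a homogeneous regular module of dimension the minimal positive imaginary root $\delta$ is a lattice point not covered by any cone of $\mathcal{F}^{g}_{\twosilt}(KQ')$ — since the indecomposable $2$-term presilting complexes correspond to the exceptional (preprojective or preinjective) modules — whence $g_{\delta}$ is not rigid; but verifying that $g_{\delta}$ lies in no cone is itself most transparently reduced to the Kronecker picture of Proposition \ref{prop:Kro}.
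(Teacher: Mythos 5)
Your proposal is correct in outline but takes a genuinely different route from the paper. After the common first step (mutation-invariance via Theorem \ref{LwDWFmqsJxpt} reduces to an acyclic affine quiver $Q'$ with $W=0$), the paper does \emph{not} pass through the Kronecker quiver: it cites a result of Reading--Speyer to the effect that $\mathbb{Z}^{|Q_0'|}\setminus\mathcal{F}^{g}_{\cluster}(Q')\neq\emptyset$ for acyclic affine $Q'$, and then uses the reachability of all rigid objects in the acyclic cluster category (Buan--Marsh--Reineke--Reiten--Todorov) to identify $\mathcal{F}^{g}_{\cluster}(Q')$ with $\mathcal{F}^{g}_{\twosilt}(KQ')$ via Remark \ref{teSU4F2m6VPA}, so the silting $\g$-fan misses a lattice point and $E$-finiteness fails. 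Your route instead mutates further to a quiver containing $K_2$ and invokes Corollary \ref{cor:Kro} and Proposition \ref{prop:Kro}; this is exactly the strategy the paper itself uses for the tameness analogue (Lemma \ref{lem:Etame finmut}, via $K_{m\ge 3}$), and it has the advantage of needing no input about affine cluster fans --- indeed, once you know $Q$ is mutation-equivalent to some $Q''$ containing a double arrow, the detour through $KQ'$ is unnecessary and you can conclude directly from Corollary \ref{cor:Kro} and Theorem \ref{LwDWFmqsJxpt}. What your approach buys is elementarity on the fan side; what it costs is the combinatorial claim, which is where your write-up is weakest. The clean way to close it is not the case-by-case surface/exceptional analysis you sketch first, but the abstract argument you mention as an alternative: by the Fomin--Zelevinsky classification, a connected quiver is of finite cluster type if and only if it is $2$-finite, i.e.\ every quiver in its mutation class has all arrow multiplicities at most $1$; since an affine quiver is not of Dynkin type, Theorem \ref{7QdLexH7Sem2} forces some $Q''$ in its mutation class to carry a multiple arrow, hence a $K_m$ with $m\ge 2$ as a subquiver, and Corollary \ref{cor:Kro} applies in either case ($m=2$ kills $E$-finiteness; $m\ge 3$ kills $E$-tameness, which is stronger by Remark \ref{RgDFJ5y6gThr}). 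Stated with that citation, your argument is complete; your fallback paragraph about the $\g$-vector of a homogeneous regular module of dimension $\delta$ is essentially a sketch of the [RS17] input the paper uses and, as you concede, is not self-contained as written.
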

	\begin{proof}
		Since $E$-finiteness is invariant under mutations by Theorem \ref{LwDWFmqsJxpt}, we may assume that $Q$ is an acyclic affine quiver and $W=0$. Then it follows from \cite[Corollary 4.9]{RS17} that
		\[
		\mathbb{Z}^{|Q_0|} \setminus \mathcal{F}^{g}_{\cluster}(Q) \neq \emptyset.
		\]
		On the other hand, \cite[Proposition 3.5]{BMRR06} implies that $\rigid^{+}\mathcal{C}_{(Q,0)}=\rigid\mathcal{C}_{(Q,0)}$ and $\ctilt^{+}\mathcal{C}_{(Q,0)}=\ctilt\mathcal{C}_{(Q,0)}$. Therefore, it follows from the observation below Theorem \ref{VTIFVbl5KQBe} that
		\[
		\mathbb{Z}^{|Q_0|} \setminus \mathcal{F}^{g}_{\twosilt}(\mathcal{J}(Q,W)) \neq \emptyset.
		\]
		Hence, $\mathcal{J}(Q,W)$ is not $E$-finite.
	\end{proof}

	\section{Main results}\label{4p5IXJsU8vuj}
	In this section, we discuss the classifications of finite-dimensional Jacobian algebras of finite and tame representation types —namely, representation-finite, $\g$-finite, $E$-finite, $\tau$-tilting finite, representation-tame, $\g$-tame, and $E$-tame— via quivers of Dynkin and finite mutation types. In particular, we show that these finiteness (resp., tameness) representation types are (resp., nearly) the same on finite-dimensional Jacobian algebras. This also proves Demonet's Conjecture \ref{tl5qnvWRhqxR} for finite-dimensional Jacobian algebras. Furthermore, applying our results, we classify cluster algebras with complete $\g$-fans.
	
	Throughout this section, we assume that all quivers have no loops or $2$-cycles, and all QPs are non-degenerate and Jacobi-finite.
	\begin{lemma}\label{lem:Etame finmut}
		If $\mathcal{J}(Q,W)$ is $E$-tame, then $Q$ is of finite mutation type.
	\end{lemma}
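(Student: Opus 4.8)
The plan is to prove the contrapositive: if $Q$ is \emph{not} of finite mutation type, then $\mathcal{J}(Q,W)$ is not $E$-tame. Since $E$-tameness is invariant under mutations by Theorem \ref{LwDWFmqsJxpt}, it suffices to find, somewhere in the mutation class of $(Q,W)$, an obstruction to $E$-tameness that is local (i.e. survives passage to a quotient). The natural candidate is a generalized Kronecker subquiver $K_m$ with $m\ge 3$, since Corollary \ref{cor:Kro} tells us precisely that if $Q$ contains $K_m$ for some $m\ge 3$ as a subquiver, then $\mathcal{J}(Q,W)$ fails to be $E$-tame (the restriction to the two relevant vertices is $(K_m,0)$, which is not $E$-tame by Proposition \ref{prop:Kro}, and non-$E$-tameness passes to quotients by Proposition \ref{R2gTZuMZA4vV}).

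The key combinatorial input is that a connected quiver is of infinite mutation type precisely when some quiver in its mutation class has an arrow of multiplicity at least $3$ between two vertices. First I would invoke the classification of finite mutation type quivers (Theorem \ref{MupWVVH0u2Zz}): a connected loop- and $2$-cycle-free quiver is of finite mutation type if and only if it is either defined from a triangulated surface or mutation equivalent to one of the finitely many exceptional quivers in Table \ref{axzK0GRWH13u}. A standard fact from the theory of finite mutation type quivers (Felikson–Shapiro–Tumarkin) is that every such quiver has all arrow multiplicities bounded by $2$, with the multiplicity-$2$ arrows appearing only in very restricted configurations. Consequently, if $Q$ is of infinite mutation type, then some quiver $Q'$ mutation equivalent to $Q$ contains an arrow of multiplicity $\ge 3$, i.e. $Q'$ contains a generalized Kronecker subquiver $K_m$ with $m\ge 3$.

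Putting these together, the argument runs as follows. Assume $Q$ is connected and not of finite mutation type. By the classification, $Q$ is mutation equivalent to a quiver $Q'$ containing $K_m$ (for some $m\ge 3$) as a subquiver. Let $W'$ be the potential on $Q'$ with $\mu$-equivalence $(Q',W')=\mu_{k_r}\cdots\mu_{k_1}(Q,W)$ obtained by the corresponding sequence of QP-mutations; since $(Q,W)$ is non-degenerate and Jacobi-finite, so is each intermediate QP, and in particular $(Q',W')$. By Corollary \ref{cor:Kro}, $\mathcal{J}(Q',W')$ is not $E$-tame. Applying Theorem \ref{LwDWFmqsJxpt} along the mutation sequence (one vertex at a time), $E$-tameness of $\mathcal{J}(Q,W)$ would force $E$-tameness of $\mathcal{J}(Q',W')$, a contradiction. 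Hence $\mathcal{J}(Q,W)$ is not $E$-tame, completing the contrapositive.

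The main obstacle I anticipate is the combinatorial step asserting that infinite mutation type forces a multiplicity-$\ge 3$ arrow \emph{somewhere} in the mutation class. This is exactly the content encoded in Theorem \ref{MupWVVH0u2Zz}: quivers of finite mutation type are bounded (multiplicity $\le 2$), so any quiver with an arrow of multiplicity $\ge 3$ is of infinite mutation type, and conversely an infinite mutation type quiver must produce such a quiver under some mutation sequence. One must be slightly careful that the reference result is stated in the right direction and that the connectedness hypothesis is used to keep the $K_m$ inside a single component; reducing to the connected case is harmless since both $E$-tameness and mutation type are checked componentwise. If the paper prefers to cite the bound on arrow multiplicities directly (as in \cite{FST12}), this step becomes a one-line appeal to that classification.
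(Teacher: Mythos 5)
Your proposal is correct and follows essentially the same route as the paper: contrapositive, invoke the Felikson--Shapiro--Tumarkin result (cited in the paper as \cite[Theorem 2.6]{FST12}) to find a quiver $Q'$ in the mutation class containing $K_m$ with $m\ge 3$, then conclude via Corollary \ref{cor:Kro} and the mutation-invariance of $E$-tameness from Theorem \ref{LwDWFmqsJxpt}. The only small caveat is that the statement ``infinite mutation type forces a multiplicity-$\ge 3$ arrow somewhere in the mutation class'' requires at least $3$ vertices (note $K_m$ itself is of \emph{finite} mutation type), which the paper handles explicitly but which is harmless here since every quiver on at most two vertices is of finite mutation type.
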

	\begin{proof}
		Assume that $Q$ is not of finite mutation type. Then it must have at least $3$ vertices. By \cite[Theorem 2.6]{FST12}, $Q$ is mutation equivalent to some quiver $Q'$ containing $K_m$ for $m\ge 3$ as a subquiver. Therefore, by Corollary \ref{cor:Kro}, every finite-dimensional Jacobian algebra associated with $Q'$ is not $E$-tame. Thus by Theorem \ref{LwDWFmqsJxpt}, $\mathcal{J}(Q,W)$ is not $E$-tame.
	\end{proof}
	\begin{lemma}\label{Trn7AJuEsX9s}
		Let $(Q,W)$ be a connected QP. If $\mathcal{J}(Q,W)$ is $E$-finite, then $Q$ is of Dynkin type.
	\end{lemma}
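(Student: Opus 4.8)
The plan is to reduce the statement to the Felikson--Shapiro--Tumarkin classification of finite mutation type quivers and then eliminate every non-Dynkin possibility. Since $\mathcal{J}(Q,W)$ is $E$-finite, it is $E$-tame by Remark \ref{RgDFJ5y6gThr}, so Lemma \ref{lem:Etame finmut} forces $Q$ to be of finite mutation type. Hence, by Theorem \ref{MupWVVH0u2Zz}, either $Q$ is defined from a triangulated surface, or $Q$ is mutation equivalent to one of the exceptional quivers in Table \ref{axzK0GRWH13u}. Throughout I would exploit that $E$-finiteness is a mutation invariant (Theorem \ref{LwDWFmqsJxpt}), so it suffices to test $E$-finiteness on any convenient representative of the mutation class; in particular, whenever a representative admits a generalized Kronecker subquiver, Corollary \ref{cor:Kro} applies after restricting the potential to those two vertices (which, having no $2$-cycles, carries the zero potential).

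Next I would dispatch the exceptional quivers. The Dynkin quivers $E_6,E_7,E_8$ are exactly the conclusion we want, so there is nothing to prove there. If $Q$ is mutation equivalent to $K_{\ge 3}$, then $\mathcal{J}(Q,W)$ is not even $E$-tame by Corollary \ref{cor:Kro} (or Proposition \ref{prop:Kro}) and Theorem \ref{LwDWFmqsJxpt}, contradicting $E$-finiteness through Remark \ref{RgDFJ5y6gThr}. If $Q$ is of affine type, and in particular if it is mutation equivalent to $E_6^{(1)},E_7^{(1)}$ or $E_8^{(1)}$, then $\mathcal{J}(Q,W)$ is not $E$-finite by Lemma \ref{lem:affine E}. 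Finally, each of $E_6^{(1,1)},E_7^{(1,1)},E_8^{(1,1)},X_6,X_7$ contains a Kronecker subquiver $K_2$ (the pairs of parallel arrows visible in Table \ref{axzK0GRWH13u}), so Corollary \ref{cor:Kro} shows that their Jacobian algebras fail to be $E$-finite. This eliminates every non-Dynkin exceptional type.

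It remains to treat the surface case, which is where the genuine work lies and which I expect to be the main obstacle. Here I would invoke our analysis of laminations on marked surfaces (Theorem \ref{thm:surf main}): this classifies precisely the triangulated surfaces whose Jacobian algebras are $E$-finite, and identifies them with the finite-type surfaces, namely the unpunctured and once-punctured disks, which correspond to the Dynkin types $A$ and $D$. The idea is that for any marked surface not of finite type one builds, from a suitable (possibly infinite) lamination, a limit of $g$-vectors realizing a non-rigid element of $K_0(\proj\mathcal{J}(Q,W))$, thereby witnessing the failure of $E$-finiteness. Since the exceptional and affine cases collapse immediately upon spotting the Kronecker subquivers and applying Lemma \ref{lem:affine E}, while ruling out all infinite-type surfaces requires this geometric input, Theorem \ref{thm:surf main} is the essential ingredient. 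Combining the exceptional and surface analyses, $E$-finiteness of the connected algebra $\mathcal{J}(Q,W)$ forces $Q$ to be of Dynkin type.
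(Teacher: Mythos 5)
Your proposal is correct and follows essentially the same route as the paper's proof: reduce to finite mutation type via Remark \ref{RgDFJ5y6gThr} and Lemma \ref{lem:Etame finmut}, eliminate $K_{\ge 2}$, $X_6$, $X_7$ and $E_i^{(1,1)}$ through Corollary \ref{cor:Kro} together with the mutation invariance of Theorem \ref{LwDWFmqsJxpt}, rule out affine types by Lemma \ref{lem:affine E}, and settle the surface case with Theorem \ref{thm:surf main}. The extra heuristic you give for Theorem \ref{thm:surf main} (a ``limit of $\g$-vectors'' from an infinite lamination) is not quite how that theorem works --- it uses closed laminates in finite laminations to hit integer points outside the $\g$-fan --- but since you invoke the theorem as a black box this does not affect the validity of the argument.
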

	\begin{proof}
		By Remark \ref{RgDFJ5y6gThr} and Lemma \ref{lem:Etame finmut}, the $E$-finiteness of $\mathcal{J}(Q,W)$ implies that $Q$ is of finite mutation type. On the other hand, it follows from Corollary \ref{cor:Kro} and Theorem \ref{LwDWFmqsJxpt} that $Q$ is not mutation equivalent to any of the following quivers: $K_m$ ($m\ge 2$), $X_6$, $X_7$, and $E^{(1,1)}_{i}$ ($i=6,7,8$) as listed in Table \ref{axzK0GRWH13u}. Moreover, by Lemma \ref{lem:affine E}, $Q$ is not of affine type. Therefore, by Theorem \ref{MupWVVH0u2Zz}, $Q$ is either one of the exceptional quivers $E_6$, $E_7$, $E_8$, or defined from a triangulated surface. Then, by Theorem \ref{thm:surf main}, $Q$ must be of Dynkin type.
	\end{proof}
	\subsection{Finiteness classification}
	\begin{theorem}\label{thm:finiteness}
		Let $(Q,W)$ be a connected QP. Then the following are equivalent:
		\begin{itemize}
			\item[$(1)$] $\mathcal{J}(Q,W)$ is $E$-finite;
			\item[$(2)$] $\mathcal{J}(Q,W)$ is $\g$-finite;
			\item[$(3)$] $\mathcal{J}(Q,W)$ is $\tau$-tilting finite;
			\item[$(4)$] $\mathcal{J}(Q,W)$ is representation-finite;
			\item[$(5)$] $Q$ is of Dynkin type;
			\item[$(6)$] $\mathcal{A}(Q)$ is of finite type;
			\item[$(7)$] $\mathcal{F}^g_{\cluster}(Q)$ is complete.
		\end{itemize}
	\end{theorem}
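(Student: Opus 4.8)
The plan is to prove the equivalence of $(1)$–$(5)$ by a single cycle of implications and then to graft the two cluster-algebraic conditions $(6)$ and $(7)$ onto that block. First I would run the cycle
\[
(2)\Rightarrow(1)\Rightarrow(5)\Rightarrow(4)\Rightarrow(3)\Rightarrow(2).
\]
Here $(2)\Rightarrow(1)$ is exactly Proposition \ref{YQWOqSW20Zqa}; $(1)\Rightarrow(5)$ is Lemma \ref{Trn7AJuEsX9s}; for $(5)\Rightarrow(4)$ I would note that a quiver of Dynkin type is in particular of acyclic type, so Corollary \ref{cor:rep type} applies and yields representation-finiteness; $(4)\Rightarrow(3)$ is Remark \ref{LJwIJTvJ2fcB}; and $(3)\Rightarrow(2)$ is Theorem \ref{Ypp40yreQWb2}. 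Closing this cycle establishes $(1)\Leftrightarrow(2)\Leftrightarrow(3)\Leftrightarrow(4)\Leftrightarrow(5)$.

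Next I would attach the cluster-theoretic conditions. The equivalence $(5)\Leftrightarrow(6)$ is immediate from Theorem \ref{7QdLexH7Sem2}, and $(5)\Rightarrow(7)$ is Reading's theorem (Theorem \ref{7QdLexH7Sem3}). For the remaining direction $(7)\Rightarrow(2)$, the key input is Remark \ref{teSU4F2m6VPA}, which identifies $\mathcal{F}^{g}_{\cluster}(Q)$ with a subfan of $\mathcal{F}^{g}_{\twosilt}(\mathcal{J}(Q,W))$. Thus if $\mathcal{F}^{g}_{\cluster}(Q)$ is complete, its support is already all of $\mathbb{R}^{n}$; since the support of $\mathcal{F}^{g}_{\twosilt}(\mathcal{J}(Q,W))$ contains the support of this subfan, it too is all of $\mathbb{R}^{n}$, so $\mathcal{F}^{g}_{\twosilt}(\mathcal{J}(Q,W))$ is complete. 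By Definition \ref{1s65JGnzx70lk} this says precisely that $\mathcal{J}(Q,W)$ is $\g$-finite, which is $(2)$. Together with the previous block, all seven conditions are equivalent.

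The content of the argument is almost entirely assembly: the genuinely deep input is Lemma \ref{Trn7AJuEsX9s}, whose proof (through the surface classification Theorem \ref{thm:surf main}) excludes the surface-type and exceptional finite-mutation cases and forces $Q$ to be Dynkin, while every other step quotes a previously established equivalence. I expect the one place warranting care to be the direction $(7)\Rightarrow(2)$: rather than trying to argue denseness or completeness directly from the cluster combinatorics, one should use the inclusion of fans from Remark \ref{teSU4F2m6VPA} and reduce completeness to an elementary statement about supports, which makes the implication transparent. No further obstacle is anticipated.
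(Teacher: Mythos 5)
Your proof is correct and follows essentially the same route as the paper: the paper's proof is exactly the implication diagram you describe, with the cycle $(2)\Rightarrow(1)\Rightarrow(5)\Rightarrow(4)\Rightarrow(3)\Rightarrow(2)$ justified by Proposition \ref{YQWOqSW20Zqa}, Lemma \ref{Trn7AJuEsX9s}, Corollary \ref{cor:rep type}, Remark \ref{LJwIJTvJ2fcB}, and Theorem \ref{Ypp40yreQWb2}, and with $(6)$ and $(7)$ attached via Theorems \ref{7QdLexH7Sem2} and \ref{7QdLexH7Sem3} and Remark \ref{teSU4F2m6VPA}. Your spelled-out support argument for $(7)\Rightarrow(2)$ is a correct elaboration of what the paper leaves implicit.
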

	\begin{proof}
		The relations among the conditions $(1)$--$(7)$ are summarized in the following diagram, where all arrows are justified by the indicated results:
		\[\begin{tikzcd}[column sep=large,cramped]
			{(4)} && {(5)} && {(6)}\\
			&& {(1)} &&\\
			{(3)} && {(2)} && {(7)}
			\arrow["\text{Cor. \ref{cor:rep type}}", Leftarrow, from=1-1, to=1-3]
			\arrow["\text{Thm. \ref{7QdLexH7Sem2}}", Leftrightarrow, from=1-3, to=1-5]
			\arrow["\text{Rem. \ref{LJwIJTvJ2fcB}}"{swap}, Rightarrow, from=1-1, to=3-1]
			\arrow["\text{Thm. \ref{7QdLexH7Sem3}}", Rightarrow, from=1-3, to=3-5]
			\arrow["\text{Thm. \ref{Ypp40yreQWb2}}"{swap}, Leftrightarrow, from=3-1, to=3-3]
			\arrow["\text{Rem. \ref{teSU4F2m6VPA}}"{swap}, Leftarrow, from=3-3, to=3-5]
			\arrow["\text{Prop. \ref{YQWOqSW20Zqa}}", Rightarrow, from=3-3, to=2-3]
			\arrow["\text{Lem. \ref{Trn7AJuEsX9s}}", Rightarrow, from=2-3, to=1-3]
		\end{tikzcd}\]
	\end{proof}
	\begin{corollary}\label{Cf4iTsp13aMpY68}
		Let $Q$ be a connected quiver. Then $\mathcal{F}^{g}_{\cluster}(Q)$ is complete if and only if $Q$ is of Dynkin type.
	\end{corollary}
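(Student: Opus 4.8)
The plan is to deduce the corollary from Theorem \ref{thm:finiteness}, whose equivalence of conditions $(5)$ and $(7)$ is exactly the statement we want — but only once the quiver $Q$ has been equipped with a non-degenerate Jacobi-finite potential. The backward implication needs no such preparation: if $Q$ is of Dynkin type, then Reading's theorem (Theorem \ref{7QdLexH7Sem3}) already gives that $\mathcal{F}^{g}_{\cluster}(Q)$ is complete. Thus the real content lies in the forward direction, and the guiding observation is that $\mathcal{F}^{g}_{\cluster}(Q)$ is a purely combinatorial invariant of the cluster algebra $\mathcal{A}(Q)$, independent of any choice of potential or of the ground field $K$. This independence is what will allow me to manufacture a QP structure on $Q$ to which Theorem \ref{thm:finiteness} applies.

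For the forward direction I would argue as follows. Suppose $\mathcal{F}^{g}_{\cluster}(Q)$ is complete. A complete fan is in particular dense, so by \cite[Theorem 2.27]{Yu23} the quiver $Q$ must be of finite mutation type. I then invoke Proposition \ref{prop:fin mut}: since $Q$ has no loops or $2$-cycles and is of finite mutation type, there exists a potential $W$ making $(Q,W)$ a non-degenerate Jacobi-finite QP — choosing $K$ uncountable if necessary, which is harmless precisely because $\mathcal{F}^{g}_{\cluster}(Q)$ does not depend on $K$. Now $(Q,W)$ is a connected QP in the sense of the paper, and since $\mathcal{F}^{g}_{\cluster}(Q)$ is complete, condition $(7)$ of Theorem \ref{thm:finiteness} holds; the implication $(7)\Rightarrow(5)$ then yields that $Q$ is of Dynkin type, as required.

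The main obstacle, and the reason the corollary is not merely a restatement of Theorem \ref{thm:finiteness}, is the gap between an arbitrary connected quiver and one admitting a non-degenerate Jacobi-finite potential: a general $Q$ carries no such $W$, and without one the Jacobian-algebra machinery underlying Theorem \ref{thm:finiteness} is simply unavailable. The resolution hinges on the fact that completeness already forces finite mutation type (through density together with \cite{Yu23}), which is exactly the hypothesis under which Proposition \ref{prop:fin mut} guarantees a suitable $W$. In writing this up I would take care to stress that the $\g$-fan of $\mathcal{A}(Q)$ is intrinsic to $Q$, so that passing to any convenient pair $(Q,W)$ over any convenient field $K$ leaves unchanged the object whose completeness is assumed; this is what legitimizes the reduction to Theorem \ref{thm:finiteness} and closes the argument.
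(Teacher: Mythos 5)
Your proposal is correct and follows essentially the same route as the paper: the ``if'' direction via Theorem \ref{7QdLexH7Sem3}, and the ``only if'' direction by noting that completeness implies denseness, hence finite mutation type by \cite[Theorem 2.27]{Yu23}, then invoking Proposition \ref{prop:fin mut} to obtain a non-degenerate Jacobi-finite potential and applying Theorem \ref{thm:finiteness}. Your added remarks on the independence of $\mathcal{F}^{g}_{\cluster}(Q)$ from the choice of potential and ground field make explicit a point the paper leaves implicit, but the argument is the same.
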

	\begin{proof}
		The ``if'' part follows from Theorem \ref{7QdLexH7Sem3}. 
		Assume that $\mathcal{F}^{g}_{\cluster}(Q)$ is complete. Then it is dense, and hence $Q$ is of finite mutation type \cite[Theorem 2.27]{Yu23}. Proposition \ref{prop:fin mut} implies that $Q$ admits a non-degenerate Jacobi-finite potential. Therefore, by Theorem \ref{thm:finiteness}, $Q$ must be of Dynkin type.
	\end{proof}
	\subsection{Tameness classification}
	\begin{theorem}\label{thm:tame}
		Let $(Q, W)$ be a connected QP. Assume that $Q$ is not mutation equivalent to any of the following quivers: $K_m$ for $m\ge 3$, $X_6$, $X_7$, and $T_2$. Then the following are equivalent:
		\begin{itemize}
			\item[$(1)$] $\mathcal{J}(Q,W)$ is $E$-tame.
			\item[$(2)$] $\mathcal{J}(Q,W)$ is $\g$-tame.
			\item[$(3)$] $\mathcal{J}(Q,W)$ is representation-tame.
			\item[$(4)$] $Q$ is of finite mutation type.
		\end{itemize}
	\end{theorem}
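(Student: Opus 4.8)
The plan is to prove the theorem as a cycle of implications assembled entirely from results already in hand, since the genuine analytic work lives in the classifications of Geiss–Labardini-Fragoso–Schröer and of Plamondon–Yurikusa–Kase together with the mutation-invariance of $E$-tameness established above. Concretely, I would build the chain
\[(4)\Longrightarrow(3)\Longrightarrow(1)\Longrightarrow(4),\]
which forces $(1)$, $(3)$, and $(4)$ to be pairwise equivalent, and then attach $(2)$ separately.

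First I would dispatch the two classical conditions: the equivalence $(3)\Leftrightarrow(4)$ is exactly Theorem \ref{SF76ppyobv7F}, whose list of excluded quivers ($K_m$ for $m\ge 3$, $X_6$, $X_7$, and $T_2$) matches the hypothesis here verbatim, so in particular $(4)\Rightarrow(3)$. Next, $(3)\Rightarrow(1)$ is immediate from Theorem \ref{B9OIm5j2hMxc}: the algebra $\mathcal{J}(Q,W)$ is finite-dimensional by our standing Jacobi-finiteness assumption, and finite-dimensional representation-tame algebras are $E$-tame. To close the cycle I would invoke Lemma \ref{lem:Etame finmut}, which gives $(1)\Rightarrow(4)$. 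This yields $(1)\Leftrightarrow(3)\Leftrightarrow(4)$.

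To fold in $(2)$, I would cite Theorem \ref{YTckH2lR6VY1}, which already gives $(2)\Leftrightarrow(4)$ under a strictly weaker hypothesis (it omits only $K_m$, $X_6$, $X_7$, not $T_2$); since our assumptions are stronger, it applies directly, and $(2)$ joins the common equivalence class. Thus all four conditions collapse to finite mutation type.

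I do not expect a serious obstacle in the assembly itself; the care required is bookkeeping about hypotheses, and that is precisely where I would focus. The subtle point — and the reason $T_2$ must appear in the list of exclusions — is that on the representation-type side $T_2$ is genuinely exceptional: by Theorem \ref{SF76ppyobv7F}$(4)$ the potential can be chosen so that $\mathcal{J}(T_2,W)$ is either tame or wild, so $(3)$ and $(4)$ fail to be equivalent there, even though the $\g$-tame characterization of Theorem \ref{YTckH2lR6VY1} still goes through for $T_2$. Adopting the more restrictive hypothesis (excluding $T_2$) is exactly what lets $(3)$ enter the equivalence class, and once the hypotheses of the borrowed theorems are aligned no further argument is needed.
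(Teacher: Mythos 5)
Your proposal is correct and follows essentially the same route as the paper: both assemble the equivalence from Theorem \ref{SF76ppyobv7F} for $(3)\Leftrightarrow(4)$, Theorem \ref{B9OIm5j2hMxc} for $(3)\Rightarrow(1)$, Lemma \ref{lem:Etame finmut} for $(1)\Rightarrow(4)$, and Theorem \ref{YTckH2lR6VY1} for $(2)\Leftrightarrow(4)$. Your remark on why $T_2$ must be excluded matches the paper's own comment following the theorem.
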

	\begin{proof}
		The relations among the conditions $(1)$--$(4)$ are summarized in the following diagram, where all arrows are justified by the indicated results:
		\[\begin{tikzcd}[column sep=huge,row sep=large,cramped]
			{(3)} && {(1)}\\
			{(2)} && {(4)}
			\arrow["\text{Thm. \ref{B9OIm5j2hMxc}}", Rightarrow, from=1-1, to=1-3]
			\arrow["\text{Thm. \ref{tGCGN5Yoe6G9}}"{swap}, Rightarrow, from=1-1, to=2-1]
			\arrow["\text{Lem. \ref{lem:Etame finmut}}", Rightarrow, from=1-3, to=2-3]
			\arrow["\text{Thm. \ref{YTckH2lR6VY1}}"{swap}, Leftrightarrow, from=2-1, to=2-3]
			\arrow["\text{Thm. \ref{SF76ppyobv7F}}"{description}, Leftrightarrow, from=1-1, to=2-3]
		\end{tikzcd}\]
	\end{proof}
	\begin{remark}\label{qnOdOPZzHnAN}
		The Jacobian algebra $\mathcal{J}(K_m,0)$ is neither $E$-tame, $\g$-tame nor representation-tame for any $m\ge 3$ (see Proposition \ref{prop:Kro}). On the other hand, Theorems \ref{SF76ppyobv7F}(4) and \ref{B9OIm5j2hMxc} imply that $\mathcal{J}(T_2,W_2^{\text{tame}})$ is $E$-tame.
	\end{remark}
	\begin{corollary}\label{G4R4PmnT5Zx09}
		Let $(Q,W)$ be a connected acyclic type QP. Then the following are equivalent:
		\begin{itemize}
			\item[$(1)$] $\mathcal{J}(Q,W)$ is $E$-tame.
			\item[$(2)$] $\mathcal{J}(Q,W)$ is $\g$-tame.
			\item[$(3)$] $\mathcal{J}(Q,W)$ is representation-tame.
			\item[$(4)$] $Q$ is of either Dynkin or affine type.
			\item[$(5)$] $\mathcal{F}^{g}_{\cluster}(Q)$ is dense.
		\end{itemize}
	\end{corollary}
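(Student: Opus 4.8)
The plan is to deduce this from the tameness classification already obtained, exploiting that Corollary~\ref{cor:rep type}, Theorem~\ref{aEaKnR4m1UVj} and Theorem~\ref{thm:tame} are all available, together with the one-way implications recorded in Theorems~\ref{B9OIm5j2hMxc} and \ref{tGCGN5Yoe6G9}. First I would fix the ``anchor'' equivalences that hold for \emph{every} connected acyclic type quiver: Corollary~\ref{cor:rep type} gives $(3)\Leftrightarrow(4)$, and Theorem~\ref{aEaKnR4m1UVj} gives $(4)\Leftrightarrow(5)$. Neither requires any exclusion, so they reduce the whole statement to locating $(1)$ and $(2)$ among the already-linked conditions $(3),(4),(5)$. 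The free implications $(3)\Rightarrow(1)$ (Theorem~\ref{B9OIm5j2hMxc}) and $(3)\Rightarrow(2)$ (Theorem~\ref{tGCGN5Yoe6G9}) then dispose of one direction, leaving only $(1)\Rightarrow(3)$ and $(2)\Rightarrow(3)$.

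To close these I would split into two cases according to whether $Q$ is mutation equivalent to $K_m$ for some $m\ge 3$. The key combinatorial observation is that the mutation class of $K_m$ consists of $K_m$ alone (mutation at either vertex merely reverses the arrows), so ``$Q$ mutation equivalent to $K_m$'' forces $Q=K_m$ and hence $W=0$. In that case $K_m$ with $m\ge 3$ is neither Dynkin nor affine, so $(4)$ fails and therefore $(3),(5)$ fail by the anchor equivalences, while $(1)$ and $(2)$ fail directly by Proposition~\ref{prop:Kro} and Theorem~\ref{YTckH2lR6VY1}, respectively. Thus all five conditions are simultaneously false, and the equivalence holds vacuously.

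In the complementary case $Q$ is not mutation equivalent to any $K_m$ ($m\ge 3$); being of acyclic type, I would argue it is also not mutation equivalent to $X_6$, $X_7$ or $T_2$, so that Theorem~\ref{thm:tame} applies verbatim and yields $(1)\Leftrightarrow(2)\Leftrightarrow(3)$. For $X_6$ and $X_7$ this is the standard fact, via the Felikson--Shapiro--Tumarkin classification underlying Theorem~\ref{MupWVVH0u2Zz}, that their mutation classes contain no acyclic quiver. For $T_2$ I would instead give a self-contained argument: a quiver of acyclic type admits, after mutating to an acyclic representative, only the zero potential, so that representative's Jacobian algebra is hereditary and the representation type of $\mathcal{J}(Q,W)$ is independent of the chosen potential; but $(T_2,W_2^{\text{tame}})$ and $(T_2,W_2^{\text{wild}})$ have \emph{different} representation types by Theorem~\ref{SF76ppyobv7F}$(4)$, which, by the mutation invariance of representation type (Theorem~\ref{on26Fdo7dIPi}), is incompatible with $T_2$ being of acyclic type.

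I expect the main obstacle to be precisely this verification that $X_6,X_7,T_2$ lie outside the acyclic-type locus, since it is what licenses the clean application of Theorem~\ref{thm:tame}. The $T_2$ case is the delicate one, as it is genuinely a statement about potentials rather than about the bare quiver and must be extracted from the rigidity of representation type under mutation. Everything else is bookkeeping: once the two cases are set up, the anchor equivalences $(3)\Leftrightarrow(4)\Leftrightarrow(5)$ together with $(3)\Rightarrow(1)$ and $(3)\Rightarrow(2)$ assemble the full cycle with no further computation.
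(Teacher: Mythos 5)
Your proposal is correct and follows the same overall route as the paper: the paper's proof is exactly ``$(1)\Leftrightarrow(2)\Leftrightarrow(3)$ by Theorem~\ref{thm:tame}, $(3)\Leftrightarrow(4)$ by Corollary~\ref{cor:rep type}, $(4)\Leftrightarrow(5)$ by Theorem~\ref{aEaKnR4m1UVj}.'' The one place where you go beyond the paper is in checking that Theorem~\ref{thm:tame} is actually applicable, and this is a genuine point: $K_m$ ($m\ge 3$) \emph{is} acyclic, hence of acyclic type, so the paper's one-line invocation of Theorem~\ref{thm:tame} silently omits this case; your vacuous-truth argument via Proposition~\ref{prop:Kro}, Theorem~\ref{YTckH2lR6VY1} and the anchor equivalences closes it cleanly (and matches the remark the authors place just before the corollary). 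Your verification that $X_6$, $X_7$ are not of acyclic type is the standard citation, and your $T_2$ argument --- that acyclic type would force a potential-independent representation type via Theorem~\ref{on26Fdo7dIPi}, contradicting Theorem~\ref{SF76ppyobv7F}$(4)$ --- is a nice self-contained alternative to quoting the known mutation-class computation. So: same skeleton as the paper, with the exceptional cases handled more carefully than the published proof does.
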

	\begin{proof}
		Conditions $(1)$, $(2)$, and $(3)$ are equivalent by Theorem \ref{thm:tame}. The equivalence between $(3)$ and $(4)$ follows from Corollary \ref{cor:rep type}, and the one between $(4)$ and $(5)$ from Theorem \ref{aEaKnR4m1UVj}.
	\end{proof}
	\appendix
	\section{$E$-finite Jacobian algebras of surface type}\label{app:ms}
	In this appendix, we establish Theorem \ref{thm:surf main}, which states that a Jacobian algebra associated with a triangulated surface is $E$-finite if and only if the corresponding quiver is of Dynkin type. This result can be deduced from the results in \cite{Yu20}. For the reader’s convenience, we reorganize the relevant arguments, following \cite{Yu20} closely.
	\subsection{Marked surfaces and tagged triangulations}\label{}
	We start with recalling the notions regarding triangulations of marked surfaces \cite{FST08} (see also \cite{Yu20}). Let $\mathcal{S}$ be a connected compact oriented Riemann surface with (possibly empty) boundary $\partial\mathcal{S}$, and $\mathcal{M}$ be a non-empty finite set of marked points in $\mathcal{S}$ with at least one marked point on each connected component of $\partial\mathcal{S}$. A marked point in the interior of $\mathcal{S}$ is called a \emph{puncture}.
	The pair $(\mathcal{S},\mathcal{M})$ is called a \emph{marked surface}. Throughout this section, we fix a marked surface $(\mathcal{S},\mathcal{M})$, and write $\mathcal{S}$ for this pair.
	\begin{note}
		For technical reasons, we assume that $\mathcal{S}$ is not a monogon with at most one puncture, a digon or a triangle without punctures, and a sphere with at most three punctures (see \cite{FST08} for the details). A curve in $\mathcal{S}$ is considered up to isotopy relative to $\mathcal{M}$.
	\end{note}
	
	An \emph{ideal arc} on $\mathcal{S}$ is a curve in $\mathcal{S}$ with endpoints in $\mathcal{M}$ such that the following conditions are satisfied:
	\begin{itemize}
		\item It does not intersect itself except at its endpoints.
		\item It is disjoint from $\mathcal{M}$ and $\partial\mathcal{S}$ except at its endpoints.
		\item It does not cut out a monogon without punctures or a digon without punctures.
	\end{itemize}
	A curve with two identical endpoints is called a \emph{loop}.
	
	When we consider intersections of curves $\gamma$ and $\delta$, we assume that $\gamma$ and $\delta$ intersect transversally in a minimum number of points. Two ideal arcs in $\mathcal{S}$ are called \emph{compatible} if they do not intersect in the interior of $\mathcal{S}$. An \emph{ideal triangulation} of $\mathcal{S}$ is a maximal set of pairwise compatible ideal arcs in $\mathcal{S}$. A triangle with only two distinct sides is called \emph{self-folded} (see Figure \ref{PgBESUeknliX}). For an ideal triangulation $T$ of $\mathcal{S}$ and an ideal arc $\gamma$ in $T$, there is a unique ideal arc $\gamma'\notin T$ such that $\mu_{\gamma}(T):=(T\setminus\{\gamma\})\cup\{\gamma'\}$ is an ideal triangulation of $\mathcal{S}$. Here, $\mu_{\gamma}(T)$ is called the \emph{flip of $T$ at $\gamma$}.
	\begin{figure}[htp]
		\begin{minipage}{0.5\textwidth}
			\centering
			\begin{tikzpicture}
				\coordinate(d)at(0,0);\coordinate(u)at(0,1.5);\coordinate(p)at(0,1);
				\draw(d)to[out=140,in=-90](-0.5,1)node[left]{$\gamma$};\draw(d)to[out=40,in=-90](0.5,1);
				\draw(-0.5,1)..controls(-0.5,1.3)and(-0.3,1.5)..(u);\draw(0.5,1)..controls(0.5,1.3)and(0.3,1.5)..(u);
				\draw(d)--node[right,pos=0.7]{$\gamma'$}(p)node[above]{$p$};
				\fill(p)circle(0.07);\fill(d)circle(0.07);
			\end{tikzpicture}
			\hspace{10mm}
			\begin{tikzpicture}
				\coordinate(d)at(0,0);\coordinate(u)at(0,1.5);\coordinate(p)at(0,1);
				\draw(d)to[out=150,in=-150]node[left]{$\iota(\gamma)$}node[pos=0.8]{\rotatebox{145}{\footnotesize$\bowtie$}}(p);
				\draw(d)to[out=30,in=-30]node[right]{$\iota(\gamma')$}(p)node[above]{$p$};
				\fill(p)circle(0.07);\fill(d)circle(0.07);
			\end{tikzpicture}%
			\caption{A self-folded triangle and the corresponding tagged arcs}
			\label{PgBESUeknliX}
		\end{minipage}
		\begin{minipage}{0.48\textwidth}\vspace{5mm}
			\centering
			\begin{tikzpicture}
				\coordinate(d)at(0,0);\coordinate(u)at(0,1.5);\coordinate(p)at(0,1);
				\draw(d)to[out=150,in=-150]node[left]{$\delta$}(p);
				\draw(d)to[out=30,in=-30]node[right]{$\epsilon$}node[pos=0.8]{\rotatebox{35}{\footnotesize$\bowtie$}}(p);
				\fill(p)circle(0.07);\fill(d)circle(0.07);
			\end{tikzpicture}
			\hspace{10mm}
			\begin{tikzpicture}
				\coordinate(d)at(0,0);\coordinate(u)at(0,1.5);\coordinate(p)at(0,1);
				\draw(d)to[out=150,in=-150]node[left]{$\delta$}node[pos=0.2]{\rotatebox{35}{\footnotesize$\bowtie$}}(p);
				\draw(d)to[out=30,in=-30]node[right]{$\epsilon$}node[pos=0.2]{\rotatebox{145}{\footnotesize$\bowtie$}}node[pos=0.8]{\rotatebox{35}{\footnotesize$\bowtie$}}(p);
				\fill(p)circle(0.07);\fill(d)circle(0.07);
			\end{tikzpicture}
			\caption{Pairs of conjugate arcs $\{\delta,\varepsilon\}$}
			\label{2hq7qM2iIk8I}
		\end{minipage}
	\end{figure}
	\begin{remark}
		Notice that an ideal arc inside a self-folded triangle can not be flipped. To make flip always possible, the notion of tagged arcs was introduced in \cite{FST08}.
	\end{remark}
	
	A \emph{tagged arc} in $\mathcal{S}$ is an ideal arc in $\mathcal{S}$ with each end being tagged in one of two ways, \emph{plain} or \emph{notched}, such that the following conditions are satisfied:
	\begin{itemize}
		\item It does not cut out a monogon with exactly one puncture.
		\item Its ends incident to $\partial\mathcal{S}$ are tagged plain.
		\item Both ends of a loop are tagged in the same way.
	\end{itemize}
	In the figures, we represent tags as follows:
	\[
	\begin{tikzpicture}[baseline=-1mm]
		\coordinate(0)at(0,0) node[left]{plain};
		\coordinate(1)at(1,0); \fill(1)circle(0.07);
		\draw(0)to(1);
	\end{tikzpicture}
	\hspace{7mm}
	\begin{tikzpicture}[baseline=-1mm]
		\coordinate(0)at(0,0) node[left]{notched};
		\coordinate(1)at(1,0); \fill(1)circle(0.07);
		\draw(0)to node[pos=0.8]{\rotatebox{90}{\footnotesize $\bowtie$}}(1);
	\end{tikzpicture}
	\]
	For an ideal arc $\gamma$ in $\mathcal{S}$, we define a tagged arc $\iota(\gamma)$ as follows:
	\begin{itemize}
		\item If $\gamma$ does not cut out a monogon with exactly one puncture, then $\iota(\gamma)$ is the tagged arc obtained from $\gamma$ by tagging both ends plain.
		\item If $\gamma$ is a loop cutting out a monogon with exactly one puncture $p$, then there is a unique ideal arc $\gamma'$ such that $\{\gamma,\gamma'\}$ is a self-folded triangle. Then $\iota(\gamma)$ is the tagged arc obtained from $\iota(\gamma')$ by changing its tag at $p$ (see Figure \ref{PgBESUeknliX}).
	\end{itemize}
	A \emph{pair of conjugate arcs} is a pair of tagged arcs such that their underlying curves coincide and exactly one of their tags is different from the others (see Figure \ref{2hq7qM2iIk8I}). Note that for a self-folded triangle $\{\gamma,\gamma'\}$, $\{\iota(\gamma),\iota(\gamma')\}$ is a pair of conjugate arcs.
	
	For a tagged arc $\delta$, we denote by $\delta^{\circ}$ the ideal arc obtained from $\delta$ by forgetting its tags. Two tagged arcs $\delta$ and $\epsilon$ are called \emph{compatible} if the following conditions are satisfied:
	\begin{itemize}
		\item The ideal arcs $\delta^{\circ}$ and $\epsilon^{\circ}$ are compatible;
		\item If $\delta^{\circ}=\epsilon^{\circ}$, then $\delta=\epsilon$ or $\{\delta,\epsilon\}$ is a pair of conjugate arcs;
		\item If $\delta^{\circ}\neq\epsilon^{\circ}$ and they have a common endpoint $o$, then the tags of $\delta$ and $\epsilon$ at $o$ are the same.
	\end{itemize}
	A \emph{partial tagged triangulation} is a set of pairwise compatible tagged arcs. If a partial tagged triangulation is maximal, then it is called a \emph{tagged triangulation}. We can define \emph{flips} of tagged triangulations in the same way as those of ideal triangulations. In particular, any tagged arc can be flipped. Although the following result will not be used in this paper, we include it here as it may help the reader better understand the context.
	\begin{theorem}[{\cite[Theorem 7.9 and Proposition 7.10]{FST08}}]\label{s7QrbAeqvEhv}
		The connectivity of tagged triangulations by flips depends on the type of $\mathcal{S}$:
		\begin{itemize}
			\item[$(1)$] If $\mathcal{S}$ is not a closed surface with exactly one puncture, then any two tagged triangulations of $\mathcal{S}$ are connected by a finite sequence of flips.
			\item[$(2)$] If $\mathcal{S}$ is a closed surface with exactly one puncture, then:
			\begin{itemize}
				\item[$\bullet$] any two tagged triangulations whose ends are all tagged plain are connected by a finite sequence of flips;
				\item[$\bullet$] any two tagged triangulations whose ends are all tagged notched are connected by a finite sequence of flips;
				\item[$\bullet$] there is no finite sequence of flips connecting a tagged triangulation whose ends are all tagged plain with one whose ends are all tagged notched.
			\end{itemize}
		\end{itemize}
	\end{theorem}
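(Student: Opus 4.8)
The plan is to reduce the connectivity question for tagged triangulations to the (classical, and here assumed) connectivity of ideal triangulations under flips, by encoding the tags as a sign vector indexed by the punctures. First I would recall the backbone fact that, under our standing assumptions on $\mathcal{S}$, any two ideal triangulations are connected by a finite sequence of flips; the only caveat is that an ideal arc inside a self-folded triangle cannot itself be flipped, but such configurations can always be circumvented by first flipping the enclosing arc.

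Next I would set up the dictionary between the two worlds. To a tagged triangulation $T$ one associates its underlying ideal triangulation $T^{\circ}$ (forget the tags, reconstituting a self-folded triangle from each pair of conjugate arcs) together with a signing $\sigma\colon P\to\{+,-\}$ on the set $P$ of punctures: compatibility forces all tags at a puncture $p$ not enclosed by a self-folded triangle to agree, and $\sigma(p)$ records whether they are plain or notched. Conversely, an ideal triangulation together with such a signing recovers a tagged triangulation, and the all-plain tagged triangulations are exactly the images $\iota(T^{\circ})$ of ideal triangulations without self-folded triangles. Under this dictionary I would analyze a flip of a tagged arc: it either projects to a flip of the underlying ideal triangulation, or, when it moves an arc into or out of a self-folded configuration, implements a local change of a single coordinate of $\sigma$.

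The core is a realizability lemma. For a puncture $p$ that is \emph{not} the unique puncture of a closed surface, I would show that the sign-change at $p$ — replacing $\sigma(p)$ by $-\sigma(p)$ while keeping $T^{\circ}$ and all other signs fixed — is realized by a finite sequence of flips, exploiting the presence of a second marked point or a boundary component to create locally near $p$ a self-folded configuration whose flip toggles the tag, and then undoing the auxiliary flips. Granting this, part $(1)$ follows: any tagged triangulation can be flipped to an all-plain one, i.e.\ to some $\iota(T^{\circ})$, after which ideal-triangulation connectivity (transported through $\iota$) connects any two such. For part $(2)$, where $\mathcal{S}$ is closed with a single puncture $p$, every tagged arc has both ends at $p$, so every tagged triangulation is uniformly all-plain or all-notched and the signing is the single coordinate $\sigma(p)$. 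Here I would prove the converse of the realizability lemma: with only one puncture there is no room to toggle the tag locally, so the all-plain versus all-notched class is a flip-invariant. Connectivity within each class then follows from ideal-triangulation connectivity applied to the plain representatives (respectively to their global tag-change), while the invariance shows no flip crosses between the two classes, yielding exactly the stated trichotomy.

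The main obstacle I anticipate is precisely this realizability lemma together with its converse: determining exactly when a local toggle of a tag at a puncture is effected by flips requires a careful case analysis of self-folded triangles and conjugate pairs in a neighborhood of the puncture, and pinning down why the closed once-punctured case is the sole obstruction to carrying it out.
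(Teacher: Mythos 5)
The paper does not prove this statement: Theorem \ref{s7QrbAeqvEhv} is imported verbatim from Fomin--Shapiro--Thurston \cite[Theorem 7.9 and Proposition 7.10]{FST08}, and the authors explicitly remark that it is included only for context and is not used elsewhere. So there is no in-paper proof to compare against; I can only assess your sketch against the original argument.

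Your outline is correct and is essentially the standard FST08 proof: reduce to flip-connectivity of ideal triangulations, encode a tagged triangulation as an underlying ideal triangulation together with a signing of the punctures (this is exactly the correspondence the present paper invokes later, via $T^{(p_1\cdots p_m)}=\iota(T^0)$), and isolate the question of toggling a single sign. You have correctly identified the crux as the realizability lemma; in FST08 this is carried out by cutting out a once-punctured digon around the puncture $p$ (possible precisely because there is a second marked point or a boundary component) and checking connectivity of the tagged flip graph of that digon, which is where the tag at $p$ gets toggled. Two small points where your sketch is looser than the actual argument: (i) the claim that a tagged flip ``either projects to an ideal flip or changes one coordinate of $\sigma$'' needs care around conjugate pairs, since flipping either member of a conjugate pair corresponds on the ideal side to flipping either the enclosing loop or the non-flippable inner arc of a self-folded triangle, so the projection to the ideal flip graph is not literally arrow-by-arrow; and (ii) for part $(2)$ the flip-invariance of the uniform tag is cleaner than ``no room to toggle locally'': on a closed once-punctured surface every arc is a loop at $p$, both ends of a loop carry the same tag, no conjugate pairs exist, so compatibility with the $n-1\ge 2$ unflipped arcs forces the new arc to inherit their common tag. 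With those details supplied, your plan goes through and matches the cited source.
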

	\subsection{Laminations on marked surfaces}
	We recall the notion of laminations on marked surfaces following the papers \cite{FT18, Yu20}. A \emph{laminate} of $\mathcal{S}$ is a non-self-intersecting curve in $\mathcal{S}$ which is either
	\begin{itemize}
		\item a closed curve, or
		\item a non-closed curve with each end either being an unmarked point on $\partial\mathcal{S}$, or spiraling around a puncture (clockwise or counterclockwise),
	\end{itemize}
	and the following curves are not allowed (see Figure \ref{3qqKzFR8bxZ7}):
	\begin{itemize}
		\item a curve cutting out a disk with at most one puncture;
		\item a curve with two endpoints on $\partial\mathcal{S}$ such that it is isotopic to a piece of $\partial\mathcal{S}$ containing at most one marked point;
		\item a curve with both ends spiraling around a common puncture in the same direction, such that it encloses nothing else.
	\end{itemize}
	\begin{figure}[htp]
		\begin{minipage}{0.48\textwidth}
			\centering
			\begin{tikzpicture}[baseline=0mm]
				\coordinate(0)at(0,0);\coordinate(l)at(-1,0.6);\coordinate(r)at(1,0.6);
				\draw(2,2)--(-2,2)--(-2,-2)--(2,-2)--(2,2);\draw[pattern=north east lines](0,-0.8)circle(4mm);
				\draw[blue](-2,1)arc(-90:0:10mm);\draw[blue](l)circle(4mm);\draw[blue](-1.2,-0.7)circle(5mm);
				\draw[blue](1,1.1)arc(90:-90:4mm);\draw[blue](1,0.3)arc(-90:-280:2.4mm);
				\draw[blue](0.6,0.6)..controls(0.6,1)and(1.2,1)..(1.2,0.6);\draw[blue](1.2,0.6)arc(0:-120:1.8mm);
				\draw[blue](0.6,0.6)arc(-0:-270:5mm);\draw[blue](0.1,1.1)..controls(0.4,1.13)and(0.8,1.13)..(1,1.1);
				\draw[blue](2,0)arc(90:270:7mm);
				\fill(l)circle(0.7mm);\fill(r)circle(0.7mm);\fill(2,2)circle(0.7mm);\fill(-2,2)circle(0.7mm);\fill(-2,-2)circle(0.7mm);\fill(2,-2)circle(0.7mm);\fill(0,-0.4)circle(0.7mm);
			\end{tikzpicture}
			\caption{Curves which are not laminates}
			\label{3qqKzFR8bxZ7}
		\end{minipage}
		\begin{minipage}{0.48\textwidth}
			\centering
			\begin{tikzpicture}[baseline=0mm]
				\coordinate (0) at (0,0);
				\coordinate (l) at (-1,0.6);
				\coordinate (r) at (1,0.6);
				\draw (2,2)--(-2,2)--(-2,-2)--(2,-2)--(2,2);\draw[pattern=north east lines](0,-0.8)circle(4mm);
				\draw[blue](-2,1.5)--(2,1.5);\draw[blue](0,-0.8)circle(7mm);\draw[blue](0,-0.8)circle(5.5mm);
				\draw[blue](-1,1)..controls(-0.3,0.95)and(0.3,0.95)..(1,1);
				\draw[blue](-1,1)arc(90:270:3.5mm);\draw[blue](-1,0.3)arc(-90:100:2.4mm);
				\draw[blue](1,1)arc(90:-90:3.5mm);\draw[blue](1,0.3)arc(-90:-280:2.4mm);
				\draw[blue](2,0)..controls(0.7,-0.2)and(0.6,0.3)..(0.6,0.6);
				\draw[blue](0.6,0.6)..controls(0.6,1)and(1.2,1)..(1.2,0.6);
				\draw[blue](1.2,0.6)arc(0:-120:1.8mm);
				\draw[blue](-1,-2)..controls(-1,0)and(-0.5,0.2)..(0,0.2);
				\draw[blue](1,-2)..controls(1,0)and(0.5,0.2)..(0,0.2);
				\fill(l)circle(0.7mm);\fill(r)circle(0.7mm);\fill(2,2)circle(0.7mm);\fill(-2,2)circle(0.7mm);\fill(-2,-2)circle(0.7mm);\fill(2,-2)circle(0.7mm);\fill(0,-0.4)circle(0.7mm);
			\end{tikzpicture}
			\caption{A lamination on an annulus with two punctures}
			\label{6IuPN3hUhPi1}
		\end{minipage}
	\end{figure}

	\begin{definition}
		Two laminates of $\mathcal{S}$ are called \emph{compatible} if they do not intersect. A finite multi-set of pairwise compatible laminates of $\mathcal{S}$ is called a \emph{lamination} on $\mathcal{S}$ (see Figure \ref{6IuPN3hUhPi1}).
	\end{definition}
	
	Let $\ell$ be a laminate of $\mathcal{S}$. For an ideal/tagged triangulation $T$ of $\mathcal{S}$. We define the \emph{shear coordinate $b_{\gamma,T}(\ell)$ of $\ell$} with respect to $\gamma\in T$ (see \cite[Definitions 12.2 and 13.1]{FT18}):
	
	First, we assume that $T$ is an ideal triangulation. If $\gamma\in T$ is not inside a self-folded triangle of $T$, then $b_{\gamma,T}(\ell)$ is defined by a sum of contributions from all intersections of $\gamma$ and $\ell$ as follows:
	Such an intersection contributes $+1$ (resp., $-1$) to $b_{\gamma,T}(\ell)$ if a segment of $\ell$ cuts through the quadrilateral surrounding $\gamma$ as shown in the left (resp., right) diagram of Figure \ref{PB8YMzV20uMT}.
	\begin{figure}[htp]
		\centering
		$+1$
		\begin{tikzpicture}[baseline=0mm]
			\coordinate(u)at(0,2);\coordinate(l)at(-1,1);\coordinate(r)at(1,1);
			\coordinate(d)at(0,0);\coordinate(s)at(-1,0.5);\coordinate(t)at(1,1.5);
			\draw(u)--(l)--(d)--(r)--(u)--node[fill=white,inner sep=2,pos=0.4]{$\gamma$}(d);
			\draw(s)..controls (-0.1,0.7)and(0.3,1)..(t)node[above]{$\ell$};
		\end{tikzpicture}
		\hspace{20mm}
		\begin{tikzpicture}[baseline=0mm]
			\coordinate(u)at(0,2);\coordinate(l)at(-1,1);\coordinate(r)at(1,1);
			\coordinate(d)at(0,0);\coordinate(s)at(-1,1.5);\coordinate(t)at(1,0.5);
			\draw(u)--(l)--(d)--(r)--(u)--node[fill=white,inner sep=2,pos=0.4]{$\gamma$}(d);
			\draw(s)node[above]{$\ell$}..controls (-0.3,0.9)and(0.3,0.6)..(t);
		\end{tikzpicture}
		$-1$
		\caption{The contribution from a segment of the laminate $\ell$ on the left (resp., right) is $+1$ (resp., $-1$)}
		\label{PB8YMzV20uMT}
	\end{figure}
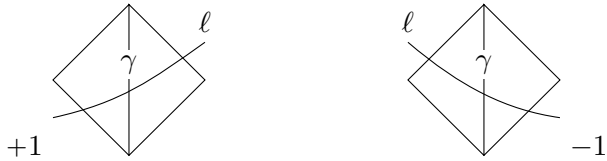
	Suppose that $\gamma\in T$ is inside a self-folded triangle $\{\gamma,\gamma'\}$ of $T$, where $\gamma'$ is a loop enclosing exactly one puncture $p$. Then we define $b_{\gamma,T}(\ell)=b_{\gamma',T}(\ell^{(p)})$, where $\ell^{(p)}$ is a laminate obtained from $\ell$ by reversing the directions in which it spirals at $p$ if such spirals exist.

	Next, we assume that $T$ is a tagged triangulation. If there is an ideal triangulation $T^0$ such that $T=\iota(T^0)$, then we define $b_{\gamma,T}(\ell)=b_{\gamma^0,T^0}(\ell)$, where $\gamma=\iota(\gamma^0)$. For an arbitrary $T$, we can obtain a tagged triangulation $T^{(p_1\cdots p_m)}$ from $T$ by changing all tags at punctures $p_1,\ldots,p_m$ (possibly $m=0$), in such a way that there is a unique ideal triangulation $T^0$ such that $T^{(p_1\cdots p_m)}=\iota(T^0)$ (see \cite[Remark 3.11]{MSW11}). Then we define $b_{\gamma,T}(\ell)=b_{\gamma^{(p_1\cdots p_m)},T^{(p_1\cdots p_m)}}\bigl((\cdots((\ell^{(p_1)})^{(p_2)})\cdots)^{(p_m)}\bigr)$, where $\gamma^{(p_1\cdots p_m)}$ is a tagged arc of $T^{(p_1\cdots p_m)}$ corresponding to $\gamma$.
	
	For a multi-set $L=L'\sqcup\{\ell\}$ of laminates of $\mathcal{S}$, the \emph{shear coordinate $b_{\gamma,T}(L)$ of $L$} with respect to $\gamma\in T$ is inductively defined by
	\[
	b_{\gamma,T}(L)=b_{\gamma,T}(L')+b_{\gamma,T}(\ell).
	\]
	We denote by $b_T(L)$ a vector $(b_{\gamma,T}(L))_{\gamma \in T}\in\mathbb{Z}^{|T|}$, and by $C_T(L)$ the cone in $\mathbb{R}^{|T|}$ spanned by $b_T(\ell)$ for all $\ell\in L$, that is, it is given by
	\[
	C_T(L):=\left\{\sum_{\ell\in L}a_{\ell}b_T(\ell)\relmiddle| a_{\ell}\in\mathbb{R}_{\ge 0}\right\}.
	\]
	
	\begin{theorem}[{\cite[Theorems 12.3 and 13.6]{FT18}}]\label{z9mIFLwJvbBb}
		Let $T$ be a tagged triangulation of $\mathcal{S}$. The map sending laminations $L$ to $b_T(L)$ induces a bijection between the set of all laminations on $\mathcal{S}$ and $\mathbb{Z}^{|T|}$.
	\end{theorem}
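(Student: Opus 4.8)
The plan is to peel off the tagged and punctured complications first and then prove bijectivity for honest ideal triangulations by writing down an explicit inverse built from a normal form for laminations. First I would exploit the tag-changing operations $T\mapsto T^{(p_1\cdots p_m)}$ and $\ell\mapsto\ell^{(p)}$ that appear in the definition of $b_{\gamma,T}$: by construction these intertwine $b_T$ with $b_{T^{(p_1\cdots p_m)}}$, and each is an involutive bijection on the set of all laminations (it merely reverses the spiraling directions at the chosen punctures). Hence it suffices to treat a tagged triangulation of the form $T=\iota(T^0)$ with $T^0$ an ideal triangulation, where $b_{\gamma,T}=b_{\gamma^0,T^0}$. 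The self-folded case is then absorbed by the defining rule $b_{\gamma,T}(\ell)=b_{\gamma',T}(\ell^{(p)})$ (Figure \ref{PgBESUeknliX}), so that only interior arcs that are diagonals of genuine quadrilaterals remain to be analysed.

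Next I would isotope an arbitrary lamination $L$ into a canonical position relative to $T^0$: after minimizing intersections, each laminate meets every triangle in a disjoint union of arcs that either cut a corner or spiral into a puncture, so $L$ is carried by the train track dual to $T^0$ and is recorded by the non-negative integer weights counting the parallel strands in each corner, subject to the switch conditions along the arcs. This is the standard normal-form description, and it identifies the set of laminations with the integer points of the resulting weight cone. The heart of the matter is then the change of coordinates: for each arc $\gamma$ the local rule of Figure \ref{PB8YMzV20uMT} expresses $b_{\gamma,T^0}(L)$ as an explicit piecewise-linear ($\max$-plus) function of the corner weights of the two adjacent triangles, with a separate contribution coming from spiraling at each puncture. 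I would construct the inverse directly, reading off triangle by triangle how the signed counts around $\gamma$ force the corner weights and how the spiraling direction at a puncture is pinned down by the signs of the shear coordinates of the incident arcs. This recovers a unique lamination from any prescribed integer vector, yielding injectivity and surjectivity onto $\mathbb{Z}^{|T|}$ simultaneously.

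As an alternative route, and as a useful consistency check, one can propagate the statement along flips: under a flip $\mu_\gamma$ the shear coordinates transform by a tropical mutation, which is a piecewise-linear bijection of $\mathbb{Z}^{|T|}$, while the set of laminations is intrinsic to $\mathcal{S}$. Thus $b_T$ is a bijection as soon as $b_{T'}$ is for a single triangulation $T'$ flip-connected to $T$, and the flip-connectivity of Theorem \ref{s7QrbAeqvEhv} (together with the plain/notched symmetry in the once-punctured closed case) reduces the direct verification to one conveniently chosen triangulation.

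I expect the main obstacle to be the punctured case. In the interior and along $\partial\mathcal{S}$ the $\pm1$ contributions already determine the strands, so the reconstruction of corner weights from shear coordinates is transparent there; near a puncture, however, the spiraling direction, and the reversal $\ell\mapsto\ell^{(p)}$ that the definition forces for self-folded triangles, is precisely the mechanism that lets negative shear coordinates be realized. Verifying that every sign pattern on the arcs incident to a puncture corresponds to one and only one consistent spiraling behaviour, and that this description matches the tag-changing reduction of the first step, is where the genuine work lies.
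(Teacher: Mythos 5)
This statement carries no proof in the paper: it is quoted directly from Fomin--Thurston (\cite[Theorems 12.3 and 13.6]{FT18}), so there is no internal argument to compare against, and the right benchmark is the cited source. Measured against that, your outline follows essentially the same route: the tagged case (Theorem 13.6 of \cite{FT18}) is reduced to the ideal case precisely via the tag-changing operations $T\mapsto T^{(p_1\cdots p_m)}$ and $\ell\mapsto\ell^{(p)}$ that are already built into the definition of $b_{\gamma,T}$, and the ideal case (Theorem 12.3) is proved by constructing an explicit inverse --- drawing $|b_{\gamma,T^0}(L)|$ parallel strands across each arc, joining them inside each triangle in the unique non-crossing way dictated by the signs, and resolving curves that would close up around a puncture into spirals. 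You correctly locate the genuine difficulty at the punctures: showing that every sign pattern on the arcs incident to a puncture is realized by exactly one consistent spiraling behaviour, compatibly with the tag-changing reduction. One caveat: your normal-form step slightly overstates what corner weights record --- on a punctured surface the train-track/corner-weight data must be supplemented by the spiraling direction at each puncture (and does not by itself give a clean identification with the integer points of a single cone), so the piecewise-linear change of coordinates has to be inverted with that extra bookkeeping; since your closing paragraph acknowledges exactly this, the issue is one of presentation rather than a substantive gap in the strategy.
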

	\subsection{Elementary and exceptional laminates}
	Non-closed laminates of $\mathcal{S}$ are divided into two types, elementary and exceptional. For a tagged arc $\delta$ of $\mathcal{S}$, we define an \emph{elementary laminate} $\mathsf{e}(\delta)$ as follows:
	\begin{itemize}
		\item $\mathsf{e}(\delta)$ is a laminate running along $\delta$ in its small neighborhood.
		\item If an endpoint of $\delta$ is a marked point $o$ on a component $C$ of $\partial\mathcal{S}$, then the corresponding endpoint of $\mathsf{e}(\delta)$ is located near $o$ on $C$ in the clockwise direction as shown in the left diagram of Figure \ref{KTIyxSOdwtcK};
		\item If $\delta$ has an endpoint at a puncture $p$, then the corresponding end of $\mathsf{e}(\delta)$ spirals around $p$ clockwise (resp., counterclockwise) if $\delta$ is tagged plain (resp., notched) at $p$ as shown in the right diagram of Figure \ref{KTIyxSOdwtcK}.
	\end{itemize}
	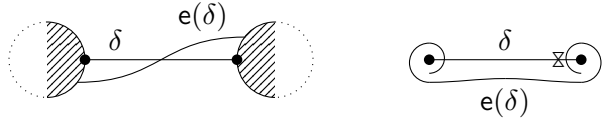
\begin{figure}[htp]
		\centering
		\begin{tikzpicture}[baseline=0mm]
			\coordinate(u)at(0,0.5);\coordinate(l)at(-0.5,0);\coordinate(r)at(0.5,0);\coordinate(d)at(0,-0.5);
			\coordinate(u1)at(3,0.5);\coordinate(l1)at(2.5,0);\coordinate(r1)at(3.5,0);\coordinate(d1)at(3,-0.5);
			\draw(r)--node[above,pos=0.2]{$\delta$}(l1);
			\draw(0.4,-0.3)..controls(1.5,-0.3)and(1.5,0.3)..node[above,pos=0.8]{$\mathsf{e}(\delta)$}(2.6,0.3);
			\draw[pattern=north east lines](u)arc(90:-90:5mm);
			\draw[pattern=north east lines](u1)arc(90:270:5mm);
			\draw[dotted](u)arc(90:270:5mm);
			\draw[dotted](u1)arc(90:-90:5mm);
			\fill(r)circle(0.07);\fill(l1)circle(0.07);
		\end{tikzpicture}
		\hspace{10mm}
		\begin{tikzpicture}[baseline=0mm]
			\coordinate(l)at(0,0);\coordinate(r)at(2,0);
			\draw(l)--node[above]{$\delta$}node[pos=0.85]{\rotatebox{90}{\footnotesize$\bowtie$}}(r);
			\draw(0,-0.3)to[out=0,in=180](1,-0.25)node[below]{$\mathsf{e}(\delta)$};\draw(1,-0.25)to[out=0,in=180](2,-0.3);
			\draw(0,-0.3)arc(-90:-270:0.26);\draw(0,0.22)arc(90:-90:0.2);
			\draw(2,-0.3)arc(-90:90:0.26);\draw(2,0.22)arc(90:270:0.2);
			\fill(l)circle(0.07);\fill(r)circle(0.07);
		\end{tikzpicture}
		\caption{Elementary laminates of tagged arcs}
		\label{KTIyxSOdwtcK}
	\end{figure}
	It follows from the construction that the map $\mathsf{e}$ from the set of all tagged arcs of $\mathcal{S}$ to the set of all laminates is injective. For an elementary laminate $\ell$, we denote by $\mathsf{e}^{-1}(\ell)$ a unique tagged arc $\delta$ such that $\mathsf{e}(\delta)=\ell$. Note that for a tagged arc $\delta$, a lamination $\{\mathsf{e}(\delta)\}$ is a reflection of the elementary lamination of $\delta$ defined in \cite[Definition 17.2]{FT18}. Our convention is more convenient for our aim.
	\begin{proposition}[{\cite[Proposition 2.5]{Yu20}}]\label{ytcn86ZHN4Ak}
		The map $\mathsf{e}$ induces a bijection between the set of all partial tagged triangulations of $\mathcal{S}$ without pairs of conjugate arcs and the set of all laminations on $\mathcal{S}$ consisting only of distinct elementary laminates.
	\end{proposition}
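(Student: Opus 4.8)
The plan is to reduce the claimed bijection to a purely local statement about pairs of tagged arcs, and then verify that statement by a geometric case analysis. Since the map $\mathsf{e}$ on individual tagged arcs is already known to be injective, a collection of pairwise distinct tagged arcs is sent to a collection of pairwise distinct elementary laminates; what remains is to match the two compatibility conditions. Concretely, I would first establish the criterion: for two distinct tagged arcs $\delta$ and $\epsilon$, they are compatible and do \emph{not} form a pair of conjugate arcs if and only if the laminates $\mathsf{e}(\delta)$ and $\mathsf{e}(\epsilon)$ are disjoint. Granting this criterion, the proposition follows formally: a partial tagged triangulation without conjugate pairs is exactly a finite set of pairwise distinct, pairwise compatible tagged arcs none of which are conjugate, and $\mathsf{e}$ sends it to a finite set of distinct, pairwise non-intersecting elementary laminates, i.e.\ a lamination consisting of distinct elementary laminates; conversely, applying $\mathsf{e}^{-1}$ laminate-by-laminate recovers such a partial tagged triangulation, and injectivity of $\mathsf{e}$ makes the two assignments mutually inverse.

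To prove the criterion I would localize the possible intersections. Because $\mathsf{e}(\delta)$ may be drawn inside an arbitrarily small neighborhood of the underlying ideal arc $\delta^{\circ}$, with its behavior prescribed only near the endpoints (clockwise displacement at a boundary marked point, and clockwise or counterclockwise spiraling at a puncture according to whether $\delta$ is plain or notched there), any intersection of $\mathsf{e}(\delta)$ and $\mathsf{e}(\epsilon)$ must occur either near an interior crossing of $\delta^{\circ}$ and $\epsilon^{\circ}$ or near a shared endpoint. The interior case is the easiest: if $\delta^{\circ}$ and $\epsilon^{\circ}$ are in minimal position and disjoint in the interior, the laminates can be kept disjoint there, while a genuine transversal crossing of the underlying arcs forces a nearby crossing of the laminates. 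For a shared boundary marked point, the common clockwise convention yields a consistent cyclic ordering of the emanating laminate ends, so no intersection is forced there.

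The heart of the argument --- and the step I expect to be the main obstacle --- is the analysis at a shared puncture together with the closely related case $\delta^{\circ}=\epsilon^{\circ}$. Here I would show that two laminate ends spiraling around a common puncture in the \emph{same} direction nest without meeting, whereas two ends spiraling in \emph{opposite} directions are forced to cross. This single geometric fact simultaneously settles the three remaining possibilities: when $\delta^{\circ}\neq\epsilon^{\circ}$ share a puncture, tagged compatibility demands equal tags there, i.e.\ co-oriented spirals and no crossing, while unequal tags produce a crossing; and when $\delta^{\circ}=\epsilon^{\circ}$ with $\delta\neq\epsilon$, the two arcs differ in exactly one tag, so $\{\delta,\epsilon\}$ is a conjugate pair precisely when the corresponding spirals are oppositely oriented and hence cross. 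The delicate points will be to make the spiraling/nesting dichotomy rigorous (accounting for the fact that laminates spiral infinitely and that intersections are counted in minimal position up to isotopy), and to carry the self-folded-triangle bookkeeping built into the definitions of $\iota$, of conjugate pairs, and of the tag-reversal operation $\ell\mapsto\ell^{(p)}$, so that the tagged-arc notion of ``conjugate'' matches the laminate notion of ``oppositely spiraling'' uniformly across all configurations.
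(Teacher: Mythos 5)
The paper does not actually prove this proposition; it is quoted verbatim from \cite[Proposition 2.5]{Yu20}, so there is no internal proof to compare against. That said, your outline is the standard (and, as far as I can tell, the cited reference's) route, and it is correct in structure: since $\mathsf{e}$ is injective on tagged arcs and every elementary laminate is by definition in its image, the whole statement reduces to the pairwise criterion that two distinct tagged arcs are compatible and non-conjugate if and only if their elementary laminates are disjoint, and your case analysis (interior crossings of the underlying arcs, shared boundary marked points handled by the uniform clockwise displacement, shared punctures handled by the nesting/crossing dichotomy for co-oriented versus oppositely oriented spirals, and the coincident-underlying-arc case which is exactly where conjugate pairs are detected) covers all configurations. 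The one substantive piece of work you defer --- making rigorous that opposite spirals around a common puncture cannot be isotoped apart while co-oriented spirals nest, in minimal position --- is genuinely the crux, but you have identified it correctly and it is a true and standard fact. One small simplification: the operations $\iota$ and $\ell\mapsto\ell^{(p)}$ play no role in this proposition (they are only needed for shear coordinates), so the ``self-folded-triangle bookkeeping'' you worry about can be dropped from the argument.
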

	In Proposition \ref{ytcn86ZHN4Ak}, pairs of conjugate arcs are excluded, because $\{\mathsf{e}(\delta),\mathsf{e}(\delta')\}$ is not a lamination for a pair of conjugate arcs $\{\delta,\delta'\}$. To address them, we consider laminates that are neither closed nor elementary, called \emph{exceptional}. They are characterized as follows.
	\begin{proposition}[{\cite[Proposition 2.6]{Yu20}}]\label{S6MOQfDF5hRS}
		A laminate is exceptional if and only if it satisfies the following conditions (see Figure \ref{XFmwIWQkr7dJ}):
		\begin{itemize}
			\item It encloses exactly one puncture.
			\item Its both ends are incident to a common boundary segment, or spirals around a common puncture in the same direction.
		\end{itemize}
	\end{proposition}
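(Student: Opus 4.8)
The plan is to prove both implications by analyzing the \emph{straightening} of a non-closed laminate, i.e.\ the recovery of a candidate tagged arc from $\ell$, and to show that $\ell$ is elementary precisely when this candidate is a genuine tagged arc. Recall that a laminate is exceptional exactly when it is neither closed nor elementary, and condition $(2)$ of the statement refers to the two ends of $\ell$, so in both directions we are dealing with non-closed laminates; the whole content is therefore to characterize the failure of elementariness. Since $\mathsf{e}$ is injective and, by Proposition \ref{ytcn86ZHN4Ak}, identifies tagged arcs (without pairs of conjugate arcs) with elementary laminates, $\ell$ is elementary if and only if $\ell=\mathsf{e}(\delta)$ for some tagged arc $\delta$. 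Reversing the construction of $\mathsf{e}$ shows that such a $\delta$, if it exists, is uniquely determined by $\ell$ and coincides with the straightening, so the task reduces to deciding when the straightening is a legitimate tagged arc.

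First I would make the straightening precise. Given a non-closed laminate $\ell$, I push each of its two ends to a marked point following the conventions of Figure \ref{KTIyxSOdwtcK}: an end lying on a boundary segment is pushed (against the clockwise convention) to the marked point $o$ it approaches, while an end spiraling around a puncture $p$ is pushed to $p$ and tagged plain or notched according to whether the spiral is clockwise or counterclockwise. This yields an ideal arc $\delta^{\circ}$ together with a tagging, hence a candidate $\delta$, and by construction $\mathsf{e}(\delta)=\ell$ as soon as $\delta$ is a tagged arc. I would then check that, because $\ell$ is a non-self-intersecting laminate satisfying the prohibitions in the definition of laminate, most tagged-arc axioms are automatic: the non-self-intersection of $\ell$ forces $\delta^{\circ}$ to be simple, ends meeting $\partial\mathcal{S}$ are tagged plain by construction, and when $\delta^{\circ}$ is a loop its two ends inherit equal tags because the two spiraling ends of $\ell$ run in the \emph{same} direction. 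The key claim to establish is that the only way $\delta$ can fail to be a tagged arc is that $\delta^{\circ}$ cuts out a monogon with exactly one puncture.

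Granting this dictionary, both implications are short. For $(\Leftarrow)$, if $\ell$ encloses exactly one puncture and its two ends either lie on a common boundary segment or spiral around a common puncture in the same direction, then the straightening $\delta^{\circ}$ is a loop based at the common marked point $o$ enclosing that single puncture; thus $\delta^{\circ}$ cuts out a monogon with exactly one puncture, $\delta$ is not a tagged arc, and by uniqueness of the candidate, $\ell$ lies outside the image of $\mathsf{e}$. Being non-closed and non-elementary, $\ell$ is exceptional. For $(\Rightarrow)$, if $\ell$ is exceptional then the straightening fails to be a tagged arc, so by the preceding claim $\delta^{\circ}$ cuts out a monogon with exactly one puncture $p$; reading off this configuration gives both required conditions, since $\ell$ then encloses exactly $p$, and the base point $o$ of the loop is a common marked point to which both ends of $\ell$ return—on a common boundary segment if $o\in\partial\mathcal{S}$, or by spiraling in the same direction around $o$ (necessarily distinct from $p$) if $o$ is a puncture.

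The main obstacle I anticipate is precisely the topological bookkeeping behind the claim that cutting out a monogon with one puncture is the \emph{unique} obstruction. One must verify that a simple laminate straightens to a simple ideal arc (controlling the isotopy when both ends approach the same marked point), and must rule out or reduce the degenerate alternatives—straightenings that are contractible or that would cut out an \emph{unpunctured} monogon, ends spiraling around a common puncture in \emph{opposite} directions, and curves that ``enclose nothing else''—by showing that each such case either is excluded by the definition of laminate or in fact yields an elementary laminate. This case analysis, together with the closed-surface subtleties governed by Theorem \ref{s7QrbAeqvEhv}, is where the argument of \cite{Yu20} must be followed most carefully; once the end-behavior is classified, matching ``encloses exactly one puncture'' with ``cuts out a monogon with one puncture'' and the equal-direction/equal-tag correspondence is routine.
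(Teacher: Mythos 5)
The paper does not prove this statement: it is quoted verbatim from \cite[Proposition 2.6]{Yu20}, so there is no in-paper argument to compare yours against. Your strategy --- straighten a non-closed laminate to a candidate tagged arc by reversing the construction of $\mathsf{e}$, and show that elementariness fails exactly when the candidate cuts out a once-punctured monogon --- is the natural one and is in the spirit of the cited source, and the two implications you derive from it are correctly organized. In particular you are right that the injectivity of $\mathsf{e}$ and the uniqueness of the candidate reduce everything to deciding when the straightening is a legitimate tagged arc.

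However, as you yourself acknowledge in the final paragraph, the proposal stops short of proving the one claim that carries all the content, namely that the once-punctured monogon is the \emph{unique} obstruction. Three concrete verifications are needed and are only named, not carried out. First, you must show that a non-self-intersecting curve cannot have both ends spiraling around the same puncture in \emph{opposite} directions (two spirals of opposite orientation about the same point necessarily cross, e.g.\ by comparing angular coordinates near the puncture); without this, the straightening could be a loop whose two ends receive different tags, which would be a second way to fail the tagged-arc axioms and would falsify the stated characterization. Second, you need the exact matching between the three prohibitions in the definition of a laminate and the prohibitions in the definition of an ideal arc: curves cutting out unpunctured disks or isotopic to boundary pieces with at most one marked point correspond precisely to straightenings that are contractible, cut out unpunctured monogons or digons, or degenerate to a point --- this is what guarantees that for a genuine laminate the straightening is always an ideal arc, so that only the tagged-arc condition ``no once-punctured monogon'' remains in play. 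Third, ``encloses exactly one puncture'' must be read as ``cuts out a disk containing exactly one puncture and nothing else''; if the region cut off by $\ell$ (together with the boundary sub-segment or the spiral point) has positive genus or contains a boundary component, the straightened loop is an essential ideal arc, hence a valid tagged arc, and $\ell$ is elementary. Until these three points are settled by the case analysis on end-types (boundary/boundary on distinct segments, boundary/puncture, distinct punctures, common boundary segment, common puncture), the proof is a correct plan rather than a complete argument.
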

	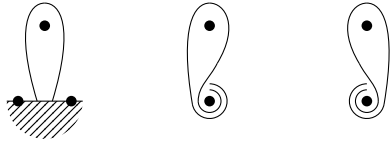
\begin{figure}[htp]
		\centering
		\begin{tikzpicture}[baseline=0mm]
			\coordinate(0)at(0,0);\coordinate(p)at(0,1);
			\draw(-0.1,0)..controls(-0.4,1)and(-0.2,1.3)..(0,1.3);
			\draw(0.1,0)..controls(0.4,1)and(0.2,1.3)..(0,1.3); 
			\draw(-0.5,0)--(0.5,0);
			\fill(-0.35,0)circle(0.07);\fill(0.35,0)circle(0.07);\fill(p)circle(0.07);
			\fill[pattern=north east lines](-0.5,0)arc(-180:0:0.5);
		\end{tikzpicture}
		\hspace{10mm}
		\begin{tikzpicture}[baseline=0mm]
			\coordinate(0)at(0,0);\coordinate(p)at(0,1);
			\draw(-170:0.23)..controls(-0.4,1)and(-0.2,1.3)..(0,1.3);
			\draw(0.1,0.5)..controls(0.4,1)and(0.2,1.3)..(0,1.3);
			\draw(0.1,0.5)..controls(0,0.3)and(-0.15,0.2)..(-0.15,0);
			\draw(-170:0.23)arc(-170:90:0.23);
			\draw(-0.15,0)arc(-180:90:0.15);
			\fill(0,0)circle(0.07);\fill(p)circle(0.07);
		\end{tikzpicture}
		\hspace{10mm}
		\begin{tikzpicture}[baseline=0mm]
			\coordinate(0)at(0,0);\coordinate(p)at(0,1);
			\draw(-10:0.23)..controls(0.4,1)and(0.2,1.3)..(0,1.3);
			\draw(-0.1,0.5)..controls(-0.4,1)and(-0.2,1.3)..(0,1.3);
			\draw(-0.1,0.5)..controls(0,0.3)and(0.15,0.2)..(0.15,0);
			\draw(-10:0.23)arc(-10:-270:0.23);
			\draw(0.15,0)arc(0:-270:0.15);
			\fill(0,0)circle(0.07);\fill(p)circle(0.07);
		\end{tikzpicture}
		\caption{Exceptional laminates}
		\label{XFmwIWQkr7dJ}
	\end{figure}
	To interpret shear coordinates of exceptional laminates as ones of elementary laminates, we introduce the following notations. For an exceptional laminate $\ell$ of $\mathcal{S}$, elementary laminates $\ell_{\sf p}$ and $\ell_{\sf q}$ are given by
	\[\ell
	\begin{tikzpicture}[baseline=5mm]
		\coordinate(0)at(0,0);\coordinate(p)at(0,1);
		\draw(-0.1,0)..controls(-0.4,1)and(-0.2,1.3)..(0,1.3);
		\draw(0.1,0)..controls(0.4,1)and(0.2,1.3)..(0,1.3);
		\fill(p)circle(0.07);\filldraw[dotted,thick,fill=white](0)circle(0.2);
	\end{tikzpicture}
	\hspace{4mm}\rightarrow\hspace{4mm}
	\ell_{\sf p}
	\begin{tikzpicture}[baseline=5mm]
		\coordinate(0)at(0,0);\coordinate(p)at(0,1);
		\draw(-0.1,0)..controls(-0.4,1)and(-0.2,1.2)..(0,1.2);
		\draw(0,1.2)arc(90:-130:0.2);
		\fill(p)circle(0.07);\filldraw[dotted,thick,fill=white](0)circle(0.2);
	\end{tikzpicture}
	\hspace{3mm}
	\ell_{\sf q}
	\begin{tikzpicture}[baseline=5mm]
		\coordinate(0)at(0,0);\coordinate(p)at(0,1);
		\draw(0.1,0)..controls(0.4,1)and(0.2,1.2)..(0,1.2);
		\draw(0,1.2)arc(90:310:0.2);
		\fill(p)circle(0.07);\filldraw[dotted,thick,fill=white](0)circle(0.2);
	\end{tikzpicture}
	\text{, where}\ \ 
	\begin{tikzpicture}[baseline=-1mm]
		\draw[dotted,thick] (0,0)circle(0.5);
	\end{tikzpicture}
	=
	\begin{tikzpicture}[baseline=-1mm]
		\draw[dotted,thick] (0,0)circle(0.5);
		\draw(-0.15,0)--(-0.25,0.4) (0.15,0)--(0.25,0.4) (-0.5,0)--(0.5,0);
		\fill(-0.35,0)circle(0.07);  \fill(0.35,0)circle(0.07);
		\fill[pattern=north east lines](-0.5,0)arc(-180:0:0.5);
	\end{tikzpicture}
	\ \text{or}\ 
	\begin{tikzpicture}[baseline=-1mm]
		\draw[dotted,thick] (0,0)circle(0.5);
		\draw(-0.25,0.4)--(-0.25,0);
		\draw(0.25,0.4)..controls(0.2,0.4)and(-0.15,0.5)..(-0.15,0);
		\draw(-180:0.25)arc(-180:90:0.25);
		\draw(-0.15,0)arc(-180:90:0.15);
		\fill(0,0)circle(0.07);
	\end{tikzpicture}
	\ \text{or}\ 
	\begin{tikzpicture}[baseline=-1mm]
		\draw[dotted,thick] (0,0)circle(0.5);
		\draw(0.25,0.4)--(0.25,0);
		\draw(-0.25,0.4)..controls(-0.2,0.4)and(0.15,0.5)..(0.15,0);
		\draw(0:0.25)arc(0:-270:0.25);
		\draw(0.15,0)arc(0:-270:0.15);
		\fill(0,0)circle(0.07);
	\end{tikzpicture}\ .
	\]
	For a lamination $L$ on $\mathcal{S}$, we denote by $L_{\sf pq}$ the lamination obtained from $L$ by replacing exceptional laminates $\ell\in L$ with $\ell_{\sf p}$ and $\ell_{\sf q}$.

	\begin{lemma}[{\cite[Lemma 2.8]{Yu20}}]\label{qiaQy8xB1mFh}
		Let $T$ be a tagged triangulation of $\mathcal{S}$. For an exceptional laminate $\ell$ of $\mathcal{S}$, we have
		\[
		b_T(\ell)=b_T(\{\ell_{\sf p},\ell_{\sf q}\}).
		\]
	\end{lemma}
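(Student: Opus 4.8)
The plan is to prove the identity one coordinate at a time, that is, to show that $b_{\gamma,T}(\ell)=b_{\gamma,T}(\ell_{\sf p})+b_{\gamma,T}(\ell_{\sf q})$ for every $\gamma\in T$; by the additive definition of shear coordinates for multi-sets of laminates we have $b_{\gamma,T}(\{\ell_{\sf p},\ell_{\sf q}\})=b_{\gamma,T}(\ell_{\sf p})+b_{\gamma,T}(\ell_{\sf q})$, so summing over $\gamma\in T$ then yields the asserted equality of vectors. By Proposition \ref{S6MOQfDF5hRS}, an exceptional laminate $\ell$ encloses exactly one puncture $p$, and by construction $\ell_{\sf p}$ and $\ell_{\sf q}$ agree with $\ell$ outside a small disk $D$ around $p$ and spiral into $p$ in opposite directions inside $D$; in fact $\{\ell_{\sf p},\ell_{\sf q}\}=\{\mathsf{e}(\delta),\mathsf{e}(\delta')\}$ for the conjugate pair $\{\delta,\delta'\}$ underlying $\ell$, cf.\ Proposition \ref{ytcn86ZHN4Ak}. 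First I would reduce to the case of an ideal triangulation. By the definition of shear coordinates for a tagged triangulation, $b_{\gamma,T}(\cdot)$ is computed after changing tags at a set of punctures $p_1,\dots,p_m$ so that $T$ becomes $\iota(T^0)$ for an ideal triangulation $T^0$, while simultaneously applying the direction-reversing operations $(\cdot)^{(p_i)}$ to the laminates. Since these operations act locally and send an exceptional laminate to an exceptional laminate and its splitting to the corresponding splitting, they commute with the passage $\ell\mapsto\{\ell_{\sf p},\ell_{\sf q}\}$; hence it suffices to establish the identity when $T$ is (the tagging of) an ideal triangulation.

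Next I would partition the arcs of $T$ into those disjoint from $D$ and those incident to $p$. For $\gamma$ disjoint from $D$, the laminate $\ell$ meets $\gamma$ only along its two strands lying outside $D$, and these two strands are precisely the unique strands of $\ell_{\sf p}$ and of $\ell_{\sf q}$. Thus the intersection points of $\gamma$ with $\ell$ are in bijection with the disjoint union of the intersection points of $\gamma$ with $\ell_{\sf p}$ and with $\ell_{\sf q}$, and, since the local picture of each strand in the quadrilateral surrounding $\gamma$ (Figure \ref{PB8YMzV20uMT}) is unchanged, the $\pm1$ contributions match termwise. This gives $b_{\gamma,T}(\ell)=b_{\gamma,T}(\ell_{\sf p})+b_{\gamma,T}(\ell_{\sf q})$ for every such $\gamma$.

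The remaining case, namely $\gamma$ incident to $p$ (equivalently $\gamma$ meeting $D$, and including an arc $\gamma$ inside a self-folded triangle enclosing $p$), is where the real work lies and is the main obstacle. Here the strands of $\ell$ running around $p$ differ from the spiraling ends of $\ell_{\sf p}$ and $\ell_{\sf q}$, so one cannot argue by a direct identification of intersection points. I would instead compute both sides locally inside $D$ by a finite case analysis on the configuration of arcs incident to $p$: when $\gamma$ lies in a self-folded triangle $\{\gamma,\gamma'\}$ around $p$ I would invoke the reduction rule $b_{\gamma,T}(m)=b_{\gamma',T}(m^{(p)})$ to transfer the count to the enclosing loop $\gamma'$, and otherwise I would read off the contributions directly from the clockwise/counterclockwise conventions of Figures \ref{KTIyxSOdwtcK} and \ref{PB8YMzV20uMT}. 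The key point to verify is that the two ends of $\ell_{\sf p}$ and $\ell_{\sf q}$, spiraling around $p$ in opposite directions, produce contributions whose sum equals exactly the contribution of the single strand of $\ell$ encircling $p$; the opposite orientations ensure that the extra half-turns cancel in pairs, leaving the same net value as $\ell$. Checking this cancellation in each local configuration, and confirming that it is independent of the number of arcs incident to $p$, completes the argument; assembling the equalities $b_{\gamma,T}(\ell)=b_{\gamma,T}(\ell_{\sf p})+b_{\gamma,T}(\ell_{\sf q})$ over all $\gamma\in T$ then yields $b_T(\ell)=b_T(\{\ell_{\sf p},\ell_{\sf q}\})$, as required.
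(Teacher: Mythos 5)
The paper does not prove this lemma at all: it is imported verbatim as \cite[Lemma 2.8]{Yu20}, so there is no internal proof to compare your argument against. Judged on its own, your overall strategy --- work coordinatewise, reduce to an ideal triangulation, and split the arcs of $T$ into those disjoint from a small disk $D$ around the enclosed puncture $p$ and those incident to $p$ --- is the right one, and your treatment of the first class of arcs is fine, since there $\ell$ and $\ell_{\sf p}\sqcup\ell_{\sf q}$ coincide up to isotopy and the $\pm1$ contributions match termwise.

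The gap is that the second class of arcs, which you correctly identify as ``where the real work lies,'' is exactly where your write-up stops: you state the key point to verify (that the two oppositely spiraling ends contribute, in total, the same as the single strand of $\ell$ encircling $p$) and then declare that ``checking this cancellation in each local configuration \dots completes the argument'' without performing any of the local computations. Since the identity is essentially tautological away from $p$, this deferred verification \emph{is} the content of the lemma, so the proposal is an outline rather than a proof. Moreover, the heuristic you offer for why it should work --- ``the extra half-turns cancel in pairs'' --- does not match the actual mechanism in the Fomin--Thurston definition: each spiraling end of $\ell_{\sf p}$ (and of $\ell_{\sf q}$) crosses every arc incident to $p$ infinitely many times on its own, and all but finitely many of these crossings enter and leave the quadrilateral surrounding $\gamma$ through the two sides meeting at the corner $p$, hence contribute $0$ individually (only the two crossing patterns of Figure \ref{PB8YMzV20uMT} contribute $\pm1$); there is no pairwise cancellation between the two laminates to invoke. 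A correct completion must therefore identify, for each arc $\gamma$ incident to $p$ (including the self-folded case via the rule $b_{\gamma,T}(-)=b_{\gamma',T}((-)^{(p)})$), precisely which finitely many crossings of $\ell_{\sf p}$ and $\ell_{\sf q}$ are of the contributing type and check that their signed sum equals that of $\ell$. Finally, the commutation of the tag-reversal operations $(\cdot)^{(p_i)}$ with the splitting $\ell\mapsto\{\ell_{\sf p},\ell_{\sf q}\}$ used in your reduction step is plausible but is asserted rather than checked; note that $\ell^{(p)}=\ell$ while $(\ell_{\sf p})^{(p)}$ and $(\ell_{\sf q})^{(p)}$ swap the two pieces, so this needs a sentence of justification rather than none.
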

	\begin{lemma}\label{CSIXYgLOzQXH}
		The map sending exceptional laminates $\ell$ to the pairs $\{\mathsf{e}^{-1}(\ell_{\sf p}),\mathsf{e}^{-1}(\ell_{\sf q})\}$ induces a bijection between the set of all exceptional laminates and the set of all pairs of conjugate arcs.
	\end{lemma}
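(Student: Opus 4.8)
The plan is to establish the three defining properties of a bijection --- well-definedness, injectivity, and surjectivity --- by a local analysis in a small disk around the enclosed puncture, matching the pictures in the display preceding the statement together with Figures \ref{XFmwIWQkr7dJ}, \ref{KTIyxSOdwtcK}, and \ref{2hq7qM2iIk8I}, in the spirit of \cite{Yu20}. For well-definedness I would fix an exceptional laminate $\ell$. By Proposition \ref{S6MOQfDF5hRS} it encloses exactly one puncture $p$, and outside a small disk $D$ around $p$ it is a curve whose two ends are incident to a common boundary segment or spiral around a common puncture in the same direction. The laminates $\ell_{\sf p}$ and $\ell_{\sf q}$ are, by construction, obtained by cutting $\ell$ inside $D$ and letting the two resulting ends spiral around $p$ in opposite senses, while agreeing with the two halves of $\ell$ outside $D$. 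Neither $\ell_{\sf p}$ nor $\ell_{\sf q}$ encloses a puncture, so by Proposition \ref{S6MOQfDF5hRS} neither is exceptional; being non-closed, both are elementary, and hence $\mathsf{e}^{-1}(\ell_{\sf p})$ and $\mathsf{e}^{-1}(\ell_{\sf q})$ are well-defined tagged arcs. Setting $\delta:=\mathsf{e}^{-1}(\ell_{\sf p})$ and $\delta':=\mathsf{e}^{-1}(\ell_{\sf q})$, the defining property of $\mathsf{e}$ (Figure \ref{KTIyxSOdwtcK}) forces $\delta$ and $\delta'$ to share the same underlying ideal arc and to agree in every tag except at $p$, where one is plain and the other notched; thus $\{\delta,\delta'\}$ is a pair of conjugate arcs.

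Injectivity and surjectivity would both follow once I exhibit the inverse construction. Given a pair of conjugate arcs $\{\delta,\delta'\}$, they share an underlying ideal arc $\delta^{\circ}$ and differ at exactly one endpoint; since ends on $\partial\mathcal{S}$ are tagged plain, that endpoint is a puncture $p$, and (after relabeling) $\delta$ is plain and $\delta'$ notched at $p$. Then $\mathsf{e}(\delta)$ and $\mathsf{e}(\delta')$ are elementary laminates spiraling around $p$ in opposite directions and agreeing elsewhere. Re-gluing their $p$-ends inside a small disk around $p$ yields a single non-closed, non-self-intersecting curve $\ell$ that encloses exactly $p$ and whose two remaining ends --- both at the common non-$p$ endpoint of $\delta^{\circ}$ --- lie on a common boundary segment or spiral around a common puncture in the same direction, the latter being forced by the fact that $\delta$ and $\delta'$ carry the same tag there. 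By Proposition \ref{S6MOQfDF5hRS}, $\ell$ is exceptional, and by construction $\ell_{\sf p}=\mathsf{e}(\delta)$ and $\ell_{\sf q}=\mathsf{e}(\delta')$. Since cutting-and-spiraling and re-gluing are mutually inverse local operations and $\mathsf{e}$ is injective, the assignments $\ell\mapsto\{\delta,\delta'\}$ and $\{\delta,\delta'\}\mapsto\ell$ are inverse to one another, which yields the claimed bijection.

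The main obstacle I anticipate is the bookkeeping of tags against spiraling directions: I must verify that the two ends produced by cutting $\ell$ genuinely spiral in opposite senses, so that one arc is plain and the other notched, and conversely that every conjugate pair re-glues to a curve satisfying \emph{both} conditions of Proposition \ref{S6MOQfDF5hRS} rather than to a closed or an elementary laminate. This is a matter of carefully reading the local models in the displayed pictures, and I would handle it case by case according to whether the non-$p$ endpoint of $\delta^{\circ}$ is a boundary marked point or a puncture (noting that $\delta^{\circ}$ cannot be a loop at $p$, since a loop must be tagged identically at both ends and so admits no conjugate partner).
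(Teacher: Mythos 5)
Your argument is correct and follows the same route as the paper, whose proof of Lemma \ref{CSIXYgLOzQXH} consists of the single sentence that the assertion follows from the definitions of $\mathsf{e}$, $(-)_{\sf p}$ and $(-)_{\sf q}$; you have simply spelled out the local verification (opposite spiraling at the enclosed puncture giving the conjugate tags, matching tags at the other end, and the re-gluing inverse) that the authors leave implicit. The tag-versus-spiral bookkeeping you flag as a potential obstacle is in fact built into the definitions of $\mathsf{e}$ and of $\ell_{\sf p},\ell_{\sf q}$, so no further case analysis is needed.
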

	\begin{proof}
		The assertion follows from the definitions of the maps $\mathsf{e}$, $(-)_{\sf p}$ and $(-)_{\sf q}$.
	\end{proof}
	\begin{proposition}\label{prop:bij lam arc}
		The map $\mathsf{e}^{-1}((-)_{\sf pq})$ induces a bijection between the set of all laminations on $\mathcal{S}$ consisting of non-closed laminates and the set of all finite multi-sets of pairwise compatible tagged arcs in $\mathcal{S}$.
	\end{proposition}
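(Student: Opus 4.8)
The plan is to analyze the map $\mathsf{e}^{-1}((-)_{\sf pq})$ laminate by laminate and then reassemble, using Proposition~\ref{ytcn86ZHN4Ak} to govern the elementary part and Lemma~\ref{CSIXYgLOzQXH} to govern the exceptional part. By Proposition~\ref{S6MOQfDF5hRS} every non-closed laminate is either elementary or exceptional, so a lamination $L$ of non-closed laminates is a finite multi-set of elementary and exceptional laminates. The forward map sends each elementary laminate $e$ occurring in $L$ to the single tagged arc $\mathsf{e}^{-1}(e)$, and each exceptional laminate $\ell$ occurring in $L$ to the pair $\{\mathsf{e}^{-1}(\ell_{\sf p}),\mathsf{e}^{-1}(\ell_{\sf q})\}$, which is a pair of conjugate arcs by Lemma~\ref{CSIXYgLOzQXH}. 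First I would check that $\mathsf{e}^{-1}((-)_{\sf pq})(L)$ is a finite multi-set of pairwise compatible tagged arcs: since the laminates of $L$ are pairwise disjoint, compatibility of any two resulting tagged arcs supported on distinct curves follows from Proposition~\ref{ytcn86ZHN4Ak} (for arcs from elementary laminates) together with the local description of $\ell_{\sf p},\ell_{\sf q}$ relative to $\ell$ (for arcs from exceptional laminates), while the two arcs produced by a single exceptional laminate are compatible because they form a conjugate pair; the multiplicities simply record parallel copies.

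To build the inverse, given a finite multi-set $M$ of pairwise compatible tagged arcs I would group its distinct arcs by underlying ideal arc $\delta^{\circ}$. By the compatibility conditions, two distinct tagged arcs in $M$ with the same underlying ideal arc must form a pair of conjugate arcs, so each underlying arc carries at most two distinct tags in $M$. For a tagged arc $\delta$ whose conjugate does not occur in $M$ I send each of its $m_{\delta}$ copies to the elementary laminate $\mathsf{e}(\delta)$. For a conjugate pair $\{\delta,\delta'\}$ occurring with multiplicities $x$ and $y$ I send it to $\min(x,y)$ copies of the exceptional laminate $\ell$ associated with $\{\delta,\delta'\}$ under Lemma~\ref{CSIXYgLOzQXH}, together with $|x-y|$ copies of $\mathsf{e}$ of whichever of $\delta,\delta'$ has the larger multiplicity. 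One then verifies that the resulting multi-set of laminates is a genuine lamination and that the two assignments are mutually inverse; away from conjugate pairs this is exactly Proposition~\ref{ytcn86ZHN4Ak} and Lemma~\ref{CSIXYgLOzQXH}, so everything reduces to the local bookkeeping at each conjugate pair.

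The main obstacle is precisely this bookkeeping, and it hinges on two compatibility facts that pull in opposite directions. On one hand, for a conjugate pair $\{\delta,\delta'\}$ the two elementary laminates $\mathsf{e}(\delta)$ and $\mathsf{e}(\delta')$ are \emph{not} compatible, so $L$ can never contain both as standalone elementary laminates; this forces at most one of the two ``excess'' elementary multiplicities to be nonzero, which is what makes the preimage unique and the forward map injective. On the other hand, the exceptional laminate $\ell$ \emph{is} compatible with each of its halves $\ell_{\sf p}$ and $\ell_{\sf q}$ — the spiralling half can be isotoped into the region that $\ell$ cuts off around the enclosed puncture — and this is exactly what lets a conjugate pair appear in $M$ with unequal multiplicities $x\neq y$, realized by $\min(x,y)$ copies of $\ell$ alongside $|x-y|$ parallel copies of one elementary half, thereby giving surjectivity. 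The remaining points to nail down are that these parallel families are pairwise disjoint, so that the inverse really lands among laminations; all of this is local to a neighbourhood of the enclosed puncture and follows the analysis of \cite{Yu20}, with Lemma~\ref{qiaQy8xB1mFh} providing the consistency of the construction with shear coordinates.
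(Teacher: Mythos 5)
Your proposal is correct and follows essentially the same route as the paper, whose entire proof is the one-line citation of Proposition \ref{ytcn86ZHN4Ak} (for the elementary part) and Lemma \ref{CSIXYgLOzQXH} (for the exceptional/conjugate-pair part). You simply make explicit the multiplicity bookkeeping at conjugate pairs --- in particular the incompatibility of $\mathsf{e}(\delta)$ with $\mathsf{e}(\delta')$ versus the compatibility of $\ell$ with each of $\ell_{\sf p}$, $\ell_{\sf q}$ --- which the paper leaves implicit.
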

	\begin{proof}
		The assertion follows from Proposition \ref{ytcn86ZHN4Ak} and Lemma \ref{CSIXYgLOzQXH}.
	\end{proof}
	
	\subsection{Cluster algebras associated with triangulated surfaces}
	Every tagged triangulation $T$ of $\mathcal{S}$ is obtained by gluing together some puzzle pieces as shown in Table \ref{Q13l58gvUyPD}. To $T$, we associate a quiver $\bar{Q}_T$ with $(\bar{Q}_T)_0=T$ whose arrows correspond to angles between tagged arcs in $T$ as shown in Table \ref{Q13l58gvUyPD}. We obtain a quiver $Q_T$ without loops or $2$-cycles from $\bar{Q}_T$ by removing $2$-cycles.

	\renewcommand{\arraystretch}{2}
	{\begin{table}[ht]
			\begin{tabular}{|c|c|c|c|c|}
				\hline
				$\triangle$
				&
				\begin{tikzpicture}[baseline=0mm]
					\coordinate(l)at(-150:1); \coordinate(r)at(-30:1); \coordinate(u)at(90:1);
					\draw(u)--node[left]{$1$}(l)--node[below]{$3$}(r)--node[right]{$2$}(u);
					\fill(u)circle(0.07); \fill(l)circle(0.07); \fill(r)circle(0.07);
				\end{tikzpicture}
				&
				\begin{tikzpicture}[baseline=-2mm]
					\coordinate(c)at(0,0); \coordinate(u)at(0,1); \coordinate(d)at(0,-1);
					\draw(d)to[out=180,in=180]node[left]{$1$}(u);
					\draw(d)to[out=0,in=0]node[right]{$2$}(u);
					\draw(d)to[out=150,in=-150]node[fill=white,inner sep=1]{$3$}(c);
					\draw(d)to[out=30,in=-30]node[pos=0.8]{\rotatebox{40}{\footnotesize $\bowtie$}}node[fill=white,inner sep=1]{$4$}(c);
					\fill(c)circle(0.07); \fill(u)circle(0.07); \fill(d)circle(0.07);\node at(0,1.1){};
				\end{tikzpicture}
				&
				\begin{tikzpicture}[baseline=-2mm]
					\coordinate(l)at(-0.5,0); \coordinate(r)at(0.5,0); \coordinate(d)at(0,-1);
					\draw(0,0)circle(1); \node at(0,0.7){$1$};
					\draw(d)to[out=170,in=-130]node[fill=white,inner sep=1]{$2$}(l);
					\draw(d)to[out=95,in=0]node[left,inner sep=1,pos=0.4]{$3$}node[pos=0.8]{\rotatebox{60}{\footnotesize $\bowtie$}}(l);
					\draw(d)to[out=85,in=180]node[right,inner sep=1,pos=0.4]{$4$}(r);
					\draw(d)to[out=10,in=-50] node[pos=0.8]{\rotatebox{10}{\footnotesize $\bowtie$}}node[fill=white,inner sep=1]{$5$}(r);
					\fill(l)circle(0.07); \fill(r)circle(0.07); \fill(d)circle(0.07);
				\end{tikzpicture}
				&
				\begin{tikzpicture}[baseline=1mm]
					\coordinate(c)at(0,0); \coordinate(u)at(90:1); \coordinate(r)at(-30:1); \coordinate(l)at(210:1);
					\draw(c)to[out=60,in=120,relative]node[fill=white,inner sep=1]{$1$}(u);
					\draw(c)to[out=-60,in=-120,relative]node[pos=0.8]{\rotatebox{40}{\footnotesize $\bowtie$}}node[fill=white,inner sep=1]{$2$}(u);
					\draw(c)to[out=60,in=120,relative]node[fill=white,inner sep=1]{$3$}(l);
					\draw(c)to[out=-60,in=-120,relative] node[pos=0.8]{\rotatebox{160}{\footnotesize $\bowtie$}}node[fill=white,inner sep=1]{$4$}(l);
					\draw(c)to[out=60,in=120,relative]node[fill=white,inner sep=1]{$5$}(r);
					\draw(c)to[out=-60,in=-120,relative]node[pos=0.8]{\rotatebox{-80}{\footnotesize $\bowtie$}}node[fill=white,inner sep=1]{$6$}(r);
					\fill(c)circle(0.07); \fill(u)circle(0.07); \fill(l)circle(0.07); \fill(r)circle(0.07);
				\end{tikzpicture}
				\\\hline
				$Q_{\triangle}$
				&
				\begin{tikzpicture}[baseline=-3mm,scale=0.8]
					\node(1)at(150:1){$1$}; \node(2)at(30:1){$2$}; \node(3)at(-90:1){$3$};
					\draw[->](3)--(2); \draw[->](2)--(1); \draw[->](1)--(3);
				\end{tikzpicture}
				&
				\begin{tikzpicture}[baseline=-2mm]
					\node(1)at(-0.7,0.5){$1$}; \node(2)at(0.7,0.5){$2$}; \node(3)at(0,-0.2){$3$}; \node(4)at(0,-0.8){$4$};
					\draw[->](3)--(2); \draw[->](2)--(1); \draw[->](1)--(3); \draw[->](4)--(2); \draw[->](1)--(4);
				\end{tikzpicture}
				&
				\begin{tikzpicture}[baseline=-1mm]
					\node(1)at(0,0.8){$1$}; \node(2)at(-170:1){$2$}; \node(3)at(-130:1){$3$};
					\node(4)at(-10:1){$4$}; \node(5)at(-50:1){$5$};
					\draw[->] (1)--(2); \draw[->] (2)--(4); \draw[->] (4)--(1);
					\draw[->] (1)--(3); \draw[->] (3)--(4);
					\draw[->] (2)--(5); \draw[->] (5)--(1); \draw[->] (3)--(5);
				\end{tikzpicture}
				&
				\begin{tikzpicture}[baseline=0mm]
					\node(1)at(90:0.5){$1$}; \node(2)at(90:1.1){$2$}; \node(3)at(-30:0.5){$5$};
					\node(4)at(-30:1.1){$6$}; \node(5)at(210:0.5){$3$}; \node(6)at(210:1.1){$4$};
					\draw[->](1)--(3); \draw[->](3)--(5); \draw[->](5)--(1);
					\draw[->] (2) to [out=40,in=140,relative] (4); \draw[->] (4) to [out=40,in=140,relative] (6); \draw[->] (6) to [out=40,in=140,relative] (2);
					\draw[->] (2) to [out=40,in=140,relative] (3); \draw[->] (4) to [out=40,in=140,relative] (5); \draw[->] (6) to [out=40,in=140,relative] (1);
					\draw[->] (1) to [out=40,in=140,relative] (4); \draw[->] (3) to [out=40,in=140,relative] (6); \draw[->] (5) to [out=40,in=140,relative] (2);
				\end{tikzpicture} \\
				\hline
			\end{tabular}\vspace{3mm}
			\caption{The quiver $Q_{\triangle}$ associated with each puzzle piece $\triangle$ of tagged triangulations}
			\label{Q13l58gvUyPD}
	\end{table}}
	
	The associated cluster algebra $\mathcal{A}(Q_T)$ has the following properties. The following are due to \cite[Theorem 7.11]{FST08}, \cite[Theorem 6.1]{FT18}, \cite[Theorem 7.1]{LF09b} and \cite[Proposition 5.2]{Re14b}.

	\begin{theorem}[{\cite{FST08,FT18,LF09b,Re14b}}]\label{XRh46SpQsaiI}
		Let $T$ be a tagged triangulation of $\mathcal{S}$.
		\begin{itemize}
			\item[$(1)$] If $\mathcal{S}$ is not a closed surface with exactly one puncture, then there is a bijection
			\[
			x_{(-)}:\{\text{Tagged arcs in $\mathcal{S}$}\}\longrightarrow\ClVar(Q_T)
			\]
			such that $-b_T(\mathsf{e}(\delta))=g(x_{\delta})$ for any tagged arc $\delta$ in $\mathcal{S}$. Moreover, it induces a bijection
			\[
			x_{(-)}:\{\text{Tagged triangulations in $\mathcal{S}$}\}\longrightarrow\Cluster(Q_T),
			\]
			which sends $T$ to the initial cluster in $\mathcal{A}(Q_T)$ and commutes with flips and mutations.
			\item[$(2)$] If $\mathcal{S}$ is a closed surface with exactly one puncture, then there is a bijection
			\[
			x_{(-)}:
			\Biggl\{\begin{gathered}\text{Tagged arcs in $\mathcal{S}$ with}\\\text{the same tags as those in $T$}\end{gathered}\Biggr\}
			\longrightarrow\ClVar(Q_T)
			\]
			such that $-b_T(\mathsf{e}(\delta))=g(x_{\delta})$ for any tagged arc $\delta$ in $\mathcal{S}$ with the same tags as those in $T$. Moreover, it induces a bijection
			\[
			x_{(-)}:
			\Biggl\{\begin{gathered}\text{Tagged triangulations in $\mathcal{S}$ with}\\\text{the same tags as those in $T$}\end{gathered}\Biggr\}
			\longrightarrow\Cluster(Q_T),
			\]
			which sends $T$ to the initial cluster in $\mathcal{A}(Q_T)$ and commutes with flips and mutations.
		\end{itemize}
	\end{theorem}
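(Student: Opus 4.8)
The plan is to assemble the four cited ingredients rather than re-derive them from scratch. First I would invoke the foundational surface correspondence of \cite{FST08}: for the cluster algebra attached to the puzzle-piece quiver $Q_T$ of Table~\ref{Q13l58gvUyPD}, \cite[Theorem 7.11]{FST08} already provides a bijection between tagged arcs in $\mathcal{S}$ and cluster variables of $\mathcal{A}(Q_T)$, inducing one between tagged triangulations and clusters, sending $T$ to the initial cluster and intertwining flips with mutations. The role of \cite{LF09b} is to certify that $Q_T$ genuinely is the exchange quiver of the surface cluster algebra (with the potential of \cite{LF09} realizing the associated Jacobian algebra), so that ``cluster variables of $\mathcal{A}(Q_T)$'' is the correct target; with this in hand, the combinatorial bijection $x_{(-)}$ and its compatibility with flips and mutations are immediate.

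Next I would pin down the $\g$-vector identity $-b_T(\mathsf{e}(\delta))=g(x_\delta)$. Here I would combine the shear-coordinate bijection already recalled in Theorem~\ref{z9mIFLwJvbBb} with the $\g$-vector computation of \cite{FT18} and \cite[Proposition 5.2]{Re14b}: in a surface cluster algebra with principal coefficients, the $\g$-vector of the cluster variable indexed by a tagged arc $\delta$, read in the basis indexed by $T$, is exactly the shear-coordinate vector of the elementary laminate $\mathsf{e}(\delta)$, up to the global sign fixed by our reflected convention for $\mathsf{e}$ noted just after its definition. Because shear coordinates transform under flips by the same tropical mutation rule as $\g$-vectors, namely \eqref{Yjd9vt0FtxUc}, this identity propagates consistently across the whole flip graph once it is verified on the initial triangulation $T$, where $\mathsf{e}(\delta)$ for $\delta\in T$ returns the standard basis vectors and these match the initial $\g$-vectors.

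The case split in the statement is then forced by the flip-connectivity dichotomy of Theorem~\ref{s7QrbAeqvEhv}. When $\mathcal{S}$ is not a once-punctured closed surface, the flip graph of tagged triangulations is connected, so mutation from $T$ reaches every tagged triangulation and the bijection ranges over all tagged arcs, giving part~(1). When $\mathcal{S}$ is a once-punctured closed surface, the flip graph splits into an all-plain and an all-notched component, and mutation from $T$ can only reach the component containing $T$; the bijection must therefore be restricted to tagged arcs carrying the same tags as those of $T$, yielding part~(2).

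The main obstacle I anticipate is book-keeping but genuinely delicate: reconciling the sign and tagging conventions of the four sources so that a single global sign is simultaneously consistent with \eqref{Yjd9vt0FtxUc}, with Theorem~\ref{z9mIFLwJvbBb}, and with the initial-seed normalization. In particular one must track the orientation convention of Figure~\ref{KTIyxSOdwtcK} that makes $\mathsf{e}$ a reflection of the elementary lamination of \cite[Definition 17.2]{FT18}, together with the placement of the minus sign in $-b_T(\mathsf{e}(\delta))=g(x_\delta)$. I expect the crux to be checking this normalization on a single triangle of Table~\ref{Q13l58gvUyPD}—where $Q_\triangle$ and the initial $\g$-vectors are explicit—and then transporting it across the flip graph by the mutation rule, so that the whole identity follows by induction on the length of a flip sequence from $T$.
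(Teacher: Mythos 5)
This theorem is stated in the paper as a quotation of known results --- the text immediately preceding it says ``The following are due to \cite[Theorem 7.11]{FST08}, \cite[Theorem 6.1]{FT18}, \cite[Theorem 7.1]{LF09b} and \cite[Proposition 5.2]{Re14b}'' --- and no proof is given in the paper itself. Your outline correctly reconstructs how those four sources combine: the arc/cluster-variable bijection from \cite{FST08}, the identification of $Q_T$ via \cite{LF09b}, the shear-coordinate/$\g$-vector identity from \cite{FT18} and \cite{Re14b} propagated by the tropical mutation rule \eqref{Yjd9vt0FtxUc}, and the case split forced by the flip-connectivity dichotomy of Theorem \ref{s7QrbAeqvEhv}; you also rightly flag the reflected convention for $\mathsf{e}$ (noted after Figure \ref{KTIyxSOdwtcK}) as the source of the sign in $-b_T(\mathsf{e}(\delta))=g(x_\delta)$. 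This matches the paper's intent, so there is nothing to correct.
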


	To consider all tagged arcs in a closed surface with exactly one puncture, we prepare some notations: For a tagged arc $\delta$ in $\mathcal{S}$, $\rho(\delta)$ is defined as a tagged arc obtained from $\delta$ by changing all tags at punctures. Let $\mathcal{S}^{\ast}$ be the same marked surface as $\mathcal{S}$ oriented in the opposite direction. For a tagged arc $\gamma$ in $\mathcal{S}$, $\gamma^{\ast}$ is defined as the corresponding one of $\mathcal{S}^{\ast}$. It is easy to see that $Q_{T^{\ast}}=Q_{T}^{\rm op}$.

	\begin{theorem}[{\cite[Section 3.2]{Yu20}}]\label{C2TUel07uSol}
		Let $T$ be a tagged triangulation of $\mathcal{S}$. If $\mathcal{S}$ is a closed surface with exactly one puncture, then there is a bijection
		\[
		x_{(\rho(-))^{\ast}}:
		\Biggl\{\begin{gathered}\text{Tagged arcs in $\mathcal{S}$ with}\\\text{different tags from those in $T$}\end{gathered}\Biggr\}
		\longrightarrow\ClVar(Q^{\op}_T)
		\]
		such that $b_T(\mathsf{e}(\delta))=g(x_{(\rho(\delta))^{\ast}})$ for any tagged arc $\delta$ in $\mathcal{S}$ with different tags from those in $T$. Moreover, it induces a bijection
		\[
		x_{(\rho(-))^{\ast}}:
		\Biggl\{\begin{gathered}\text{Tagged triangulations in $\mathcal{S}$ with}\\\text{different tags from those in $T$}\end{gathered}\Biggr\}
		\longrightarrow\Cluster(Q^{\op}_T),
		\]
		which sends $\rho(T)$ to the initial cluster in $\mathcal{A}(Q_T^{\rm op})$ and commutes with flips and mutations.
	\end{theorem}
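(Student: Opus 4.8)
The plan is to reduce the statement to Theorem~\ref{XRh46SpQsaiI}(2) applied to the orientation-reversed surface $\mathcal{S}^{\ast}$, using two auxiliary observations: that orientation reversal negates shear coordinates, and that the elementary-laminate construction intertwines the tag-reversal $\rho$ with the passage $(-)^{\ast}$ to $\mathcal{S}^{\ast}$. First I would record the structural fact that on a closed surface with exactly one puncture $p$, the only marked point is $p$, so every tagged arc is a loop at $p$; by the loop condition it is tagged plain at both ends or notched at both ends. A short argument using the third compatibility condition for tagged arcs (the common endpoint $p$ forces equal tags) shows that no tagged triangulation can mix the two classes, so $T$ is \emph{pure} and the phrase ``different tags from those in $T$'' denotes unambiguously the complementary tag class. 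Consequently $\rho$ restricts to a bijection between the tagged arcs (resp.\ tagged triangulations) with different tags from $T$ and those with the same tags as $T$, with $\rho\circ\rho=\id$.

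The heart of the argument is a \emph{sign-reversal lemma}: for any laminate $\ell$ of $\mathcal{S}$, writing $\ell^{\ast}$ for the same curve read in $\mathcal{S}^{\ast}$ and $T^{\ast},\gamma^{\ast}$ for the induced objects, one has $b_{\gamma^{\ast},T^{\ast}}(\ell^{\ast})=-b_{\gamma,T}(\ell)$ for all $\gamma\in T$. I would prove this straight from the definition of shear coordinates: reversing the orientation of $\mathcal{S}$ interchanges the two local pictures of Figure~\ref{PB8YMzV20uMT}, so each elementary $\pm1$ contribution changes sign, and one checks that this is compatible with the two reduction clauses in the definition (the self-folded-triangle rule $b_{\gamma,T}(\ell)=b_{\gamma',T}(\ell^{(p)})$ and the tag-changing passage $T\rightsquigarrow T^{(p_1\cdots p_m)}$ to an ideal triangulation), since those reductions are orientation-natural.

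Next I would establish $\mathsf{e}\bigl((\rho(\delta))^{\ast}\bigr)=\bigl(\mathsf{e}(\delta)\bigr)^{\ast}$ as laminates of $\mathcal{S}^{\ast}$. As all ends lie at $p$, the elementary laminate records only the spiraling direction. If $\delta$ is (say) all-notched in $\mathcal{S}$, then $\mathsf{e}(\delta)$ spirals counterclockwise with respect to the orientation of $\mathcal{S}$; the arc $(\rho(\delta))^{\ast}$ is all-plain in $\mathcal{S}^{\ast}$, so $\mathsf{e}\bigl((\rho(\delta))^{\ast}\bigr)$ spirals clockwise with respect to $\mathcal{S}^{\ast}$, which is the same direction as counterclockwise with respect to $\mathcal{S}$; hence the two agree as curves-with-spiraling. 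Thus the sign changes induced by $\rho$ and by $(-)^{\ast}$ cancel geometrically. Applying Theorem~\ref{XRh46SpQsaiI}(2) to $\mathcal{S}^{\ast}$ with the pure triangulation $T^{\ast}$ (and $Q_{T^{\ast}}=Q_T^{\op}$) gives a bijection $x_{(-)}$ from tagged arcs of $\mathcal{S}^{\ast}$ with the same tags as $T^{\ast}$ onto $\ClVar(Q_T^{\op})$ with $g(x_{(\rho(\delta))^{\ast}})=-b_{T^{\ast}}\bigl(\mathsf{e}((\rho(\delta))^{\ast})\bigr)$; precomposing with $\delta\mapsto\rho(\delta)\mapsto(\rho(\delta))^{\ast}$ yields $x_{(\rho(-))^{\ast}}$. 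Combining the two facts gives the desired identity
\[
g(x_{(\rho(\delta))^{\ast}}) = -b_{T^{\ast}}\bigl((\mathsf{e}(\delta))^{\ast}\bigr) = b_T(\mathsf{e}(\delta)).
\]

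The ``Moreover'' part is then automatic: $\rho$, $(-)^{\ast}$, and (by Theorem~\ref{XRh46SpQsaiI}(2)) $x_{(-)}$ each commute with flips and mutations, so the composite does, and evaluating at $\rho(T)$ gives $(\rho(\rho(T)))^{\ast}=T^{\ast}$, whose image is the initial cluster of $\mathcal{A}(Q_T^{\op})$. The step I expect to be the main obstacle is the sign-reversal lemma: one must verify that orientation reversal negates $b_{\gamma,T}$ not only on the generic contributions of Figure~\ref{PB8YMzV20uMT} but also through the self-folded-triangle and tagged-to-ideal reduction clauses, tracking how the auxiliary operations $\ell\mapsto\ell^{(p)}$ and tag changes interact with the mirror $(-)^{\ast}$. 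A secondary subtlety is keeping the spiraling-direction bookkeeping consistent with the elementary-laminate convention fixed in Figure~\ref{KTIyxSOdwtcK}.
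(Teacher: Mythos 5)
Your proof is correct. The paper itself gives no argument for Theorem \ref{C2TUel07uSol} and simply cites \cite[Section 3.2]{Yu20}; your reduction to Theorem \ref{XRh46SpQsaiI}(2) applied to the opposite surface $\mathcal{S}^{\ast}$ with $Q_{T^{\ast}}=Q_T^{\op}$ --- via the sign-reversal $b_{T^{\ast}}(\ell^{\ast})=-b_{T}(\ell)$ of shear coordinates under orientation reversal and the cancellation $\mathsf{e}\bigl((\rho(\delta))^{\ast}\bigr)=\bigl(\mathsf{e}(\delta)\bigr)^{\ast}$ --- is essentially the argument of that reference, and the purity of tagged triangulations on a once-punctured closed surface that you invoke is exactly what makes the two tag classes well defined.
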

	
	\subsection{Jacobian algebras associated with triangulated surfaces}
	Let $T$ be a tagged triangulation of $\mathcal{S}$ and $W$ a non-degenerate Jacobi-finite potential of $Q_T$. We consider a property of the Jacobian algebra $\mathcal{J}(Q_T,W)$.
	\begin{theorem}[{\cite[Corollary 1.4]{Yu20}}]\label{thm:connected}
		The following holds depending on the type of the surface $\mathcal{S}$:
		\begin{itemize}
			\item[$(1)$] If $\mathcal{S}$ is not a closed surface with exactly one puncture, then
			\[
			\ind\bigl(\rigid(\mathcal{C}_{(Q_T,W)})\bigr)=\ind\bigl(\rigid^+(\mathcal{C}_{(Q_T,W)})\bigr)= \ind\bigl(\rigid^-(\mathcal{C}_{(Q_T,W)})\bigr).
			\]
			\item[$(2)$] If $\mathcal{S}$ is a closed surface with exactly one puncture, then
			\[
			\ind\bigl(\rigid(\mathcal{C}_{(Q_T,W)})\bigr)= \ind\bigl(\rigid^+(\mathcal{C}_{(Q_T,W)})\bigr)\sqcup\ind\bigl(\rigid^-(\mathcal{C}_{(Q_T,W)})\bigr).
			\]
		\end{itemize}
	\end{theorem}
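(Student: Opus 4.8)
The plan is to translate the entire statement into the combinatorial model of tagged arcs on $\mathcal{S}$ and then read off the three sets $\ind(\rigid)$, $\ind(\rigid^{+})$, $\ind(\rigid^{-})$ from the flip structure of tagged triangulations. Throughout, I would use that the $\g$-vector of a $2$-term presilting complex determines it, so that via the $\g$-vector-preserving bijection $\tilde{H}_{\hat{\Gamma}_{(Q_T,W)}}$ of Theorem \ref{nyKSQbIejd2O} each of the three sets may be identified with the corresponding set of $\g$-vectors, and it then suffices to compare these sets of vectors in $\mathbb{Z}^{|T|}$.

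First I would describe $\ind(\rigid^{+})$ and $\ind(\rigid^{-})$. Composing the $\g$-vector-preserving bijection $\ind(\rigid^{+}(\mathcal{C}_{(Q_T,W)}))\leftrightarrow\ClVar(Q_T)$ of Theorem \ref{VTIFVbl5KQBe} with Theorem \ref{XRh46SpQsaiI}, under which $-b_T(\mathsf{e}(\delta))=g(x_\delta)$, identifies $\ind(\rigid^{+})$ with the set of all tagged arcs in case $(1)$ and with the set of tagged arcs carrying the same tags as $T$ in case $(2)$. Dually, the $\g$-vector-reversing bijection $\ind(\rigid^{-}(\mathcal{C}_{(Q_T,W)}))\leftrightarrow\ClVar(Q_T^{\op})$ coming from Theorem \ref{Lr78jTdZdIN3} and $\tilde{H}$ (together with $Q_{T^{\ast}}=Q_T^{\op}$), composed with Theorem \ref{XRh46SpQsaiI} applied to $\mathcal{S}^{\ast}$ in case $(1)$ and with Theorem \ref{C2TUel07uSol} in case $(2)$, identifies $\ind(\rigid^{-})$ with all tagged arcs in case $(1)$, and with the set of tagged arcs carrying different tags from $T$ in case $(2)$.

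Next I would show that $\ind(\rigid(\mathcal{C}_{(Q_T,W)}))$ is in bijection with the set of all tagged arcs of $\mathcal{S}$, matching $\g$-vectors with shear coordinates of elementary laminates. The $\g$-vectors of indecomposable rigid objects are exactly the shear coordinates $b_T(\mathsf{e}(\delta))$ of elementary laminates; since $\mathsf{e}$ is injective and, by Lemma \ref{qiaQy8xB1mFh} together with Propositions \ref{ytcn86ZHN4Ak} and \ref{prop:bij lam arc}, the shear-coordinate map sends distinct laminates (elementary, and via $(-)_{\sf pq}$ also exceptional) bijectively to the $\g$-vectors attached to tagged arcs, the lamination bijection of Theorem \ref{z9mIFLwJvbBb} lets me conclude that every indecomposable rigid object corresponds to a unique tagged arc and conversely. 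With this identification in hand, the theorem follows by comparison with the previous paragraph: in case $(1)$ all three sets equal the set of all tagged arcs, so $\ind(\rigid)=\ind(\rigid^{+})=\ind(\rigid^{-})$; in case $(2)$ the set of all tagged arcs partitions into those with the same tags as $T$ and those with different tags, and these two classes are exactly $\ind(\rigid^{+})$ and $\ind(\rigid^{-})$, yielding the disjoint union $\ind(\rigid)=\ind(\rigid^{+})\sqcup\ind(\rigid^{-})$.

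The main obstacle is the classification in the previous paragraph: proving that there are no indecomposable rigid objects beyond those coming from tagged arcs, equivalently that every indecomposable rigid object is reachable by mutation from $\hat{\Gamma}_{(Q_T,W)}$ or $\hat{\Gamma}_{(Q_T,W)}[1]$. This is exactly where the surface and lamination machinery is essential, and where the once-punctured closed surface demands genuine care: there the flip graph of tagged triangulations splits into the all-plain and all-notched components by Theorem \ref{s7QrbAeqvEhv}, which is what forces the union to be disjoint rather than an equality, and one must check that the two reachable classes $\ctilt^{+}$ and $\ctilt^{-}$ are precisely these two components and that together they exhaust all cluster-tilting objects. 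The remaining verifications—the compatibility of shear-coordinate signs across the two sides and the bookkeeping for self-folded triangles and conjugate pairs handled by Lemma \ref{qiaQy8xB1mFh}—I expect to be routine given the stated results of \cite{Yu20,FT18,FST08}.
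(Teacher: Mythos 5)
First, note that the paper does not actually prove Theorem \ref{thm:connected}: it is imported verbatim as \cite[Corollary 1.4]{Yu20}, so there is no internal argument to compare yours against. Judged on its own terms, your proposal has a genuine gap at exactly the point you yourself flag as ``the main obstacle.'' Your identification of $\ind(\rigid^{+})$ and $\ind(\rigid^{-})$ with the appropriate sets of tagged arcs via Theorems \ref{VTIFVbl5KQBe}, \ref{XRh46SpQsaiI}, \ref{Lr78jTdZdIN3} and \ref{C2TUel07uSol} is sound, since those results describe precisely the \emph{reachable} rigid objects. But the entire content of the theorem is the inclusion $\ind(\rigid)\subseteq\ind(\rigid^{+})\cup\ind(\rigid^{-})$, i.e.\ that no indecomposable rigid object exists beyond those obtained by iterated mutation from $\hat{\Gamma}_{(Q_T,W)}$ or $\hat{\Gamma}_{(Q_T,W)}[1]$. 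Your sentence ``the $\g$-vectors of indecomposable rigid objects are exactly the shear coordinates $b_T(\mathsf{e}(\delta))$ of elementary laminates'' is an assertion of this, not a proof, and none of the ingredients you invoke delivers it: Theorem \ref{z9mIFLwJvbBb} is a bijection between laminations and $\mathbb{Z}^{|T|}$ and says nothing about which integer vectors are $\g$-vectors of rigid objects; Lemma \ref{qiaQy8xB1mFh} and Propositions \ref{ytcn86ZHN4Ak} and \ref{prop:bij lam arc} are purely combinatorial statements about laminates and arcs; and Theorem \ref{s7QrbAeqvEhv} only concerns the flip graph of tagged triangulations. In \cite{Yu20} this step rests on a genuine classification of indecomposable rigid objects in cluster categories of surface type in terms of tagged arcs, which is additional input not obtainable from anything quoted in the present paper.

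A secondary point: even granting that classification, the disjointness in case $(2)$ should be deduced from the injectivity of the $\g$-vector map on indecomposable rigid objects (equivalently, on indecomposable $2$-term presilting complexes via Theorem \ref{nyKSQbIejd2O}), since the arcs with the same tags as $T$ and those with different tags have distinct shear coordinates; the disconnectedness of the flip graph in Theorem \ref{s7QrbAeqvEhv} shows that $\ctilt^{+}$ and $\ctilt^{-}$ are distinct families but does not by itself preclude a single indecomposable rigid object from lying in both $\rigid^{+}$ and $\rigid^{-}$. That part is easily repaired; the missing classification above is the essential gap.
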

	\begin{remark}\label{rem:Jacobi-fin}
		Labardini–Fragoso \cite{LF09} attaches to $T$ a non-degenerate potential $W_T$ on the quiver $Q_T$. The Jacobian algebra $\mathcal{J}(Q_T,W_T)$ is finite-dimensional in each of the following mutually exclusive cases:
		\begin{itemize}
			\item[$(1)$] $\partial\mathcal{S}\neq\emptyset$ \cite{LF09};
			\item[$(2)$] $\partial\mathcal{S}=\emptyset$ and $\mathcal{S}$ is not the sphere with exactly four punctures \cite{Lad12};
			\item[$(3)$] $\mathcal{S}$ is the sphere with exactly four punctures \cite{LF15}.
		\end{itemize}
		Moreover, if $\mathcal{S}$ is not a closed surface with exactly one puncture, then $\mathcal{J}(Q_T,W)$ is finite-dimensional for every non-degenerate potential $W$ of $Q_T$. This follows from the classification of non-degenerate potentials of $Q_T$ developed in \cite{GGS14, GLFS16, GLFMO20, LF09, LF15, Lad12} (see also \cite{LF16a}).
	\end{remark}
	\begin{corollary}\label{cor:surfbij}
		There is a bijection
		\[
		\mathsf{X}:\{\text{Tagged arcs in $\mathcal{S}$}\}\longrightarrow\ind\bigl(\twopresilt(\mathcal{J}(Q_T,W))\bigr)
		\]
		such that $-b_T(\mathsf{e}(\delta))=g^{\mathsf{X}(\gamma)}$ for any tagged arc $\delta$ in $\mathcal{S}$. Moreover, it induces a bijection
		\[
		\mathsf{X}:\{\text{Tagged triangulations of $\mathcal{S}$}\}\longrightarrow\twosilt(\mathcal{J}(Q_T,W)),
		\]
		which sends $T$ to $\mathcal{J}(Q_T,W)$ and commutes with flips and mutations. In particular, if $\mathcal{S}$ is a closed surface with exactly one puncture, then $\mathsf{X}$ sends $\rho(T)$ to $\mathcal{J}(Q_T,W)[1]$.
	\end{corollary}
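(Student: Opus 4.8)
The plan is to obtain $\mathsf{X}$ by composing three already-established bijections, and to treat separately the two cases distinguished in Theorem~\ref{thm:connected}, according to whether $\mathcal{S}$ is a closed surface with exactly one puncture. The point is that all of the component maps preserve (or reverse, in one case) $\g$-vectors, commute with flips/mutations, and send reference objects to reference objects, so the desired identity $g^{\mathsf{X}(\delta)}=-b_T(\mathsf{e}(\delta))$ and the triangulation statement will follow by chasing through the composite.

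First suppose $\mathcal{S}$ is \emph{not} a closed surface with exactly one puncture. By Theorem~\ref{thm:connected}(1) together with the $\g$-vector-preserving bijection $\ind(\rigid(\mathcal{C}_{(Q_T,W)}))\leftrightarrow\ind(\twopresilt(\mathcal{J}(Q_T,W)))$ coming from Theorem~\ref{nyKSQbIejd2O}, the whole set $\ind(\twopresilt(\mathcal{J}(Q_T,W)))$ is identified with $\ind(\rigid^+(\mathcal{C}_{(Q_T,W)}))$. Composing with the $\g$-vector-preserving bijection $\ind(\twopresilt^+(\mathcal{J}(Q_T,W)))\leftrightarrow\ClVar(Q_T)$ recorded after Theorem~\ref{VTIFVbl5KQBe} and with the bijection $x_{(-)}$ of Theorem~\ref{XRh46SpQsaiI}(1), I would define $\mathsf{X}(\delta)$ to be the indecomposable $2$-term presilting complex corresponding to the cluster variable $x_\delta$. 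Since the first two bijections preserve $\g$-vectors and $x_{(-)}$ satisfies $-b_T(\mathsf{e}(\delta))=g(x_\delta)$, this gives $g^{\mathsf{X}(\delta)}=-b_T(\mathsf{e}(\delta))$. As each bijection in the chain commutes with flips/mutations and sends the reference object to the initial one, the induced map on tagged triangulations is a bijection onto $\twosilt(\mathcal{J}(Q_T,W))$ sending $T\mapsto\mathcal{J}(Q_T,W)$ and commuting with flips and mutations.

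Now suppose $\mathcal{S}$ is a closed surface with exactly one puncture $p$. Here every arc is a loop at $p$, and the compatibility rules force all arcs of a fixed tagged triangulation to carry the same tag at $p$; hence the set of tagged arcs splits into those with the same tags as $T$ and those with different tags, and likewise tagged triangulations split into an all-plain and an all-notched class. On the algebraic side, Theorem~\ref{thm:connected}(2) and Theorem~\ref{nyKSQbIejd2O} yield $\ind(\twopresilt(\mathcal{J}(Q_T,W)))=\ind(\twopresilt^+(\mathcal{J}(Q_T,W)))\sqcup\ind(\twopresilt^-(\mathcal{J}(Q_T,W)))$. I would define $\mathsf{X}$ piecewise: on arcs with the same tags as $T$, use the bijection onto $\ClVar(Q_T)$ and then onto $\ind(\twopresilt^+(\mathcal{J}(Q_T,W)))$ exactly as in the previous case, giving $g^{\mathsf{X}(\delta)}=g(x_\delta)=-b_T(\mathsf{e}(\delta))$; on arcs with different tags, use $x_{(\rho(-))^{\ast}}$ from Theorem~\ref{C2TUel07uSol} into $\ClVar(Q_T^{\op})$ followed by the $\g$-vector-\emph{reversing} bijection $\ind(\twopresilt^-(\mathcal{J}(Q_T,W)))\leftrightarrow\ClVar(Q_T^{\op})$ of Theorem~\ref{Lr78jTdZdIN3}. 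For the latter, the reversal turns the identity $b_T(\mathsf{e}(\delta))=g(x_{(\rho(\delta))^{\ast}})$ into $g^{\mathsf{X}(\delta)}=-g(x_{(\rho(\delta))^{\ast}})=-b_T(\mathsf{e}(\delta))$, so the required formula holds uniformly. Tracking the reference objects through Theorems~\ref{C2TUel07uSol} and~\ref{Lr78jTdZdIN3} shows the all-notched class maps bijectively onto $\twosilt^-(\mathcal{J}(Q_T,W))$ with $\rho(T)\mapsto\mathcal{J}(Q_T,W)[1]$, yielding the final assertion.

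The main obstacle I expect is the bookkeeping in the one-punctured case: one must check that the two decompositions — of tagged arcs into same/different tags and of $\ind(\twopresilt(\mathcal{J}(Q_T,W)))$ into the $\pm$ parts — are genuinely matched by the piecewise definition, so that $\mathsf{X}$ is a single well-defined bijection rather than two unrelated maps, and that the sign conventions (the $\g$-vector reversal of Theorem~\ref{Lr78jTdZdIN3} against the unnegated shear-coordinate identity of Theorem~\ref{C2TUel07uSol}) conspire to give the uniform formula $g^{\mathsf{X}(\delta)}=-b_T(\mathsf{e}(\delta))$. Verifying that the tagged triangulations split into exactly the all-plain and all-notched classes — which follows from the compatibility rules since every arc is a loop at $p$ — and that these correspond respectively to $\twosilt^+(\mathcal{J}(Q_T,W))$ and $\twosilt^-(\mathcal{J}(Q_T,W))$ is the point where the geometry of the one-punctured surface enters essentially.
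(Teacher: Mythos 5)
Your proposal is correct and follows essentially the same route as the paper: in the non-one-punctured case it composes the correspondence below Theorem~\ref{VTIFVbl5KQBe} with Theorem~\ref{XRh46SpQsaiI}(1) and invokes Theorem~\ref{thm:connected}(1) for surjectivity, and in the one-punctured case it splits tagged arcs by tag type and matches the two pieces with $\twopresilt^{\pm}$ via Theorems~\ref{XRh46SpQsaiI}(2), \ref{C2TUel07uSol}, \ref{Lr78jTdZdIN3} and \ref{thm:connected}(2), exactly as the paper does. Your explicit sign-check for the $\g$-vector reversal in Theorem~\ref{Lr78jTdZdIN3} against the unnegated identity of Theorem~\ref{C2TUel07uSol} is a useful elaboration of a point the paper leaves implicit.
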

	\begin{proof}
		If $\mathcal{S}$ is not a closed surface with exactly one puncture, the desired bijection is obtained by combining the correspondences given in the observation below Theorem \ref{VTIFVbl5KQBe} and Theorem \ref{XRh46SpQsaiI}(1). Then Theorem \ref{thm:connected}(1) ensures that this covers all indecomposable two-term presilting complexes.
		
		Now, assume that $\mathcal{S}$ is a closed surface with exactly one puncture. In this case, the set of tagged arcs naturally splits into two disjoint subsets depending on the tags (same or different from those in $T$). The desired bijection on the first part is obtained by composing the bijections in Theorems \ref{VTIFVbl5KQBe} and \ref{XRh46SpQsaiI}(2), and that on the second part by Theorems \ref{Lr78jTdZdIN3} and \ref{C2TUel07uSol}. Then Theorem \ref{thm:connected}(2) yields the desired bijection.
	\end{proof}

	\begin{theorem}\label{thm:surf main}
		There is a commutative diagram of bijections:
		\[\begin{tikzcd}[cramped]
			\node(L)at(0,0){\{\text{Laminations on $\mathcal{S}$ consisting of non-closed laminates}\}};
			\node(M)at(0,-1.5){\{\text{Finite multi-sets of pairwise compatible tagged arcs in $\mathcal{S}$}\}};
			\node(2)at(0,-3){\twopresilt\mathcal{J}(Q_T,W)};
			\node(Z)at(7,-3){\mathcal{F}^{g}_{\twosilt}(\mathcal{J}(Q_T,W))\cap\mathbb{Z}^{|T|}};
			\draw[->](L)to[out=0,in=90]node[right]{\scriptstyle -b_T}(Z);
			\draw[->](L)--node[left]{\scriptstyle\mathsf{e}^{-1}((-)_{\sf pq})}(M);
			\draw[->](M)--node[left]{\scriptstyle\mathsf{X}}(2);
			\draw[->](2)--node[above]{\scriptstyle g^{(-)}}(Z);
		\end{tikzcd}\]
		In particular, $\mathcal{J}(Q_T,W)$ is $E$-finite if and only if $Q_T$ is of Dynkin type.
	\end{theorem}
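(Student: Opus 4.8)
The plan is to construct the four maps as bijections and then read the $E$-finiteness criterion off the completed diagram. The left vertical arrow $\mathsf{e}^{-1}((-)_{\sf pq})$ is a bijection by Proposition \ref{prop:bij lam arc}, and the middle arrow is the multi-set extension of the bijection of Corollary \ref{cor:surfbij}: a finite multi-set of pairwise compatible tagged arcs is sent by $\mathsf{X}$ to the direct sum (with multiplicities) of the corresponding indecomposable $2$-term presilting complexes. First I would check that $\mathsf{X}$ is well defined and bijective onto $\twopresilt\mathcal{J}(Q_T,W)$; this reduces to showing that a multi-set of tagged arcs is pairwise compatible exactly when its $\mathsf{X}$-images assemble into a presilting complex, which I would deduce from Corollary \ref{cor:surfbij} (tagged triangulations correspond to silting complexes and flips to mutations) together with the fact that every presilting complex is a direct summand of a silting one. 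To see that the square commutes I would evaluate $-b_T$ on a single laminate: for an elementary laminate $\mathsf{e}(\delta)$ the identity $-b_T(\mathsf{e}(\delta))=g^{\mathsf{X}(\delta)}$ is exactly Corollary \ref{cor:surfbij}, while for an exceptional laminate $\ell$ Lemma \ref{qiaQy8xB1mFh} gives $b_T(\ell)=b_T(\{\ell_{\sf p},\ell_{\sf q}\})$, so that $\ell$ and the associated pair of conjugate arcs $\{\mathsf{e}^{-1}(\ell_{\sf p}),\mathsf{e}^{-1}(\ell_{\sf q})\}$ carry matching data. Since both $b_T$ and $g^{(-)}$ are additive over multi-set unions and direct sums, commutativity on all non-closed laminations then follows by linearity.

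The genuinely new step is the right-hand arrow, and I would prove that $g^{(-)}$ is a bijection from $\twopresilt\mathcal{J}(Q_T,W)$ onto $\mathcal{F}^{g}_{\twosilt}(\mathcal{J}(Q_T,W))\cap\mathbb{Z}^{|T|}$. Injectivity and the inclusion of the image in the fan are immediate, since $g^{X}$ lies in the simplicial cone $C(X)$ and the generators of that cone are the $\g$-vectors of the indecomposable summands. For surjectivity I would argue that a lattice point $g$ of the fan lies in some cone $C(S)$ with $S$ a $2$-term silting complex; because the $\g$-vectors of the indecomposable summands of $S$ form a $\mathbb{Z}$-basis of $K_0(\proj\mathcal{J}(Q_T,W))\cong\mathbb{Z}^{|T|}$, expressing $g$ in this basis produces non-negative integer coefficients, whence $g=g^{X}$ for the presilting complex $X$ obtained from those summands with the corresponding multiplicities. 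Combining this with the commutative square and with the bijection of Theorem \ref{z9mIFLwJvbBb}, the composite $-b_T$ restricts to a bijection from laminations consisting of non-closed laminates onto $\mathcal{F}^{g}\cap\mathbb{Z}^{|T|}$, which is exactly the content of the diagram.

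For the final assertion I would proceed as follows. By Definition \ref{Fr4567nvAEoQJ}, the algebra $\mathcal{J}(Q_T,W)$ is $E$-finite precisely when every $g\in\mathbb{Z}^{|T|}$ is rigid, that is, is a presilting $\g$-vector, that is, lies in $\mathcal{F}^{g}\cap\mathbb{Z}^{|T|}$. Since Theorem \ref{z9mIFLwJvbBb} identifies the \emph{whole} of $\mathbb{Z}^{|T|}$ with the set of \emph{all} laminations via $-b_T$, the diagram shows that this occurs if and only if every lamination consists of non-closed laminates, i.e.\ if and only if $\mathcal{S}$ carries no closed laminate. I would then invoke the topological classification: a marked surface admits no essential simple closed curve (equivalently, no closed laminate) exactly when it is an unpunctured or once-punctured polygon, and these are precisely the surfaces of Dynkin type $A$ and $D$; as the remaining Dynkin types $E_6,E_7,E_8$ are exceptional and not of surface type (Theorem \ref{MupWVVH0u2Zz}), this yields $E$-finiteness of $\mathcal{J}(Q_T,W)$ if and only if $Q_T$ is of Dynkin type.

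The main obstacle I anticipate is twofold. The first delicate point is bookkeeping with multiplicities so that the right-hand arrow is genuinely surjective onto the lattice points of the possibly incomplete $\g$-fan; this rests on the basis property of the $\g$-vectors of a silting complex and on simpliciality, and must be reconciled with the paper's convention that $\twopresilt$ records basic complexes. The second, more geometric obstacle is the claim that every marked surface other than the disk with at most one puncture carries a closed laminate, which is what turns the combinatorial equivalence ``no closed laminate'' into the clean statement ``$Q_T$ is Dynkin''.
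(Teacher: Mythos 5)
Your proposal is correct and follows essentially the same route as the paper's proof: the same four bijections justified by Proposition \ref{prop:bij lam arc}, Corollary \ref{cor:surfbij}, and the simpliciality/$\mathbb{Z}$-basis argument for the surjectivity of $g^{(-)}$, the same commutativity check via Lemma \ref{qiaQy8xB1mFh}, and the same reduction of $E$-finiteness to the non-existence of closed laminates on $\mathcal{S}$. Your extra care in extending $\mathsf{X}$ to multi-sets and reconciling multiplicities with the convention that $\twopresilt$ records basic complexes addresses a point the paper passes over silently, but it does not change the argument.
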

	\begin{proof}
		We verify that all maps in the diagram are bijections and that the diagram is commutative.
		
		The map $\mathsf{e}^{-1}((-)_{\sf pq})$ is a bijection by Proposition \ref{prop:bij lam arc}. The map $\mathsf{X}$ is a bijection by Corollary \ref{cor:surfbij}. The map $g^{(-)}$ is injective on $\twopresilt(\mathcal{J}(Q_T,W))$ by \cite[Corollary 6.7]{DIJ19} (see also \cite[Section 2.3]{DK08}), and its image lies in $\mathcal{F}^{g}_{\twosilt}(\mathcal{J}(Q_T,W)) \cap \mathbb{Z}^{|T|}$. As the fan $\mathcal{F}^{g}_{\twosilt}(\mathcal{J}(Q_T,W))$ is simplicial and the $g$-vectors of the indecomposable direct summands of any basic two-term silting complex form a $\mathbb{Z}$-basis of the corresponding cone, the map $g^{(-)}$ is also surjective. Hence, $g^{(-)}$ is a bijection onto $\mathcal{F}^{g}_{\twosilt}(\mathcal{J}(Q_T,W)) \cap \mathbb{Z}^{|T|}$.
		
		The composition $g^{\mathsf{X}(\mathsf{e}^{-1}((-)_{\sf pq}))}$ coincides with $-b_T((-)_{\sf pq})$ by Corollary \ref{cor:surfbij}. Since $b_T(L)=b_T(L_{\sf pq})$ for any lamination $L$ by Lemma \ref{qiaQy8xB1mFh}, it follows that $g^{\mathsf{X}(\mathsf{e}^{-1}(L_{\sf pq}))} = -b_T(L)$. Therefore, the diagram is commutative.
		
		Finally, combining the bijection $-b_T$ in the diagram with Theorem \ref{z9mIFLwJvbBb} yields a bijection between the set of laminations containing at least one closed laminate and $\mathbb{Z}^{|T|}\setminus\mathcal{F}^{g}_{\twosilt}(\mathcal{J}(Q_T,W))$. By definition, $\mathcal{J}(Q_T,W)$ is $E$-finite if and only if 
		\[\mathbb{Z}^{|T|}\setminus\mathcal{F}^{g}_{\twosilt}(\mathcal{J}(Q_T,W))=\emptyset.\]
		Therefore, the final assertion follows from the fact that $\mathcal{S}$ admits no closed laminates if and only if it is a disc with at most one puncture, in which case $Q_T$ is of Dynkin type.
	\end{proof}
	
	\section*{Acknowledgment}
	The authors would like to thank Jan Schr\"{o}er and Osamu Iyama for their helpful comments on the abstract and introduction. They thank Kaveh Mousavand for his useful comments, which inform us about Demonet's conjecture.
	The first author expresses gratitude to JiaRui Fei for his insightful description of mutations of general presentations through email discussions. The second author would like to thank Sota Asai and Qi Wang for their useful discussions. He was supported by JSPS KAKENHI Grant Numbers JP21K13761.
	
		\bibliographystyle{alpha}
		\bibliography{all}
\end{document}